\documentclass[a4paper,12pt]{amsart}

\addtolength{\textwidth}{3.5cm}
\addtolength{\hoffset}{-1.5cm}

\usepackage{amsmath,amsfonts,amsthm,amssymb,dsfont}
\usepackage[alphabetic]{amsrefs}
\usepackage{microtype}

\usepackage{times}

\usepackage{enumerate}

\usepackage{mathrsfs}
\usepackage{enumerate, xspace}
\usepackage{graphicx}
\usepackage{color}

    \usepackage[all, knot]{xy}
    \xyoption{arc} 

\usepackage{verbatim}
\usepackage{wrapfig}
\usepackage{caption}
\usepackage{subcaption}

\usepackage{pgf,tikz}
\usetikzlibrary{arrows}

\pdfminorversion=6

\usepackage[pdfauthor={Markus Steenbock},pdftitle={Rips-Segev torsion-free groups without the unique product property},pdfkeywords={small cancellation groups, Kaplansky zero-divisor conjecture, unique product groups, random groups,  graphical presentations, Rips-Segev groups, graphical small cancellation},pdftex]{hyperref}

\usepackage{color}

\hypersetup{colorlinks,%
citecolor=black,%
filecolor=black,%
linkcolor=black,%
urlcolor=black,%
}

\DeclareMathOperator{\diam}{diam}
\DeclareMathOperator{\girth}{girth}


\newtheorem{theorem}{Theorem}[section]
\newtheorem{them}{Theorem}
\newtheorem{lemma}{Lemma}[section]
\newtheorem{prop}{Proposition}[section]
\newtheorem{cor}{Corollary}[section]

\newtheorem*{lemma*}{Lemma}
\newtheorem*{cor*}{Corollary}
\newtheorem*{prop*}{Proposition}

\theoremstyle{definition}

\newtheorem*{definition*}{Definition}
\newtheorem{example}{Example}[section]

\theoremstyle{remark}
\newtheorem{remark}{Remark}[section]
\newtheorem*{remark*}{Remark}

\numberwithin{equation}{section}

\begin{document}
\definecolor{qqqqff}{rgb}{0,0,1}

\author{Markus Steenbock}
\address{Universit\"at Wien, Fakult\"at f\"ur Mathematik\\
Oskar-Morgenstern-Platz 1, 1090 Wien, Austria.
}
\email{markus.steenbock@univie.ac.at}
\keywords{ Graphical small cancellation groups, Kaplansky zero-divisor conjecture, unique product groups, random groups, Rips-Segev groups. }
\subjclass[2010]{20F06,20F60,20F67,20P05.} 

\title{Rips-Segev torsion-free groups without the unique product property} 

\begin{abstract} We generalize the graphical small cancellation theory of Gromov to a graphical small cancellation theory over the free product. We extend Gromov's small cancellation theorem to the free product. We explain and  generalize Rips-Segev's construction of torsion-free groups without the unique product property by viewing these groups as given by graphical small cancellation presentations over the free product. Our graphical small cancellation theorem then provides first examples of Gromov hyperbolic groups without the unique product property. We construct uncountably many non-isomorphic torsion-free groups without the unique product property. We show that the presentations of  generalized Rips-Segev groups are not generic among finite presentations of groups. 
\end{abstract}

\maketitle

\smallskip
\smallskip
\smallskip

The outstanding Kaplansky zero-divisor conjecture states that the group ring of a torsion-free group over an integral domain has no zero-divisors \cites{kaplansky_problems_1957,kaplansky_problems_1970}. The conjecture is still open in general. Unique product groups (or groups with the unique product property, see Definition~p.~\pageref{up}) are torsion-free groups that satisfy the conjecture \cite{cohen_zero_1974}. Examples of unique product groups are abelian and non-abelian free groups, nilpotent groups, and hyperbolic groups which act with large translation length on a hyperbolic space 
\cite{delzant_sur_1997}. 
Answering a question of Passman \cite{passman_algebraic_1977}, Rips and Segev have provided first examples of torsion-free groups without the
 unique product property~\cite{rips_torsion-free_1987}. These groups and their Rips-Segev presentations  have still not been systematically investigated. 
 
 The conjecture is also known for torsion-free elementary amenable groups \cite{kropholler_applications_1988} as well as right-angled Artin groups, right-angled  Coxeter groups and certain finite and elementary amenable extensions of such groups \cite{linnell_strong_2012}. Other examples of torsion-free groups without the unique product property can be found in 
\cite{promislow_simple_1988,carter_new_2013}. 

 Our study of Rips-Segev's groups is motivated by the following two open problems.
\smallskip
\begin{itemize}
\item Do Rips-Segev groups satisfy Kaplansky's zero-divisor conjecture?\\[-10pt]
\item Are unique product groups generic among finitely presented groups?
\end{itemize}
\smallskip

The second question is due to T. Delzant. 
We expect a positive answer to both questions. This would also mean that a generic finitely presented group satisfies Kaplansky's zero-divisor conjecture. 

Rips and Segev defined their group by taking as relators words that are read on the cycles of  a certain finite connected edge-labeled graph. The graph was designed to encode the non-unique product relations. On the other hand, to conclude that their groups are torsion-free and without the unique product property, the authors referred to the small cancellation theory~\cite{rips_torsion-free_1987}*{p. 123}.
However, the statements they refer to either are not applicable (in fact, their presentations do not satisfy 
the classical $C(p)$--condition for minimal sequences as described in~\cite{lyndon_combinatorial_1977}*{Ch. V.8}  as relators have to be of length 4 in a free group) or
their proofs are not present in the literature (the result~\cite{lyndon_combinatorial_1977}*{Th. 9.3, Ch. V}  is available only in a specific case of 
the classical metric $C'(\lambda)$--condition over the free product and not under the $C'(\lambda)$--condition for minimal sequences as required by the  Rips-Segev construction).  In particular, contrary to the classical metric small cancellation conditions, the relators given by Rips-Segev can have long common parts, see our explanation in Section~\ref{rssmallcanc}.  Although Rips-Segev were the first to use a graphical group \emph{presentation}, the small cancellation conditions they consider are not graphical conditions.

We adapt a new viewpoint on the Rips-Segev presentations. Namely,  we show that they satisfy the  graphical  metric small cancellation condition with respect to the free product length 
on the free product of certain torsion-free groups. We prove the following general result, of independent interest, that can then be applied to the Rips-Segev presentations. 

 \begin{them}[cf. Theorems \ref{lii}, \ref{T: tf}, \ref{T: gi}]\label{T: main1}
Let $G_1,\ldots,G_n$ be finitely generated groups.  Let $\Omega$ be a family of finite connected graphs edge-labeled by $G_1\cup \ldots \cup G_n$ so that the graphical metric small cancellation condition with respect to the free product length on the free product $G_1*\cdots *G_n$ is satisfied.
Let $G$ be the group given by the corresponding graphical presentation, that is,  the quotient of $G_1*\cdots *G_n$ subject to the relators being  the words read on the cycles of $\Omega.$ 

Then
$G$ satisfies a linear isoperimetric inequality with respect to the free product length. Moreover, $G$ is torsion-free whenever $G_1,\ldots,G_n$ are torsion-free; 
$G$ is Gromov hyperbolic whenever $G_1,\ldots,G_n$ are Gromov hyperbolic and $\Omega$ is finite.

The graph $\Omega$ injects into the Cayley graph of $G$ with respect to $G_1\cup \ldots \cup G_n$.
\end{them}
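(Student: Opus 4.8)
The plan is to reduce the whole statement to a study of reduced van Kampen diagrams over the graphical presentation, adapted to the free product. First I would fix the combinatorial model: a diagram $D$ is a finite, simply connected planar $2$-complex whose oriented edges carry labels in $G_1\cup\cdots\cup G_n$; whose $2$-cells are \emph{$\Omega$-faces}, each $\Omega$-face $f$ coming equipped with a label-preserving immersion of $\partial f$ into a component of $\Omega$ (so $\partial f$ reads a cyclic word on a closed immersed path of $\Omega$); and which in addition carries the bookkeeping of the free-product structure on its $1$-skeleton, exactly as in the classical diagram calculus over a free product (consecutive edges at a vertex with labels in the same factor $G_i$ are either fused into one syllable or recorded as a ``$G_i$-relation'', cf.\ \cite{lyndon_combinatorial_1977}*{Ch.~V.9}). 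With this model, every element of $G$ that is trivial bounds such a diagram, and the free product length of the boundary word is, up to the obvious identification at vertices, the number of edges of $\partial D$. A \emph{piece} is taken in the graphical sense: a labeled path admitting two label-preserving immersions into $\Omega$ that do not differ by a label-preserving automorphism of $\Omega$; the graphical metric small cancellation hypothesis then says that any piece occurring as a subpath of an immersed closed path (``relator'') $\gamma$ of $\Omega$ has length $<\lambda|\gamma|$ with $\lambda\le 1/6$.

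Next I would prove the key lemma: if $D$ is a reduced diagram of minimal complexity (fewest $\Omega$-faces, then least total edge-length), then every interior arc of $D$ — a maximal arc of $\partial f\cap\partial f'$ between two distinct $\Omega$-faces — is a piece; reducedness excludes cancelling pairs of $\Omega$-faces, and minimality over the free-product bookkeeping excludes the degeneracies coming from the factor groups. Granting this, I would run a combinatorial curvature argument: assign to each corner of $D$ an angle making every interior vertex flat; the piece condition forces each interior $\Omega$-face to be bounded by more than $1/\lambda\ge 6$ arcs, hence to receive nonpositive curvature, while the $G_i$-bookkeeping contributes nothing; combinatorial Gauss--Bonnet pushes all positive curvature onto the $\Omega$-faces meeting $\partial D$, and a pigeonhole on curvature yields an $\Omega$-face $f$ with $|\partial f\cap\partial D|>(1-3\lambda)|\partial f|$. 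Iterating this Greendlinger-type statement gives $|\partial D|\ge \varepsilon\,\mathrm{Area}(D)$ for some $\varepsilon=\varepsilon(\lambda)>0$, measured in the free product length; this is Theorem~\ref{lii}. The embedding $\Omega\hookrightarrow\mathrm{Cay}(G,G_1\cup\cdots\cup G_n)$ falls out of the same argument: were a reduced immersed path of $\Omega$, or a difference of two such paths with equal endpoints, trivial in $G$, one would obtain a nonempty minimal diagram whose boundary reads along $\Omega$, and the long boundary arc of the face $f$ above — a subpath of a relator whose short complement is also a path in $\Omega$ — contradicts that the boundary path is immersed and reduced in $\Omega$.

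The torsion and hyperbolicity statements are bootstrapped from the isoperimetric inequality. For Theorem~\ref{T: tf} I would argue as in classical free-product small cancellation: a finite-order element is either conjugate into some $G_i$, hence trivial since $G_i$ is torsion-free, or $g^k=1$ with $g$ not conjugate into a factor produces a reduced spherical/annular diagram which, by combinatorial asphericity under the $C'(1/6)$ condition, cannot exist; more carefully, diagram surgery shows any torsion of $G$ is conjugate into some $G_i$. For Theorem~\ref{T: gi}, when $\Omega$ is finite and the $G_i$ are hyperbolic, $G$ is finitely presented, and I would upgrade the linear isoperimetric inequality for the free product length to one for a word metric of $G$ by filling each $G_i$-syllable using the linear Dehn function of $G_i$; Gromov's characterisation then gives hyperbolicity.

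The main obstacle, and the point where genuinely new work is needed beyond transcribing Gromov's and Lyndon--Schupp's arguments, is making the two refinements coexist: graphical small cancellation presupposes a \emph{free} alphabet, free-product small cancellation presupposes relators presented as explicit \emph{words}, and here the letters are elements of arbitrary torsion-free groups $G_i$ that may multiply or cancel along an edge path. I expect the delicate steps to be (i) the correct definition of a reduced diagram over a graphical presentation over a free product, together with the proof that minimal diagrams have all interior arcs pieces, and (ii) showing that the free product (syllable) length is ``almost additive'' along the arcs of such diagrams and along immersed paths of $\Omega$, so that the local $C'(\lambda)$ inequalities may legitimately be summed in the curvature count. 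Once these are secured, the curvature bookkeeping and the torsion and hyperbolicity arguments are routine adaptations of the free-alphabet theory.
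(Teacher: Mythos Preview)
Your outline is broadly aligned with the paper's strategy, and you correctly flag step~(i) as the crux. But the justification you offer for the key lemma---that in a minimal reduced diagram every interior arc is a piece---does not go through as stated, and this is exactly where the graphical theory diverges from the classical one. Two $\Omega$-faces $\Pi_1,\Pi_2$ with boundary cycles $sp_1,sp_2$ can share a long arc $s$ that is \emph{not} a piece: this occurs when the lifts of $s$ with $\Pi_1$ and with $\Pi_2$ coincide in $\Omega$ (the paper calls such $s$ \emph{originating}). ``Reducedness excludes cancelling pairs'' only rules out the case $\omega(p_1)=\omega(p_2)$; in the remaining case $\omega(p_1p_2^{-1})$ is the label of another cycle in $\Omega$, so one is tempted to merge $\Pi_1,\Pi_2$ into a single $\Omega$-face and invoke minimality---but the merged cell need not be a disc. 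The paper resolves this by collecting faces joined through originating arcs into maximal \emph{regions}, proving (Corollary~\ref{C: Ollivier3}) that in a minimal diagram every maximal region is simply connected---the proof applies the classical $C'_*(\lambda)$-lemma to an innermost offending subdiagram---and only then passing to the diagram $\widetilde{D}$ whose faces are these regions. It is $\widetilde{D}$, not $D$, on which the Greendlinger/curvature count is run.

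Two further points. First, the Greendlinger inequality for $\widetilde{D}$ bounds $|\omega(\partial D)|_*$ below by a constant times $\sum_M|\omega(\partial M)|_*$, which controls the number of \emph{regions}, not the number of original $\Omega$-faces; to get $|\omega(\partial D)|_*\ge \varepsilon\,|D|$ you still need a linear bound on the faces inside each region in terms of $|\omega(\partial M)|_*$, supplied in the paper by a short lemma on diagrams immersed in $\widetilde{\Omega}$. Second, your torsion sketch is too optimistic: combinatorial asphericity under $C'(1/6)$ is not available here, and the paper's argument (Theorem~\ref{T: tf}) needs the stronger $Gr'_*(1/8)$-condition together with a genuine substitute for \cite{lyndon_combinatorial_1977}*{L.10.2, Ch.V}, since the classical ``$x$ and $x^{m-1}$ are pieces'' step fails for graphical relators. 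The free-product subtlety you flag in~(ii) is absorbed in the paper by measuring pieces and girth over all graphs \emph{equivalent} to $\Omega$ under AO-moves and reductions, which is what makes the syllable-length inequalities stable under the diagram surgeries.
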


Our theorem extends to the free product setting Gromov's graphical small cancellation theorem which states that presentations with the graphical metric small cancellation condition over a free group define torsion-free hyperbolic groups~\cites{gromov_random_2003,ollivier_small_2006}. 
Our result also generalizes to the graphical small cancellation setting a theorem of Pankrat'ev who establishes the hyperbolicity of the free products of hyperbolic groups subject to relators satisfying 
the classical metric small cancellation condition~\cite{pankratev_hyperbolic_1999}.

As a corollary, our theorem provides all details to the Rips-Segev original construction, specifically, we conclude that the Rips-Segev groups are indeed torsion-free.
In addition, this yields the following new result. Note that the reference to the small cancellation conditions for minimal sequences (as stated in~\cite{rips_torsion-free_1987}*{p. 123}) is not sufficient (as it could lead to quadratic Dehn functions instead of linear ones which characterize hyperbolicity in the case of finitely presented groups).   

\begin{them}[cf. Theorem \ref{T: RS are hyperbolic}] \label{gh} The Rips-Segev torsion-free groups without the unique product property are Gromov hyperbolic.\end{them}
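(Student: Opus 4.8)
The plan is to realize each Rips--Segev torsion-free group $G$ as the group defined by a graphical presentation over a suitable free product of Gromov hyperbolic groups, and then to invoke Theorem~\ref{T: main1} (in the form of Theorem~\ref{T: gi}); this is what is carried out in Theorem~\ref{T: RS are hyperbolic}. Recall that a Rips--Segev presentation is given by a finite generating set together with a finite connected edge-labeled graph $\Gamma$ whose closed paths spell the defining relators, the graph being designed so that a prescribed finite set fails to have a unique product in the quotient. As words in the free group these relators share long common subwords, so that neither the classical nor the graphical small cancellation condition over the free group applies; this is exactly the difficulty recorded in Section~\ref{rssmallcanc}.

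The resolution is to regroup the generators into classes and pass to the free product $G_1*\cdots*G_n$ of the subgroups they generate, chosen so that each $G_i$ is finitely generated, torsion-free and Gromov hyperbolic --- concretely one may take each $G_i$ to be free or infinite cyclic --- and so that each of the long shared subwords collapses to a single syllable of some free factor. Relabeling $\Gamma$ accordingly produces a finite connected graph $\Omega$ edge-labeled by $G_1\cup\cdots\cup G_n$, and $G$ is precisely the quotient of $G_1*\cdots*G_n$ by the words read on the cycles of $\Omega$. The point of the construction is that, although the relators are long in the free group, with respect to the free product length every cycle of $\Omega$ has large syllable length while every piece has bounded syllable length.

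It then remains to verify that $\Omega$ is a reduced $G_1*\cdots*G_n$-labeled graph --- possibly after a harmless subdivision or folding that does not change the group --- and that it satisfies the graphical metric small cancellation condition with respect to the free product length for a sufficiently small parameter. Concretely, one must bound, in the syllable metric, the length of every piece of $\Omega$ against its girth in the same metric, classifying the ways a labeled path can occur in two essentially different places in the Rips--Segev graph. This combinatorial analysis is the main obstacle: one must pin down the correct free-product decomposition, since the naive one need not make the small cancellation parameter small enough. Once it is established, all hypotheses of Theorem~\ref{T: main1} hold --- the factors $G_i$ are Gromov hyperbolic and torsion-free and $\Omega$ is finite --- so $G$ is Gromov hyperbolic and torsion-free, and $\Omega$ embeds into the Cayley graph of $G$ with respect to $G_1\cup\cdots\cup G_n$, which is what preserves the prescribed non-unique-product configuration in $G$. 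In particular this supplies the genuine linear isoperimetric inequality that the reference to small cancellation for minimal sequences in \cite{rips_torsion-free_1987}*{p.~123} does not provide, and which, for finitely presented groups, is equivalent to Gromov hyperbolicity.
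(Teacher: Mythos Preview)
Your proposal is correct and follows the paper's approach: the free product is simply $\langle a\rangle * \langle b\rangle$ (two infinite cyclic factors, so the ``naive'' decomposition already works), the long powers $a^P$ collapse to single syllables, the Rips--Segev numerical conditions on the coefficients $(I,O,C)$ force every graphical piece to have syllable length at most~$3$ (Lemma~\ref{L: maxplength}), and the large-girth hypothesis on the underlying graph~$\Phi_\Gamma$ (girth at least~$41$) then yields the $Gr'_*(1/8)$--condition via Proposition~\ref{P: criterion} (Corollary~\ref{C: final graphical small cancellation for Rips-Segev}). One small correction: the hyperbolicity clause of Theorem~\ref{T: main1} is Theorem~\ref{lii}, not Theorem~\ref{T: gi}, which records the embedding of $\Omega$ into the Cayley graph.
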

 
Hyperbolic groups with large translation length have the unique product property \cite{delzant_sur_1997}. { Hence}, we deduce that the Rips-Segev groups are first examples of torsion-free
hyperbolic groups which possess no action with large translation length on any hyperbolic space. 

We also obtain that the Rips-Segev groups satisfy the Kadison-Kaplansky conjecture (stating that the reduced $C^*$-algebra of a torsion-free group has no non-trivial idempotents)
as, for torsion-free groups, it follows from the Baum-Connes conjecture, proved for Gromov hyperbolic groups~\cite{lafforgue_baum_1998}. 

Extending our graphical viewpoint further, we define the \emph{generalized Rips-Segev presentations} and construct many \emph{new torsion-free groups without the unique product property}. 
Some of our new groups have infinitely many different pairs of subsets without the unique product property. 

It is unknown whether or not there are countably many Rips-Segev groups up to isomorphism. 
We show that elementary Nielsen equivalences and conjugation do not induce isomorphisms of Rips-Segev groups. This allows to produce
a huge family of torsion-free groups  without the unique product property.
\begin{them}[cf. Theorem \ref{infpresgrwup}]
 There are uncountably many non-isomorphic torsion-free groups without the unique product property.  
\end{them}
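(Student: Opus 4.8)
The plan is to sidestep the construction of finitely generated examples and instead package a single countable family of building blocks into uncountably many pairwise non-isomorphic free products. The first ingredient is a countably infinite family $R_1,R_2,\ldots$ of pairwise non-isomorphic torsion-free groups without the unique product property, each of which is freely indecomposable. Such a family is supplied by the generalized Rips-Segev construction: by Theorem~\ref{T: main1} every generalized Rips-Segev presentation defines a torsion-free group, and, using that the defining graph $\Omega$ injects into the Cayley graph together with the fact that elementary Nielsen moves and conjugation do not identify distinct such presentations, one obtains infinitely many of them that are pairwise non-isomorphic. Free indecomposability can be arranged by a standard small cancellation argument (a connected defining graph whose relator words are not conjugate into a free factor prevents a nontrivial free splitting, e.g. by inspecting the action on the Bass--Serre tree or via the Dehn algorithm coming from the linear isoperimetric inequality of Theorem~\ref{T: main1}). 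Note that each $R_i$ is automatically not infinite cyclic --- indeed not free --- since free groups have the unique product property.

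For every nonempty subset $S\subseteq\mathbb{N}$ set $G_S:=\ast_{i\in S}R_i$. First, $G_S$ is torsion-free: by the Kurosh subgroup theorem a nontrivial element of finite order in a free product is conjugate into a free factor, and the $R_i$ are torsion-free. Second, $G_S$ fails the unique product property: choose $i\in S$, so that $R_i\le G_S$; if finite nonempty subsets $A,B\subseteq R_i$ witness the failure of the unique product property in $R_i$, the very same $A,B$ witness it in $G_S$, since the product set $AB$ and the multiplicity of representations $g=ab$ with $a\in A$, $b\in B$ are computed inside $R_i$ and are unchanged in any overgroup. Thus every $G_S$ is a torsion-free group without the unique product property.

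It remains to check that $S\mapsto G_S$ is injective on isomorphism classes. Suppose $G_S\cong G_T$. Since each $R_i$ is freely indecomposable and none is infinite cyclic, $\ast_{i\in S}R_i$ and $\ast_{j\in T}R_j$ are Kurosh (Grushko) decompositions of one and the same group into freely indecomposable factors with trivial free part. By the classical uniqueness of such decompositions --- a consequence of the Kurosh subgroup theorem, valid for arbitrary free products --- there is a bijection $S\to T$ carrying each factor $R_i$ to a conjugate, hence isomorphic, factor $R_j$. As the $R_i$ are pairwise non-isomorphic, this forces $S=T$. Since $\mathbb{N}$ has $2^{\aleph_0}$ subsets, this produces $2^{\aleph_0}$ pairwise non-isomorphic torsion-free groups without the unique product property.

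The main obstacle is the first step: producing the countable family $\{R_i\}$ and, above all, verifying its pairwise non-isomorphism and free indecomposability --- this is exactly where Theorem~\ref{T: main1} (torsion-freeness and the embedding $\Omega\hookrightarrow\mathrm{Cay}$) and the analysis of Nielsen equivalence and conjugation of Rips-Segev presentations do the real work. Once that family is in hand, the free-product bookkeeping above is routine. (If one additionally wants finitely generated examples, an alternative route keeps a single finitely generated free product fixed and adjoins to a fixed non-unique-product \emph{core} one graph component per element of a varying subset $S\subseteq\mathbb{N}$; the graphical metric small cancellation condition is inherited by subfamilies, so Theorem~\ref{T: main1} still applies, but one must then exhibit an isomorphism invariant recovering $S$ --- which is less immediate than the free-factor argument used here.)
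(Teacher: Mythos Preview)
Your argument has a genuine gap in its first step. You assume the existence of a countably infinite family $R_1,R_2,\ldots$ of \emph{pairwise non-isomorphic} torsion-free non-unique-product groups, and you claim this is supplied by the paper's results on Rips--Segev presentations. It is not. Proposition~\ref{P: marked groups} only shows that the identity map on generators fails to be an isomorphism (non-isomorphism as \emph{marked} groups), and the subsequent Remark only excludes single elementary Nielsen moves and certain specific conjugations. These do not combine to rule out all isomorphisms; indeed the paper explicitly states, just after the proof of Theorem~\ref{infpresgrwup}, that whether there are infinitely many non-isomorphic \emph{finitely presented} Rips--Segev groups is an open question. So the input family $\{R_i\}$ you need is precisely what is not available. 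The free-indecomposability claim is likewise not in the paper, and your sketch (``inspect the Bass--Serre tree'' or ``Dehn algorithm'') is not a proof.

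By contrast, the paper's own argument avoids ever needing abstract non-isomorphism among countably many building blocks. It fixes one infinite family $(\Gamma_l)_{l\in\mathbb Z}$ whose disjoint union satisfies $Gr'_*(1/8)$, and for each $I\subseteq\mathbb Z$ takes the single $2$-generated group $G_I=G(\bigsqcup_{i\in I}\Gamma_i)$. Proposition~\ref{P: marked groups} (via Lemma~\ref{L: number}) yields that the $G_I$ are pairwise non-isomorphic as \emph{marked} groups --- that is, uncountably many distinct $2$-generated marked groups --- and then one invokes the elementary fact that a finitely generated group carries only countably many markings, so uncountably many marked-isomorphism classes force uncountably many abstract isomorphism classes. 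This bypasses Kurosh/Grushko entirely and, unlike your free products $\ast_{i\in S}R_i$ with $S$ infinite, produces $2$-generated examples. Your parenthetical ``alternative route'' at the end is in fact much closer to what the paper actually does.
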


On the other hand, we prove that all currently known finite presentations of torsion-free groups without the unique product property are not generic among finite presentations of groups
with respect to two different fundamental models of random finitely presented groups.

\begin{them}[cf. Theorem \ref{g1}] Generalized Rips-Segev presentations of torsion-free non-unique product groups are not generic in Gromov's graphical model \cites{gromov_random_2003,ollivier_kazhdan_2007} of finitely presented random groups. 
\end{them}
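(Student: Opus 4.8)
The plan is to single out a property of labeled graphs that is generic in Gromov's graphical model yet fails for the defining data of every generalized Rips--Segev presentation; the set of labelings producing such presentations then lies inside an event whose probability tends to zero, which is precisely what non-genericity means. I set up the model as usual: one fixes a sequence of finite connected graphs $\Gamma_j$ with girth tending to infinity, together with the standing bounded-degree and relative-expansion hypotheses, labels the edges independently and uniformly by the generating set, and reads the relators on the reduced cycles.

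The property I use records the syllable count of the relators relative to the free-product decomposition $F=G_1*\cdots*G_n$ of the ambient free group, where a \emph{syllable} of a reduced word is a maximal subword lying in a single free factor. Let $\mathcal Q$ be the event that every relator, read as a reduced word of length $\ell$, decomposes into at least $c\,\ell$ syllables, for a suitable fixed $c>0$. Along a reduced path each new edge opens a fresh syllable unless its label remains in the free factor of the previous one, and, conditionally on the past, the latter occurs with probability bounded away from $1$; hence the number of syllables of a relator of length $\ell$ concentrates around a definite multiple of $\ell$, and a Chernoff-type deviation estimate together with a union bound over the relators gives $\Pr[\mathcal Q]\to 1$ in the standing regime. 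The same estimate, with only the number and length of the relators changing, holds at every density below $1/2$; this is why $\mathcal Q$ --- rather than, say, a metric small-cancellation condition over the free group --- is the natural property to compare against.

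The complementary task is to see that $\mathcal Q$ fails for generalized Rips--Segev presentations, and here one unwinds the definitions from the earlier sections. Such a presentation is a graphical presentation over a free product $G_1*\cdots*G_n$ of torsion-free free factors in which each relator is the word read on a cycle of a finite graph $\Omega$, with \emph{each edge labeled by a single element of a free factor} and the cycles of the defining graphs of uniformly bounded length (equal to $4$ in Rips and Segev's original construction, where the cycles encode the basic non-unique-product relations $ab=a'b'$). Hence each relator is a product of a bounded number of syllables, while the individual syllables are long, since the construction forces the free factors to be large for the small cancellation property to hold over the free-product length. This is exactly the ``long common parts over the free group'' phenomenon described in Section~\ref{rssmallcanc}: common parts that are long as words but that span only a bounded number of syllables. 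Consequently, once the relators are long enough each of them has far fewer than $c\,\ell$ syllables, so $\mathcal Q$ fails; with the previous paragraph, the class of generalized Rips--Segev presentations is therefore not generic. The injectivity clause of Theorem~\ref{T: main1} is what makes this syllable count an intrinsic invariant of the presentation rather than an artefact of the chosen labeled graph.

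I expect the genuine difficulty to lie in this last step, in two respects. First, the uniform bound on the number of syllables per relator has to be extracted honestly from the general definition of the generalized Rips--Segev presentations --- including the variants that produce uncountably many groups --- and not merely from the original Rips--Segev graph. Second, and more delicately, one must check that this bounded-syllable feature is not destroyed by an admissible change of basis of $F$: either by agreeing that genericity is measured against the fixed basis carried by the presentation, or by verifying that a reduced word with boundedly many syllables still has few ``pieces'' after every automorphism of $F$ that one wishes to admit. With these points settled, the same scheme transfers to the density model with no essential change.
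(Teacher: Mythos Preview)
Your overall strategy --- isolate a property of labeled graphs that is generic under random labeling yet is violated by every generalized Rips--Segev graph --- is exactly the paper's approach. The gap is in the structural feature you extract from the Rips--Segev construction.

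You assert that the defining cycles have ``uniformly bounded length (equal to $4$)'', so that each relator is a product of a bounded number of syllables. This is a misreading. The ``length $4$'' in the introduction refers to the Appel--Schupp $C(4)$--$T(4)$ framework (which \emph{requires} relators of length $4$ and which the paper explains does \emph{not} apply here), not to the cycles of $\Gamma$. In fact the cycles of a generalized Rips--Segev graph $\Gamma$ are long in both metrics: Lemma~\ref{L: mincycle} gives $\gamma(\Gamma)\geqslant\girth(\Phi_\Gamma)$, and $\girth(\Phi_\Gamma)>41$ is part of the construction, so every relator has \emph{at least} $41$ syllables --- and there is no uniform upper bound either, since the underlying graph $\Phi_\Gamma$ may be taken of arbitrarily large girth. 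Your property $\mathcal Q$ (linear syllable count) therefore does not separate Rips--Segev graphs from random ones: a Rips--Segev graph sitting inside $\overline{\Delta^j}$ can perfectly well have all its cycles of syllable length comparable to their word length.

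The paper uses a sharper structural feature. Lemma~\ref{reduced word 2} shows that every generalized Rips--Segev graph contains a long reduced \emph{path} whose label has the form
\[
a^{P_0}b^{\varepsilon}a^{P_1}b^{\varepsilon}\cdots b^{\varepsilon}a^{P_g},
\]
with all $b$-exponents of the \emph{same} sign $\varepsilon$. This comes from the orientation convention in Steps~1--2 of the construction: every $b$-edge points from some $v_{iP}$ to some $v_{jQ}$, so one can thread through consecutive $b$-edges always in the positive (or always in the negative) direction. The number of reduced words of length $\bar\delta$ with this constant-$b$-sign shape is at most $2^{\bar\delta+2}$, against $3^{\bar\delta}$ reduced words in total; combined with the bound on the number of length-$\bar\delta$ paths in $\overline{\Delta^j}$, this gives the exponential decay. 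So the correct rarity witness is not ``few syllables'' but ``all $b$-syllables of one sign along some long path'', and the argument is a direct first-moment count rather than a concentration estimate on relators.
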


\begin{them}[cf. Theorem \ref{g2}] Generalized Rips-Segev presentations of torsion-free non-unique product groups are not generic in Arzhantseva-Ol'shanskii's few relators model \cite{arzhantseva_class_1996} of finitely presented random groups. \end{them}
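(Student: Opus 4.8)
The plan is to play off two facts against each other. First, a generic tuple of relators in Arzhantseva--Ol'shanskii's few relators model satisfies the classical metric small cancellation condition $C'(\lambda)$ for every fixed $\lambda>0$; in particular, in a generic presentation no two defining relators, and no relator together with itself, have a common subword of length exceeding $\lambda$ times the length of the relators. This is classical: fixing the number $m\geq 2$ of generators and the number $t\geq 1$ of relators and drawing each relator uniformly among the cyclically reduced words of length at most $n$, a first-moment estimate gives that the frequency of tuples failing $C'(\lambda)$ is at most $O\!\left(n^{2}(2m-1)^{-\lambda n}\right)$, which tends to $0$ as $n\to\infty$. Second --- and this is the point --- every generalized Rips--Segev presentation has two defining relators that share a common subword of length at least a universal constant fraction $c>0$ of the relator length. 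Granting this, the family of generalized Rips--Segev presentations with $m$ generators and $t$ relators is contained, for each $\lambda<c$, in the complement of the generic set of $C'(\lambda)$-presentations; hence it is negligible and in particular not generic, which is Theorem~\ref{g2}.

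To establish the second fact I would invoke the definition of the generalized Rips--Segev presentations together with the analysis of Section~\ref{rssmallcanc}. Such a presentation is read off a finite connected labeled graph $\Omega$ whose combinatorics is designed so that long labeled arcs --- the ones spelling the non-unique product sets $S$ and $T$ --- lie simultaneously on several distinct defining cycles of $\Omega$; this is exactly the mechanism producing the intended coincidences $st=s't'$ in the quotient, and, as observed in Section~\ref{rssmallcanc}, it is also why the relator words have long common parts while $\Omega$ still satisfies graphical small cancellation (a shared arc is a single subgraph, not a \emph{piece}). I would extract from the explicit form of $\Omega$ in that definition that the arc realizing a coincidence occupies at least a fixed proportion of the label of each cycle running through it, uniformly in the size of $\Omega$, so that two defining relators always overlap in a subword of length at least $c$ times their length, with $c$ independent of the presentation. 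After Tietze-expanding the finitely presented free factors one still has these two long-overlapping relators among the defining words, so the presentation fails $C'(\lambda)$ for all $\lambda<c$.

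The main obstacle is the uniform constant $c$: one must see from the general definition of a generalized Rips--Segev presentation --- not just from the particular graphs of Rips and Segev --- that encoding a non-unique product configuration always forces a shared arc of definite relative length, and that this persists for every admissible choice of data (for instance, for any spanning tree of $\Omega$ used to cut down to a finite relator set), so that it is not an artifact of an inefficient presentation. I expect this to fall out of the graphical framework behind Theorems~\ref{T: main1} and~\ref{gh}: the non-unique product sets have bounded cardinality, while the coincidence relations have normal-form length comparable to the lengths of the relators, so the part of a relator taken up by the shared arc cannot be made asymptotically negligible. Once $c>0$ is pinned down, Theorem~\ref{g2} follows immediately from the two facts above.
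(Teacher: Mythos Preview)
Your approach is genuinely different from the paper's, and it has a real gap.

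The paper does not argue via failure of classical $C'(\lambda)$. Instead it isolates a structural syllable pattern that every generalized Rips--Segev graph contains: by Lemma~\ref{reduced word 2}, such a graph carries a reduced path labeled
\[
a^{P_0}b^{\varepsilon}a^{P_1}b^{\varepsilon}\cdots a^{P_{g-1}}b^{\varepsilon}a^{P_g},
\]
with a single fixed sign $\varepsilon$ on every $b$. The proof of Theorem~\ref{g2} then counts that the number of words of this form of length $\leqslant t$ is at most $4t\cdot 2^{t}$, while the number of all cyclically reduced words of length $\leqslant t$ in two generators is on the order of $3^{t}$; a first-moment bound finishes. So the mechanism is the rarity of this ``all $b$'s with the same sign'' pattern, not the presence of long common subwords between relators.

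The gap in your argument is exactly where you flag it: the uniform constant $c$. You need that in \emph{every} generalized Rips--Segev presentation, for \emph{every} admissible choice of cycle basis, two defining relators overlap in a subword of length at least $c$ times the relator length, with $c$ independent of the presentation. You give no proof of this, and the heuristic you offer is incorrect: the non-unique product sets do \emph{not} have bounded cardinality. In the construction (Section~\ref{S: explicit construction}), $B=\{1,a,b,ab\}$ is fixed but $A$ has $\sum_i C_i$ elements, and the $C_i$ are enormous (e.g.\ $10^{5i}$ in Example~\ref{E:coefM=0}). The $a$-line lengths vary over many orders of magnitude, so the proportion of a cycle occupied by a shared $a$-segment is not uniformly bounded below in any obvious way, and different spanning trees produce cycle bases with very different overlap profiles. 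Without pinning down $c$, the inclusion ``Rips--Segev presentations $\subseteq$ non-$C'(\lambda)$ presentations'' is unproved, and the rest of your argument does not go through.

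Even if a version of your claim could be salvaged, it would require a detailed analysis of all cycle bases of the graphs $\Gamma$; the paper's route via the syllable pattern of Lemma~\ref{reduced word 2} bypasses this entirely and gives the exponential decay directly.
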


{\bf Acknowledgments.} The author is grateful to Prof. Goulnara Arzhantseva for suggesting the problem, revisions, many valuable remarks and discussions, to his colleagues Christopher Cashen, Dominik Gruber and Alexandre Martin for useful comments, and to the referees for their suggestions.

 The work was supported by the ERC grant ANALYTIC 259527 of \mbox{G.~Arzhantseva} and the University of Vienna.
 M. Steenbock is a recipient of the Vienna University Research Grant 2013 and of the DOC fellowship of the Austrian Academy of Sciences 2014--2015. 
 The results have been announced at the Erwin Schr\"odinger International Institute 
 for
Mathematical Physics 
within the workshop  `Golod-Shafarevich groups and algebras and rank gradient' in August
2012.


\section{Small cancellation theories over the free product}

We first review essentials of classical small cancellation theory. Then we introduce the graphical small cancellation conditions over the free product that we use in our approach to the Rips-Segev groups. 

\subsection{Classical small cancellation theory over the free product}\label{S: classical small cancellation theory}

Let $F=G_1*\cdots *G_d$ be the free product. The groups $G_i$ are called \emph{factors} of $F$. Each non-trivial element $w\in F$ can be represented as a product $w=h_1\cdots h_n$, 
where each $h_i\not=1$ is in one of the factors. If successive $h_i$ and $h_{i+1}$ are not in the same factor, then $h_1\cdots h_n$ is the \emph{normal form} of $w$. 
Each non-trivial element $w\in F$  has a unique representation in normal form. The integer $n$ in such a representation is the \emph{free product length} (or the \emph{syllable length}) of $w$,
denoted by $\lvert w \rvert_*.$ For instance, if $g_i\in G_i$, $g_i^2\not=1$, then $\lvert g_1^2\rvert_*=1$ and $\lvert g_1^2g_2g_1^{-1}\rvert_*=3.$

Let $w\in F$ be non-trivial, $w=h_1\cdots h_n$, and some successive $h_i$ and $h_{i+1}$ are in the same factor. If $h_i=h_{i+1}^{-1}$, we say $h_i$ and $h_{i+1}$ \emph{cancel}. 
If  $h_i\not=h_{i+1}^{-1}$, we say  $h_i$ and $h_{i+1}$  \emph{consolidate} to $a=h_ih_{i+1}$ and give $w=h_1\cdots h_{i-1} a h_{i+2}\cdots h_n$. 
We call $w=h_1\cdots h_n$  \emph{weakly reduced}  if there is no cancellation between successive $h_i$ and $h_{i+1}$, consolidations are allowed.  
We call $w=h_1\cdots h_n$  \emph{weakly cyclically reduced}  
  if $n\leqslant 1$  or for each cyclic permutation $\sigma$ we have $h_{\sigma(1)}\cdots h_{\sigma(n)}$ weakly reduced. 
  { Then} $h_{\sigma(1)}\cdots h_{\sigma(n)}$ is a \emph{weakly cyclically reduced conjugate} of~$w$.
Given $w,w'\in F$, we say that the product $ww'$ is \emph{weakly reduced}
  if $w=h_1\cdots h_n, w'=h'_1\cdots h'_l$ such that $h_1\cdots h_nh'_1\cdots h'_l$ is weakly reduced.

\smallskip
  
Let $R\subseteq F$ and $G:=F/\langle\langle R  \rangle\rangle$ be the quotient of $F$ by the normal subgroup generated by $R$. 
  We say $R$ is \emph{symmetrized} if 
 it contains all weakly  cyclically reduced conjugates of $r$ and $r^{-1}$ for each $r\in R$. 
We always assume $R$ is symmetrized  as both $R$ and its natural symmetrization define the same group~$G$.

\smallskip

An element $p\in F$ is a \emph{piece} if for distinct elements $r_1, r_2\in R$, we have
$r_1=pu_1, r_2=pu_2$ for some $u_1,u_2\in F$ and these products are weakly reduced. 

\smallskip

Let $0<\lambda<1$. A subset $R\subseteq F$ satisfies the \emph{$C'_{*}(\lambda)$--small cancellation condition over $F$} (or, briefly, the \emph{$C'_{*}(\lambda)$--condition}) if for every piece $p$ we have 
\[
\lvert p \rvert_*<\lambda \min \{\lvert r \rvert_*\mid r\in  R \}.
\]
{ Then} the group $G=F/\langle\langle R  \rangle\rangle$ is a \emph{$C'_{*}(\lambda)$--small cancellation group} (or just  a \emph{$C'_{*}(\lambda)$--group}).

Let $\Lambda=\max\{ \lvert p \rvert_*\mid p \text{ is a piece}\}$  be the \emph{maximal piece length} and $\gamma=\min \{\lvert r \rvert_*\mid r\in R\}$ be the \emph{minimal relator length}.
Then the $C'_{*}(\lambda)$--condition states: 
\[
\frac{\Lambda}{\gamma}<\lambda .
\]

\smallskip

 The $C'_*(\lambda)$--condition has a geometric interpretation in the language of van Kampen diagrams. 
 
 \smallskip

A \emph{diagram} $D$ is a finite  oriented planar 2-complex  with a specified  embedding $i$  of $D$ into the plane.
 An \emph{edge} is a 1-cell of $D$. The set of all edges is denoted by $E(D)$. 
  The \emph{boundary} $\partial D$ is the set of edges and vertices on the boundary of $i(D)$ in the plane. 
 A \emph{face} $\Pi$ is a (closed) 2-cell of $D$.  { The interior of every face is homeomorphic to the open disc.}
 The number of all faces is denoted by $\lvert D \rvert$.  We say that a face $\Pi$ is \emph{not simply-connected (in D)} if distinct edges or vertices in $\partial\Pi$ are identified in $D$. Otherwise, $\Pi$ is called \emph{simply-connected (in D)}. 
 
 An edge or a vertex of $D$ is called \emph{inner} if it is not in $\partial D$. { The \emph{exterior boundary} $\partial_{ext}\Pi$ of a face $\Pi$ is the intersection of $\partial \Pi$ with $\partial D$  in $D$. The \emph{inner boundary} $\partial_{int} \Pi$ is the closure of $\partial\Pi -\partial_{ext}\Pi$  in $D$.  A face with non-empty exterior boundary is called an \emph{exterior face}, otherwise it is called an \emph{inner face}.}
 
  A path $p$ of edges in $D$ is \emph{simple}   if no edge $e$ or its inverse $e^{-1}$ occurs more than once in $p$. A path is a \emph{cycle} in $D$ if its starting and terminal vertices coincide. 
 A path is an \emph{arc} in $D$ if all of its vertices, except possibly the endpoints, have degree 2 in $D$.
An \emph{inner segment} of $D$ is an arc made of inner edges. A \emph{boundary cycle} of a face $\Pi$ is a cycle of minimal length including all the edges of $\partial\Pi$.
A \emph{boundary cycle of $D$} is a cycle of minimal length including all the edges of $\partial D$, which does not cross itself. 

\smallskip

  A \emph{labeling} of ${D}$ by $F$ is a map $\omega\colon E(D)\to F$ 
  assigning to each edge $e\in E(D)$ an element $\omega(e)$ of a factor of $F$ so that
  $\omega(e^{-1})=\omega(e)^{-1}$.  The label $\omega(p)$ of a path $p=(e_1,\ldots, e_n)$ is the concatenation $\omega(e_1)\cdots\omega(e_n)$.  The labels of two paths are called \emph{equal (in F)} if they represent the same element in~$F$.

   If  $\omega\colon E(D)\to Y\subseteq F$ for some $Y\subseteq F$, we say that $\omega $ is a labeling by $Y$. Usually we consider $Y=X\sqcup X^{-1}$, where $X:=X_1\sqcup \ldots \sqcup X_d$ and each $X_i$ is a finite set generating the factor $G_i$ of $F$. 
   We denote by $\lvert \, .\, \rvert$ the \emph{word length} on $F$ with respect to $X$.

\begin{definition*}
A \emph{van Kampen diagram}  for an element $w \in F$, over a set of relators $ R \subseteq F, $ is a 
connected and simply-connected 
diagram $D$ labeled by $F$ such that: 
\begin{itemize} \item The label $\omega(e_1)\cdots\omega(e_n)$ of a  boundary cycle $e_1,\ldots,e_n$ of $D$ is weakly reduced and represents~$w$; 
 \item 
 The label $\omega(e_1')\cdots\omega(e_m')$ of a boundary cycle $e_1',\ldots,e_m'$  of each face $\Pi$ in $D$ is weakly cyclically  reduced and represents a relator~$r\in R$.\end{itemize} 
  \end{definition*}

The next result is a variant for the free products of the fundamental van Kampen Lemma.

\begin{lemma}[\cite{lyndon_combinatorial_1977}*{p. 276, Ch. V.9}] An element $w \in F$ belongs to the normal subgroup $\langle\langle R\rangle\rangle$ of $F$ generated by $R\subseteq F$ if and only if there exists a van Kampen diagram for $w$ over $R$. 
\end{lemma}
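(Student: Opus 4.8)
The plan is to prove the two implications separately, following the template of the classical van Kampen Lemma over a free group but carefully tracking syllables and consolidations in $F = G_1 * \cdots * G_d$, where each edge carries an honest element of one of the factors.

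For the ``if'' direction, suppose $D$ is a van Kampen diagram for $w$ over $R$, and argue by induction on the number of faces $\lvert D\rvert$. If $\lvert D\rvert = 0$, then $D$ is a finite connected simply-connected graph, i.e.\ a tree, so a boundary cycle traverses each edge once in each direction and its label reduces to $1$ in $F$; since this label is weakly reduced and represents $w$, it must be empty and $w = 1 \in \langle\langle R\rangle\rangle$. For the inductive step with $\lvert D\rvert \geqslant 1$: because the planar embedding $i(D)$ has an unbounded complementary region touching $\partial D$, one can choose an exterior face $\Pi$ whose exterior boundary $\partial_{ext}\Pi$ is a nonempty consecutive arc of a boundary cycle of $D$. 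Cutting $\Pi$ out (delete its interior and the edges of $\partial_{ext}\Pi$ not shared with another face) yields a connected simply-connected diagram $D'$ with $\lvert D'\rvert = \lvert D\rvert - 1$, whose boundary element equals $g^{-1} r^{\mp 1} g \cdot w$ in $F$, where $r \in R$ is the relator read on $\Pi$ and $g \in F$ is realized by a path in $D'$ from the basepoint to $\Pi$. By the induction hypothesis the boundary element of $D'$ lies in $\langle\langle R\rangle\rangle$, hence so does $w$. The only subtlety is when $\Pi$ is not simply-connected in $D$ (some of its boundary edges or vertices are identified in $D$); then one first separates these identifications, passing to a subdiagram if necessary, before removing $\Pi$. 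No free cancellation ever has to be performed, since each edge label is a genuine element of a factor of $F$; the weak-reducedness hypotheses serve only to guarantee that the boundary labels really represent $w$ and the relators.

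For the ``only if'' direction, suppose $w \in \langle\langle R\rangle\rangle$, so in $F$ we may write $w = \prod_{j=1}^{k} g_j r_j^{\epsilon_j} g_j^{-1}$ with $r_j \in R$, $g_j \in F$, $\epsilon_j = \pm 1$. First build a ``lollipop'' diagram $D_0$: from a single basepoint attach $k$ sticks whose labels spell the normal forms of $g_1,\dots,g_k$, each ending on the boundary cycle of a candy face whose label spells a weakly cyclically reduced conjugate of $r_j^{\epsilon_j}$, which again lies in $R$ by symmetrization. Then $D_0$ is connected and simply-connected with $k$ faces, and a boundary cycle is labeled by the displayed product, hence represents $w$; but that label need not be weakly reduced. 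Next apply a finite sequence of elementary moves that preserve connectedness and simple-connectedness and change no element of $F$ represented by a boundary or face cycle: (i) \emph{cancellation} of two consecutive edges with mutually inverse labels in the same factor, followed by deletion of the resulting spur; (ii) \emph{consolidation} of two consecutive edges with non-inverse labels $a,b$ in the same factor into a single edge labeled $ab$; (iii) folding of arcs leaving a common vertex with a common initial label. Each move strictly decreases the number of edges (or a lexicographic complexity), so the process terminates at a diagram $D$ admitting no such move, i.e.\ its boundary label is weakly reduced, the boundary label of each face is weakly cyclically reduced, and by symmetrization every face label lies in $R$; thus $D$ is a van Kampen diagram for $w$ over $R$.

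I expect the main obstacle to be the second direction, namely making the reduction process rigorous over the free product: one must check that consolidating or cancelling edges around a face keeps that face's label a weakly cyclically reduced conjugate of a relator (so symmetrization still applies), that moves on shared edges do not damage neighbouring faces, that folding never produces a face with a non-weakly-cyclically-reduced boundary, and that the whole procedure terminates. A secondary nuisance in the first direction is handling faces that are not simply-connected in $D$ together with the requirement that a boundary cycle of $D$ not cross itself; both are resolved by routine surgery but must be spelled out. Since the statement is standard, being the free-product analogue treated in \cite{lyndon_combinatorial_1977}*{Ch. V.9}, I would present the argument compactly, emphasising only the points where syllables and consolidations differ from the free-group case.
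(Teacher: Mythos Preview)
The paper does not give its own proof of this lemma: it is stated with a citation to \cite{lyndon_combinatorial_1977}*{p.~276, Ch.~V.9} and no argument follows. So there is nothing in the paper to compare your proposal against; your sketch is a reasonable reconstruction of the classical argument from the cited reference, adapted to the free-product setting with the syllable/consolidation bookkeeping the paper's definitions require.

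One small point in your base case: you write that for a tree the boundary label ``reduces to $1$'' and then, being weakly reduced, ``must be empty''. The cleaner way to say this under the paper's conventions is that a tree with at least one edge has a leaf, and at that leaf the boundary cycle reads $\omega(e)\omega(e)^{-1}$, which is a cancellation and hence not weakly reduced; so the only tree diagram with weakly reduced boundary is a single vertex and $w=1$. This is what you intend, but phrasing it via the leaf makes the role of weak reducedness explicit.
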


Let $0<\lambda<1$. {An arc} $s$ in $D$ is said to satisfy the \emph{$C'_*(\lambda)$--small cancellation condition} over $F$ if    
\[
\lvert \omega(s)\rvert_*<\lambda\min\{ \lvert r\rvert_*\mid r= \omega(\partial\Pi), \Pi \text{ is a face in } D\}.
 \]
If all inner segments of $D$ satisfy the $C'_*(\lambda)$--small cancellation condition over F, then $D$ is said to satisfy the \emph{$C'_*(\lambda)$--small cancellation condition over F}
(or briefly, the \emph{$C'_*(\lambda)$--condition}).
  
\smallskip
 
  An inner segment $s$ in $D$ \emph{originates}\label{D: originate in R} in $R$ if $s$ is in the common boundary of two faces $\Pi_1$ and $\Pi_2$ with boundary cycles $sp_1$ and $sp_2$ such that $\omega(p_1)=\omega(p_2)$. Van Kampen diagrams with no  originating inner segments are called \emph{reduced}, see e.g. \cite{lyndon_combinatorial_1977}*{p.241, Ch.V.9}.
   A face $\Pi$ \emph{self-intersects} along $s$, if a boundary cycle of $\Pi$ is $q_1sq_2s^{-1}$, where $q_1$ and $q_2$ are distinct non-trivial cycles in $D$. In this case, $\Pi$ is \emph{not} simply-connected. 
  
  The labels of inner segments that do not originate  in $R$ represent pieces, cf. \cite{lyndon_combinatorial_1977}*{p.277, L.9.2, Ch.V.9}. The label of originating inner segments is not controlled by the small cancellation condition on $R$. Therefore, a priori, a van Kampen diagram $D$ over $R$ does not satisfy the $C'_*(\lambda)$--condition. Moreover, $D$ can have not simply-connected faces. 
  If $D$ is \emph{minimal}, that is, $|D|$ is minimal among all van Kampen diagrams for $w$, then the following variant of the Greendlinger lemma, cf. \cite{lyndon_combinatorial_1977}*{p. 278, Th. 9.3, Ch. V.9}, ensures 
  both the $C'_*(\lambda)$--condition and the simply-connectedness of all faces in $D$.

\begin{theorem}[Classical small cancellation lemma]\label{T: classical small cancellation lemma}  Let $0<\lambda\leqslant 1/6.$  Let $D$ be a  labeled simply-connected diagram with simply-connected faces that satisfies the $C'_*(\lambda)$--small cancellation condition over the free product. 
\begin{itemize}
  \item If $D$ has more than two faces then there are at least two exterior faces $\Pi$ such that 
   $$\lvert\omega(\partial_{ext}\Pi)\rvert_*>\left( 1-{3}{\lambda}\right){\lvert\omega(\partial\Pi)\rvert_*},$$ 
  $\partial_{int}\Pi$ consists of at most three inner segments, and $\partial_{ext}\Pi$ is connected. 
  \item The following inequality is satisfied $$\lvert\omega(\partial D)\rvert_*> (1-6\lambda) \sum_{\Pi_i\text{ is a face in }D} \lvert\omega(\partial \Pi_i)\rvert_*.$$ 
  \item The label of $\partial D$ is at least as long as (with respect to the length function $\lvert\, .\,\rvert_*$) the label of $\partial\Pi$, for each face $\Pi$ in $D.$
 \end{itemize}
\end{theorem}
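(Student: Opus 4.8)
The plan is to follow the proof of the classical Greendlinger lemma (see \cite{lyndon_combinatorial_1977}*{Ch.~V.9}, or, equivalently, Strebel's classification of the exterior faces of a $C'(1/6)$-diagram), carrying the syllable length $\lvert\,.\,\rvert_*$ in place of the word length throughout and keeping track of the two features absent in the free-group case: the consolidation of syllables at degree-two vertices, and the fact that $\lvert\,.\,\rvert_*$ is only subadditive (not additive) under concatenation. Write $\gamma=\min\{\lvert\omega(\partial\Pi)\rvert_*:\Pi\text{ a face of }D\}$, so that by hypothesis every inner segment $s$ satisfies $\lvert\omega(s)\rvert_*<\lambda\gamma\le\gamma/6$ while every face satisfies $\lvert\omega(\partial\Pi)\rvert_*\ge\gamma$. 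First I would pass to the \emph{coarsened} map $\bar D$, whose edges are the maximal arcs of $D$ and whose interior vertices — after the usual pruning of spurs, which does not affect the relevant boundary labels — all have degree at least three. If an interior face of $\bar D$ had at most six arcs, they would all be inner segments and subadditivity would give $\lvert\omega(\partial\Pi)\rvert_*<6\lambda\gamma\le\gamma$, a contradiction; hence every interior face has at least seven arcs. Thus $\bar D$ is a finite, simply-connected $(3,7)$-map (its faces being discs by hypothesis), and the purely combinatorial core of small cancellation theory — which never sees the labels — applies verbatim: if $D$ has more than two faces, then $\bar D$ has at least two exterior faces $\Pi$ for which $\partial_{ext}\Pi$ is a single arc and $\partial_{int}\Pi$ is a concatenation of at most three inner segments.

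For the first bullet I would then translate this back into lengths. For a face $\Pi$ as above, subadditivity gives $\lvert\omega(\partial_{int}\Pi)\rvert_*\le\sum_{i}\lvert\omega(s_i)\rvert_*<3\lambda\gamma\le 3\lambda\lvert\omega(\partial\Pi)\rvert_*$, and, since $\partial\Pi$ is the concatenation of $\partial_{ext}\Pi$ with $\partial_{int}\Pi$, subadditivity again yields $\lvert\omega(\partial_{ext}\Pi)\rvert_*\ge\lvert\omega(\partial\Pi)\rvert_*-\lvert\omega(\partial_{int}\Pi)\rvert_*>(1-3\lambda)\lvert\omega(\partial\Pi)\rvert_*$, with $\partial_{ext}\Pi$ connected and $\partial_{int}\Pi$ made of at most three inner segments by construction.

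The second bullet I would obtain by summing the local estimates by discrete Gauss--Bonnet on the disc $\bar D$: for the standard angle assignment the interior vertices and interior faces contribute non-positively, so the total curvature $2\pi$ is concentrated on the exterior faces and boundary vertices; converting this curvature budget into $\lvert\,.\,\rvert_*$ — using $\lvert\omega(s)\rvert_*<\lambda\gamma$ on inner segments, each counted twice, and $\lvert\omega(\partial\Pi_i)\rvert_*\ge\gamma$ — yields $\lvert\omega(\partial D)\rvert_*>(1-6\lambda)\sum_i\lvert\omega(\partial\Pi_i)\rvert_*$. The third bullet then follows from the first by induction on $\lvert D\rvert$: for $\lvert D\rvert=1$ equality holds; for $\lvert D\rvert=2$ the two faces share a single inner segment (and $\gamma\ge 7$, since any inner segment has length $\ge 1$ and $<\gamma/6$), so the claim is a direct computation; and for $\lvert D\rvert>2$ one removes an exterior face $\Pi_0$ supplied by the first bullet with $\Pi_0\ne\Pi$ (there are at least two candidates), observes that the resulting diagram $D'$ still satisfies the hypotheses with minimal relator length at least $\gamma$, and notes that passing from $\partial D'$ to $\partial D$ replaces the short arc $\partial_{int}\Pi_0$ by the longer arc $\partial_{ext}\Pi_0$, so $\lvert\omega(\partial D)\rvert_*\ge\lvert\omega(\partial D')\rvert_*\ge\lvert\omega(\partial\Pi)\rvert_*$.

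The combinatorial statement used in the first paragraph is exactly the free-group case and is quoted rather than reproved. The genuinely new work — and the main obstacle — is the syllable-length bookkeeping: every time a boundary cycle is cut into arcs one may lose one syllable per junction to a consolidation, so all of the inequalities above carry integer error terms, and one must check that these stay inside the $\lambda$-budget. This is tightest, and needs the most care, in the inductive step of the third bullet and in the counting underlying the second bullet, because $\lambda$ is allowed to be as large as $1/6$, where the margins $(1-3\lambda)$ and $(1-6\lambda)$ are smallest; everything else is a transcription of the classical $C'(1/6)$ argument.
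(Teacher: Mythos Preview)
The paper does not supply its own proof of this theorem: it is stated as the classical Greendlinger-type lemma over free products and is attributed to \cite{lyndon_combinatorial_1977}*{Ch.~V.9}; the paper then invokes it as a black box in the proof of Corollary~\ref{C: Ollivier} and later in Lemma~\ref{L: graphical small cancellation lemma}. So there is no ``paper's own proof'' to compare against beyond the implicit pointer to Lyndon--Schupp.

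Your sketch is the standard route --- coarsen to a $(3,7)$-map, apply the purely combinatorial curvature argument, then translate back to syllable lengths --- and is presumably what the reference does. Two remarks are worth flagging. First, your derivation of the second bullet is the weakest link: the inequality you want runs the wrong way against subadditivity. You need a \emph{lower} bound on $\lvert\omega(\partial D)\rvert_*$ in terms of the boundary arcs, but subadditivity only gives an upper bound; the way out is that the label of $\partial D$ is assumed weakly reduced, so the syllable count of the boundary is controlled by the arc labels minus at most one consolidation per junction, and these junction losses must be absorbed into the curvature surplus coming from the strict inequality $\lvert\omega(s)\rvert_*<\lambda\gamma$ on inner segments --- you acknowledge this but do not carry it out, and at $\lambda=1/6$ there is no slack in $(1-6\lambda)$, so this step needs genuine care (or a direct appeal to the reference). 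Second, your inductive proof of the third bullet is cleaner than what one usually sees and is correct as stated; the key observation that one of the two Greendlinger faces can be chosen distinct from the target face $\Pi$ is exactly what makes the induction go through.
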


\begin{cor}[\cite{lyndon_combinatorial_1977}*{p. 277, proof of L. 9.2, Ch. V}]\label{C: Ollivier} Let $0<\lambda\leqslant 1/6$. Let $R\subseteq F$ satisfy the  { $C'_*(\lambda)$--condition} over $F$. Then every minimal van Kampen diagram satisfies the { $C'_*(\lambda)$--condition} over $F$ and all its faces are simply-connected. 
\end{cor}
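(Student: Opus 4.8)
The plan is an induction on the number of faces $|D|$ of a minimal van Kampen diagram $D$ for a given $w\in\langle\langle R\rangle\rangle$; the cases $|D|\leqslant1$ are trivial, and I assume the statement for all diagrams with fewer faces. It suffices to prove that $D$ is \emph{reduced} and that every face of $D$ is simply-connected, because the conclusion then follows at once: in a reduced diagram with simply-connected faces every inner edge bounds two \emph{distinct} faces (a single face on both sides would traverse the edge twice in its boundary cycle --- impossible consecutively, as that label is weakly cyclically reduced, and impossible non-consecutively, as the face would then self-intersect), so every inner segment $s$ is shared by two distinct faces and does not originate in $R$; by \cite{lyndon_combinatorial_1977}*{p.~277, L.~9.2, Ch.~V.9} its label is a piece, whence $\lvert\omega(s)\rvert_*<\lambda\gamma\leqslant\lambda\min\{\lvert r\rvert_*\mid r=\omega(\partial\Pi),\ \Pi\text{ a face of }D\}$, which is exactly the $C'_*(\lambda)$--condition for $D$.

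\emph{Reducedness.} If an inner segment $s$ originated in $R$, lying in the common boundary of distinct faces $\Pi_1$, $\Pi_2$ with boundary cycles $sp_1$, $sp_2$ and $\omega(p_1)=\omega(p_2)$, then deleting $\Pi_1$, $\Pi_2$ and $s$ and folding $p_1$ onto $p_2$ (their labels agree in $F$) yields a connected, simply-connected, planar diagram for $w$ with $|D|-2$ faces, contradicting minimality. This is the free-product form of the classical diamond move.

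\emph{Simply-connectedness of faces.} Suppose some face $\Pi$ is not simply-connected, i.e.\ its boundary cycle, as a closed edge path, is not embedded. Then one may choose a simple subloop $\alpha$ of $\partial\Pi$ such that $\Pi$ meets $\alpha$ on only one side: for instance, if $\Pi$ self-intersects along an arc, take $\alpha$ to be a simple subloop of one of the two ``lobes'', which is edge-disjoint from the self-intersection arc; if $\partial\Pi$ merely touches itself at a vertex, take $\alpha$ to be the resulting simple loop. The component $D_\alpha$ of $D$ cut along $\alpha$ that does not contain $\Pi$ is then a van Kampen diagram whose boundary cycle is a cyclic conjugate of $\omega(\alpha)^{\pm1}$ and each of whose boundary edges is read on the common boundary of $\Pi$ and a face of $D_\alpha$. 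As $\omega(\partial\Pi)$ is weakly cyclically reduced, $\omega(\alpha)\neq1$ in $F$, so $D_\alpha$ has at least one face; being a proper sub-diagram of the minimal $D$, it is itself minimal and has fewer faces, so by the inductive hypothesis it satisfies the $C'_*(\lambda)$--condition with all faces simply-connected. Now Theorem~\ref{T: classical small cancellation lemma}, applied to $D_\alpha$, furnishes a face $\Pi'$ of $D_\alpha$ and a \emph{connected} subpath $\beta\subseteq\partial D_\alpha\cap\partial\Pi'$ with $\lvert\omega(\beta)\rvert_*>c\,\lvert\omega(\partial\Pi')\rvert_*\geqslant c\,\gamma$, where $c=1-3\lambda$ when $|D_\alpha|>2$ (the first bullet), and when $|D_\alpha|\leqslant2$ one takes $\beta$ to be the whole exterior boundary of a face of $D_\alpha$ and finds $c=1$ or $c=1-\lambda$ from the $C'_*(\lambda)$--condition of $D_\alpha$; in all cases $c>\lambda$ since $\lambda\leqslant1/6$. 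Since every edge of $\beta$ lies on $\alpha\subseteq\partial\Pi$, the path $\beta$ is a common subpath of the boundary cycles of $\Pi'$ and of $\Pi$, traversed with opposite orientations; hence a cyclic conjugate of $\omega(\partial\Pi')$ and one of $\omega(\partial\Pi)^{-1}$ both begin, in weakly reduced form, with $\omega(\beta)$. These two relators are distinct --- equality would exhibit an edge of $\beta$ as originating in $R$, against reducedness --- so $\omega(\beta)$ is a piece and $\lvert\omega(\beta)\rvert_*<\lambda\gamma$, contradicting $\lvert\omega(\beta)\rvert_*>c\,\gamma\geqslant\lambda\gamma$. This completes the induction.

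The step I expect to be the main obstacle is the topological bookkeeping inside the simply-connectedness argument: choosing the subloop $\alpha$ so that $\Pi$ genuinely lies on one side of it (hence $D_\alpha$ is a \emph{proper}, non-empty sub-diagram all of whose boundary is shared with $\Pi$), and running the small cases $|D_\alpha|\leqslant2$ by hand (where one also uses that a simply-connected diagram with exactly two faces has its faces meeting along a single inner segment). The diamond move and the final passage to the $C'_*(\lambda)$--condition are the anticipated free-product analogues of the classical arguments of \cite{lyndon_combinatorial_1977}*{Ch.~V}; indeed, as the statement indicates, the corollary essentially repackages the proof of \cite{lyndon_combinatorial_1977}*{p.~277, L.~9.2, Ch.~V}.
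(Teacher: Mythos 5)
Your overall strategy is the same as the paper's: reducedness is extracted from minimality by the free-product diamond move, simple-connectedness of faces is obtained by applying Theorem~\ref{T: classical small cancellation lemma} to the subdiagram enclosed by a bad face and contradicting the shortness of pieces, and the $C'_*(\lambda)$--condition then follows because non-originating inner segments between distinct faces are pieces; the paper merely organizes the second step via an \emph{innermost} self-intersecting face instead of your induction on $\lvert D\rvert$. Two of your steps, however, need repair.

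First, the inference ``as $\omega(\partial\Pi)$ is weakly cyclically reduced, $\omega(\alpha)\neq1$ in $F$'' is not valid over a free product: weak reducedness forbids only cancellations between consecutive letters, not consolidations that collapse a block to the identity, so a weakly reduced word such as $b\,a\,a\,a^{-2}\,b^{-1}$ can represent $1$. The conclusion you actually need ($D_\alpha$ contains at least one face) is true, but for a topological reason: in a connected, simply-connected planar diagram every bounded complementary region of the $1$-skeleton is a face, and since the interior of $\Pi$ lies outside $\alpha$, the disc bounded by $\alpha$ contains a face distinct from $\Pi$.

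Second, the assertion that $D_\alpha$, ``being a proper sub-diagram of the minimal $D$, is itself minimal'' is precisely where your induction needs an argument, and the obvious regluing proof fails verbatim in the free-product setting: a diagram for the element $\omega(\alpha)$ with fewer faces may spell its boundary by a \emph{different} weakly reduced word, so it cannot be glued back along $\alpha$ edge by edge, and the mismatch cannot be absorbed by a face-free collar. One has to pass through the correspondence between minimal diagrams and minimal $R$-sequences (which is what the paper implicitly uses when it says minimality of $D$ means $n$ is the least number of conjugates of relators with product $w$): removing $D_\alpha$ writes $w$ as a product of conjugates of the outside relators together with one conjugate of $\omega(\alpha)^{\pm1}$, and substituting a shorter $R$-sequence for $\omega(\alpha)$ yields a shorter $R$-sequence, hence a smaller diagram, for $w$. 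Alternatively you can avoid minimality of $D_\alpha$ altogether, as the paper does: choose $\Pi$ innermost, so that all faces of $D_\alpha$ are simply-connected, and then every inner segment of $D_\alpha$ lies between two distinct faces of the already-reduced $D$ and is therefore a piece, which is all that Theorem~\ref{T: classical small cancellation lemma} requires. With either fix your argument goes through; the remaining topological bookkeeping you flag (the choice of $\alpha$) is at the same level of detail as the paper's own treatment.
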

{ In Section~\ref{S: van Kampen diagrams}, we extend our proof below to the graphical small cancellation setting.}

\begin{proof} We apply Theorem \ref{T: classical small cancellation lemma}. Let $D$ be a minimal van Kampen diagram for an element $w\in F$ over $R$.  
The faces of $D$ correspond to relators $r_1,\ldots, r_n\in R$. The minimality condition on $D$ ensures that $n$ is the minimal number
such that $w$ can be expressed as a product of conjugates of relators.

Let $s$ be an inner segment in $D$ that originates in $R$. We delete $s$ and obtain a new diagram over $R$ with a face whose labels of the boundary cycles represent the identity in $F$.  Thus, 
$w$ can be expressed as a product of conjugates of $n-2$ relators. This contradicts to the minimality of $n$.

Suppose $D$ contains a face $\Pi$ which self-intersects along an inner segment $s$ in $D$: a boundary cycle of $\Pi$ is $q_1sq_2s^{-1}$, where $q_1$ and $q_2$ are distinct non-trivial cycles in $D$. 
Take such an \emph{innermost} $\Pi$: the subdiagrams $K_1$ and $K_2$ of $D$ that fill the cycles $q_1$ and $q_2$ are simply-connected diagrams and all their faces, besides $\Pi$, are simply-connected. Suppose that $K_1$ contains $K_2$.   
The labels on all inner segments  $s'$ in the common boundary of  $\Pi$ and $K_2$ represent pieces in $R$. Theorem \ref{T: classical small cancellation lemma} applies to $K_2$ and yields an inner segment $s''$ in the common boundary of $\Pi$ and $K_2$ whose label is larger than $\frac{1}{2}{\lvert\omega(\partial\Pi)\rvert_*}$. This is a contradiction. 
\end{proof}

A \emph{linear isoperimetric inequality} with respect to the free product length states that there is a constant~$C>0$ such that  all minimal van Kampen diagrams $D$ over $R$ satisfy $\lvert \omega(\partial D) \rvert_* \geqslant C \lvert D \rvert$. 
  A \emph{linear isoperimetric inequality} with respect to the word length metric states that  there is a constant $C>0$ such that  all minimal van Kampen diagrams $D$ satisfy $\lvert\omega(\partial D)\rvert  \geqslant  C \lvert D\rvert $. The Gromov hyperbolicity of $G$ is known to be equivalent to the linear isoperimetric inequality with respect to the word length metric. 
  
  The second inequality of Theorem \ref{T: classical small cancellation lemma} yields a linear isoperimetric inequality  with respect to the free product length. As $\lvert\, .\,\rvert_* \leqslant \lvert\, .\,\rvert$, we obtain a linear isoperimetric inequality with respect to the word length metric. If each factor $G_i$ of $F$ is  Gromov hyperbolic, then one obtains the first of the following well-known small cancellation theorems.

\begin{theorem}\label{C: hyperbolic}\emph{\cite{pankratev_hyperbolic_1999}} Let $G$ be a $C'_*(1/6)$--small cancellation group with relators $R$ over $F$. Then 
 $G$ satisfies a linear isoperimetric inequality with respect to $\lvert\, .\, \rvert_*.$ If $F$ is the free product of finitely many Gromov hyperbolic groups and $R$ is finite, then $G$ is Gromov hyperbolic.
\end{theorem}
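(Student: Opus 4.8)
I would treat the two assertions separately, reducing the hyperbolicity statement to the first assertion together with the (linear) Dehn functions of the factors. For the first assertion I feed minimal van Kampen diagrams over $R$ into Theorem~\ref{T: classical small cancellation lemma}. If $w\in\langle\langle R\rangle\rangle$, the Van Kampen Lemma provides a van Kampen diagram for $w$ over $R$; take one, $D$, with $\lvert D\rvert$ minimal. By Corollary~\ref{C: Ollivier} this $D$ satisfies the $C'_*(\lambda)$--condition over $F$ with $\lambda\leqslant 1/6$ and all of its faces are simply-connected, so Theorem~\ref{T: classical small cancellation lemma} applies to $D$. When $\lambda<1/6$ its second inequality already gives $\lvert\omega(\partial D)\rvert_*>(1-6\lambda)\sum_{\Pi_i}\lvert\omega(\partial\Pi_i)\rvert_*\geqslant(1-6\lambda)\,\gamma\,\lvert D\rvert$, where $\gamma$ is the minimal relator length; at the endpoint $\lambda=1/6$ one runs Dehn's algorithm instead, iteratively removing an exterior face supplied by the first bullet (whose exterior boundary carries more than half of its boundary label and whose inner boundary is at most three pieces), and obtains $\lvert\omega(\partial D)\rvert_*\geqslant c\,\lvert D\rvert$ for some $c=c(\lambda,\gamma)>0$. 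Either way $G$ satisfies a linear isoperimetric inequality with respect to $\lvert\, .\,\rvert_*$.

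For the hyperbolicity statement, assume in addition that every $G_i$ is Gromov hyperbolic and that $R$ is finite. Then each $G_i$ is finitely presented with a linear Dehn function, and the free product $F$ of finitely many hyperbolic groups is itself hyperbolic; fix a finite presentation $F=\langle X\mid\mathcal S\rangle$ with $X=X_1\sqcup\cdots\sqcup X_n$ and linear Dehn function. For each factor element $g$ occurring as a syllable of some weakly cyclically reduced relator of $R$ — a finite list, as $R$ is finite — fix a shortest word $u_g$ over the relevant $X_i$ representing $g$, and set $R'=\{u_{g_1}\cdots u_{g_k}\mid g_1\cdots g_k\in R\}$, a finite set of words over $X$. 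Since each word of $R'$ represents in $F$ the corresponding relator of $R$, the normal closures of $R$ and $R'$ in $F$ coincide, so $\langle X\mid\mathcal S\cup R'\rangle$ is a finite presentation of $G$, whose isoperimetric function I aim to bound linearly.

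Given a word $w$ over $X\sqcup X^{-1}$ trivial in $G$, let $\bar w$ be the $F$--normal form of the element it represents; grouping $w$ into syllables and reducing inside the factors shows $\lvert\bar w\rvert_*\leqslant\lvert w\rvert$ and $\lvert\bar w\rvert\leqslant\lvert w\rvert$. As $\bar w\in\langle\langle R\rangle\rangle$, take a minimal van Kampen diagram $D$ over $R$ whose boundary reads $\bar w$; the first part gives $\lvert D\rvert\leqslant\lvert\bar w\rvert_*/c\leqslant\lvert w\rvert/c$. Now subdivide each factor-labeled edge of $D$ into a path spelling the corresponding $u_g$ (choosing shortest words for the syllables of $\bar w$ as well): every face of $D$ becomes an $R'$--face, the tree parts stay trees, and the boundary now reads a word of length $\lvert\bar w\rvert\leqslant\lvert w\rvert$ representing $w$ in $F$. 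Glue on a van Kampen diagram over $\langle X\mid\mathcal S\rangle$ identifying this word with $w$: its area is at most the linear Dehn function of $F$ evaluated at $\leqslant 2\lvert w\rvert$, so the resulting van Kampen diagram for $w$ over $\langle X\mid\mathcal S\cup R'\rangle$ has area bounded by a linear function of $\lvert w\rvert$. Hence $G$ satisfies a linear isoperimetric inequality with respect to the word length metric and is therefore Gromov hyperbolic by the equivalence recalled above.

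The main obstacle is this last conversion: one must check that subdividing $D$ really gives a valid van Kampen diagram over $\langle X\mid\mathcal S\cup R'\rangle$ — which is why the words $u_g$ on relator syllables are fixed once and for all, compatibly with the subdivision of edges shared by several faces — and that the perimeter of the region complementary to the $R'$--faces, filled over $\mathcal S$, is linearly controlled by $\lvert w\rvert$; this is precisely the bookkeeping of Pankrat'ev \cite{pankratev_hyperbolic_1999}. One may sidestep it at the cost of extra machinery: the linear isoperimetric inequality with respect to $\lvert\, .\,\rvert_*$, together with the injectivity of each $G_i$ in $G$ provided by the third bullet of Theorem~\ref{T: classical small cancellation lemma}, says precisely that $G$ is hyperbolic relative to $\{G_1,\dots,G_n\}$, and a group hyperbolic relative to finitely many hyperbolic subgroups is hyperbolic.
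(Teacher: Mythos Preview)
Your proposal is correct and follows essentially the same route as the paper: the paragraph preceding Theorem~\ref{C: hyperbolic} derives the linear isoperimetric inequality in $\lvert\,.\,\rvert_*$ from the second bullet of Theorem~\ref{T: classical small cancellation lemma} via Corollary~\ref{C: Ollivier}, then invokes $\lvert\,.\,\rvert_*\leqslant\lvert\,.\,\rvert$ and defers the passage to a genuine word-metric isoperimetric inequality over a finite presentation to Pankrat'ev---exactly the conversion you spell out. Your observation about the endpoint $\lambda=1/6$ is well taken (and is harmless here since $\lvert\,.\,\rvert_*$ is integer-valued, so $C'_*(1/6)$ implies $C'_*(\lambda')$ for some $\lambda'<1/6$), and your relative-hyperbolicity alternative is a clean way to bypass the subdivision bookkeeping you correctly flag.
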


The following result is known as the \emph{torsion theorem} for classical small cancellation groups.

\begin{theorem}\label{C: t.f.}\emph{\cite{lyndon_combinatorial_1977}*{p. 281, Th. 10.1, Ch. V.}} Let $G$ be a $C'_*(1/8)$--small cancellation group with relators $R$  over $F$. If $F$ is torsion-free and no relator in $R$ is a  proper power, then $G$ is torsion-free.  
\end{theorem}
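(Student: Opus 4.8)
The plan is to transport the classical torsion argument of small cancellation theory to the free product, using Theorem~\ref{T: classical small cancellation lemma} and Corollary~\ref{C: Ollivier} as the engine. Suppose, for contradiction, that $G$ has a non-trivial element of finite order; then it has one of some order $n\geqslant 2$. The set of $w\in F$ whose image in $G$ is conjugate to this element is non-empty and closed under conjugation in $F$, so we may choose such a $w$ with $\ell:=\lvert w\rvert_*$ minimal. Minimality forces $w$ to be weakly cyclically reduced, and, conjugating by the first syllable, it also forces the first and last syllables of the normal form $w=h_1\cdots h_\ell$ to lie in distinct factors; hence $w^n$ is in normal form with $\lvert w^n\rvert_*=n\ell$, and every block of $\ell$ consecutive syllables of the periodic word $w^n$ represents a cyclic conjugate of $w$ of free product length $\ell$. (We may assume $\ell\geqslant 2$, the degenerate case $\ell=1$ being handled directly by the torsion-freeness of the factors.) Since $w^n$ represents $1$ in $G$, we have $w^n\in\langle\langle R\rangle\rangle$, so by the van Kampen lemma there is a van Kampen diagram for $w^n$ over $R$; fix one, $M$, with $\lvert M\rvert$ minimal. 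By Corollary~\ref{C: Ollivier}, $M$ is reduced, satisfies the $C'_*(1/8)$--condition over $F$, and all its faces are simply-connected.

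If $\lvert M\rvert=0$, then $w^n=1$ in $F$; as $F$ is torsion-free and $w\neq 1$, this is impossible. If $\lvert M\rvert=1$, then $M$ has no filament, since a filament would create a cancellation of two successive syllables in the weakly reduced boundary label of $M$; hence $M$ is a disk with $\partial M=\partial\Pi$ for its unique face $\Pi$, so $w^n$ equals a weakly cyclically reduced conjugate of a relator and therefore, after symmetrization, $w^n\in R$. But $w^n$ is a proper power ($n\geqslant 2$, $w\neq 1$), so this relator is a proper power, contradicting the hypothesis.

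It remains to rule out $\lvert M\rvert\geqslant 2$, which is the heart of the matter. When $\lvert M\rvert=2$, the two faces have connected common boundary consisting (by simple connectedness and reducedness) of a single piece, and each of their exterior boundaries makes up more than $\tfrac78$ of its relator; when $\lvert M\rvert\geqslant 3$, Theorem~\ref{T: classical small cancellation lemma} supplies an exterior face $\Pi$ whose exterior boundary is a connected arc with $\lvert\omega(\partial_{ext}\Pi)\rvert_*>\tfrac58\lvert\omega(\partial\Pi)\rvert_*$ and whose inner boundary consists of at most three inner segments, hence of at most three pieces. In either case fix such a face $\Pi$, put $r:=\omega(\partial\Pi)\in R$, $b:=\omega(\partial_{ext}\Pi)$, $c:=\omega(\partial_{int}\Pi)$, so that $b$ is a sub-word of the periodic word $w^n$, $\lvert b\rvert_*>\tfrac58\lvert r\rvert_*$, $r=_F bc$, and $\lvert c\rvert_*<\tfrac38\gamma\leqslant\tfrac38\lvert r\rvert_*$ (hence $\lvert r\rvert_*\leqslant\lvert b\rvert_*+\lvert c\rvert_*<\tfrac85\lvert b\rvert_*$). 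Since $b$ is a sub-word of the normal form $w^n$, it contains a sub-word $(w^*)^k$ with $w^*$ a cyclic conjugate of $w$ and $k=\lfloor\lvert b\rvert_*/\ell\rfloor$. If $k\geqslant 2$, compare $r$ with the cyclic conjugate $r^*$ of $r$ obtained by shifting $(w^*)^k$ by one period: either $r=r^*$, in which case the cyclic word $r$ is invariant under a non-trivial rotation (as $0<\ell<2\ell\leqslant\lvert r\rvert_*$) and is therefore a proper power, contradicting the hypothesis; or $(w^*)^{k-1}$ is a common weakly reduced prefix of the two distinct relators $r,r^*$, hence a piece, so that $(k-1)\ell<\tfrac18\gamma\leqslant\tfrac18\lvert r\rvert_*<\tfrac15(k+1)\ell$, which forces $5(k-1)<k+1$, i.e. $k\leqslant 1$ — a contradiction. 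Thus $k\leqslant 1$, so $\lvert b\rvert_*<2\ell$ and $\gamma\leqslant\lvert r\rvert_*<\tfrac{16}{5}\ell$. A final bookkeeping step, using the second exterior face furnished by Theorem~\ref{T: classical small cancellation lemma} to account for the whole boundary together with the relations $r_i=_F b_ic_i$ and the $n\geqslant 2$ periodicity of $\partial M$, then either rewrites a cyclic conjugate of $w$ as an element of $F$ of free product length $<\ell$ representing a conjugate of the same torsion element of $G$ — contradicting the minimal choice of $w$ — or once again exposes a relator that is a proper power. In every case we reach a contradiction, so $\lvert M\rvert\leqslant 1$, and $G$ is torsion-free.

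The main obstacle is exactly this last step: excluding $\lvert M\rvert\geqslant 2$ requires careful bookkeeping of free product consolidations — handled above by observing that the minimal choice of $w$ keeps $w^n$ in normal form — together with a balancing of the length estimates among the periodic boundary word, the pieces (bounded by $\tfrac18\gamma$) and the minimal length $\ell$. The condition $C'_*(1/8)$, with its extra room beyond $C'_*(1/6)$, is what keeps these estimates comfortable, and the whole argument is the free product analogue of the classical torsion theorem of Lyndon--Schupp quoted in the statement.
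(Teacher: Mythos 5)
Your overall strategy (minimal-length representative $w$ of the torsion conjugacy class, a minimal van Kampen diagram for $w^n$, Corollary~\ref{C: Ollivier} to get the $C'_*(1/8)$--condition, then the Greendlinger-type Theorem~\ref{T: classical small cancellation lemma}) is the right one, and it is the same scheme as the classical Lyndon--Schupp proof that the paper simply cites for this statement (the paper does not reprove it; its proof of the graphical analogue, Theorem~\ref{T: tf}, is where the argument is actually displayed). Your exclusion of $k\geqslant 2$ by comparing $r$ with its cyclic shift by one period is fine in spirit (it is essentially Lyndon--Schupp's Lemma~10.2 in disguise), but it is not where the difficulty lies, and the step you label ``a final bookkeeping step'' is a genuine gap: knowing only that the Greendlinger face satisfies $k\leqslant 1$, i.e.\ $\lvert b\rvert_*<2\ell$ and $\gamma\leqslant\lvert r\rvert_*<\tfrac{16}{5}\ell$, is not a contradiction of anything --- such diagrams are perfectly consistent with the hypotheses. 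The contradiction has to be manufactured, and you never do it.

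Concretely, what is missing is the following (cf.\ the proof of Theorem~\ref{T: tf}). After a cyclic rotation you may write the Greendlinger subword as $u=z^mt$ with $z$ a cyclic conjugate of $w$ of length $\ell$, $m\geqslant 1$, and $t$ a proper prefix of $z=ts$, so that $r=uv=(ts)^mtv$ with $\lvert v\rvert_*<\tfrac38\lvert r\rvert_*$. Your period-shift argument disposes of $m>1$, but in the remaining case $m=1$ you must show that the leftover prefix $t$ is short, and this is exactly where the no-proper-power hypothesis enters a second time: compare $r=tstv$ with its cyclic conjugate $tvts$; either they coincide, which (via commuting elements in a free product) forces $r$ and $z$ to be powers of a common element and hence $r$ a proper power, or $t$ is a piece, so $\lvert t\rvert_*<\tfrac18\lvert r\rvert_*$. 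Only then does minimality bite: from $uv=_G1$ one gets $z=_Gv^{-1}t^{-1}$ with $\lvert v^{-1}t^{-1}\rvert_*<\tfrac48\lvert r\rvert_*<\lvert u\rvert_*-\lvert t\rvert_*\leqslant\lvert z\rvert_*$, contradicting the minimal choice of $w$ (and this is precisely where $1/8$ rather than $1/6$ is needed). Your sketch instead appeals to ``the second exterior face'' and the periodicity of $\partial M$, which plays no role in the classical argument and gives no mechanism for producing either a shorter conjugate of $w$ or a proper-power relator; as written, the case $\ell\leqslant\lvert b\rvert_*<2\ell$ (and also $\lvert b\rvert_*\leqslant\ell$, where one must replace $b$ by $c^{-1}$ inside $z$ to shorten it) is simply not handled. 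A smaller issue: the case $\ell=1$ is not ``handled directly by torsion-freeness of the factors''; you still need the third item of Theorem~\ref{T: classical small cancellation lemma} (boundary at least as long as each relator) to rule out a nontrivial factor element becoming torsion in $G$.
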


For each element $r\in R$, take a  path $p_r$ labeled by $F$, whose label is the normal form of $r$.
Let $c_r$ be the \emph{relator cycle}, a cycle graph obtained from $p_r$ by identifying the endpoints.  The following is an easy consequence of the first statement of Theorem~\ref{T: classical small cancellation lemma}.

\begin{theorem}\label{C: injection}
 Let $G$ be a $C'_*(1/6)$--small cancellation group  with relators $R$ over $F$. Then for each $r\in R$ the relator cycle $c_r$ injects into the Cayley graph of $G$ with respect to $G_1\cup\ldots\cup G_d$.
\end{theorem}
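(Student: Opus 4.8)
The plan is to show that the natural label-preserving map $\phi\colon c_r\to\mathrm{Cay}(G,G_1\cup\cdots\cup G_d)$ taking the base vertex of $c_r$ to $1\in G$ is injective; it is well defined because $r=1$ in $G$. Writing $r=h_1\cdots h_n$ in normal form with $n=\lvert r\rvert_*$, the cycle $c_r$ has $n$ edges labeled $h_1,\ldots,h_n$, and $\phi$ sends the $i$-th vertex to the partial product $h_1\cdots h_i\in G$; hence $\phi$ is injective on vertices exactly when no proper nonempty cyclic subword of $r$ represents $1$ in $G$, and injectivity on edges then follows (the cases $\lvert r\rvert_*\leq 2$ being immediate). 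So everything reduces to the claim: if $\tilde r\in R$ is a cyclic permutation of $r$ and $\tilde r=ww''$ with $w,w''$ nonempty, then $w\neq 1$ in $G$. Here we use that $R$ is symmetrized, so $\tilde r$ and all cyclic conjugates of $\tilde r^{\pm 1}$ lie in $R$. This is essentially the first statement of Theorem~\ref{T: classical small cancellation lemma}.

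Suppose for contradiction that $w=1$ in $G$, so $1\leq\lvert w\rvert_*<\lvert\tilde r\rvert_*=n$. Let $D$ be a minimal van Kampen diagram for $w$ over $R$; by Corollary~\ref{C: Ollivier} it is reduced, satisfies the $C'_*(1/6)$--condition, and all of its faces are simply-connected. As $w$ is a nonempty word in normal form, $w\neq 1$ in $F$, so $\lvert D\rvert\geq 1$. If $\lvert D\rvert=1$, then $D$ is a single face (a reduced one-face diagram with a simply-connected face has no tree, since a tree would contribute a cancelling pair of edges to the boundary word), so its boundary cycle reads both $w$ and a relator $\rho\in R$; thus $w$ is a prefix of $\tilde r\in R$ and $w=_F\rho$, so either $\rho=\tilde r$ and $\lvert w\rvert_*=n$, a contradiction, or $\rho\neq\tilde r$ and $w$ is a piece, whence $\lvert w\rvert_*<\tfrac16\gamma\leq\tfrac16\lvert w\rvert_*$ with $\gamma=\min\{\lvert r'\rvert_*\mid r'\in R\}$, again a contradiction. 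So $\lvert D\rvert\geq 2$, and Theorem~\ref{T: classical small cancellation lemma} — its first statement if $\lvert D\rvert>2$, and, if $\lvert D\rvert=2$, the observation that the two faces share exactly one inner segment (two would bound a subdiagram containing a further face), which is a piece since $D$ is reduced — yields two distinct exterior faces $\Pi_1,\Pi_2$ with $\partial_{ext}\Pi_i$ connected and $\lvert\omega(\partial_{ext}\Pi_i)\rvert_*>\tfrac12\lvert\omega(\partial\Pi_i)\rvert_*$.

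Put $q_i=\omega(\partial_{ext}\Pi_i)$, a subword of the relator $\omega(\partial\Pi_i)$, with $\lvert q_i\rvert_*>\tfrac12\lvert\omega(\partial\Pi_i)\rvert_*\geq\tfrac12\gamma$. The arcs $\partial_{ext}\Pi_1$ and $\partial_{ext}\Pi_2$ are disjoint sub-arcs of the boundary cycle of $D$, so at most one of them contains in its interior the base vertex at which that cycle reads $w$. If $\partial_{ext}\Pi_i$ avoids that vertex, then $q_i$ is an infix of the word $w$, hence of the cyclic word $r$ since $w$ is a prefix of $\tilde r$; as $\lvert q_i\rvert_*>\tfrac12\gamma>\tfrac16\gamma$, it is not a piece. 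For the at most one remaining arc, $q_i$ is the concatenation of a suffix and a prefix of $w$, hence of two infixes of the cyclic word $r$, and at least one of these is not a piece (otherwise $\lvert q_i\rvert_*$ would be smaller than $\tfrac13\gamma$). Thus in all cases there is an infix $u_i$ of the cyclic word $r$ that is also a subword of $\omega(\partial\Pi_i)$ and is not a piece. Now $u_i$ is a prefix of a cyclic conjugate of $r^{\pm 1}$ lying in $R$ and also a prefix of a cyclic conjugate of $\omega(\partial\Pi_i)$ lying in $R$; were these distinct, $u_i$ would be a piece, so they coincide and $\omega(\partial\Pi_i)$ is a cyclic conjugate of $r^{\pm 1}$, giving $\lvert\omega(\partial\Pi_i)\rvert_*=n$. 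Hence $\lvert q_1\rvert_*+\lvert q_2\rvert_*>n$, whereas $\partial_{ext}\Pi_1$ and $\partial_{ext}\Pi_2$ are edge-disjoint sub-arcs of $\partial D$, so $\lvert q_1\rvert_*+\lvert q_2\rvert_*\leq\lvert\omega(\partial D)\rvert_*=\lvert w\rvert_*\leq n-1$, a contradiction. This proves the claim, and with it the theorem.

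The delicate points are bookkeeping rather than conceptual: the one exterior arc that may wrap around the base vertex of $\partial D$ (handled above by splitting it into a suffix and a prefix of $w$), the verification that diagrams with one or two faces still supply the required exterior arcs — where the first statement of Theorem~\ref{T: classical small cancellation lemma} is not literally applicable and one must argue from reducedness and the structure of small reduced diagrams — and the routine care needed with the passage between edge count and syllable length and with the meaning of ``injects'' for relators of very small syllable length. The conceptual heart is the strong Greendlinger alternative packaged in the first statement of Theorem~\ref{T: classical small cancellation lemma}: a long exterior arc cannot be a piece and therefore must be inherited from $r$ itself, yet two disjoint exterior arcs each longer than half of $r$ cannot both run along $\partial D$, whose total syllable length $\lvert w\rvert_*$ is strictly less than $\lvert r\rvert_*$.
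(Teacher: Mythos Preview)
Your proof is correct and follows precisely the line the paper indicates: the paper states only that Theorem~\ref{C: injection} ``is an easy consequence of the first statement of Theorem~\ref{T: classical small cancellation lemma}'' and gives no further details, and your argument unpacks exactly that consequence via Greendlinger faces in a minimal diagram for a proper subword. Your explicit treatment of the cases $\lvert D\rvert=1,2$ and of the exterior arc that may wrap past the basepoint fills in the bookkeeping the paper omits; your closing remark about the passage between edge count and syllable length correctly flags the one point that needs care (arranging $\partial D$ to read $w$ in normal form, or, equivalently, invoking the third bullet of Theorem~\ref{T: classical small cancellation lemma} once $\lvert\omega(\partial\Pi_i)\rvert_*=n$ is established, which gives the contradiction $\lvert w\rvert_*\geq n$ without the additive estimate on $\lvert q_1\rvert_*+\lvert q_2\rvert_*$).

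It may interest you that for the graphical generalization (Theorem~\ref{T: gi}) the paper takes a somewhat different route: rather than estimating lengths of two Greendlinger faces inside a diagram for the subword, it glues the minimal diagram for the subword to a single new face carrying the relator cycle and derives a contradiction from the resulting diagram having a non--simply-connected face while still satisfying $C'_*(1/6)$. Your direct length-comparison argument is entirely adequate for the classical case and arguably more transparent; the gluing approach becomes more natural in the graphical setting, where one must also exclude faces that ``originate'' with the boundary.
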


\subsection{Graphical small cancellation theory over the free product} \label{S: graphical}

Let $\Omega$ be an oriented graph (finite or infinite). It is a 2-complex with no faces, so the above definitions apply to graphs. In particular, a \emph{labeling} of $\Omega$  by $F$ is a map $\omega\colon E(\Omega)\to F$ such that { the label $\omega(e)$ of every edge $e$ is in a factor of $F$ and }  $\omega(e^{-1})=\omega(e)^{-1}$. 
If  $\omega\colon E(\Omega)\to Y\subseteq F$ for $Y\subseteq F$, we say that $\omega $ is a labeling by~$Y$. 

 Let $\Omega$ be labeled by $X\sqcup X^{-1}$. 
The labeling of $\Omega$, or the labeled graph $\Omega$ itself, is called \emph{reduced (over F)} if the label of  every  non-trivial simple path in $\Omega$ represents a non-trivial element in $F$. 
 
In particular, in a reduced labeled $\Omega$ the labels of consecutive edges do not cancel. Therefore, 
the label of each non-trivial simple path is weakly reduced and the label of each non-trivial simple cycle is weakly cyclically reduced. 
\begin{definition*}\label{D: graphical presentation} Let $\mathcal{C}$ denote a set of cycles generating the fundamental group of $\Omega$. Let $R\subseteq F$ denote the subset of elements represented by the labels of the cycles in $\mathcal{C}$. The quotient group $G(\Omega):=F/\langle\langle R\rangle \rangle$ is the \emph{graphically presented group} with relators $R$ over $F$.
\end{definition*}
  The group $G(\Omega)$ does not depend on the choice of $\mathcal{C}$ (e.g. we can always assume that $\mathcal{C}$ is made of simple cycles). If $\Omega$ is finite we can choose such a finite $\mathcal{C}$. Then  $G(\Omega)$ is finitely presented whenever $F$ is a finitely presented group.  

  \smallskip
  
Our next aim is to introduce the graphical small cancellation conditions over the free product~$F$. We start with the notion of a piece, then we focus on non-trivial relations in $F$.

An \emph{immersion} of labeled graphs is a locally injective graph morphism which preserves the labellings. \label{D: graphical piece}
A \emph{(graphical) piece} in $\Omega$ is a labeled path $p$ immersed in $\Omega$ such that there is a labeled path $q$ immersed in $\Omega$, the immersions of $p$ and $q$  are distinct, and  the labels satisfy $\omega(q)=\omega(p)$  in $F$.

\smallskip

In our specific applications below, we consider the following two cases.

 1.  Each factor $G_i$ of $F$ is a free group on $X_i$ and $\Omega$ is labeled by $X\sqcup X^{-1}$. The labeling of $\Omega$ is reduced if the word read on two subsequent edges does not cancel to the empty word. 
  \smallskip

  2. $F=G_1*G_2$ and the factors $G_1$ and $G_2$ are torsion-free. Let $1\not=a\in G_1$, $1\not=b\in G_2$. A labeling of $\Omega$ by $\{a^{\pm1},b^{\pm1}\}$ is reduced if the label on two subsequent edges does not cancel to the empty word. 

\smallskip

In both cases, for a labeled path $p$ to be a piece, it suffices that $p$ has at least two distinct immersions into $\Omega$. In Case 1, our definition of graphical pieces
coincides with that given in \cite{ollivier_small_2006}*{p. 76}.  In Case 2, the elements $a$ and $b$ generate a free subgroup of $G_1*G_2$. Thus, Case 2 is a generalization of Case 1. 

\medskip

{

 We now introduce certain operations on a labeled graph $\Omega$ that preserve the group $G(\Omega)$. 
We say two graphs $\Omega'$, labeled by $X'\sqcup {X'}^{-1} $, and $\Omega$, labeled by $X\sqcup X^{-1}$, are \emph{equivalent  (over F)}, if one is obtained from the other by means of finitely many combinations of the following three \emph{graph transformations}. 

 \smallskip
 
 1. \emph{AO-move over the free product}. An \emph{arc}  is a path in $\Omega$ all of whose vertices, except possibly the endpoints, have degree 2.
 Let $p$ be an arc of $n$ edges in $\Omega$ with a weakly reduced label $x_{i_1}^{\epsilon_{i_1}}\cdots x_{i_n}^{\epsilon_{i_n}}$, $\epsilon_{i_1}, \ldots ,\epsilon_{i_n} \in \{\pm 1\}$, $x_{i_1},\ldots,x_{i_n}\in X_i$.  Suppose ${x'_{j_1}}^{\epsilon_{j_1}}\cdots {x'_{j_m}}^{\epsilon_{j_m}}$ , $\epsilon_{j_1}, \ldots ,\epsilon_{j_m} \in \{\pm 1\}$, ${x'_{j_1}},\ldots,{x'_{j_m}}\in X_i',$ is weakly reduced and $x_{i_1}^{\epsilon_{i_1}}\cdots x_{i_n}^{\epsilon_{i_n}}={x'_{j_1}}^{\epsilon_{j_1}}\cdots {x'_{j_m}}^{\epsilon_{j_m}}$ in~$F$. Let $q$ be a labeled path of $m$ edges whose label is ${x'_{j_1}}^{\epsilon_{j_1}}\cdots {x'_{j_m}}^{\epsilon_{j_m}}$. We  \emph{replace $p$ with $q$}: we identify the starting vertex of $q$ with the starting vertex of $p$ in $\Omega$, and the terminal vertex of $q$ with the terminal vertex of $p$. Then we delete all the edges of $p$ together with all of its vertices, except the endpoints.

 \smallskip
 
 2. \emph{Reduction over the free product}.  {Let $p$ be a simple path in $\Omega$ whose label represents the identity in~$F$.} We 
 identify the starting vertex and the terminal vertex of $p$ and delete the terminal edge. 

\smallskip

3. \emph{Deletion}. We delete edges incident to vertices of degree 1 together with these vertices.

}
\medskip

If two graphs  $\Omega'$ and $\Omega$ are equivalent over $F$, then they define the same group: $$G(\Omega')=G(\Omega).$$

The AO-moves over a given group $H$ were first defined and applied by Arzhantseva and Ol'shanskii in \cite{arzhantseva_class_1996}*{p. 351f} to  Stallings graphs. If two such graphs  are equivalent, then they define the same subgroup of $H$ \cite{arzhantseva_class_1996}*{p. 352, L. 1}. The terminology `AO-move' is due to \cite{kapovich_2005_genericity}*{p. 9, Def. 2.7}.

In Case 1,  if $e_1,e_2$ is a simple path such that $\omega(e_1)\omega(e_2)$ cancels to the identity in the free group, then the Reduction of $e_1, e_2$ is known as the \emph{folding} of $e_1$ and $e_2$.

\medskip

\begin{definition*} The \emph{ maximal piece length}  in a labeled graph $\Omega$ is defined by 
\[ 
\Lambda:=\max \{\lvert \omega(p) \rvert _*\mid p \hbox{ is a piece in  a reduced labeled graph $\Omega'$ equivalent to } \Omega\},
\] and the \emph{minimal cycle length}  in a labeled graph $\Omega$ is defined by
\[
 \gamma:=\min\{\lvert \omega(c) \rvert _* \mid c \text{ is a cycle in  a reduced labeled graph $\Omega'$ equivalent to } \Omega \}.
 \]
 Let $0<\lambda<1.$
 A  
 reduced
labeling of $\Omega$ satisfies the \emph{$Gr_*'(\lambda)$--graphical small cancellation condition}  if $$\frac{\Lambda}{\gamma}<\lambda.$$
 In this case, the  group $G(\Omega)$ is a \emph{$Gr_*'(\lambda)$--graphical small cancellation group} with relators $R$ over $F$. 
\end{definition*}

If $F$ is a free group and we consider our graphical small cancellation condition with respect to the word length metric on $F$, then we obtain Gromov's graphical small cancellation condition over the free group  \cite{ollivier_small_2006}*{p.77, Th.1}.

\smallskip

We now provide a useful criterion on a reduced labeling of $\Omega$ that ensures the {$Gr_*'(\lambda)$--condition}.

\begin{prop}\label{P: criterion}
 Let $\omega$ be a reduced labeling of $\Omega$ and $0<\lambda <1$. If for all graphical pieces $p$ and all non-trivial  labeled cycles $c$ in $\Omega$ we have that 
 \[
 \frac{\lvert  \omega(p) \rvert_* +2}{\omega(c)}<\lambda,
  \]
 then the labeling of $\Omega$ satisfies the $Gr'_*(\lambda)$--graphical small cancellation condition.
\end{prop}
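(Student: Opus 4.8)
The quantities $\Lambda$ and $\gamma$ in the $Gr'_*(\lambda)$--condition are extremal over \emph{all} reduced labeled graphs equivalent to $\Omega$, whereas the hypothesis only controls pieces and non-trivial labeled cycles inside $\Omega$ itself. Writing $\Lambda_\Omega:=\max\{|\omega(p)|_*: p\text{ a piece in }\Omega\}$ and $\gamma_\Omega:=\min\{|\omega(c)|_*: c\text{ a non-trivial labeled cycle in }\Omega\}$, the plan is to prove
\[
\gamma=\gamma_\Omega\qquad\text{and}\qquad \Lambda\le\Lambda_\Omega+2 .
\]
Since $\Omega$ is reduced and equivalent to itself and $\lambda<1$, the hypothesis is equivalent to $\Lambda_\Omega+2<\lambda\,\gamma_\Omega$ (in particular it forces $\Lambda_\Omega<\infty$); the two displayed inequalities then give $\Lambda\le\Lambda_\Omega+2<\lambda\gamma_\Omega=\lambda\gamma$, which is exactly $\Lambda/\gamma<\lambda$.

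For the cycle identity I would examine the action of each of the three graph transformations on cycles. An AO--move over the free product replaces an arc whose label lies in a single factor $G_i$ by another arc carrying the same element of $F$; a reduction replaces a simple subpath with trivial label by its pinch; a deletion removes degree--one edges, which lie on no cycle. In each case, given a non-trivial labeled cycle in the new graph one produces --- by substituting the modified arc back, by re-inserting the trivial-label path, or without change, respectively --- a cycle in the old graph representing the \emph{same} element of $F$, and conversely. As $|\,\cdot\,|_*$ depends only on the represented element of $F$, iterating along a chain $\Omega=\Omega_0,\dots,\Omega_k=\Omega'$ shows that $\Omega$ and every equivalent reduced $\Omega'$ realize the same set of syllable lengths of non-trivial labeled cycles; hence $\gamma=\gamma_\Omega$. (This also re-proves, at the level of relators, that the transformations preserve $G(\Omega)$.)

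The piece bound is the crux. A convenient bookkeeping device is the graph obtained from $\Omega$ by contracting each maximal monochrome arc --- a maximal run of consecutive edges whose labels lie in one factor --- to a single edge labeled by the corresponding element of that factor: this is unchanged by AO--moves and by reductions internal to a monochrome arc, and its edge-counts are precisely the syllable lengths $|\omega(\cdot)|_*$. So, given a piece $p'$ in a reduced $\Omega'$ equivalent to $\Omega$ together with a distinct second immersed path $q'$ with $\omega(q')=\omega(p')$, I would track $p'$ and $q'$ back through the (reversible) chain of transformations: every edge lying over an edge of $\Omega$ translates directly, and the only obstruction is that an endpoint of $p'$ or of $q'$ may have come to sit strictly inside a monochrome arc of $\Omega$, where the edge-subdivision no longer matches. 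Truncating $p'$ and $q'$ to the boundary of that arc discards a subpath whose label lies in a single factor, hence costs at most one syllable at each of the two ends; what remains is a path $p$ in $\Omega$, still carrying a distinct second immersion $q$ with $\omega(q)=\omega(p)$ --- a piece in $\Omega$ --- with $|\omega(p)|_*\ge|\omega(p')|_*-2$. Taking the supremum over $p'$ yields $\Lambda\le\Lambda_\Omega+2$.

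The step I expect to be the main obstacle is exactly this last one: verifying that the endpoint ambiguity really stays confined to a single monochrome arc and does not accumulate along a long chain of transformations (which may pass through non-reduced intermediate graphs, where reductions can bridge factors), and checking that the truncation still leaves two \emph{genuinely distinct} immersions --- if $p$ and $q$ coincided in $\Omega$ one must argue that $p'$ and $q'$ already agreed away from the monochrome arcs, so that $|\omega(p')|_*\le 2$ and the bound is trivial. The cycle part, once the effect of each transformation on cycles is spelled out, I expect to be routine.
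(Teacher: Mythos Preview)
Your proposal is correct and follows essentially the same route as the paper: your ``boundaries of maximal monochrome arcs'' are exactly what the paper calls \emph{vertices of first type} (vertices at which two incident edges carry labels from distinct factors), and your contracted graph is the dual way of saying that such vertices, together with the syllable count they encode, are invariant under the graph transformations.

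The paper resolves precisely the obstacle you flag at the end, and does so by a direct case split rather than by chasing a chain of transformations. Since vertices of first type are never identified by any of the three moves, a path in $\Omega'$ that starts and ends at such vertices has a well-defined preimage in $\Omega$ with the same label; this avoids any accumulation issue. For a piece $p'$ in $\Omega'$ (with partner $q'$), one then distinguishes: if $p'$ and $q'$ contain at most one vertex of first type, then $|\omega(p')|_*\le 2$ and there is nothing to prove; otherwise one passes to the maximal subpaths $\tilde p\subseteq p'$, $\tilde q\subseteq q'$ whose endpoints are of first type, notes $|\omega(p')|_*\le|\omega(\tilde p)|_*+2$, and pulls $\tilde p,\tilde q$ back to $\Omega$ where they are still distinctly immersed (because first-type vertices are not identified), hence a genuine piece. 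This is exactly your truncation argument, with the ``distinctness after truncation'' worry handled by the first case of the dichotomy.
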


\begin{proof}
There are two types of vertices in $\Omega$. At a \emph{vertex of first type} there are at least two edges such that their labels are in distinct factors of $F$. The remaining vertices are \emph{vertices of second type}. If $p$ is a path immersed in $\Omega$, the vertices of first type in $p$ have two edges of $p$ whose labels are in distinct factors. Hence, the starting and the terminal vertex of $p$ are not included in the vertices of first type in $p$. Then $\lvert \omega(p) \rvert_*-1$ equals the number of vertices of first type in $p$. If $c$ is a reduced labeled cycle, then $\lvert \omega(c) \rvert_*$ equals the number of vertices of first type in $c$.  We observe that vertices of first type in a reduced labeled graph are never identified by applications of {the graph transformations}.
 
   Let $\Omega'$ be { reduced labeled and} equivalent to $\Omega$. The above arguments imply that the minimal length of the non-trivial cycles in $\Omega$ and the minimal length of the non-trivial cycles in $\Omega'$ are equal.
   
     Let  $p'$ be a path immersed in $\Omega'$ which starts and terminates at vertices of first type. Then $p'$  has a preimage $p$ in $\Omega$. The vertices of first type in $p$ are the preimages of the vertices of first type in $p'$. Moreover, the starting and terminal vertex of $p$ are of first type in $\Omega$ and the label of $p$ equals the label of $p'$.
 
 Now suppose that $p'$ is a piece in $\Omega'$. Let $q'$ be a second path whose immersion in $\Omega'$ is distinct from the immersion of $p'$ and whose label equals the label of $p'$. If the starting and terminal vertices of both $p'$ and $q'$ are of first type in $\Omega'$, then there are preimages $p$ and $q$ in $\Omega$ as above. The immersions of $p$ and $q$ are distinct in $\Omega$ as vertices of first type cannot be identified by applications of AO-moves and Reductions. We conclude that $p$ is a piece in $\Omega$. The labels of $p$ and $p'$ are equal, so we obtain the required inequality. Let us now consider the case that $p'$ or $q'$ does not start or terminate at vertices of first type. There are three possibilities for such a piece $p'$. Either $p'$ and $q'$ have no vertex of first type, $p'$ and $q'$ have one vertex of first type, or $p'$ and $q'$ contain at least two vertices of first type. In the first two cases, we have that $\lvert \omega(q) \rvert_* \leqslant 2$. Hence, the label of 
$p'
$ is shorter than $\lambda\gamma$. In the third case, $p'$ and $q'$ have a subpath $\tilde{p}$ and $\tilde{q}$ whose starting and terminal 
vertex are of first type in $\Omega'$ and whose labels satisfy $\omega(\tilde{p})=\omega(\tilde{q})$ in $F$. We choose $\tilde{p}$ and $\tilde{q}$ maximal so that there are no subpaths of $p'$ and $q'$ whose starting and terminal vertices are of first type in $\Omega'$ and which contain $\tilde{p}$ or $\tilde{q}$ with the above properties. This implies that $\lvert \omega(q') \rvert_*\leqslant \lvert \omega(\tilde{q}) \rvert_* +2$. By the above arguments, $\tilde{q}<\lambda \gamma$. We conclude that $\lvert \omega(q) \rvert_*<\lambda \gamma$. This completes the proof.
\end{proof}

This criterion is later used to conclude that  specific examples of labellings of graphs satisfy the graphical small cancellation condition over the free product.  We proceed by studying van Kampen diagrams over graphical group presentations. This is the first step in proving our graphical small cancellation theorem, Theorem~\ref{T: main1}.

\subsection{Van Kampen diagrams}\label{S: van Kampen diagrams}

In this section, we study van Kampen diagrams over relators $R\subseteq F$ given by the labels of cycles of~$\Omega$. These van Kampen diagrams are different to those studied in Section \ref{S: classical small cancellation theory}. Namely, they have new `originating' inner segments that are not controlled by the small cancellation condition. We `delete' such inner segments to obtain diagrams that satisfy the $C'_*(\lambda)$--condition. 

\smallskip

Let $\Omega$ be a reduced labeled graph whose labeling satisfies the $Gr_*'(\lambda)$--condition for $0<\lambda\leqslant 1/6$. 
 Let $\widetilde{\Omega}$ be the 2-complex obtained by filling all reduced labeled cycles of $\Omega$ with a disc. Let $D$  be a van Kampen diagram over $R$. Let $\Pi$ be a face of $D$. The label on a boundary cycle $c$ of $\Pi$ equals in $F$ to a relator $r\in R$. In $\widetilde{\Omega}$, we find a 2-cell ${\Pi}'$ {, called a \emph{lift} of $\Pi$ in $\widetilde{\Omega}$, with a boundary cycle ${c}'$, whose label satisfies $\omega({c}')=r$ in~$F$.}   

The boundary cycle $c$ of $\Pi$ is equivalent to $c'$ by graph transformations, in the above terminology. Namely, there is a reduced labeled graph $\Omega'$ equivalent to $\Omega$ such that the lift of $\Pi$ in  $\widetilde{\Omega'}$ is a copy of $\Pi$ with boundary cycle $c$. In this case, we say that $c$ \emph{lifts to $\widetilde{\Omega'}$ with~$\Pi$}.  We say that a subpath $p$ of $c$ \emph{lifts to  $\widetilde{\Omega}$ with~$\Pi$}, if the sequence of graph transformations from $\Omega'$ to $\Omega$ fixes the edges of $p$.

\begin{remark}
The lift of $\Pi$ in $\widetilde{\Omega}$ is unique. Indeed, given a second lift in $\widetilde{\Omega}$, two distinct reduced labeled cycles in $\Omega$ had a label equal to $r$. This contradicts the graphical small cancellation condition on $\Omega$.
\end{remark}
 
\begin{definition*}[cf. \cite{ollivier_small_2006}*{p. 81}] Let $\Pi_1,\Pi_2$ be faces in $D$ and let $\Omega'$ be a graph equivalent to $\Omega$. An edge $e\in \partial\Pi_1\cap\partial\Pi_2$ \emph{originates} in  $\Omega'$ if  $e$ lifts with $\Pi_1$ and $\Pi_2$ to $\widetilde{\Omega'}$ and these lifts of $e$ coincide. An inner segment  in $D$ \emph{originates}, or is an \emph{originating segment}, if all of its edges originate in such a graph $\Omega'$. 
\end{definition*}

The inner segments originating in $R$ as defined on page~\pageref{D: originate in R} are included in this definition.

\begin{remark}
Every inner segment that does \emph{not} originate  satisfies the $C'_*(\lambda)$--condition, as every such inner segment is a piece in a graph $\Omega'$ equivalent to~$\Omega$.
\end{remark}

 \begin{remark}\label{R: faces in widetilde D} If an originating segment $s$ in the common boundary of two faces $\Pi_1$ and $\Pi_2$ with boundary cycles $sp_1$ and $sp_2$ originates, then \emph{either}  $\omega(p_1)=\omega(p_2)$ in $F$ \emph{or} $\omega(p_1p_2^{-1})$ equals to the label of a reduced cycle in $\Omega$.
 \end{remark}

 
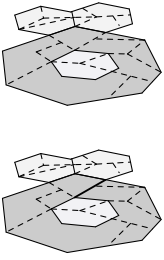
\begin{wrapfigure}{l}{0.30\textwidth}
\definecolor{ttqqqq}{rgb}{0,0,0}
\definecolor{anti-flashwhite}{rgb}{0.95, 0.95, 0.96}
\begin{tikzpicture}[line cap=round,line join=round,>=triangle 45,x=0.6cm,y=0.27cm]
\clip(-1.3,-1) rectangle (2.8,6.1);
\fill[color=ttqqqq,fill=ttqqqq,fill opacity=0.05] (-0.32,4.72) -- (0.34,4.44) -- (0.91,4.87) -- (0.82,5.58) -- (0.16,5.86) -- (-0.41,5.43) -- cycle;
\fill[color=ttqqqq,fill=ttqqqq,fill opacity=0.05] (0.84,5.6) -- (0.91,4.87) -- (1.58,4.57) -- (2.17,5) -- (2.1,5.73) -- (1.43,6.03) -- cycle;
\fill[color=ttqqqq,fill=ttqqqq,fill opacity=0.2] (-0.68,3.66) -- (0.91,4.87) -- (2.44,4.2) -- (2.8,2.62) -- (2.38,1.6) -- (0.74,1) -- (-0.6,2) -- cycle;
\fill[color=ttqqqq,fill=anti-flashwhite,fill opacity=2] (0.62,2.42) -- (1.36,2.26) -- (1.87,2.82) -- (1.64,3.54) -- (0.9,3.7) -- (0.39,3.14) -- cycle;
\draw [color=ttqqqq] (-0.32,4.72)-- (0.34,4.44);
\draw [color=ttqqqq] (0.34,4.44)-- (0.91,4.87);
\draw [color=ttqqqq] (0.82,5.58)-- (0.16,5.86);
\draw [color=ttqqqq] (0.16,5.86)-- (-0.41,5.43);
\draw [color=ttqqqq] (-0.41,5.43)-- (-0.32,4.72);
\draw [color=ttqqqq] (0.84,5.6)-- (0.91,4.87);
\draw [color=ttqqqq] (0.91,4.87)-- (1.58,4.57);
\draw [color=ttqqqq] (1.58,4.57)-- (2.17,5);
\draw [color=ttqqqq] (2.17,5)-- (2.1,5.73);
\draw [color=ttqqqq] (2.1,5.73)-- (1.43,6.03);
\draw [color=ttqqqq] (1.43,6.03)-- (0.84,5.6);
\draw [color=ttqqqq] (-0.68,3.66)-- (0.91,4.87);
\draw [color=ttqqqq] (0.91,4.87)-- (2.44,4.2);
\draw [color=ttqqqq] (2.44,4.2)-- (2.8,2.62);
\draw [color=ttqqqq] (2.8,2.62)-- (2.38,1.6);
\draw [color=ttqqqq] (2.38,1.6)-- (0.74,1);
\draw [color=ttqqqq] (0.74,1)-- (-0.6,2);
\draw [color=ttqqqq] (-0.6,2)-- (-0.68,3.66);
\draw [color=ttqqqq] (0.62,2.42)-- (1.36,2.26);
\draw [color=ttqqqq] (1.36,2.26)-- (1.87,2.82);
\draw [color=ttqqqq] (1.87,2.82)-- (1.64,3.54);
\draw [color=ttqqqq] (1.64,3.54)-- (0.9,3.7);
\draw [color=ttqqqq] (0.9,3.7)-- (0.39,3.14);
\draw [color=ttqqqq] (0.39,3.14)-- (0.62,2.42);
\draw [dash pattern=on 2pt off 2pt] (0.9,3.7)-- (1.58,4.57);
\draw [dash pattern=on 2pt off 2pt] (0.34,4.44)-- (0.9,3.7);
\draw [dash pattern=on 2pt off 2pt] (0.61,4.08)-- (0.06,3.62);
\draw [dash pattern=on 2pt off 2pt] (0.06,3.62)-- (0.39,3.14);
\draw [dash pattern=on 2pt off 2pt] (0.39,3.14)-- (-0.6,2);
\draw [dash pattern=on 2pt off 2pt] (0.39,3.14)-- (1.02,3.06);
\draw [dash pattern=on 2pt off 2pt] (1.02,3.06)-- (1.64,3.54);
\draw [dash pattern=on 2pt off 2pt] (1.02,3.06)-- (2.61,2.16);
\draw [dash pattern=on 2pt off 2pt] (2.13,2.43)-- (1.7,1.48);
\draw [dash pattern=on 2pt off 2pt] (1.64,3.54)-- (2.06,3.52);
\draw [dash pattern=on 2pt off 2pt] (2.06,3.52)-- (2.44,4.2);
\draw [dash pattern=on 2pt off 2pt] (2.06,3.52)-- (2.8,2.62);
\draw [dash pattern=on 2pt off 2pt] (2.44,4.2)-- (1.34,4.26);
\draw [dash pattern=on 2pt off 2pt] (0.91,4.87)-- (1.38,5.34);
\draw [dash pattern=on 2pt off 2pt] (1.38,5.34)-- (2.13,5.44);
\draw [dash pattern=on 2pt off 2pt] (1.74,5.39)-- (1.88,4.92);
\draw [dash pattern=on 2pt off 2pt] (1.38,5.34)-- (1.07,5.77);
\draw [dash pattern=on 2pt off 2pt] (0.16,5.86)-- (0.22,5.28);
\draw [dash pattern=on 2pt off 2pt] (0.22,5.28)-- (-0.32,4.72);
\draw [dash pattern=on 2pt off 2pt] (0.22,5.28)-- (1.38,5.34);
\draw (0.82,5.58)-- (0.84,5.6);
\draw [dash pattern=on 2pt off 2pt] (-0.32,4.72)-- (0.9,5.32);
\end{tikzpicture}
\begin{tikzpicture}[line cap=round,line join=round,>=triangle 45,x=0.6cm,y=0.27cm]
\clip(-1.3,-1) rectangle (2.8,6.1);
\fill[color=ttqqqq,fill=ttqqqq,fill opacity=0.05] (-0.32,4.72) -- (0.34,4.44) -- (0.91,4.87) -- (0.82,5.58) -- (0.16,5.86) -- (-0.41,5.43) -- cycle;
\fill[color=ttqqqq,fill=ttqqqq,fill opacity=0.05] (0.84,5.6) -- (0.91,4.87) -- (1.58,4.57) -- (2.17,5) -- (2.1,5.73) -- (1.43,6.03) -- cycle;
\fill[color=ttqqqq,fill=ttqqqq,fill opacity=0.2] (-0.68,3.66) -- (0.91,4.87) -- (2.44,4.2) -- (2.8,2.62) -- (2.38,1.6) -- (0.74,1) -- (-0.6,2) -- cycle;
\fill[color=ttqqqq,fill=anti-flashwhite,fill opacity=2] (0.62,2.42) -- (1.36,2.26) -- (1.87,2.82) -- (1.64,3.54) -- (0.9,3.7) -- (0.39,3.14) -- cycle;
\draw [color=ttqqqq] (-0.32,4.72)-- (0.34,4.44);
\draw [color=ttqqqq] (0.34,4.44)-- (0.91,4.87);
\draw [color=ttqqqq] (0.82,5.58)-- (0.16,5.86);
\draw [color=ttqqqq] (0.16,5.86)-- (-0.41,5.43);
\draw [color=ttqqqq] (-0.41,5.43)-- (-0.32,4.72);
\draw [color=ttqqqq] (0.84,5.6)-- (0.91,4.87);
\draw [color=ttqqqq] (0.91,4.87)-- (1.58,4.57);
\draw [color=ttqqqq] (1.58,4.57)-- (2.17,5);
\draw [color=ttqqqq] (2.17,5)-- (2.1,5.73);
\draw [color=ttqqqq] (2.1,5.73)-- (1.43,6.03);
\draw [color=ttqqqq] (1.43,6.03)-- (0.84,5.6);
\draw [color=ttqqqq] (-0.68,3.66)-- (0.91,4.87);
\draw [color=ttqqqq] (0.91,4.87)-- (2.44,4.2);
\draw [color=ttqqqq] (2.44,4.2)-- (2.8,2.62);
\draw [color=ttqqqq] (2.8,2.62)-- (2.38,1.6);
\draw [color=ttqqqq] (2.38,1.6)-- (0.74,1);
\draw [color=ttqqqq] (0.74,1)-- (-0.6,2);
\draw [color=ttqqqq] (-0.6,2)-- (-0.68,3.66);
\draw [color=ttqqqq] (0.62,2.42)-- (1.36,2.26);
\draw [color=ttqqqq] (1.36,2.26)-- (1.87,2.82);
\draw [color=ttqqqq] (1.87,2.82)-- (1.64,3.54);
\draw [color=ttqqqq] (1.64,3.54)-- (0.9,3.7);
\draw [color=ttqqqq] (0.9,3.7)-- (0.39,3.14);
\draw [color=ttqqqq] (0.39,3.14)-- (0.62,2.42);
\draw [line width=0.7pt,color=ttqqqq] (0.9,3.7)-- (1.58,4.57);
\draw [dash pattern=on 2pt off 2pt] (0.34,4.44)-- (0.9,3.7);
\draw [dash pattern=on 2pt off 2pt] (0.61,4.08)-- (0.06,3.62);
\draw [dash pattern=on 2pt off 2pt] (0.06,3.62)-- (0.39,3.14);
\draw [dash pattern=on 2pt off 2pt] (0.39,3.14)-- (-0.6,2);
\draw [dash pattern=on 2pt off 2pt] (0.39,3.14)-- (1.02,3.06);
\draw [dash pattern=on 2pt off 2pt] (1.02,3.06)-- (1.64,3.54);
\draw [dash pattern=on 2pt off 2pt] (1.02,3.06)-- (2.61,2.16);
\draw [dash pattern=on 2pt off 2pt] (2.13,2.43)-- (1.7,1.48);
\draw [dash pattern=on 2pt off 2pt] (1.64,3.54)-- (2.06,3.52);
\draw [dash pattern=on 2pt off 2pt] (2.06,3.52)-- (2.44,4.2);
\draw [dash pattern=on 2pt off 2pt] (2.06,3.52)-- (2.8,2.62);
\draw [dash pattern=on 2pt off 2pt] (2.44,4.2)-- (1.34,4.26);
\draw [dash pattern=on 2pt off 2pt] (0.91,4.87)-- (1.38,5.34);
\draw [dash pattern=on 2pt off 2pt] (1.38,5.34)-- (2.13,5.44);
\draw [dash pattern=on 2pt off 2pt] (1.74,5.39)-- (1.88,4.92);
\draw [dash pattern=on 2pt off 2pt] (1.38,5.34)-- (1.07,5.77);
\draw [dash pattern=on 2pt off 2pt] (0.16,5.86)-- (0.22,5.28);
\draw [dash pattern=on 2pt off 2pt] (0.22,5.28)-- (-0.32,4.72);
\draw [dash pattern=on 2pt off 2pt] (0.22,5.28)-- (1.38,5.34);
\draw (0.82,5.58)-- (0.84,5.6);
\draw [dash pattern=on 2pt off 2pt] (-0.32,4.72)-- (0.9,5.32);
\end{tikzpicture} 
\caption{Diagrams $D$, dashed edges originate.}
\label{F: diagrams D}
\end{wrapfigure}

 Note that the label $\omega(p_1p_2^{-1})$ is not necessarily in the set $R$. Therefore, a minimality argument as in the proof of Corollary \ref{C: Ollivier} cannot be directly applied. In particular, Theorem \ref{T: classical small cancellation lemma} cannot be applied. We provide an extension of Corollary \ref{C: Ollivier} to the graphical setting: Corollary~\ref{C: Ollivier3} below. 

\smallskip

A \emph{region} is a connected subdiagram of $D$ which is defined inductively as follows. Every face of $D$ is a region. If $M$ is a region and $\Pi$ is a face of $D$, then $M\cup \Pi$ is a region if $M$ and $\Pi$ have an originating inner segment of $D$ in common.  A region is \emph{maximal} if it is not contained in a region with a larger number of faces. 
 Let $M$ be a maximal region.  
 In Figure \ref{F: diagrams D} the dark-grey diagrams $M$ are not simply-connected. \\
\begin{lemma}\label{Hilfssatz van Kampen} Let $D$ be a minimal van Kampen diagram over $R$ as above, let $M$ be a maximal region.
\begin{enumerate}
  \item The label of every boundary cycle of $M$ equals in $F$ to the label of a reduced cycle in $\Omega$.
\item If $M$ is simply-connected in $D$, then we can assume that $M$ has a reduced boundary cycle. 
\end{enumerate}
\end{lemma}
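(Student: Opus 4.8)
The plan is to prove (1) and (2) together by induction on the number $\lvert M\rvert$ of faces of $M$, strengthening the inductive hypothesis in the simply-connected case to: $M$ embeds as a subcomplex of $\widetilde{\Omega_M}$ for some reduced labeled graph $\Omega_M$ equivalent to $\Omega$, with each face mapping to its (unique) lift. For $\lvert M\rvert=1$, $M=\Pi$ is a single face; a boundary cycle of $\Pi$ has label a relator $r\in R$, and by the construction of $\widetilde\Omega$ and of the lift $\Pi'$, together with the remark that this lift is unique, $r$ is the label of a reduced cycle of $\Omega$, giving (1). If $\Pi$ is simply-connected, then $\partial M=\partial\Pi$ is already reduced (its label is weakly cyclically reduced, so no simple subpath is trivial in $F$), and replacing $\Omega$ by the graph in which $\Pi'$ is a copy of $\Pi$ with boundary cycle $\partial\Pi$ gives the strengthened hypothesis, hence (2).

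For the inductive step choose an ``outermost'' face $\Pi$ of $M$ with $M'=M\setminus\Pi$ still a region; when $M$ is simply-connected take $\Pi$ to be a leaf of the dual tree of $M$ (vertices the faces, edges the shared originating segments), so that $M'$ is a simply-connected region with one fewer face and $M=M'\cup\Pi$ along a single originating segment $s$, which in $M'$ lies on the boundary of a single face $\Pi_j$. Since $D$ is minimal, no inner segment of $D$ originates in $R$ in the sense of page~\pageref{D: originate in R}: otherwise the deletion step in the proof of Corollary~\ref{C: Ollivier} would produce a van Kampen diagram for the same element with strictly fewer faces, contradicting minimality of $D$. Hence, writing the boundary cycles of $\Pi_j$ and $\Pi$ through $s$ as $sp_1$ and $sp_2$, Remark~\ref{R: faces in widetilde D} gives that $\omega(p_1p_2^{-1})$ is the label of a reduced cycle of $\Omega$ and, fixing a reduced labeled graph $\Omega'$ witnessing that $s$ originates, that the lifts of $s$ with $\Pi_j$ and with $\Pi$ to $\widetilde{\Omega'}$ coincide.

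Using the uniqueness of the lift of $\Pi_j$ in $\widetilde\Omega$, and that the transformations from $\Omega'$ to $\Omega$ fix the edges lifting with $\Pi_j$, one reconciles $\Omega'$ with a reduced labeled graph $\Omega_{M'}$ realizing the inductive hypothesis for $M'$, obtaining a single reduced labeled graph $\Omega_M$ equivalent to $\Omega$ in which the lift of $M'$ and the lift of $\Pi$ are glued exactly along the image of $s$. When $M$ is simply-connected this is the strengthened hypothesis, and since then $\partial M$ is a simple cycle in the reduced labeled graph $\Omega_M$, performing on $D$ the transformations realizing this lift — legitimate because $M$ is a disc — makes $\partial M$ reduced, which is (2). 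For (1) in general, any boundary cycle $\beta$ of $M$ is obtained from boundary cycles of $M'$ and the boundary cycle of $\Pi$ by splicing along, and discarding, the common image of $s$; inside $\Omega_M$ this realizes $\beta$ as a closed path obtained by splicing reduced cycles along a common arc, and applying the folding and reduction moves available within the equivalence turns it into a reduced cycle, so $\omega(\beta)$ equals in $F$ the label of a reduced cycle of $\Omega$.

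The step I expect to be the main obstacle is the reconciliation of the witnessing graph $\Omega'$ with the inductively constructed $\Omega_{M'}$ into a single graph $\Omega_M$: one must check that the equivalences recording that the successively attached originating segments originate can be chosen coherently, which rests on the uniqueness of lifts and on tracking which edges are fixed by the relevant transformations. The bookkeeping is heaviest when $M$ is not simply-connected, since then the outermost face may be glued to $M'$ along several originating segments and the boundary cycles split and recombine in a more intricate way; there one works only with statement (1) and with the boundary words rather than with an embedded lift.
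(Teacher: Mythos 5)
Your induction rests on a hypothesis that is both stronger than what is true and never established: that the region $M$ \emph{embeds} as a subcomplex of $\widetilde{\Omega_M}$ with each face going to its unique lift. At best one gets an immersion of an equivalent diagram (this is what the paper claims, and only \emph{after} this lemma, for simply-connected regions with all inner edges originating); two faces of $M$ may carry the same relator and have the same lift, and the coherence of the witnessing equivalences across successively attached originating segments is exactly the point you yourself defer as the ``main obstacle''. Your decomposition step is also not available in general: the dual graph of a simply-connected region need not be a tree (faces arranged around an interior vertex give a cycle), so there need not be a face meeting the rest of $M$ along a single originating segment. More importantly, your splice-and-reduce argument for assertion (1) is missing the non-triviality input: after splicing the boundary of $M'$ with that of $\Pi$ along $s$, ``applying folding and reduction moves to obtain a reduced cycle'' fails precisely when the spliced label is trivial in $F$, in which case the reductions collapse the cycle and no reduced cycle of $\Omega$ has that label. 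Excluding this requires the minimality of $D$ applied to \emph{every connected subregion} (a trivial boundary label would allow one to excise its faces, contradicting minimality, as in the proof of Corollary~\ref{C: Ollivier}); you invoke minimality only to rule out segments originating in $R$, i.e.\ the two-face situation of Remark~\ref{R: faces in widetilde D}, which does not cover larger subregions. This non-triviality statement, proved by the excision argument and fed into an induction over subregions with Remark~\ref{R: faces in widetilde D} as base case, is the actual engine of the paper's proof of (1).

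Assertion (2) is a statement about being allowed to \emph{modify the diagram}, and this is where your proposal has its most serious gap. Your base case is already wrong: a weakly cyclically reduced face boundary need not be reduced, since in a free product consolidations allow a weakly reduced word such as $g\cdot g\cdot g^{-2}$ with $g\in G_1$, $g^2\neq 1$, whose (simple) subpath has trivial label; nothing in the definition of a van Kampen diagram forces a face boundary to be a copy of the reduced cycle of $\Omega$. In the inductive step, ``performing on $D$ the transformations realizing this lift'' is not a defined operation: the graph transformations act on labeled graphs, whereas what is needed is a surgery on $D$ itself. The paper's proof cuts the interior of $M$ out of $D$, applies the same sequence of Reductions and Deletions to the two copies of the boundary cycle $c$ (the one bounding $M$ and the one bounding the complement), and reglues the reduced copies along $c'$, checking that the result is again a van Kampen diagram for the same word $w$ with the same number of faces, hence still minimal, and treating separately the case where $\partial M$ meets $\partial D$. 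None of this appears in your argument, and without it the phrase ``we can assume that $M$ has a reduced boundary cycle'' is not justified.
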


\begin{proof}
The  minimality argument of the proof of Corollary \ref{C: Ollivier} implies that every boundary cycle of $M$ represents a non-trivial element of $F$. 
 Moreover, for all  connected subregions $M'\subseteq M$  this argument implies that the label of a boundary cycle of $ M'$ equals a non-trivial element in $F$. By induction, with 
 Remark \ref{R: faces in widetilde D} as the base case, we conclude that the label of every boundary cycle of $M$ equals in $F$ to the label of a non-trivial cycle in $\Omega$. This implies assertion~1.

 \medskip
 
Let $M$ be simply-connected in $D$ and its boundary cycle, denoted by $c$, be not reduced. Assertion 1 implies that $c$ is equivalent to a reduced cycle $c'$  whose label equals the label of $c$ in $F$. The cycle $c'$ can be obtained by applying Reductions to $c$ as long as possible. We then apply Deletions. The procedure terminates as $c$ represents a non-trivial element of $F$ by assertion 1.
 
 \medskip
 
  { We now prove assertion 2.} We remove the interior of $M$ from $D$. We obtain a diagram~$D'$. Either $\partial M$ contains an exterior edge of $D$ or not. In the second case, the boundary of $D'$ consists of $\partial D$ and $c$. Apply the sequence of Reductions that transform $c$ to $c'$ to the copies of $c$ in the boundary of $M$ and ${D}'$. We denote the resulting diagrams by $M'$ and ${D}''$. Then apply Deletions to both $M'$ and ${D}''$. The boundary cycle  of $M'$ is $c'$ and the boundary of ${D}''$ consists of  $\partial D$ and a copy of $c'$. By construction, the inner segments of $M$ originate, $|M|=|\Pi'(M)|$, and $|{D}'|=|{D}''|$. The diagram $M'$ can be glued into  $D''$ along the copies of $c'$. We obtain a new van Kampen diagram $D'''$ with boundary word $w$ such that $|D|=|D'''|$.  
In the first case the arguments are analogous.
\end{proof}
This  amounts to the aforementioned generalization of Corollary \ref{C: Ollivier}. 
\begin{cor} \label{C: Ollivier3} Let $D$ be a minimal van Kampen diagram over $R$ as above. Then every maximal region $M$ is simply-connected (hence, the interior of $M$ is homeomorphic to an open disk). 
\end{cor}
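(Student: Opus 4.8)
The plan is to argue by contradiction, using the minimality of $D$ to run an induction on $|D|$ and reducing to an application of the classical small cancellation lemma, Theorem~\ref{T: classical small cancellation lemma}; this mirrors the way Corollary~\ref{C: Ollivier} was deduced from Theorem~\ref{T: classical small cancellation lemma}. So suppose the corollary fails, and among all minimal van Kampen diagrams over $R$ (over all boundary words) that carry a maximal region which is not simply-connected, fix one, $D$, with $|D|$ as small as possible, and let $M$ be such a region in $D$. Since $D$ is a planar disc and $M$ is not simply-connected, $D\setminus M$ has a bounded connected component; choosing an innermost one we obtain a simply-connected subdiagram $K\subseteq D$ whose boundary cycle $q:=\partial K$ is a closed sub-path of a boundary cycle of $M$. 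A degenerate possibility is that $K$ has no face; then $q$ merely traverses spurs of $M$ and can be removed by Deletions and Reductions (as in the proof of Lemma~\ref{Hilfssatz van Kampen}) without changing $|D|$ or $G(\Omega)$, eliminating this source of non-simple-connectedness, so I may assume $K$ has at least one face. The decisive observation at this stage is that \emph{no inner segment of $D$ lying on $q$ originates}: otherwise the face of $K$ abutting such a segment, glued to $M$, would form a region with strictly more faces than $M$, contradicting maximality of $M$. Since on $q$ the faces of $K$ meet only faces of $M$, it follows that every connected sub-path $p$ of $q$ that also appears as a sub-path of the boundary cycle of a face, or of a region, of $D$ is a \emph{piece}: the two appearances of $p$ lift to distinct paths in $\Omega$, because at every edge of $p$ the lift on the $K$-side differs from the lift on the $M$-side (this is precisely the failure of ``originates''). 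Finally, by Lemma~\ref{Hilfssatz van Kampen}(1) the label $\omega(q)$ equals in $F$ the label of a reduced cycle of $\Omega$; after the simplifications of Lemma~\ref{Hilfssatz van Kampen}(2) applied to $K$, I may assume $q$ is reduced, so $\lvert\omega(q)\rvert_*\geqslant\gamma$.

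Next comes the inductive reduction. Replacing $K$ by a minimal van Kampen diagram for $\omega(q)$ over $R$ does not increase the number of faces, and it cannot decrease it either, for otherwise re-gluing it into $D$ along $q$ would produce a smaller van Kampen diagram for the boundary word of $D$. Hence $K$ is itself a minimal van Kampen diagram, with $|K|<|D|$, so by minimality of $|D|$ every maximal region of $K$ is simply-connected; each of these is a maximal region of $D$ as well. Now collapse every maximal region of $K$ to a single face whose boundary cycle is the corresponding reduced cycle of $\Omega$, which is legitimate by Lemma~\ref{Hilfssatz van Kampen}(2) exactly as in the last paragraph of its proof. This yields a labeled, simply-connected diagram $\widehat K$ with simply-connected faces, each face carrying the label of a reduced cycle of $\Omega$ (hence of syllable length $\geqslant\gamma$), all of whose inner segments are pieces (by the observation of the first step, applied between two collapsed regions), and whose whole boundary $q$ is a concatenation of pieces. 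Since $\Lambda<\lambda\gamma$, the diagram $\widehat K$ satisfies the $C'_*(\lambda)$--condition over $F$, and $\lambda\leqslant 1/6$, so Theorem~\ref{T: classical small cancellation lemma} applies to $\widehat K$.

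It remains to draw the contradiction. If $\widehat K$ has at least two faces, the first assertion of Theorem~\ref{T: classical small cancellation lemma} gives an exterior face $\Pi$ of $\widehat K$ with $\partial_{ext}\Pi$ connected and $\lvert\omega(\partial_{ext}\Pi)\rvert_*>(1-3\lambda)\lvert\omega(\partial\Pi)\rvert_*$. But $\partial_{ext}\Pi$ is a connected sub-path of $q$ that is simultaneously a sub-path of the boundary cycle of the face $\Pi$ (which carries the label of a reduced cycle of $\Omega$), hence is a piece by the first step; therefore $\lvert\omega(\partial_{ext}\Pi)\rvert_*\leqslant\Lambda<\lambda\gamma\leqslant\lambda\lvert\omega(\partial\Pi)\rvert_*$, and combining the two inequalities yields $1-3\lambda<\lambda$, i.e.\ $\lambda>1/4$, contradicting $\lambda\leqslant 1/6$. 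If instead $\widehat K$ has exactly one face, then $K$ is a single maximal region with $\partial K=q$, glued to $M$ entirely along $q$; then $q$ is itself a piece, so $\lvert\omega(q)\rvert_*\leqslant\Lambda<\lambda\gamma<\gamma$, contradicting $\lvert\omega(q)\rvert_*\geqslant\gamma$. Either way we contradict the choice of $D$, which proves that every maximal region of a minimal van Kampen diagram over $R$ is simply-connected; that its interior is then homeomorphic to an open disc is immediate.

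I expect the main obstacle to be making the ``piece'' claim of the first step fully rigorous, and its use after collapsing: one must track, through the graph transformations and the collapsing operations, the equivalent graphs $\Omega'$ that witness ``originates'', and check that a connected sub-path of $q$ reappearing on the boundary of a collapsed region has genuinely distinct immersions into one common such $\Omega'$ — that is, that the non-originating property passes to sub-paths and survives the collapse. A secondary, more routine obstacle is the degenerate configuration where $\partial K$ meets $\partial D$, which should be dispatched exactly as the ``first case'' in the proof of Lemma~\ref{Hilfssatz van Kampen}.
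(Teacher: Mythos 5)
Your argument is correct and takes essentially the same route as the paper: maximality of $M$ forces every arc along $q$ to be non-originating (hence a piece), Lemma~\ref{Hilfssatz van Kampen} lets you collapse the simply-connected maximal regions of the inner subdiagram into faces with reduced boundary labels, and Theorem~\ref{T: classical small cancellation lemma} applied to the collapsed diagram yields the contradiction — the paper's proof does exactly this, only replacing your minimal-counterexample induction on $|D|$ by the choice of an \emph{innermost} non-simply-connected region, so that the maximal regions of the filling subdiagram are simply-connected by that choice. The degenerate cases you treat separately (no faces in $K$, a single face in $\widehat K$) and the explicit inequality $1-3\lambda<\lambda$ are simply absorbed in the paper's terser appeal to Theorem~\ref{T: classical small cancellation lemma}.
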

\begin{proof}
 We use the notions of the proof of Corollary \ref{C: Ollivier} and consider an innermost not simply-connected region $M$ in $D$. All maximal regions $M_i$ of $K_2$ are simply-connected by assumption. This allows to define a diagram  $\widetilde{K_2}$ with faces $ M_i$ by deleting the originating edges from $K_2$. By definition, the faces $M_i$ of $\widetilde{K_2}$ are simply-connected. The $ M_i$ have reduced labeled boundary cycles by Lemma~\ref{Hilfssatz van Kampen}. Every face $M_i$ has a lift to a graph $\Omega$ and every arc $p$ in $\widetilde{K_2}$ lifts with a face $ M_i$ to such $\Omega$. Moreover, there is a reduced cycle in $\Omega$ whose label equals the label of $K_2$ in $D$. As $p$ does not consist of originating edges, we conclude that $p$ is a piece. This implies that $\widetilde{K_2}$ satisfies the $C'_*(1/6)$--condition and all arcs on the boundary of $\widetilde{K_2}$ are pieces. This contradicts Theorem \ref{T: classical small cancellation lemma}.
 \end{proof}

Corollary \ref{C: Ollivier3} allows to define a diagram $\widetilde{D}$ with faces $M$, by deleting the originating edges from $D$. In addition, all faces of $\widetilde{D}$  are simply-connected. As no inner segment of  $\widetilde{D}$ originates, $\widetilde{D}$ satisfies the $C'_*(\lambda)$--condition and we can apply Theorem~\ref{T: classical small cancellation lemma} to $\widetilde{D}$. The result summarizes as follows.

\begin{lemma}[Graphical small cancellation lemma] \label{L: graphical small cancellation lemma}
Let $0< \lambda \leqslant 1/6$. Let $D$ be a minimal van Kampen diagram over a $Gr_*'(\lambda)$--graphical small cancellation presentation. 
 \begin{itemize}
    \item If $\widetilde{D}$ has more than two faces then there are at least two exterior faces $M$ in $\widetilde{D}$ such that  
      $$\lvert\omega(\partial_{ext}M)\rvert_*>\left( 1-{3}{\lambda}\right){\lvert\omega(\partial M)\rvert_*},$$ 
      $\partial_{int}M$ consists of at most three pieces, and $\partial_{ext}M$ is connected.
    \item The following inequality is satisfied:
    $$\lvert\omega(\partial D)\rvert_*> (1-6\lambda) \sum_{ M_i \text{ is a face in }\widetilde{D}} \lvert\omega(\partial M_i)\rvert_*.$$  
    \item The label of $\partial D$ is at  least as long as (with respect to the length function $\lvert\, .\, \rvert_*$) the label of $\partial M$, for every face $M$ in $\widetilde{D}$.
 \end{itemize}
\end{lemma}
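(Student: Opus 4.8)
The plan is to deduce the lemma from Theorem~\ref{T: classical small cancellation lemma} applied to the auxiliary diagram $\widetilde{D}$ obtained from $D$ by deleting all originating edges and collapsing each maximal region into a single face. First I would check that $\widetilde{D}$ is an admissible input for that theorem. By Corollary~\ref{C: Ollivier3} every maximal region $M$ of $D$ is simply-connected, so collapsing the maximal regions yields a labeled diagram whose faces are the $M_i$ and are simply-connected; moreover $\widetilde{D}$ is connected and simply-connected, since it is built from the cells of $D$ by gluing along originating inner segments only. By Lemma~\ref{Hilfssatz van Kampen}(2) we may take each $M_i$ to have a \emph{reduced} boundary cycle, and by Lemma~\ref{Hilfssatz van Kampen}(1) the label of that boundary cycle equals in $F$ the label of a reduced cycle in $\Omega$; since $\Omega$ is itself reduced, this forces $\lvert\omega(\partial M_i)\rvert_*\geqslant\gamma$. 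Finally I would invoke the last paragraph of the proof of Lemma~\ref{Hilfssatz van Kampen}: the originating segments are inner edges and the Reductions used to reduce region boundaries are performed in the interior (the exterior-region case being analogous), so the passage $D\rightsquigarrow\widetilde{D}$ leaves the boundary word unchanged, i.e. $\omega(\partial\widetilde{D})=\omega(\partial D)$.

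Next I would verify that $\widetilde{D}$ satisfies the $C'_*(\lambda)$--condition over $F$. Every inner segment $s$ of $\widetilde{D}$ lies in the common boundary of two distinct faces $M_i,M_j$ and, by construction, consists of non-originating edges; by the Remark following the definition of originating segments, $s$ is a piece in a graph equivalent to $\Omega$, so $\lvert\omega(s)\rvert_*\leqslant\Lambda$. Combining $\Lambda/\gamma<\lambda$ with $\lvert\omega(\partial M_i)\rvert_*\geqslant\gamma$ gives $\lvert\omega(s)\rvert_*<\lambda\min\{\lvert\omega(\partial M_i)\rvert_*\mid M_i\text{ a face of }\widetilde{D}\}$, which is exactly the $C'_*(\lambda)$--condition for $\widetilde{D}$.

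Having placed $\widetilde{D}$ in the hypotheses of Theorem~\ref{T: classical small cancellation lemma}, the three conclusions of that theorem --- with its ``faces'' being the $M_i$ and its ``$\partial D$'' being $\partial\widetilde{D}$ --- become, after substituting $\omega(\partial\widetilde{D})=\omega(\partial D)$, precisely the three bullets to be proved; in the first bullet, ``$\partial_{int}M$ consists of at most three inner segments'' reads ``at most three pieces'' because, as noted above, the inner segments of $\widetilde{D}$ are pieces. I expect the only genuine work to lie in justifying that $\widetilde{D}$ really is a van Kampen-type diagram meeting those hypotheses --- namely the simple-connectedness of its faces (this is Corollary~\ref{C: Ollivier3}, itself proved by an innermost-region argument), the fact that its face boundary labels are weakly cyclically reduced of syllable length at least $\gamma$, and that $D\rightsquigarrow\widetilde{D}$ does not disturb $\omega(\partial D)$ (all of which are supplied by Lemma~\ref{Hilfssatz van Kampen}). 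Once these are in hand, the lemma is a transcription of Theorem~\ref{T: classical small cancellation lemma}.
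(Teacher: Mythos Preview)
Your proposal is correct and follows exactly the approach the paper takes: the lemma is presented there as a direct consequence of the paragraph preceding it, which constructs $\widetilde{D}$ via Corollary~\ref{C: Ollivier3}, observes that its inner segments (being non-originating) are pieces so that $\widetilde{D}$ satisfies the $C'_*(\lambda)$--condition, and then invokes Theorem~\ref{T: classical small cancellation lemma}. Your write-up merely spells out in more detail the verifications (simple-connectedness of the $M_i$, reducedness of their boundary labels via Lemma~\ref{Hilfssatz van Kampen}, and preservation of $\omega(\partial D)$) that the paper leaves implicit.
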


 We  apply the techniques of this section to show our graphical small cancellation theorem.

\subsection{Small cancellation theorems}

We now generalize the classical small cancellation theorems, Theorem~\ref{C: hyperbolic},  Theorem~\ref{C: t.f.} and Theorem~\ref{C: injection} to the graphical setting. 
Our first aim is to show a linear isoperimetric inequality for all minimal van Kampen diagrams over $R$ given by a labeled graph $\Omega$ with the $Gr_*'(1/6)$--condition. 

\smallskip

Let $D$ be a minimal van Kampen diagram over $R$ and $\widetilde{D}$ is obtained from $D$ as above, by deleting originating edges.  Lemma \ref{L: graphical small cancellation lemma} yields: \[\lvert \omega(\partial D)\rvert_*> (1-6\lambda) \sum_{ M_i \text{ is a face in }\widetilde{D}} \lvert\omega(\partial M_i) \rvert_*.\]  

Every face $M$ of $\widetilde{D}$ represents a connected and simply-connected region (also denoted by $M$) of $D$  all of whose inner edges originate. The label on a boundary cycle $c$ of $M$ is weakly reduced, represents an element of $F$, and lifts with $M$ to  $\widetilde{\Omega}$ (or to $\widetilde{\Omega}'$, for a graph $\Omega'$ equivalent to $\Omega$). As all inner segments of $M$ originate, the whole  diagram $M$ lifts to $\widetilde{\Omega}$. Thus, there is an equivalent diagram $M'$ that has an immersion into $\widetilde{\Omega}$: its boundary cycle is a copy of $c$ and its faces are the lifts of the faces of $M$ in $\widetilde{\Omega}$. We have $|M|=|M'|$.  
 A combination of the above inequality with the next lemma yields a linear isoperimetric inequality for $D$.

\begin{lemma}
Let $D'$ be a van Kampen diagram that is immersed in $\widetilde{\Omega}$. Then there is $C>0$ such that $$\lvert D'\rvert\leqslant C\lvert \omega(\partial D') \rvert_*.$$ 
\end{lemma}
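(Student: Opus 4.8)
The plan is to bound the number of faces of a van Kampen diagram $D'$ immersed in $\widetilde{\Omega}$ in terms of the free product length of its boundary label, exploiting finiteness of $\Omega$ (or of the family $\Omega$). First I would observe that since $D'$ immerses in $\widetilde{\Omega}$, every face of $D'$ is the lift of a $2$-cell of $\widetilde{\Omega}$, hence its boundary label is (a weakly cyclically reduced conjugate of) one of the finitely many relators in $R$; in particular each face boundary has free product length at most $\Gamma:=\max\{\lvert r\rvert_*\mid r\in R\}$, and dually at least $\gamma$. The total count $\sum_{\Pi\text{ face of }D'}\lvert\omega(\partial\Pi)\rvert_*$ is therefore comparable to $\lvert D'\rvert$: it lies between $\gamma\lvert D'\rvert$ and $\Gamma\lvert D'\rvert$. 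So it suffices to bound that sum linearly by $\lvert\omega(\partial D')\rvert_*$.

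Next I would feed $D'$ into the graphical small cancellation machinery already established. Since $D'$ is immersed in $\widetilde{\Omega}$, no inner edge of $D'$ can originate: the lifts of two adjacent faces to $\widetilde{\Omega}$ are determined by the immersion and an originating edge would force two distinct reduced cycles of $\Omega$ to carry the same label, contradicting the $Gr'_*(\lambda)$--condition (this is exactly the Remark following the definition of originating segments, together with the uniqueness-of-lift Remark). Hence $\widetilde{D'}=D'$, all faces of $D'$ are simply-connected, every inner segment of $D'$ is a piece, and $D'$ satisfies the $C'_*(\lambda)$--condition over $F$ with $\lambda\le 1/6$. Now the second bullet of Theorem~\ref{T: classical small cancellation lemma} (equivalently Lemma~\ref{L: graphical small cancellation lemma}) applies directly to $D'$ and gives
\[
\lvert\omega(\partial D')\rvert_* > (1-6\lambda)\sum_{\Pi\text{ a face in }D'}\lvert\omega(\partial\Pi)\rvert_* \geqslant (1-6\lambda)\,\gamma\,\lvert D'\rvert,
\]
so one may take $C = 1/\big((1-6\lambda)\gamma\big)$, which is a positive constant because $\lambda<1/6$ and $\gamma\geqslant 1$. (In the family version where $\Omega$ is a possibly infinite family of finite graphs, $\gamma$ is still the minimal cycle length over the whole family, so the constant is uniform; if $D'$ has at most two faces one checks the bound trivially.)

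The step I expect to be the main obstacle — and the one needing care rather than cleverness — is verifying rigorously that $D'$ genuinely inherits the $C'_*(\lambda)$--condition, i.e. that the immersion $D'\hookrightarrow\widetilde{\Omega}$ precludes originating inner segments so that Theorem~\ref{T: classical small cancellation lemma} is literally applicable without passing through the $\widetilde{(\cdot)}$ construction. One has to be slightly attentive to inner segments lying in the common boundary of a face with itself (self-intersections) versus genuine simply-connected faces; but since an immersion is locally injective, $D'$ has no not-simply-connected faces and no self-intersecting faces to begin with, so this complication does not arise. A secondary subtlety is the edge case $\lvert D'\rvert\le 2$, where Theorem~\ref{T: classical small cancellation lemma} gives no information about pieces between two faces directly; there one argues that a diagram with one face has $\lvert\omega(\partial D')\rvert_*\ge\gamma\ge\gamma\lvert D'\rvert$ by the weak-reducedness of the boundary label, and a diagram with two faces sharing a segment still has boundary length bounded below by $\gamma$ by the last bullet of Theorem~\ref{T: classical small cancellation lemma} (or by a direct syllable count using $\lambda\le1/6$), so the asserted $C$ works uniformly.
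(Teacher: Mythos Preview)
Your argument contains a fundamental error: you have the notion of ``originating'' exactly backwards. By definition, an inner edge $e\in\partial\Pi_1\cap\partial\Pi_2$ \emph{originates} when the lifts of $e$ with $\Pi_1$ and with $\Pi_2$ to $\widetilde{\Omega}$ \emph{coincide}. If $D'$ is immersed in $\widetilde{\Omega}$, then the immersion itself furnishes compatible lifts of every face, and for any inner edge the two lifts agree (this is precisely what local injectivity of the immersion says). Hence \emph{every} inner edge of $D'$ originates; the whole of $D'$ is a single maximal region and $\widetilde{D'}$ has exactly one face. The inner segments of $D'$ are therefore \emph{not} pieces and their labels are not bounded by the $Gr'_*(\lambda)$--condition at all, so Theorem~\ref{T: classical small cancellation lemma} is inapplicable. (A secondary problem: even were it applicable, the hypothesis allows $\lambda=1/6$, at which the factor $1-6\lambda$ vanishes and your constant $C$ blows up.)

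This is not a technical gap but the heart of the matter: the lemma is invoked precisely to control what happens \emph{inside} a single region of $\widetilde{D}$, where small cancellation gives no information. The paper's proof therefore abandons small cancellation entirely and uses a direct geometric argument in $\Omega$: since $\partial D'$ immerses as a closed path in $\Omega$, one can cut it into subwords of length about $2\diam(\Omega)$ and close each off by a short path of length at most $\diam(\Omega)$; each resulting loop has length at most $3\diam(\Omega)$ and bounds a single $2$-cell of $\widetilde{\Omega}$. Iterating yields roughly $\lvert\omega(\partial D')\rvert_*/\diam(\Omega)$ faces, and since $\diam(\Omega)\geqslant\gamma/2$ one gets $C=3/\gamma$. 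You should reprove the lemma along these lines.
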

\begin{proof} We extend the proof of \cite{ollivier_small_2006}*{p.81, L.11} for the word length metric to  free product length~$\lvert\, .\, \rvert_*$. It suffices to prove the claim for a special choice of $ R $. Denote by $\diam$ the diameter of $\Omega$. Let $ R $ be the elements represented by labels of cycles $c$ such that $\lvert\omega(c) \rvert_*\leqslant3 \diam$. Let  $D'$ be a minimal van Kampen diagram for $w$ over $ R $ that is immersed in  $\widetilde{\Omega}$. We prove that $\lvert D\rvert\leqslant\frac{3\lvert w\rvert_*}{\gamma}$, where $\gamma=\min\{\lvert \omega(c) \rvert_* \mid c \text{ is a non-trivial cycle in } \Omega\}$.

If $\lvert w\rvert_*\leqslant2 \diam$, there is a diagram with one face. If $$2\diam\leqslant\lvert w\rvert_* \leqslant(n+1)\diam,$$ $w$ is the concatenation of paths $w'$ and $w''$,  so that $\lvert w'\rvert_*=2\diam$. There is a path $x$ connecting the terminal and the starting vertex of $w'$ such that $$\lvert x \rvert_*\leqslant\lvert x\rvert\leqslant\diam;$$ and w.l.o.g. (choose $w'$ such that $\lvert w''\rvert_*=\lvert w\rvert_*-\lvert w'\rvert_*$) $$\lvert xw''\rvert\leqslant\lvert w\rvert_*-\diam \leqslant n\diam.$$

Observe that $\lvert w'x\rvert_*\leqslant3\diam$. There is a van Kampen diagram for $w'x$ consisting of only one face. Iterating yields that $D'$ has at most $1+\frac{\lvert w\rvert_*}{\diam}$ faces. Since $\diam \geqslant \frac{\girth}{2}\geqslant \frac{\gamma}{2}$ and $\lvert w\rvert_*\geqslant \gamma$, we have $\lvert D'\rvert\leqslant\frac{3\lvert w\rvert_*}{\gamma}.$
\end{proof}

We now use the linear isoperimetric inequality for $D$ to conclude our first main result. 

\begin{theorem}\label{lii}
Let $G_1,\ldots,G_d$ be finitely generated groups.  Let $\Omega$ be a family of finite connected graphs with a reduced labeling by $G_1\cup \ldots \cup G_d$. Suppose the labeling satisfies the $Gr_*'(1/6)$--condition over the free product $G_1*\cdots *G_d$.  Let $G$ be the group generated by $G_1\cup \ldots \cup G_d$ subject to relators represented by labels of simple cycles generating the fundamental group of $\Omega$. Let $D$ be a minimal van Kampen diagram over the given presentation of $G$.

Then $D$ satisfies the linear isoperimetric inequality $$\lvert D\rvert\leqslant C \lvert \partial D\rvert_*.$$

If $\Omega$ is finite and $G_1,\ldots,G_d$ are Gromov hyperbolic, then $G$ is Gromov hyperbolic.
\end{theorem}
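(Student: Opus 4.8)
The plan is to assemble Theorem~\ref{lii} from the pieces already established in this section, chaining the inequalities together. First I would fix a minimal van Kampen diagram $D$ over $R$ and form $\widetilde{D}$ by deleting the originating edges, as described above; by Corollary~\ref{C: Ollivier3} every maximal region $M$ is simply-connected, so the faces of $\widetilde{D}$ are genuine $2$-cells and the Graphical small cancellation lemma, Lemma~\ref{L: graphical small cancellation lemma}, applies. This gives
\[
\lvert \omega(\partial D)\rvert_* > (1-6\lambda) \sum_{M_i \text{ is a face in }\widetilde{D}} \lvert\omega(\partial M_i)\rvert_*,
\]
with $\lambda = 1/6$ — so to keep this useful one actually wants $\lambda<1/6$ strictly, or else one argues via the first bullet of Lemma~\ref{L: graphical small cancellation lemma} instead; either way the key point is that $\lvert \omega(\partial D)\rvert_*$ dominates a positive multiple of $\sum_i\lvert\omega(\partial M_i)\rvert_*$.

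Next I would bound $\lvert D\rvert$ from above by $\sum_i \lvert\omega(\partial M_i)\rvert_*$. Here each region $M$ (a face of $\widetilde{D}$) has all inner edges originating, so the whole subdiagram $M$ lifts to $\widetilde{\Omega}$ (or to $\widetilde{\Omega'}$ for an equivalent graph), giving an equivalent diagram $M'$ immersed in $\widetilde{\Omega}$ with $\lvert M\rvert = \lvert M'\rvert$ and the same boundary label. The preceding lemma then provides a constant $C_0>0$, depending only on $\Omega$ (through its diameter and girth), with $\lvert M'\rvert \leqslant C_0\,\lvert\omega(\partial M')\rvert_*$. Summing over the faces $M_i$ of $\widetilde{D}$ yields $\lvert D\rvert = \sum_i \lvert M_i\rvert \leqslant C_0 \sum_i \lvert\omega(\partial M_i)\rvert_*$, and combining with the isoperimetric inequality from Lemma~\ref{L: graphical small cancellation lemma} gives $\lvert D\rvert \leqslant C\,\lvert\omega(\partial D)\rvert_*$ for a suitable $C>0$ (e.g. $C = C_0/(1-6\lambda)$ in the strict case), which is the asserted inequality $\lvert D\rvert \leqslant C\,\lvert\partial D\rvert_*$.

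Finally, for the hyperbolicity statement: assume $\Omega$ is finite and each $G_i$ is Gromov hyperbolic. Finiteness of $\Omega$ means $R$ is finite, so $G$ is finitely presented over $F = G_1*\cdots*G_d$; moreover each $G_i$ is finitely generated and hyperbolic, hence so is $F$ (a free product of finitely many hyperbolic groups is hyperbolic). Since $\lvert\,.\,\rvert_* \leqslant \lvert\,.\,\rvert$ on $F$ — as noted earlier in the excerpt — the inequality $\lvert D\rvert \leqslant C\,\lvert\omega(\partial D)\rvert_*$ immediately upgrades to a linear isoperimetric inequality $\lvert D\rvert \leqslant C\,\lvert\omega(\partial D)\rvert$ with respect to the word length. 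One then needs that van Kampen diagrams over $F$ built from a finite presentation of each factor, filled in over the finitely many relators $R$, realize the Dehn function of $G$ up to the standard linear change coming from hyperbolicity of the factors; this is exactly the step where one passes from a relative (free-product) linear isoperimetric inequality to an absolute one, and it is the main obstacle — it requires combining the Dehn functions of the $G_i$ with the relative estimate, which is where a reference to Theorem~\ref{C: hyperbolic} / Pankrat'ev's argument and the standard equivalence ``linear Dehn function $\Leftrightarrow$ hyperbolicity'' does the work. With that in hand, $G$ has a linear Dehn function and is therefore Gromov hyperbolic.
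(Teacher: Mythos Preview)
Your proposal is correct and follows essentially the same route as the paper: apply Lemma~\ref{L: graphical small cancellation lemma} to $\widetilde{D}$, use that each region $M$ lifts to an immersed diagram in $\widetilde{\Omega}$ so that the preceding lemma bounds $\lvert M\rvert$ linearly in $\lvert\omega(\partial M)\rvert_*$, sum, and then pass to the word metric via $\lvert\,.\,\rvert_*\leqslant\lvert\,.\,\rvert$ together with hyperbolicity of the factors (Pankrat'ev's argument, Theorem~\ref{C: hyperbolic}). Your explicit remark that at $\lambda=1/6$ the coefficient $(1-6\lambda)$ vanishes and one must use the strict inequality $\Lambda/\gamma<1/6$ to work with some $\lambda<1/6$ is a point the paper leaves implicit.
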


This extends Theorem \ref{C: hyperbolic} to the free product. The next result extends Theorem \ref{C: t.f.}.

\begin{theorem}\label{T: tf} Let $G_1,\ldots,G_d$ be finitely generated torsion-free groups.  Let $\Omega$ be a family of finite connected graphs with a reduced labeling by $G_1\cup \ldots \cup G_d$. Suppose the labeling satisfies the $Gr_*'(1/8)$--condition over the free product $G_1*\cdots *G_d$. Let $G$ be the group presented by $G_1\cup \ldots \cup G_d$ as generators and the elements $r\in R$ represented by labels of the cycles of $\Omega$ as relators. 

Then $G$ is torsion-free.
\end{theorem}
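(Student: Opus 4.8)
The plan is to mimic the classical proof of the torsion theorem (Theorem~\ref{C: t.f.}), which reduces torsion-freeness to an analysis of van Kampen diagrams of a very constrained shape, but to run this analysis with $\widetilde{D}$ and the Graphical small cancellation lemma (Lemma~\ref{L: graphical small cancellation lemma}) in place of the classical diagram and Theorem~\ref{T: classical small cancellation lemma}. Suppose, for contradiction, that $G$ has torsion: then there is $g\in F=G_1*\cdots*G_d$, not in $\langle\langle R\rangle\rangle$, and some $n\geqslant 2$ with $g^n\in\langle\langle R\rangle\rangle$. First I would pass to a weakly cyclically reduced conjugate of $g$ and take a minimal van Kampen diagram $D$ for $g^n$ over $R$; by cyclic symmetry one wants $D$ to be an \emph{annular} (or ``disc with a central distinguished vertex'') diagram, or equivalently one works with a van Kampen diagram for $g^n$ whose boundary is $n$ copies of (the normal form of) $g$ and exploits the $\mathbb{Z}/n$-symmetry. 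The standard move is: if $D$ were genuinely annular one could cut it open; minimality of $|D|$ plus the symmetry forces strong restrictions on the faces meeting the boundary.

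The core of the argument is the combinatorial/curvature estimate. Passing to $\widetilde{D}$ (Corollary~\ref{C: Ollivier3} guarantees its faces $M$ are simply-connected regions of $D$, and no inner segment of $\widetilde{D}$ originates, so $\widetilde{D}$ satisfies the $C'_*(\lambda)$--condition), the first bullet of Lemma~\ref{L: graphical small cancellation lemma} gives, whenever $\widetilde{D}$ has more than two faces, at least two exterior faces $M$ with $|\omega(\partial_{ext}M)|_*>(1-3\lambda)|\omega(\partial M)|_*$, with $\partial_{int}M$ made of at most three pieces and $\partial_{ext}M$ connected. With $\lambda\leqslant 1/8$ one gets $|\omega(\partial_{ext}M)|_*>\tfrac{5}{8}|\omega(\partial M)|_*$, i.e.\ such an $M$ shares \emph{more than half} of its boundary with $\partial D$. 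The plan is to show that the existence of such a face $M$ contradicts either the minimality of $D$ or the assumption that no relator (equivalently, no reduced cycle of $\Omega$) is a proper power: because $\partial D$ consists of $n$ copies of $g$, a face $M$ whose exterior label is more than half of $\omega(\partial M)$ and which (after accounting for the at most three interior pieces) has its exterior boundary lying across the boundary pattern forces $\omega(\partial M)$ — which by Lemma~\ref{Hilfssatz van Kampen}(1) equals in $F$ the label of a reduced cycle of $\Omega$, hence a relator $r\in R$ — to contain a large subword of $g^2$; the ``more than half'' estimate combined with the $\mathbb{Z}/n$-periodicity of the boundary then shows $r$ is conjugate into $\langle g\rangle$ in a way that makes $r$ a proper power in $F$ (this uses torsion-freeness of the $G_i$, so that $F$ is torsion-free and ``$r$ reads a proper power on the boundary'' genuinely means $r$ \emph{is} a proper power), contradicting the hypothesis. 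The residual cases — $\widetilde{D}$ has at most two faces, or $|D|\in\{0,1\}$ — are handled directly: $|D|=0$ gives $g^n=1$ in $F$ hence $g=1$ in $F$; $|D|=1$ means $g^n$ is (a conjugate of) a relator which, reading a proper power, would itself be a proper power; and the two-face case is an elementary matching-of-boundaries argument, again using $\lambda\leqslant1/8$ to bound the piece shared by the two faces.

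More precisely, I would organize the writing as follows. \textbf{Step 1:} set up the contradiction, reduce to $g$ weakly cyclically reduced and to a minimal diagram $D$ for $g^n$ with boundary labeled by $n$ cyclic copies of $g$, and form $\widetilde{D}$ via Corollary~\ref{C: Ollivier3}. \textbf{Step 2:} dispose of $|\widetilde{D}|\leqslant 2$ and $|D|\leqslant1$ by the elementary arguments above. \textbf{Step 3:} in the remaining case apply Lemma~\ref{L: graphical small cancellation lemma} to get an exterior face $M$ with $|\omega(\partial_{ext}M)|_*>(1-3\lambda)|\omega(\partial M)|_*\geqslant\tfrac58|\omega(\partial M)|_*$; \textbf{Step 4:} use the $n$-fold periodicity of $\partial D$ together with this inequality (and the bound on interior pieces) to locate within $\omega(\partial M)$ — which is, up to $F$-equality and cyclic permutation, a relator $r$ and, by Lemma~\ref{Hilfssatz van Kampen}, the label of a reduced cycle in $\Omega$ — two ``translates'' of a common subpath differing by the deck translation of $\partial D$, and conclude that $r$ is a proper power, the desired contradiction.

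The main obstacle, and the step I expect to require the most care, is \textbf{Step 4}: controlling how $\partial_{ext}M$ sits inside the periodic word $g^n$. In the free-group case this is the classical ``a relator that overlaps itself in a periodic boundary is a proper power'' lemma, but over the free product one must run the bookkeeping with the syllable length $|\cdot|_*$ rather than word length, be careful that consolidations at vertices of the second type do not spoil the periodicity count, and — crucially — remember that $\omega(\partial M)$ is only \emph{equal in $F$} to a relator and to a cycle label of $\Omega$, not literally a fixed normal form, so the ``proper power'' conclusion has to be phrased at the level of $F$ and then converted to a statement about reduced cycles of $\Omega$ (a priori a relator being a proper power in $F$ already contradicts the hypothesis as stated, so this conversion may be unnecessary, but one should double-check the hypothesis is ``no relator in $R$ is a proper power'' in $F$, which is exactly the formulation of Theorem~\ref{C: t.f.}). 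A secondary subtlety is making the $\mathbb{Z}/n$-symmetry of $D$ rigorous: rather than literally building an annular diagram, it is cleaner to take $D$ a minimal diagram for $g^n$, observe that minimality is preserved under the cyclic rotation of the boundary word, and use this to argue that the ``exterior face with more than half its boundary outside'' from Lemma~\ref{L: graphical small cancellation lemma} can be taken to straddle a period boundary of $g^n$, which is where the proper-power phenomenon is forced.
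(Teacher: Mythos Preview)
Your overall shape --- pass to $\widetilde D$ and invoke Lemma~\ref{L: graphical small cancellation lemma} to find a face $M$ with $|\omega(\partial_{\mathrm{ext}}M)|_* > \tfrac{5}{8}|\omega(\partial M)|_*$ --- matches the paper. But Step~4 and the target contradiction are wrong, and a graphical-specific preparatory step is missing.

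First, you never minimize over conjugates. The paper takes $z$ of \emph{least $|\cdot|_*$-length among all $F$-representatives of $G$-conjugates of $g$}, and this is essential: it is what forces the exterior arc $u$ of the Greendlinger face to exceed one period. (If $u$ fit inside a single cyclic copy of $z$, replacing $u$ by $v^{-1}$, where $r=uv$, would give a strictly shorter $G$-conjugate.) With $g$ merely weakly cyclically reduced, nothing prevents $|g|_*$ from dwarfing every relator length, so $u$ sits inside one period and no overlap is visible; your ``straddle a period boundary'' move does not help, since crossing a marked point is not the same as exceeding a period. Second, even once $u=z^{m}t$ with $m\geqslant 1$ and $z=ts$, the contradiction is \emph{not} that $r=(ts)^{m}tv$ is a proper power. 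For $m>1$ one shows $(ts)^{m-1}$, $ts$, $t$ are graphical pieces, forcing $|u|_*<\tfrac{3}{8}|r|_*$; for $m=1$ either $s=v$ (handled by a piece argument) or $t$ is a piece and the $G$-conjugate $st=(tv)^{-1}$ of $z$ satisfies $|tv|_*<\tfrac{4}{8}|r|_*<|z|_*$, contradicting the minimal choice of $z$. That last branch is the crux and is unreachable without the minimization.

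Finally, the piece arguments above need $\omega(\partial M)$ to be the label of a \emph{single simple cycle} of $\Omega$. In the graphical setting Lyndon--Schupp's Lemma~10.2 fails, and the paper arranges this by hand: any face of $\widetilde D$ whose boundary reads a concatenation of $\Omega$-cycles (or a proper power thereof) is replaced by a rose of simple-cycle faces before the Greendlinger step is applied. Your sketch omits this.
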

\begin{proof} We extend the proof of \cite{lyndon_combinatorial_1977}*{p. 281f, Th.10.1, Ch.V}, which does not apply as their Lemma \cite{lyndon_combinatorial_1977}*{p. 281, L.10.2, Ch.V} does not hold in the graphical setting.

Let $\Omega$ and $R$ as above. The $Gr_*'(1/8)$--condition implies that there are no proper powers among relators in $R$. 
Let $w\in F$, $\lvert w\rvert_*>1$.
Let $\lvert z\rvert_*>1$ be an element of least length among all conjugates of $w$ in $G$ of order $n\geqslant 2$ in $G$. (All conjugates have the same order.) Let $D$ be a minimal van Kampen diagram for $z^n$. 

We can assume that the boundary cycles of faces of the corresponding diagram $\widetilde{D}$ are neither proper powers nor concatenations of cycles in $\Omega$. This replaces  Lemma  \cite{lyndon_combinatorial_1977}*{p. 281, L.10.2, Ch.V}. 

Indeed, let $M$ be a face in $\widetilde{D}$, and $\tilde{r}$ the label of $\partial M$. Assume $\tilde{r}=a^m$ is a weakly reduced product with $m\geqslant 2$. As all inner edges of $M$ are originating, $\widetilde{r}$ equals the label of a cycle $c$ in $\Omega$. If $a$ is not the label of a simple cycle $c_a$ in $\Omega$ and $c$ is the concatenation of $m$ copies of $c_a$, then $a$ and $a^{m-1}$ are pieces. This contradicts the $Gr_*'(1/6)$-condition. Thus, $c_a$ is a simple cycle in $\Omega$. Then we replace the face $M$ by the $m$-rose consisting of $m$-faces each of whose boundary cycles is a copy of $c_a$ that are identified at one  common vertex. If $r$ is a concatenation of $m$ cycles $c_i$ with different labels, we replace $M$ by the $m$-rose consisting of $m$-faces with boundaries $c_1,\ldots,c_m$ that are identified at one common vertex. The inner segments in the resulting diagram satisfy the $C'_*(1/8)$--condition and all its faces are simply-connected.

By Lemma  \ref{L: graphical small cancellation lemma}, $z^n=uz'$, where $u$ is a subword of $r\in R$ such that $\lvert u\rvert_*>\frac58\lvert r\rvert_*$. By the above assumption, $r$ equals the label of a single simple cycle $c$ of $\Omega$. {By the minimal length condition} on $z$, $u$ is not a subword of $z$. Hence, we have that $u=z^mt$ is a weakly reduced product, where $m\geqslant 1$ and $t$ does not begin with a power of $z$. Write $z=ts$. 
 Then $r=uv=(ts)^m tv$. 
 
 If $m>1$, as $c$ is not a concatenation of cycles, $(ts)^{m-1}$, $(ts)$, and $t$ are labels of pieces in a reduced labeled graph $\Omega'$ equivalent to $\Omega$. Then $u$ would be the product of the labels of three pieces. Thus, $\lvert u\rvert_*< \frac12 \lvert r\rvert_*$,  a contradiction.
 
 Hence, $r=tstv$. If $ts=tv$, $r$ and $z$ are powers of a common subword. As observed above, this word would be the label of a piece as $c$ is not a concatenation of smaller cycles. This contradicts the $Gr_*'(1/8)$--condition. Thus, $t$ is the label of a piece, and $\lvert t\rvert_*<1/8\min\{\lvert r\rvert_*\mid r\in R\}$. As $u=v$ in $G$, we have $\lvert tv\rvert_*\leqslant \frac48 \min\{\lvert r\rvert_*\mid r\in R\}$. As $\lvert u\rvert_*\geqslant \frac58\min\{\lvert r\rvert_*\mid r\in R\}$, this contradicts the minimality in the choice of $z$.
\end{proof}

Finally, we obtain the following generalization of Theorem \ref{C: injection}.

\begin{theorem}\label{T: gi} Let $G_1,\ldots,G_d$ be finitely generated groups.  Let $\Omega=(\Omega_l)_l$ be a family of finite connected graphs with a reduced labeling by $G_1\cup \ldots \cup G_d$. Suppose $\Omega$ satisfies $Gr_*'(1/6)$--condition over the free product $G_1*\cdots *G_d$. Let $G$ be the group presented by $G_1\cup \ldots \cup G_d$ as generators and the elements $r\in R$ represented  by labels of the cycles of $\Omega$ as relators. 

Then every graph $\Omega_l$ of $\Omega$ injects in the Cayley graph of $G(\Omega)$ with respect to $G_1\cup \ldots \cup G_d$. 

If $\Omega$ is labeled by $X\sqcup X^{-1}$, then each graph $\Omega_l$ of $\Omega$ injects into the Cayley graph of $G(\Omega)$ with respect to $X\sqcup X^{-1}$.
\end{theorem}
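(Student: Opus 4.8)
The plan is to prove the injection $\Omega_l \hookrightarrow \mathrm{Cay}(G(\Omega))$ by contradiction, using the graphical small cancellation lemma, Lemma~\ref{L: graphical small cancellation lemma}, exactly as one does in the classical graphical small cancellation theory over the free group. First I would fix attention on a single graph $\Omega_l$ and recall that a morphism $\Omega_l \to \mathrm{Cay}(G(\Omega))$ is induced by sending each vertex along a chosen path to the corresponding element of $G(\Omega)$; this is well defined precisely because the labels of cycles of $\Omega$ are relators of $G$. The morphism fails to be an injection iff there are two vertices $u,v$ of $\Omega_l$ and a path $p$ in $\Omega_l$ from $u$ to $v$ whose label $\omega(p)$ is trivial in $G$ (if $u=v$, $p$ is a non-trivial cycle; if $u\neq v$, by choosing $p$ shortest we may assume $p$ is a simple path or reduce to comparing two immersed paths with equal label in $G$). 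Since $\Omega$ is reduced over $F$, $\omega(p)$ is weakly reduced and non-trivial in $F$, so it is non-trivial in $F$ but trivial in $G$; hence there is a minimal van Kampen diagram $D$ over $R$ with boundary word $\omega(p)$ and $|D|\geqslant 1$.

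The heart of the argument is then to derive a contradiction from the existence of such a $D$ together with the fact that the boundary word $\omega(p)$ is itself read along a path immersed in $\Omega$. I would pass to $\widetilde D$ as constructed before Lemma~\ref{L: graphical small cancellation lemma}, whose faces $M$ are the maximal regions, each simply-connected by Corollary~\ref{C: Ollivier3}, with reduced boundary cycle lifting to $\widetilde\Omega$ (Lemma~\ref{Hilfssatz van Kampen}). If $\widetilde D$ has at least one face, consider the cases $|\widetilde D| \leqslant 2$ and $|\widetilde D| > 2$. In the case $|\widetilde D| > 2$, the first bullet of Lemma~\ref{L: graphical small cancellation lemma} gives an exterior face $M$ with $\lvert \omega(\partial_{ext} M)\rvert_* > (1-3\lambda)\lvert\omega(\partial M)\rvert_* \geqslant \tfrac12 \lvert\omega(\partial M)\rvert_*$ and $\partial_{ext}M$ connected and consisting of a piece together with part of the interior; crucially $\partial_{ext}M$ is a subpath of the boundary cycle of $D$, i.e.\ a subpath of $p$, hence a path immersed in $\Omega_l\subseteq\Omega$. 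But $\partial M$ also lifts to a reduced cycle $c$ of $\Omega$, so $\partial_{ext}M$ is a long common subpath of $c$ and of (a translate of) $p$ inside $\Omega$; since $p$ is immersed in $\Omega$ this common subpath is either a piece — forcing $\lvert\omega(\partial_{ext}M)\rvert_* < \lambda\gamma \leqslant \tfrac16\lvert\omega(\partial M)\rvert_*$ and contradicting the above — or it witnesses that $c$ itself is (a cyclic conjugate of) $p$, which cannot happen because then $D$ would not have been minimal with $|D|\geqslant 1$ (one removes the single face). The cases $|\widetilde D| = 1$ and $|\widetilde D| = 2$ are handled directly: when $|\widetilde D|=1$ the boundary word $\omega(p)$ equals in $F$ the label of a reduced cycle of $\Omega$, and since $p$ is immersed in $\Omega$ this again forces $p$ to traverse such a cycle, contradicting that $p$ is a simple path with $u\neq v$ (or, if $u=v$, contradicting the choice of $p$ as not lifting to a cycle by a standard argument, or simply noting $\Omega_l$ has no such identification by construction of $R$); the $|\widetilde D|=2$ case is the classical two-face analysis where the two faces share an inner segment that is long in both, again contradicting $Gr_*'(1/6)$.

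I would then record the second, labeled version: if $\Omega$ is labeled by $X\sqcup X^{-1}$, the Cayley graph with respect to $X\sqcup X^{-1}$ maps onto the Cayley graph with respect to $G_1\cup\dots\cup G_d$ by subdividing factor-edges into $X$-words, and the injection over the larger generating set refines the injection already proved — or, more directly, one runs the identical van Kampen argument measuring with $\lvert\,.\,\rvert$ after noting that in Case~1 (and Case~2) the $Gr_*'(\lambda)$ condition with respect to $\lvert\,.\,\rvert_*$ is exactly Gromov's condition, and invoke the word-length version of Lemma~\ref{L: graphical small cancellation lemma} that follows from the last inequality of Lemma~\ref{L: graphical small cancellation lemma} together with $\lvert\,.\,\rvert_*\leqslant\lvert\,.\,\rvert$ and the bounded-length-of-factor-generators observation. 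The main obstacle I anticipate is the careful bookkeeping in the case $u\neq v$: one must argue that a shortest path $p$ in $\Omega_l$ witnessing non-injectivity can be taken so that $\omega(p)$ is weakly reduced and that the diagram $D$ and its exterior face genuinely produce a \emph{piece} in $\Omega$ rather than merely a long common subpath of two different immersions of the same path — this is where the distinction between "$p$ has two distinct immersions" and "$p$ equals a cycle of $\Omega$" must be made precise, and it is exactly the point where the definition of graphical piece and the uniqueness-of-lift remark (the Remark after Corollary~\ref{C: Ollivier}) do the work.
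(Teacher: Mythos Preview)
Your overall strategy---contradiction via a minimal van Kampen diagram and the long exterior face from Lemma~\ref{L: graphical small cancellation lemma}---is sound, but the key dichotomy has a genuine gap. You argue that $\partial_{ext}M$, being immersed in $\Omega$ both via the face $M$ (as a subpath of its bounding cycle $c$) and via $\partial D$ (as a subpath of $p$), is ``either a piece \ldots\ or it witnesses that $c$ itself is (a cyclic conjugate of) $p$''. The second alternative does not follow. Coincidence of the two lifts along $\partial_{ext}M$ only says that \emph{this arc} has the same image in $\Omega$ under both immersions; it does not force all of $c$ to equal all of $p$. In the paper's terminology, what happens is that $M$ \emph{originates with the boundary}. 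Removing such a face does not produce a smaller diagram for $\omega(p)$: it produces a diagram whose boundary label is a \emph{different} word, the label of a path from $u$ to $v$ that detours through $\partial_{int}M$ inside $\Omega$. Since your minimality of $D$ is only among diagrams for the fixed word $\omega(p)$, this yields no contradiction.

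The paper closes exactly this gap by strengthening the minimality: among \emph{all} paths in $\Gamma=\Omega_l$ from $v_1$ to $v_2$, choose one whose minimal van Kampen diagram has the fewest faces. With this choice, a face originating with the boundary can be removed to give a diagram for another path between the same two vertices with strictly fewer faces, contradicting the choice of $p$; hence no exterior face originates with the boundary and every exterior arc is a genuine piece. The paper then finishes not by a case analysis on $|\widetilde D|$, but by extending $p$ to a cycle $d=pq$ in $\Gamma$, gluing the single $2$-cell of $\widetilde\Gamma$ with boundary $d$ onto $D$ along $p$, and observing that the resulting diagram satisfies the $C'_*(1/6)$--condition yet contains a face that is not simply-connected (the glued cell, since the two endpoints $v_1,v_2$ are identified in $D$). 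This is impossible by the classical small cancellation lemma. Your approach can be salvaged by adopting the stronger minimality over paths, but as written the ``one removes the single face'' step does not go through.
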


\begin{proof} Let $\Gamma:=\Omega_l$ for some $l$.  Choose two distinct vertices $v_1$ and $v_2$ of $\Gamma$. Let $p$ be a non-trivial simple path in $\Gamma$ starting at $v_1$ and terminating at $v_2$. Let $x$ be the label of $p$. 
 As the labeling is reduced, $x$ represents a non-trivial element of $F$. We argue by contradiction and assume that $x=1$ in $G(\Gamma)$; equivalently, the images of $v_1$ and $v_2$ are identified in the Cayley graph of $G(\Gamma)$.
 
 Let $D$ be a minimal van Kampen diagram for $x$ over $R$. We assume that $D$ has no inner edges originating in $\Gamma$. Otherwise, we put $D:=\widetilde{D}$. The diagram $D$ has a distinguished vertex $v$. This is the starting and terminal vertex of its boundary cycle $c$, whose label is $x$.  The vertex $v$ has two \emph{lifts} to $\Gamma$, the vertices $v_1$ and $v_2$. We say that $c$ lifts to $p$ in $\Gamma$. 
Observe that $c$ can be obtained from $p$ by applying the graph transformations: the AO-moves, Reductions, and Deletions. Let us apply the same sequence of AO-moves and Reductions to the path $p$ in $\Gamma$. We then apply Deletions until no degree-one vertex is left. Let $\Gamma'$ be the resulting graph. It is equivalent to $\Gamma$. The image of $p$ in $\Gamma'$ is a copy of $c$ with starting vertex $v_1$ and terminal vertex $v_2$.
 
 Therefore, we assume that $p$ is a copy of $c$ in $\Gamma$.
 
 We choose $p$ so that the number of faces of $D$ is minimal among all minimal van Kampen diagrams for elements in $F$ represented by the labels of paths between $v_1$ and $v_2$ in $\Gamma$. (Such labels are not equal in~$F$.) If $\Pi$ is an exterior face in $D$ let  $s$ denote a path in $\partial_{ext}\Pi$. If the lift of $\Pi$ to $\widetilde{\Gamma}$ is such that the lift of $s$ with $\Pi$ coincides with the lift of $c$ to $p$ , then $\Pi$ is called \emph{originating with $c$}. 
 
 In $D$, no face originates with $c$. If there was such a face $\Pi$, we could remove it from $D$. Specifically, we remove the interior of the face as well as the path $s$. If this deletion disconnects $D$, then we further delete all faces that are not in the connected component containing $v$. The boundary cycle $\bar{c}$ of the resulting diagram, starting and terminating at $v$, has a lift to  a path $\bar{p}$ in $\Gamma$, and the labels of $p$ and $\bar{p}$  are not equal in~$F$. (More specifically, there is a graph $\Gamma'$ so that $\bar{c}$ is a copy of a path $\bar{p}'$ in $\Gamma'$ starting at $v_1$ and terminating at $v_2$, and $\bar{p}$ is the preimage in $\Gamma$ of $\bar{p}'$.)  This diagram for $\omega(\bar{p})$  has a lesser number of faces, a contradiction to the minimality condition in the choice of $p$.
 
 The path $p$ lies on a  non-trivial cycle $d=pq$ in $\Gamma$. Let $z$ be the label of $d$  and $y$ the label of $q$ so that $z=xy$. There is no cancellation  between $x$ and $y$.  
  Let $D'$ be a copy of the 2-cell of $\widetilde{\Gamma}$ with boundary cycle $d$. Recall that $p$ is a copy of $c$. Let us glue $D$ and $D'$ along the paths $c$ and $p$. Let $D''$ be the resulting diagram. The boundary label of $D''$ equals $y$. The image of $D'$ in $D''$ is, as a face in $D''$, not simply-connected as one vertex of its boundary is identified.
 
 As there are no faces in  $D$ originating with $c$, the inner segments $s$ of $D''$ in $c$ do not originate. Otherwise, there is a face $\Pi$ of $D$ such that the lifts of $s$ with $\Pi$ and $D'$ are equal. In particular, $\Pi$ originates with $c$ along $s$, a contradiction. Therefore, $D''$ satisfies the $C'_*(1/6)$--condition. Lemma \ref{L: graphical small cancellation lemma} implies that such a diagram cannot exist. This finishes the proof. 
\end{proof}

This theorem gives a new instance of a theorem of Gromov \cite{gromov_random_2003}*{p. 141, S.4.8, see also p.75, S.1.1}. Our result generalizes Ollivier's variant of Gromov's theorem \cite{ollivier_small_2006}*{p. 77, Th.1}. 
 Another generalization, under the combinatorial $C(6)$--small cancellation condition, is given in~\cite{gruber_graphical_2012}.

We apply the above theorems to finite and infinite families of labeled reduced graphs that then define torsion-free groups without the unique product property.


\section{Rips-Segev groups revisited}

\label{rssmallcanc}

Rips-Segev  define a group by a graph with directed edges. Every edge of their graph is labeled by a word in $a^{\pm1},b^{\pm1}$. The letters $a$ and $b$ freely generate a free group { $F:=\langle a \rangle *\langle b\rangle$.} Let $R$ be the infinite set of elements in $F$ that are represented by the labels of all non-trivial cycles in their graph. Rips-Segev's group is given by a presentation with generators $a,b$ and relators $R$. These relators are chosen to imply the non-unique product property. To conclude that the groups are torsion-free and indeed without the unique product property, Rips-Segev first affirm that $R$ satisfies certain classical generalizations of small cancellation conditions. Then they refer to classical results such as the Greendlinger lemma \cite{lyndon_combinatorial_1977}*{p. 278, Th. 9.3, Ch. V} and the torsion theorem \cite{lyndon_combinatorial_1977}*{p. 281 , Th. 10.1, Ch. V}.

\smallskip

The purpose of this section is to explain Rips-Segev's line of argument. This reveals that their reasoning does not allow to conclude that their groups are torsion-free and without the unique product property. In particular, the above-cited results are invalid for the small cancellation conditions used by Rips-Segev. We fill this gap using our graphical small cancellation theory over the free product.  Moreover, we explain that the small cancellation conditions Rips-Segev appeal to, do not a priori imply that their groups are Gromov hyperbolic.

\smallskip

We first review the classical generalizations of small cancellation theory, see e.g. \cite{lyndon_combinatorial_1977}*{p. 267ff, Ch.V.8}.

\subsection{Classical generalizations of small cancellation theory}

An \emph{$R$-sequence} for a word $w$ is a sequence  of relators $r_1,\ldots,r_n \in R$ such that $w=_F\prod_{i=1}^n u_ir_iu_i^{-1}$, where $u_i \in F$ and the equality is in $F$. An  $R$-sequence $r_1,\ldots,r_n$ for $w$ is called  \emph{minimal} if $n$ is minimal among the $R$-sequences for $w$. 

A van Kampen diagram for a word $w$ over $R$ in $F$ is \emph{minimal} if the number of its faces is minimal among the van Kampen diagrams for $w$. A minimal $R$-sequence $r_1,\ldots,r_n$ for $w$ corresponds to a minimal van Kampen diagram the boundary cycles of whose faces are labeled by $r_1,\ldots,r_n$. Lyndon and Schupp studied minimal van Kampen diagrams to solve the word and conjugacy problem for the classical $C'(1/6)$--small cancellation groups \cites{lyndon_dehn_1966,schupp_dehn_1968}. In their terminology, minimal van Kampen diagrams are \emph{diagrams of minimal $R$-sequences}.

\smallskip

Appel-Schupp subsequently developed the \emph{$C(4)$-$T(4)$--small cancellation condition for minimal sequences} \cite{appel_conjugacy_1972}*{p.333}. 
Rips-Segev's first reference  to the small cancellation theory \cite{rips_torsion-free_1987}*{p. 123} points to these conditions. Appel-Schupp's conditions imply the small cancellation conditions only on minimal van Kampen diagrams (in contrast to all reduced van Kampen diagrams as in the case of the classical $C'(\lambda)$--condition).

Suppose that the set of relators $R$ satisfies the following conditions.
\begin{enumerate}
\item[(1)] The relators in $R$ have length 4.
 \item[(2)] If $r_1,r_2 \in R$ cancel two or more letters, then either $r_2=r_1^{-1}$ or $r_1r_2$ is in $R$. 
 \item[(3)] If $r_1,r_2,r_3 \in R$, and there is cancellation in all the products $r_1r_2$, $r_2r_3$, and $r_3r_1$, then $r_1r_2r_3$ is a product of at most two elements of $R$. 
\end{enumerate}
If conditions $(1)$--$(3)$ hold, the corresponding group presentation is said to satisfy the \emph{$C(4)$-$T(4)$--small cancellation condition for minimal sequences (or minimal van Kampen diagrams)}.

Conditions (1) and (2) unify to the \emph{$C(4)$--condition for minimal sequences (or minimal van Kampen diagrams)} over $R$, which states that every inner face of a minimal van Kampen diagram   has 4 inner segments. Condition (3) is called the \emph{$T(4)$--condition}. Note that, as relators have length 4, the set $R$ is finite whenever the generating set $X$ is finite. 

\smallskip

The given geometry of minimal van Kampen diagrams under the $C(4)$-$T(4)$--condition allows to solve the word and the conjugacy problems for groups given by presentations satisfying the $C(4)$-$T(4)$--condition for minimal van Kampen diagrams  \cite{lyndon_combinatorial_1977}*{p.271, L.8.4, Ch.V}.

\smallskip

Gromov's hyperbolicity of a finitely presented group is characterized by a linear word problem: given a finite presentation of the group the minimal van Kampen diagrams over this presentation have to satisfy a linear isoperimetric inequality (w.r.t. the word length metric).  
In contrast, a group with the $C(4)$-$T(4)$--condition for minimal sequences can have a quadratic word problem. Thus, the $C(4)$-$T(4)$--condition is not sufficient to imply that the group is Gromov hyperbolic. 

\smallskip

We have seen the classical generalizations of the small cancellation theory as discussed in \cite{lyndon_combinatorial_1977}*{p. 267, Ch.V.8}. As already mentioned, Rips-Segev do refer to 
Appel-Schupp's conditions for minimal sequences. At the same time, they implicitly further generalize these small cancellation conditions for minimal sequences. However,
this does not complete the arguments as there are no such conditions explained in their reference \cite{lyndon_combinatorial_1977}. We now explain these small cancellation conditions.

\subsection{Rips-Segev's arguments}

In order to conclude that their groups are torsion-free and without the unique product property, Rips-Segev suggest two independent lines of argument \cite{rips_torsion-free_1987}*{p. 123}: They refer to a ``$C([p/2])$--condition for minimal sequences'' in \cite{lyndon_combinatorial_1977} ($p$ is a large number depending on the graph which defines their group presentation). Alternatively, they refer to a  ``$C'_*(1/[p/2])$--condition for minimal sequences over the free product $\langle a\rangle *\langle b\rangle$'' in \cite{lyndon_combinatorial_1977}. Both conditions are not described in \cite{lyndon_combinatorial_1977}*{Ch. V} and they are not defined in Rips-Segev's paper. 

Rips-Segev wish to apply the results of the classical small cancellation theory \cite{lyndon_combinatorial_1977}. Rips-Segev refer to the Greendlinger lemma \cite{lyndon_combinatorial_1977}*{p. 278, Th. 9.3, Ch. V} and the torsion theorem \cite{lyndon_combinatorial_1977}*{p. 281 , Th. 10.1, Ch. V}. The Greendlinger lemma is indispensable in their argument to imply the non-unique product property, the torsion theorem is required to imply torsion-freeness.
  
  \smallskip
  
We now propose definitions for both Rips-Segev's alternatives. Then we analyze the (non)-availability of the Greendlinger lemma and the torsion theorem.
Our definitions are based on  Rips-Segev's comment on \cite{rips_torsion-free_1987}*{p. 123} and the basic definitions from \cite{lyndon_combinatorial_1977}: the $C(4)$-$T(4)$--condition for minimal sequences and the $C'_*(\lambda)$--condition. 
 
 \smallskip
 
We say that $R$ satisfies the \emph{$C([p/2])$--condition for minimal sequences} if, in every minimal van Kampen diagram over $R$, each inner face has at least $[p/2]$ inner segments. The Greendlinger lemma is not available under the $C([p/2])$--condition for minimal sequences. The torsion theorem \cite{lyndon_combinatorial_1977}*{p. 281 , Th. 10.1, Ch. V} does not apply if we replace the $C'_*(1/8)$--condition by the $C([p/2])$--condition for minimal sequences (or even by the aforementioned $C(4)$-$T(4)$--small cancellation for minimal sequences).  Thus, Rips-Segev's first reference cannot be used to conclude.

\smallskip 
  
We say that $R$ satisfies the \emph{$C'_*(1/[p/2])$--condition for minimal sequences over the free product} if every minimal van Kampen diagram over $R$ satisfies the $C'_*(1/[p/2])$--condition (instead of \emph{all} reduced van Kampen diagrams as  discussed in Section~\ref{S: classical small cancellation theory}). The Greendlinger lemma, for instance, our Theorem~\ref{T: classical small cancellation lemma}, is then applicable. However, the torsion theorem is not available: the proof in \cite{lyndon_combinatorial_1977}*{p.281, L.10.2, Ch.V.9}  is using pieces; and pieces are not available in the small cancellation theory for minimal sequences. Hence, this attempt cannot be used to conclude. 
 
\subsection{Another small cancellation condition}

In view of the above explanations, the small cancellation theory for minimal sequences suggested by Rips-Segev has to be replaced. We propose a new small cancellation condition that has not been investigated in the literature. The idea for its  definition is based on the following generalization of the classical notion of a piece that we extract from \cite{rips_torsion-free_1987}*{p. 117}.

\begin{definition*}\label{D: RSpiece}  
A  \emph{Rips-Segev piece} is an element $p\in F$ such that, for distinct $r_1,r_2\in R$,
 $r_1=up$ and $r_2=p^{-1}v$ are weakly reduced products and $uv$ cancels neither to 1 nor to another relator in $R$. 
\end{definition*}

\begin{definition*}\label{D: RSsmallcancellation}
Let $0<\lambda<1.$
A set of relators $R$ satisfies the \emph{Rips-Segev-$C'_{*}(\lambda )$--small cancellation condition over $F$}, if for every Rips-Segev piece $p$ we have that 
\[
|p|_*< \lambda \min\{ |r|_*\mid r\in R\} .
\]
In this case, the group $G$ is a \emph{Rips-Segev-$C'_{*}(\lambda)$--small cancellation group} over $F$. 
\end{definition*}

Rips-Segev had considered an infinite set $R$. The set $R$ has no proper powers and satisfies the Rips-Segev-$C'_*(1/[p/2])$--condition over $\langle a\rangle * \langle b \rangle$. As explained below, minimal van Kampen diagrams over~$R$ satisfy the $C'_*(1/[p/2])$--condition. Hence, $R$ satisfies the above $C'_*(1/[p/2])$-- and $C([p/2])$--small cancellation conditions for minimal sequences.

\smallskip

Altogether, we feel that the Rips-Segev-$C'_*(\lambda)$--condition is implicitly used in \cite{rips_torsion-free_1987}.

\smallskip

  Again, arguments that the classical results \cite{lyndon_combinatorial_1977}*{p. 278, Th. 9.3, p. 281, Th. 10.1, Ch. V} extend to the Rips-Segev small cancellation are missing. We first discuss the Greendlinger lemma, then we comment on the torsion theorem.
\begin{itemize}

\item Let $0<\lambda \leqslant 1/6$. As we have observed in Section \ref{S: classical small cancellation theory}, if a minimal van Kampen diagram $D$ is reduced, the Greendlinger lemma \cite{lyndon_combinatorial_1977}*{p.278, Th. 9.3, Ch. V} is available. By the proof of \cite{lyndon_combinatorial_1977}*{p.277, L. 9.2(2), Ch. V}, $D$ is reduced if if satisfies the $C'_*(\lambda)$--condition, in the sense of Section~\ref{S: classical small cancellation theory}.  

 We can apply a minimality argument as in the proof of Corollary \ref{C: Ollivier} to conclude. Indeed, if a segment $s$ in the common boundary of two faces $\Pi_1$ and $\Pi_2$ with boundary cycles $sp_1$ and $sp_2$  does \emph{not} satisfy the $C'_*(\lambda)$--condition in the sense of Section~\ref{S: classical small cancellation theory}, then \emph{either}  $\omega(p_1)=\omega(p_2)$ \emph{or} $\omega(p_1p_2^{-1})$ represents a relator in $R$. We conclude that $w$ can be expressed as a product of conjugates of less than $n$ relators,  contradicting the minimality assumption.

This implies the Greendlinger lemma for Rips-Segev-$C'_{*}(\lambda)$--groups.
\item
The torsion theorem cannot be obtained using Lyndon-Schupp's arguments. 
In particular, Lyndon-Schupp's proof uses that if $r=x^ma$ (weakly reduced), $m>1$ and $r$ is not a proper power, then $x$ and $x^{m-1}$ are pieces \cite{lyndon_combinatorial_1977}*{p.281, L. 10.2, Ch. V}. This is false in Rips-Segev's situation. 
\end{itemize} 

Moreover, a Rips-Segev-$C'_{*}(1/6)$--group is a priori not Gromov hyperbolic. As explained above, minimal van Kampen diagrams  over an \emph{infinite} set $R$ satisfy the $C'_{*}(1/6)$--condition. 
However, minimal van Kampen diagrams over a \emph{finite} subset of $R$ do not satisfy the $C'_*(1/6)$--condition as not all inner segments are controlled by the Rips-Segev-$C'_{*}(1/6)$--condition. 
Hence, under the Rips-Segev-$C'_{*}(1/6)$--condition, we cannot conclude that Rips-Segev's group is Gromov hyperbolic. 

\smallskip

In this paper, we close all gaps in Rips-Segev's arguments using our $Gr_*'(1/8)$--condition.  
In the next section, we explain a new construction using our so-called generalized Rips-Segev graphs.  We then derive conditions on the labellings of such graphs that imply the $Gr_*'(1/8)$--condition. We discover that certain reductions of original Rips-Segev graphs are contained in our family of graphs and their labellings satisfy the  $Gr_*'(1/8)$--condition. Finally, we use our general graphical small cancellation theorems to conclude.

\begin{remark} The Rips-Segev-$C'_*(\lambda)$--condition is \emph{not} designed as a condition on the labeling of a graph. However, let $\Omega$ be a labeled graph with the $Gr_*'(\lambda)$--condition and let $R\subseteq F$ be the \emph{infinite} set  given by the labels of all cycles of $\Omega$. Then $R$ satisfies the Rips-Segev-$C'_{*}(\lambda)$--small cancellation condition. The proof of this claim is similar to the proof of Proposition \ref{P: criterion}. The converse statement is false.
\end{remark}


\section{Generalized Rips-Segev groups}

Our next aim is the construction of graphical presentations of torsion-free groups without the unique product property. Let us first give a definition of the unique product property.

\smallskip

Let $G$ be a group and let $A$ and $B$ be nonempty finite subsets of $F$. The  \emph{product of $A$ and $B$} is defined to be the set $AB=\{xy\mid x\in A,y\in B\}.$ 
If an element $z$ in $AB$ has a unique expression in $G$ as a product $z=xy$ for $x\in A$ and $y\in B$, then $A$ and $B$ are said to \emph{have a unique product in $G$}.

\begin{definition*}\label{up} If for all pairs of  nonempty finite subsets $A,B$ of $G$, the sets $A$ and $B$ have a unique product in $G$, then $G$ is said to have the \emph{unique product property} or to be a \emph{unique product group}.
\end{definition*}

In what follows we describe a procedure to construct groups without the unique product property, that is, admitting at least two nonempty finite subsets which do not have a unique product in the group.

\subsection{Graphs encoding the non-unique product property}

In this section we explain how to construct groups without the unique product property using graphical group presentations. We first consider an instructive example.

\smallskip

\begin{example}\label{E: instructive}
 Let $A=\{a,ab\}$ and $B=\{1,b\}$. 
The \emph{graphical presentation of $AB$} is the following graph. It can be seen as a subgraph of the Cayley graph of the free group on $a$ and $b$.  
 \[\xy
{\ar^b (-20,0)*{o}; (0,0)*{\bullet}}; {\ar^b (0,0); (20,0)*{o}}; (-20,-3)*{a\cdot 1};(0,-3)*{a\cdot b};(20,-3)*{ab\cdot b};(0,3)*{ab\cdot 1};
\endxy  
 \] 
Every vertex in this graph represents a product in $AB$: The two vertices marked by $o$ represent unique products in the free group. The vertex marked by $\bullet$ represents products which are not unique. As $a\cdot 1=a$ and $ab\cdot 1=ab$, for every element $x\in A$ there is a vertex $v$ representing $x$. Hence, for every product $xy$ in $AB$, $y\not=1$, there is a vertex $v$ representing $x$, a vertex $v'$ representing $xy$ and a simple path with label $y$ that starts at $v$  and terminates at $v'$. 

We now identify the vertices $o$. We obtain the following graph $\Omega$. 
\[\xy
{\ar@/_1pc/_b (-10,0)*{\bullet}; (10,0)}; {\ar@/_1pc/_b (10,0)*{\bullet}; (-10,0)}; (-15,2)*{ a\cdot 1}; (-15,-2)*{ab\cdot b};(15,-2)*{a\cdot b};(15.5,2)*{ab\cdot 1};
\endxy 
 \]
In $\Omega$, each vertex represents two different products in $AB$. The group $G(\Omega)$ is given by a presentation with generators $\{a,b\}$ and as relators the label on non-trivial cycles of $\Omega$. We view $G(\Omega)$ as the quotient of $\langle a\rangle *\langle b \rangle$ by the normal closure of $\{b^2\}$. The subsets $A$ and $B$ do not have a unique product in $G(\Omega)$. 

We have therefore constructed a graphical presentation of a group without the unique product property. 
\end{example}

The graph $\Omega$ of Example \ref{E: instructive} was used to \emph{encode the non-unique product property for $A$ and $B$}. Let us now turn to a more general case. Let $F$ denote the free product $G_1~*~\ldots~*~G_d$. Our aim is to extend and formalize the ideas used in Example \ref{E: instructive}. We  obtain groups in which more general sets $A, B\subset F$, $1\in B$, do not have a unique product.  Again, we encode the non-unique product property for $A$ and $B$ in a graph~$\Gamma$. We start with a graph $\Theta$ labeled by $F$ such that each vertex in $\Theta$ represents a product in $AB$. Moreover, for every product $xy$ in $AB$, $y\not=1$, there is a vertex $v$ representing $x$, a vertex $v'$ representing~$xy$ and a simple path with label $y$ that starts at $v$  and terminates at $v'$. If $v'$ represents a product that is unique in $F$, then we identify $v'$ with a second vertex $v''$ representing a different product in $AB$. Like this we produce our graph $\Gamma$. 
 Then we study conditions on $\Gamma$ that imply that $A$ and $B$ do not have a unique product in the corresponding group $G(\Gamma)$.
 Finally, we show that our constructions of graphs below yield groups without the unique product property.

\subsection{Generalized Rips-Segev graphs}

We now describe the construction of graphs encoding the non-unique product property for specific sets $A$ and $B$. We first describe the sets $A$ and $B$ and then construct the corresponding graphs in 4 steps. 

\smallskip

From now on let $G_1$ and $G_2$ be torsion-free groups with generating sets $X_1$ and $X_2$, and let $F=G_1*G_2$. Then $X:=X_1\sqcup X_2$ is a generating set for $F$. Let $1\not =a \in G_1$ and $1\not=b \in G_2$. We can always assume that $a$ and $b$ are in $X$. We now define the sets $A$ and $B$. Let us first look at an example.

   \begin{example}\label{E: RSset}
  Let $c$ { be a word on $X\sqcup X^{-1}$ which is weakly reduced and whose terminal letter does not coincide with $a^{-1}$, $b$ or $b^{-1}$, }and let $C\in \mathbb{N}$. Let $v_i:=ca^i $ and $w_i:=ca^ib$. Let $A:=\{v_0, \ldots, v_{C-1}\}$ and $B:=\{1,a,b,ab\}$. 
    The product set is given by $$AB=\{v_0,\ldots,v_C,w_0,\ldots,w_C\}.$$ The products $AB$ have the following graphical presentation $\Theta$. 
\[
\xy
 {\ar^a (0,0); (10,0)};(0,-3)*{v_0}; (10,-3)*{v_1}; 
 {\ar^a (10,0); (20,0)}; (20,-3)*{v_2};
 {(20,0)*{}; (100,0)**\dir{.}};{\ar^a (100,0); (110,0)}; (110,-3)*{v_{C-1}};
 {\ar^a (110,0); (120,0)}; (120,-3)*{v_C};
{\ar^b (0,0)*{o} ; (0,10)*{o}}; (0,13)*{w_0}; 
{\ar^b (10,0)*{\bullet}; (10,10)*{\bullet}}; (10,13)*{w_1};
{\ar^b (20,0)*{\bullet}; (20,10)*{\bullet}}; (20,13)*{w_2};
 {\ar^b (35,0)*{\bullet}; (35,10)*{\bullet}}; {\ar^b (68,0)*{\bullet}; (68,10)*{\bullet}}; 
{\ar^b (100,0)*{\bullet} ; (100,10)*{\bullet}};  {\ar^b (110,0)*{\bullet}; (110,10)*{\bullet}}; (110,13)*{w_{C-1}};
{\ar^b (120,0)*{o}; (120,10)*{o}}; (120,13)*{w_C};
\endxy
\]

The graph $\Theta$ can be seen as a subgraph of the Cayley graph of { $F$ with respect to $X\sqcup X^{-1}$}. 
Every vertex represent a product in $AB$. The elements $v_0,w_0,v_C,w_C$ are products in $AB$ that are unique in $F$. Every edge in this graph is directed and labeled by either $a$ or $b$.

By $T$, we denote the following subgraph of the Cayley graph of { $F$ with respect to~$X\sqcup X^{-1}$}.
\[
\xy
{\ar@{-->}^{c} (-10,0)*{*}; (0,0)};
 {\ar^a (0,0); (10,0)};(0,-3)*{v_0}; (10,-3)*{v_1}; 
 {\ar^a (10,0); (20,0)}; (20,-3)*{v_2};
 {(20,0)*{}; (100,0)**\dir{.}};{\ar^a (100,0); (110,0)}; (110,-3)*{v_{C-1}};
 {\ar^a (110,0); (120,0)}; (120,-3)*{v_C};
{\ar^b (0,0)*{o} ; (0,10)*{o}}; (0,13)*{w_0}; 
{\ar^b (10,0)*{\bullet}; (10,10)*{\bullet}}; (10,13)*{w_1};
{\ar^b (20,0)*{\bullet}; (20,10)*{\bullet}}; (20,13)*{w_2};
 {\ar^b (35,0)*{\bullet}; (35,10)*{\bullet}}; {\ar^b (68,0)*{\bullet}; (68,10)*{\bullet}}; 
{\ar^b (100,0)*{\bullet} ; (100,10)*{\bullet}};  {\ar^b (110,0)*{\bullet}; (110,10)*{\bullet}}; (110,13)*{w_{C-1}};
{\ar^b (120,0)*{o}; (120,10)*{o}}; (120,13)*{w_C};
\endxy
\]
The additional vertex $*$ represents the identity, and the dashed arrow represents the path whose label is the word $c$. For every element $x$ in $A$ there is a vertex $v$ representing $x$ and a simple path $p_x$ from $*$ to $v$ that is labeled by $x$, and for every element $1\not= y\in B$, there is a vertex $v'$ representing $xy$ and a simple path $p_y$ from $v$ to the vertex $v'$ labeled by $y$. The concatenation $p_xp_y$ is the simple path from $*$ to $v'$ passing through $v$ that is labeled by $z:=xy$.
\end{example}

\smallskip

Now let us come back to the general case. Let $K$ be a non-zero  natural number. For $1\leqslant i \leqslant K$, let~$c_i$  be a weakly reduced word in $X\sqcup X^{-1}$, and let $C_i$  be  non-zero numbers in $\mathbb{N}$.   
     Let $v_{il}:=c_ia^l$ and $w_{il}=c_ia^lb$. We denote by $A$ the set 
 \[
 A:=\bigsqcup_{i=1}^K\{v_{i0},v_{i1},v_{i2},\ldots, v_{i(C_i-1)}\}
 \]
 and by $B$ the set $\{1,a,b,ab\}$. Then the product set 
 \[
 AB=\bigsqcup_{i=1}^K\{ v_{i0}, v_{i1}, \ldots, v_{iC_i},w_{i0},w_{i1}, \ldots, w_{iC_i}\}.
 \] 
 The number $K$, the elements $c_i$ and the numbers $C_i$ are thought of as of variables. In our explicit construction below, see Section \ref{S: explicit construction}, we determine possible values for them.

  \smallskip
  
 For each $i$ let $A_i:= \{v_{i0},v_{i1},\ldots,v_{i(C_i-1)}\}$ and let $\Theta_i$ be the graphical presentation of $A_iB$ as shown in Example \ref{E: RSset}. Let $\Theta:=\bigsqcup_{i=1}^K\Theta_i$ be the disjoint union of $K$ such graphs. Every edge of this graph is directed and labeled by either $a$ or $b$. Every vertex in $\Theta$ represents a product in $AB$. Moreover, for every element $x$ in $A$ there is a vertex $v$ representing $x$, and for every $1\not=y\in B$ there is a vertex $v'$ representing $xy$ and a simple path $p_y$ from $v$ to the vertex $v'$ labeled by $y$.
 
Let $p_i$ be the path whose label is the word $c_i$. Let us denote by $T'$ the graph obtained from $\Theta$ and the paths $p_i$ by  identifying the terminal vertex of $p_i$ with $v_{i0}$ for all $1\leqslant i \leqslant K$.  In $T'$ we identify the starting vertices of the paths $p_i$ and denote the  resulting tree by $T$. Let us denote the image of the starting vertices of the paths $p_i$ in $T$ by $*$. Again, for every element $x$ in $A$ there is a vertex $v$ representing $x$. In addition, in $T$ there is a simple path $p_x$ from $*$ to $v$ that is labeled by $x$, and for all  $1\not=y\in B$, there is a vertex $v'$ representing $xy$ and  simple path $p_y$ from $v$ to the vertex $v'$ labeled by $y$. The concatenation $p_xp_y$ is the simple path from $*$ to $v'$ passing through $v$ that is labeled by $xy$.
 
 We call the subgraph (in $\Theta$ or $T$) given by the vertices $\{v_{i0},v_{i1},\ldots,v_{iC_i}\}$ and edges $\{(v_{i0},v_{i1}),$ $(v_{i1},v_{i2}),$ $\ldots, (v_{i(C_i-1)},v_{iC_i})\}$ the \emph{$a$-line $i$}. For each $i$ there are four vertices of $\Theta_i$ (as a subgraph in $\Theta$ or $T$) representing a unique product in $F$, $v_{i0}$, $v_{iC_i}$, $w_{i0}$, and $w_{iC_i}$. To produce a graph that encodes the non-unique product property for $A$ and $B$ we copy the strategy of Example \ref{E: instructive} and identify every vertex representing a unique product with a vertex representing a different product.
 
\smallskip
 
 Starting with $\Theta$, we now construct graphs encoding the non-unique product property for $A$ and $B$ in 4 steps. In each step of the construction we call the image of the $a$-line $i$ again $a$-line $i$.

 \begin{enumerate}
  \item
For each of the $2n$ vertices $w$ of the form $w_{i0}$ and $w_{iC_i}$, we choose a vertex $v_{jI}$ such that $0\leqslant I \leqslant C_j$  and the pairs $(j,I)$ are all different among themselves. We then identify $w$ and $v_{jI}$, see 
Figure~\ref{Step 1}.

As two distinct vertices $w$ are not identified, the resulting graphs are reduced by construction. The vertices which have been identified  now represent at least two products of $AB$. We have possibly identified some vertices $w_{i0}$ or $w_{iC_i}$ with a vertex $v_{j0}$ or $v_{jC_j}$, cf. Figure~\ref{Step 1}. 
\item
 In a second step, we take care of vertices $v$ of the form $v_{j0}$ and $v_{jC_j}$ which have not been identified  with a vertex $w$ in Step 1. 
For every such vertex $v$ we choose a vertex $w_{lO}$, so that $0< O < C_l$ and the index pairs $(l,O)$ are all different among themselves. Then we identify $v$ and $w_{lO}$, see Figure~\ref{Step O}. 

Again, as two distinct vertices $v$ are not identified, the resulting graph is reduced by construction. In this graph, every vertex represents at least two products in $AB$.  It has exactly $K$  $a$-lines. We assume that the choices have been made so that the resulting graph is connected. It is easy to see that this is always possible.
\item
We delete all edges of degree one, i.e. all edges with a label by $b$ that have not been glued. 
 \end{enumerate}

 
   \begin{figure}[ht]
   \[
\xy
 {\ar^a (-10,0); (0,0)};(-10,-3)*{v_{i0}}; (0,-3)*{v_{i1}}; 
 {\ar^a (0,0); (10,0)}; (10,-3)*{v_{i2}};
 {(10,0)*{}; (90,0)**\dir{.}};{\ar^a (90,0); (100,0)}; (100,-3)*{v_{i(C_i-1)}};
 {\ar^a (100,0); (110,0)}; (110,-3)*{v_{iC_i}};
{\ar^b (-10,0)*{o} ; (-10,10)*{o}}; (-10,13)*{w_{i0}}; 
{\ar^b (0,0)*{\bullet}; (0,10)*{\bullet}}; (0,13)*{w_{i1}};
{\ar^b (10,0)*{\bullet}; (10,10)*{\bullet}}; (10,13)*{w_{i2}};
 {\ar^b (25,0)*{\bullet}; (25,10)*{\bullet}}; {\ar^b (58,0)*{\bullet}; (58,10)*{\bullet}}; 
{\ar^b (90,0)*{\bullet} ; (90,10)*{\bullet}};  {\ar^b (100,0)*{\bullet}; (100,10)*{\bullet}}; (100,13)*{w_{{i(C_i-1)}}};
{\ar@/_2pc/^b (110,0)*{o}; (98,20)}; (95,18)*{w_{iC_i}=v_{jI}};
 {\ar^a (20,20); (30,20)};(20,17)*{v_{j0}}; (30,17)*{v_{j1}}; 
 {\ar^a (30,20); (40,20)}; (40,17)*{v_{j2}};
 {(40,20)*{}; (120,20)**\dir{.}};{\ar^a (120,20); (130,20)}; (130,17)*{v_{j(C_j-1)}};
 {\ar^a (130,20); (140,20)}; (140,17)*{v_{jC_j}};
{\ar^b (20,20)*{o} ; (20,30)*{o}}; (20,33)*{w_{j0}}; 
{\ar^b (30,20)*{\bullet}; (30,30)*{\bullet}}; (30,33)*{w_{j1}};
{\ar^b (30,20)*{\bullet}; (30,30)*{\bullet}}; 
 {\ar^b (55,20)*{\bullet}; (55,30)*{\bullet}}; {\ar^b (98,20)*{\bullet}; (98,30)*{\bullet}}; 
{\ar^b (120,20)*{\bullet} ; (120,30)*{\bullet}};  {\ar^b (130,20)*{\bullet}; (130,30)*{\bullet}}; (130,33)*{w_{j(C_j-1)}};
{\ar^b (140,20)*{o}; (140,30)*{o}}; (140,33)*{w_{jC_j}};
\endxy
\]
   \[
\xy
 {\ar^a (-10,0); (0,0)};(-10,-3)*{v_{i0}}; (0,-3)*{v_{i1}}; 
 {\ar^a (0,0); (10,0)}; (10,-3)*{v_{i2}};
 {(10,0)*{}; (90,0)**\dir{.}};{\ar^a (90,0); (100,0)}; (100,-3)*{v_{i(C_i-1)}};
 {\ar^a (100,0); (110,0)}; (110,-3)*{v_{iC_i}};
{\ar^b (-10,0)*{o} ; (-10,10)*{o}}; (-10,13)*{w_{i0}}; 
{\ar^b (0,0)*{\bullet}; (0,10)*{\bullet}}; (0,13)*{w_{i1}};
{\ar^b (10,0)*{\bullet}; (10,10)*{\bullet}}; (10,13)*{w_{i2}};
 {\ar^b (25,0)*{\bullet}; (25,10)*{\bullet}}; {\ar^b (58,0)*{\bullet}; (58,10)*{\bullet}}; 
{\ar^b (90,0)*{\bullet} ; (90,10)*{\bullet}};  {\ar^b (100,0)*{\bullet}; (100,10)*{\bullet}}; (100,13)*{w_{{i(C_i-1)}}};
{\ar@/_1pc/^b (110,0)*{o}; (120,20)}; (130,18)*{w_{iC_i}=v_{jC_j}};
 {\ar^a (0,20); (10,20)};(0,17)*{v_{j0}}; (10,17)*{v_{j1}}; 
 {\ar^a (10,20); (20,20)}; 
 {(20,20)*{}; (100,20)**\dir{.}};{\ar^a (100,20); (110,20)}; (110,17)*{v_{j(C_j-1)}};
 {\ar^a (110,20); (120,20)}; 
{\ar^b (0,20)*{o} ; (0,30)*{o}}; (0,33)*{w_{j0}}; 
{\ar^b (10,20)*{\bullet}; (10,30)*{\bullet}}; (10,33)*{w_{j1}};
{\ar^b (10,20)*{\bullet}; (10,30)*{\bullet}}; 
 {\ar^b (35,20)*{\bullet}; (35,30)*{\bullet}}; {\ar^b (78,20)*{\bullet}; (78,30)*{\bullet}}; 
{\ar^b (100,20)*{\bullet} ; (100,30)*{\bullet}};  {\ar^b (110,20)*{\bullet}; (110,30)*{\bullet}}; (110,33)*{w_{j(C_j-1)}};
{\ar^b (120,20)*{\bullet}; (120,30)*{o}}; (120,33)*{w_{jC_j}};
\endxy
\] 
\caption{Step 1} \label{Step 1}
\end{figure} 
\begin{figure}[ht!]
   \[
\xy
 {\ar^a (-10,0); (0,0)};(-10,-3)*{v_{l0}}; (0,-3)*{v_{l1}}; 
 {\ar^a (0,0); (10,0)}; (10,-3)*{v_{l2}};
 {(10,0)*{}; (90,0)**\dir{.}};{\ar^a (90,0); (100,0)}; (100,-3)*{v_{l(C_l-1)}};
 {\ar^a (100,0); (110,0)}; (110,-3)*{v_{lC_l}};
{\ar^b (-10,0)*{o} ; (-10,10)*{\bullet}}; (-10,13)*{w_{l0}}; 
{\ar^b (0,0)*{\bullet}; (0,10)*{\bullet}}; (0,13)*{w_{l1}};
{\ar^b (10,0)*{\bullet}; (10,10)*{\bullet}}; (10,13)*{w_{l2}};
 {\ar^b (25,0)*{\bullet}; (20,20)*{\bullet}}; (25,-3)*{v_{lO}};
 {\ar^b (58,0)*{\bullet}; (58,10)*{\bullet}}; 
{\ar^b (90,0)*{\bullet} ; (90,10)*{\bullet}};  {\ar^b (100,0)*{\bullet}; (100,10)*{\bullet}}; (100,13)*{w_{{l(C_l-1)}}};
{\ar^b (110,0)*{o}; (110,10)*{\bullet}}; (110,13)*{w_{lC_l}};
 {\ar^a (20,20); (30,20)};(12,17)*{w_{lO}=v_{j0}}; (30,17)*{v_{j1}}; 
 {\ar^a (30,20); (40,20)}; (40,17)*{v_{j2}};
 {(40,20)*{}; (120,20)**\dir{.}};{\ar^a (120,20); (130,20)}; (130,17)*{v_{j(C_j-1)}};
 {\ar^a (130,20); (140,20)}; (140,17)*{v_{jC_j}};
{\ar^b (20,20)*{o} ; (20,30)*{\bullet}}; (20,33)*{w_{j0}}; 
{\ar^b (30,20)*{\bullet}; (30,30)*{\bullet}}; (30,33)*{w_{j1}};
{\ar^b (30,20)*{\bullet}; (30,30)*{\bullet}};
 {\ar^b (55,20)*{\bullet}; (55,30)*{\bullet}}; {\ar^b (98,20)*{\bullet}; (98,30)*{\bullet}}; 
{\ar^b (120,20)*{\bullet} ; (120,30)*{\bullet}};  {\ar^b (130,20)*{\bullet}; (130,30)*{\bullet}}; (130,33)*{w_{j(C_j-1)}};
{\ar^b (140,20)*{o}; (140,30)*{\bullet}}; (140,33)*{w_{jC_j}};
\endxy
\]
\caption{Step 2} \label{Step O}
\end{figure}

Let us denote the resulting graph  by $\Theta'$. By construction the labeling of $\Theta'$ is reduced over~$F$. We refer to a vertex by $v_{iP}$ if it is the image of $v_{iP}$ of $\Theta$. The $a$-line $i$ in $\Theta'$ consists of the vertices $\{v_{i0},v_{i1},\ldots,v_{iC_i}\}$ and edges $\{(v_{i0},v_{i1}),(v_{i1},v_{i2}),\ldots ,(v_{i(C_i-1)},v_{iC_i})\}$. Each $a$-line $i$ has $s_i$ distinguished vertices $v_{iI_{i1}},\ldots,v_{iI_{is_i}}$  arising from Step 1, and $t_i$ distinguished vertices $v_{iO_{i1}},\ldots,v_{iO_{it_i}}$ arising from Step 2. Both $s_i$ and $t_i$ can be zero.  Figure \ref{F: typical aline} shows a typical $a$-line in $\Theta'$, Figures \ref{F: extremal case 1} and \ref{F: extremal case 2} show further examples of a-lines that can be produced using the above construction.

\smallskip\medskip

\emph{
 Let us denote the tuple $(I_{i1},\ldots,I_{is_i},O_{i1},\ldots,O_{it_i},C_i)$  by $(I_i,O_i,C_i)$. 
Then,  we denote  
\[
(I,O,C):=\bigcup_{1\leqslant i\leqslant K}\{(i,s_i,t_i,(I_i,O_i,C_i))\}.
\] 
We call the graph $\Theta'$ constructed above a generalized Rips-Segev graph with coefficients $(I,O,C)$.
}

\smallskip\medskip

These intermediate graphs $\Theta'$ turn out to be useful later on. 
In a fourth step we now make $\Theta'$ a graph encoding the non-unique product property for $A$ and $B$.
\begin{enumerate}\setcounter{enumi}{3}
\item For each word $c_i$  we take the labeled path $p_i$ whose label is the word $c_i$. For all $i$, we glue the terminal vertex of the path $p_i$ to $v_{i0}$ in $\Theta'$. We call the graph so obtained $\Theta''$. In $\Theta''$ we identify all the starting vertices of the paths $c_i$. The reduction (or folding) of $\Theta''$ is denoted by $\Gamma$.
\end{enumerate}

\smallskip

\emph{
We call a graph $\Gamma$ with a reduced labeling {by $F$}
, $K\in \mathbb{N}$, coefficients $(I,O,C)$ and elements $c_i\in F$ constructed above a generalized Rips-Segev graph for $A$ and $B$.
}

\smallskip\medskip

Figure \ref{F: exnup4} shows two examples of generalized Rips-Segev graphs. 
\smallskip

We note that $\Gamma$ contains the image of the tree $T$ as a subgraph. More specifically, the tree $T$ is a maximal spanning tree of $\Gamma$.  Let us remark that the graph $\Gamma$ could alternatively be obtained by applying steps 1, 2, and 3 to the tree $T$ defined above. In the next sections  however, we work with the intermediate graphs $\Theta'$, that we called Rips-Segev graphs for coefficients $(I,O,C)$.

\begin{figure}[ht!]
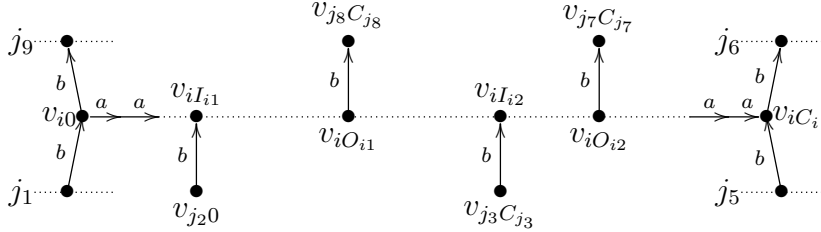

 \[
\xy
 {\ar^a (0,0)*{\bullet}; (5,0)};(-3,0)*{v_{i0}}; {\ar^a (5,0); (10,0)};{(10,0)*{}; (80,0)**\dir{.}};{\ar^a (80,0); (85,0)};{\ar^a (85,0); (90,0)*{\bullet}}; (94,0)*{v_{iC_i}};
{\ar^b (15,-10)*{\bullet} ; (15,0)*{\bullet}};  (15,3)*{v_{iI_{i1}}}; (15,-13)*{v_{j_20}};
 {\ar^b (55,-10)*{\bullet}; (55,0)*{\bullet}};  (55,3)*{v_{iI_{i2}}}; (55,-13)*{v_{j_3C_{j_3}}};
 {\ar^b (35,0)*{\bullet}; (35,10)*{\bullet}}; (35,-3)*{v_{iO_{i1}}}; (35,13.5)*{v_{j_8C_{j_8}}};
{\ar^b (68,0)*{\bullet}; (68,10)*{\bullet}}; (68,-3)*{v_{iO_{i2}}}; (68,13)*{v_{j_7C_{j_7}}};
{\ar^b (-2,-10)*{\bullet} ; (0,0)};  {\ar^b (92,-10)*{\bullet}; (90,0)}; {\ar@{.} (-8,-10)*{j_1}; (4,-10)}; {\ar@{.} (85,-10)*{j_5}; (98,-10)};
{\ar^b (0,0); (-2,10)*{\bullet}}; {\ar^b (90,0); (92,10)*{\bullet}}; {\ar@{.} (-8,10)*{j_9}; (4,10)}; {\ar@{.} (85,10)*{j_6}; (98,10)};
\endxy
\]
\caption{Typical $a$-line with $s=t=2$.} \label{F: typical aline}
\end{figure}
\begin{figure}[ht!]
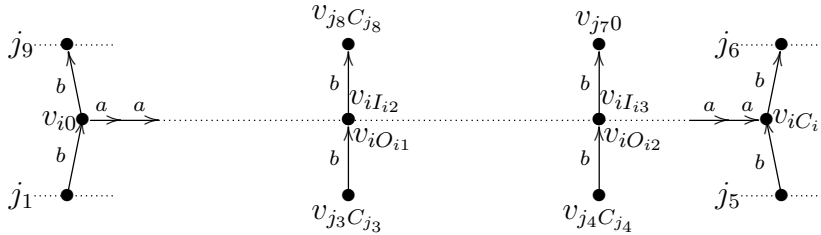

\[
\xy
 {\ar^a (0,0)*{\bullet}; (5,0)};(-3,0)*{v_{i0}}; {\ar^a (5,0); (10,0)};{(10,0)*{}; (80,0)**\dir{.}};{\ar^a (80,0); (85,0)};{\ar^a (85,0); (90,0)*{\bullet}}; (94,0)*{v_{iC_i}};
 {\ar^b (35,-10)*{\bullet}; (35,0)*{\bullet}};  (38.5,2.5)*{v_{iI_{i2}}}; (35,-13)*{v_{j_3C_{j_3}}};
 {\ar^b (68,-10)*{\bullet}; (68,0)*{\bullet}};  (71.5,2.5)*{v_{iI_{i3}}}; (68,-13)*{v_{j_4C_{j_4}}};
 {\ar^b (35,0)*{\bullet}; (35,10)*{\bullet}}; (39.5,-2.5)*{v_{iO_{i1}}}; (35,13.5)*{v_{j_8C_{j_8}}};
{\ar^b (68,0)*{\bullet}; (68,10)*{\bullet}}; (72.5,-2.5)*{v_{iO_{i2}}}; (68,13)*{v_{j_70}};
{\ar^b (-2,-10)*{\bullet} ; (0,0)};  {\ar^b (92,-10)*{\bullet}; (90,0)}; {\ar@{.} (-8,-10)*{j_1}; (4,-10)}; {\ar@{.} (85,-10)*{j_5}; (98,-10)};
{\ar^b (0,0); (-2,10)*{\bullet}}; {\ar^b (90,0); (92,10)*{\bullet}}; {\ar@{.} (-8,10)*{j_9}; (4,10)}; {\ar@{.} (85,10)*{j_6}; (98,10)};
\endxy
\]
\caption{a-line with $I_{jl}=O_{jk}$ for some $k,l$.} \label{F: extremal case 1}
\end{figure}
 \begin{figure}[ht!]
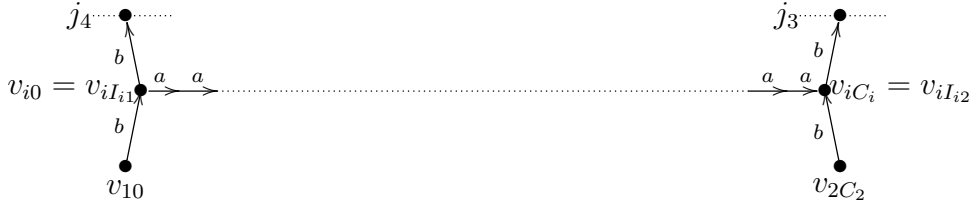

\[
\xy
 {\ar^a (0,0)*{\bullet}; (5,0)};(-9,0)*{v_{i0}=v_{iI_{i1}}}; {\ar^a (5,0); (10,0)};{(10,0)*{}; (80,0)**\dir{.}};{\ar^a (80,0); (85,0)};{\ar^a (85,0); (90,0)*{\bullet}}; (100,0)*{v_{iC_i}=v_{iI_{i2}}}; 
{\ar^b (-2,-10)*{\bullet} ; (0,0)};  {\ar^b (92,-10)*{\bullet}; (90,0)}; (-2,-13)*{v_{10}}; (92,-13)*{v_{2C_2}};
{\ar^b (0,0); (-2,10)*{\bullet}}; {\ar^b (90,0); (92,10)*{\bullet}}; {\ar@{.} (-8,10)*{j_4}; (4,10)}; {\ar@{.} (85,10)*{j_3}; (98,10)};
\endxy
\]
\caption{a-line with $s=2$, $t=0$, $I_{i1}=0$ and $I_{i2}=C_i$.} \label{F: extremal case 2}
\end{figure}

 \begin{figure}[ht!]
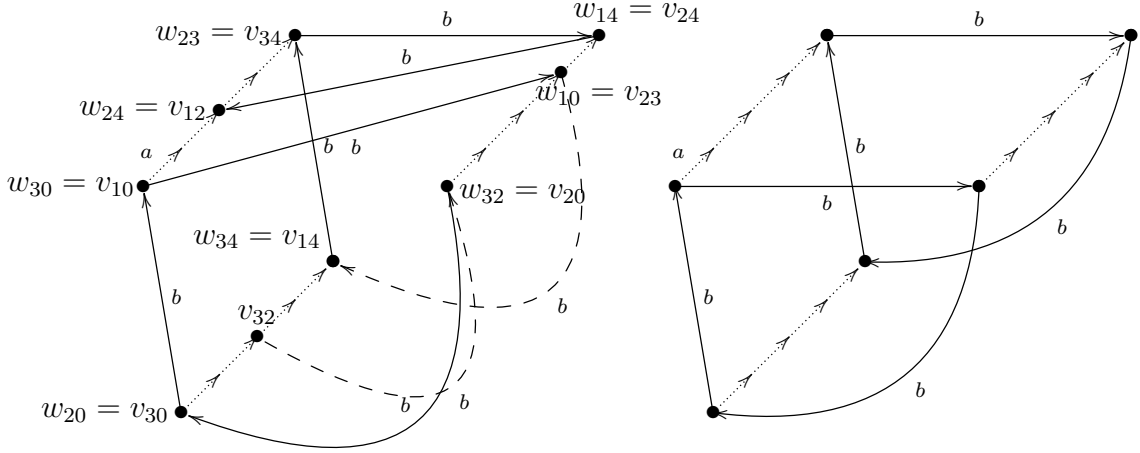

           \[
\xy
{\ar@{.>}^a (-10,5)*{}; (-5,10)}; {\ar_b (-10,5); (45,20)*{\bullet}}; (50,17)*{w_{10}=v_{23}};
{\ar@/^{60pt}/@{-->}^b (45,20);(15,-5)*{\bullet}}; (-19.5,5)*{w_{30}=v_{10}}; 
(-10,15)*{w_{24}=v_{12}}; 
{\ar@{.>} (-5,10)*{}; (0,15)*{}}; 
{\ar@{.>} (0,15)*{}; (5,20)*{}}; 
{\ar@{.>} (5,20)*{}; (10,25)*{}}; {\ar^b (10,25); (50,25)*{\bullet}}; (55,28)*{w_{14}=v_{24}}; 
{\ar@{.>} (30,5)*{}; (35,10)*{}}; {\ar@/^5pc/_b (30,5); (-5,-25)*{\bullet}} ; (40,4)*{w_{32}=v_{20}}; (-15,-25)*{w_{20}=v_{30}};
{\ar@{.>} (35,10)*{}; (40,15)*{}};
{\ar@{.>} (40,15)*{}; (45,20)*{}}; 
{\ar@{.>} (45,20)*{}; (50,25)*{}}; {\ar_b (50,25); (0,15)*{\bullet}}; 
{\ar@{.>} (-5,-25)*{}; (0,-20)*{}}; {\ar_b (-5,-25); (-10,5)*{\bullet}}; 
{\ar@{.>} (0,-20)*{}; (5,-15)*{}}; {\ar@/_5pc/@{-->}_b (5,-15)*{\bullet}; (30,5)*{\bullet}}; (5,-12)*{v_{32}};
{\ar@{.>} (5,-15)*{}; (10,-10)*{}}; 
{\ar@{.>} (10,-10)*{}; (15,-5)*{\bullet}}; {\ar_b (15,-5); (10,25)*{\bullet}}; (0,25)*{w_{23}=v_{34}}; (5,-2)*{w_{34}=v_{14}};
{\ar@{.>}^a (60,5)*{\bullet}; (65,10)}; {\ar_b (60,5); (100,5)*{\bullet}};{\ar@/^{30pt}/^b (120,25);(85,-5)}; 
{\ar@{.>} (65,10)*{}; (70,15)*{}}; 
{\ar@{.>} (70,15)*{}; (75,20)*{}}; 
{\ar@{.>} (75,20)*{}; (80,25)*{\bullet}}; {\ar^b (80,25); (120,25)*{\bullet}}; 
{\ar@{.>} (100,5)*{\bullet}; (105,10)*{}}; {\ar@/^3pc/^b (100,5); (65,-25)*{}}; 
{\ar@{.>} (105,10)*{}; (110,15)*{}};
{\ar@{.>} (110,15)*{}; (115,20)*{}}; 
{\ar@{.>} (115,20)*{}; (120,25)*{\bullet}};  
{\ar@{.>} (65,-25)*{\bullet}; (70,-20)*{}}; {\ar_b (65,-25); (60,5)*{}};
{\ar@{.>} (70,-20)*{}; (75,-15)*{}}; 
{\ar@{.>} (75,-15)*{}; (80,-10)*{}}; 
{\ar@{.>} (80,-10)*{}; (85,-5)*{\bullet}}; {\ar_b (85,-5); (80,25)*{\bullet}};
\endxy
\]
\caption{Generalized Rips-Segev graphs for $A$ and $B$, where $K=3$, $C_1=C_2=C_3=4$, and $c_1=1$, $c_2=b^{-1}$, and $c_3=b^{-2}$. Both graphs are results of our above construction starting with this given data and their labeling is reduced. The full $b$-edges are those glued in Step 1 above. The dashed $b$-edges are those glued in Step 2 above. On the right-hand side no $b$-edges are glued in Step~2. }   
\label{F: exnup4}
\end{figure} 

\newpage


We now discuss conditions to use $\Gamma$, as the graph $\Omega$ in Examples \ref{E: instructive}, to encode the non-unique product property for $A$ and $B$. Indeed, let $G(\Gamma)$ be the quotient of $F$ by the normal subgroup generated by the set of labels on the non-trivial cycles of $\Gamma$. 
\begin{prop}\label{P: small cancellation on RSgraphs} 
Suppose $\Gamma$ is a generalized Rips-Segev graph for $A$ and $B$ whose labeling satisfies the $Gr_*'(1/6)$--condition. Then $A$ and $B$ embed and  do not have a unique product in $G(\Gamma)$.
\end{prop}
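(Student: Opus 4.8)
The plan is to deduce the whole statement from the injectivity of $\Gamma$ in the Cayley graph of $G(\Gamma)$. Since $\Gamma$ satisfies the $Gr_*'(1/6)$--condition, Theorem~\ref{T: gi} gives an embedding $\iota\colon\Gamma\hookrightarrow\operatorname{Cay}(G(\Gamma),G_1\cup G_2)$, normalized so that $\iota(*)=1$. The only property I will use repeatedly is that, for every vertex $u$ of $\Gamma$, all paths in $\Gamma$ from $*$ to $u$ have the same image $\iota(u)$ in $G(\Gamma)$, and $u\mapsto\iota(u)$ is injective; equivalently, if some path from $*$ to $u$ carries a label representing $\zeta\in F$, then $\zeta=\iota(u)$ in $G(\Gamma)$. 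In particular, two products of $AB$ represented at one and the same vertex of $\Gamma$ become equal in $G(\Gamma)$, even when they differ in $F$.

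\emph{The sets embed.} The graph $\Gamma$ contains the spanning tree $T$ through $*$, and for each $x\in A$ there is a reduced path $p_x$ in $T$ from $*$ to a vertex $v(x)$ with $\omega(p_x)=x$, so $\iota(v(x))=x$ in $G(\Gamma)$. Distinct reduced paths from a basepoint in a tree end at distinct vertices, so $x\mapsto v(x)$ is injective on $A$, and hence $A\to G(\Gamma)$ is injective. For $B=\{1,a,b,ab\}$ it suffices that $1,a,b,ab$ are pairwise distinct in $G(\Gamma)$, i.e.\ that $a,b,ab,ab^{-1}$ are non-trivial there. Each of these elements $g$ is non-trivial in $F$ with $\lvert g\rvert_*\le2$, while $\gamma>2$ (the $Gr_*'(1/6)$--condition gives $\gamma>6\Lambda$, and a generalized Rips--Segev graph always contains a piece, e.g.\ the label $a$ read on two distinct $a$-edges, so $\Lambda\ge1$). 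Were $g=1$ in $G(\Gamma)$, a minimal van Kampen diagram $D$ for $g$ over $R$ having at least one face $M$ of $\widetilde D$ would give, by the third assertion of Lemma~\ref{L: graphical small cancellation lemma} together with Lemma~\ref{Hilfssatz van Kampen}, $\lvert g\rvert_*=\lvert\omega(\partial D)\rvert_*\ge\lvert\omega(\partial M)\rvert_*\ge\gamma>2$, a contradiction; so $D$ has no face, forcing $g=1$ in $F$, again a contradiction. Hence $B\to G(\Gamma)$ is injective too.

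\emph{No unique product.} Let $z$ be an arbitrary element of $AB$ as computed in $G(\Gamma)$, written $z=x_0y_0$ with $x_0\in A$, $y_0\in B$; as an element of $F$, $x_0y_0$ lies in $AB=\bigsqcup_i\{v_{i0},\dots,v_{iC_i},w_{i0},\dots,w_{iC_i}\}$. If $x_0y_0$ is an \emph{interior} product, i.e.\ $x_0y_0=v_{iP}$ or $x_0y_0=w_{iP}$ with $0<P<C_i$, then in $G(\Gamma)$ we have $z=v_{iP}\cdot1=v_{i(P-1)}\cdot a$ (resp.\ $z=v_{iP}\cdot b=v_{i(P-1)}\cdot(ab)$), with $v_{iP},v_{i(P-1)}\in A$, $1,a,b,ab\in B$, and the two displayed pairs distinct because $a\ne1$ in $F$; so $z$ has no unique expression. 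If instead $x_0y_0$ is an \emph{endpoint} product $v_{i0}$, $v_{iC_i}$, $w_{i0}$ or $w_{iC_i}$, let $u$ be the vertex of $\Gamma$ at which it is represented --- namely $v_{i0}$ or $v_{iC_i}$, or, by Step~1 of the construction, the $a$-line vertex with which $w_{i0}$, resp.\ $w_{iC_i}$, was identified. This vertex survives into $\Gamma$, and, by construction, it represents, besides $x_0y_0$, at least one further product $\zeta\in AB$ with $\zeta\ne x_0y_0$ in $F$, carried by a path from $*$; consequently $\iota(u)=x_0y_0=z$ and $\iota(u)=\zeta$ in $G(\Gamma)$. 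Choosing an $A\times B$-decomposition $(x',y')$ of $\zeta$ (possible since $\zeta\in AB$), we obtain $x'y'=\zeta=\iota(u)=z$ in $G(\Gamma)$ with $(x',y')\ne(x_0,y_0)$ because $x'y'=\zeta\ne x_0y_0$ in $F$; thus $z$ has the two distinct expressions $(x_0,y_0)$ and $(x',y')$. Since $z$ was arbitrary, $A$ and $B$ do not have a unique product in $G(\Gamma)$.

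\emph{Where the difficulty lies.} The conceptual content is short: the identifications designed into $\Gamma$ make products of $AB$ that differ in $F$ coincide in $G(\Gamma)$, and this coincidence is exactly what the injection of Theorem~\ref{T: gi} certifies. The genuine work is in the ``endpoint'' case, where one must follow the four construction steps and verify both that every vertex of $\Gamma$ arising from an identification really does represent at least two products of $AB$ and that the two witnessing paths from $*$ survive the deletion of degree-one $b$-edges in Step~3 and the folding in Step~4. Splitting off the ``interior'' products, which are settled entirely inside $F$, is precisely what removes the need to track the $w$-vertices that Step~3 deletes.
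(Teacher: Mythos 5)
Your proposal is correct and takes essentially the same route as the paper: the embeddings are obtained from the injection of $\Gamma$ into the Cayley graph (Theorem~\ref{T: gi}) via the spanning tree $T$, and the failure of unique products is read off the vertices identified in Steps 1--2, exactly as in Lemma~\ref{L: graphs encoding non-unique product property} and the lemma following it. The only deviations are minor: you establish the injectivity on $B$ by a length estimate using Lemma~\ref{L: graphical small cancellation lemma} (with $\gamma>6\Lambda\geqslant 6$) instead of the paper's ``similar'' Cayley-graph argument, and your explicit split into interior and endpoint products cleanly sidesteps the $b$-edges deleted in Step~3, a point the paper's lemma passes over.
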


The proposition is implied by  the following two Lemmas. Denote by $\iota$ the quotient map $F\to G(\Gamma)$.
\begin{lemma}\label{L: graphs encoding non-unique product property}
 Let $\Gamma$ be a generalized Rips-Segev graph for $A$ and $B$. Suppose that the map $\iota$ is injective when restricted to $A$ and $B$. Then $A$ and $B$ do not have a unique product in $G(\Gamma)$.
\end{lemma}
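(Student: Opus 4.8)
**The plan is to show that the very construction of $\Gamma$ exhibits, for each product $z$ in $AB$, at least two distinct factorizations $z=xy$ with $x\in A$, $y\in B$, that remain distinct after passing to $G(\Gamma)$.** Recall from the construction that $\Gamma$ contains the tree $T$ as a spanning tree, and for every $x\in A$ there is a vertex $v$ of $\Gamma$ representing $x$ (i.e.\ the endpoint of the path $p_x$ from $*$ labeled by $x$), and for every $1\neq y\in B$ there is a simple path $p_y$ from $v$ to a vertex $v'$ with label $y$, where $v'$ represents $xy$. By Steps 1 and 2 of the construction, \emph{every} vertex of $\Gamma$ was obtained by identifying a vertex representing some product in $AB$ with a vertex representing a \emph{different} product in $AB$; thus every vertex of $\Gamma$ simultaneously represents at least two distinct products of $AB$.

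First I would fix an arbitrary $z\in AB$ and unwind what this means on the level of $\Gamma$. Write $z=x_1y_1$ with $x_1\in A$, $y_1\in B$; the path $p_{x_1}p_{y_1}$ in $\Gamma$ from $*$ ends at a vertex $v'$ representing $z$. By the construction $v'$ also represents a second product $x_2y_2$ with $(x_1,y_1)\neq(x_2,y_2)$; concretely $v'$ is the endpoint of a path from $*$ labeled by some $x_2\in A$ followed by a path labeled by $y_2\in B$ (the case $y_2=1$ is included, as $v_{i0}$ represents $c_ia^i\cdot 1$). Hence as elements of $F$, the labels $x_1y_1$ and $x_2y_2$ of two paths in $\Gamma$ from $*$ to the same vertex $v'$ are equal \emph{in $G(\Gamma)$}: indeed their ``difference'' $x_1y_1(x_2y_2)^{-1}$ is the label of a closed path at $*$, hence lies in $\langle\langle R\rangle\rangle$ since $R$ consists of labels of cycles generating $\pi_1(\Gamma)$. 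Therefore $x_1y_1 = x_2y_2$ in $G(\Gamma)$, so $z$ has (at least) two expressions as a product of an element of $A$ and an element of $B$ \emph{in the group}.

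It then remains to verify that these two expressions are genuinely distinct \emph{in $G(\Gamma)$}, not merely in $F$ — i.e.\ that $x_1\neq x_2$ in $G(\Gamma)$ or $y_1\neq y_2$ in $G(\Gamma)$. This is exactly where the hypothesis that $\iota$ is injective on $A$ and on $B$ enters: since $(x_1,y_1)\neq(x_2,y_2)$ as elements of $F$, either $x_1\neq x_2$ in $F$, whence $\iota(x_1)\neq\iota(x_2)$ by injectivity of $\iota|_A$, or $y_1\neq y_2$ in $F$, whence $\iota(y_1)\neq\iota(y_2)$ by injectivity of $\iota|_B$. In either case the two factorizations $\iota(x_1)\iota(y_1)$ and $\iota(x_2)\iota(y_2)$ of $\iota(z)$ in $G(\Gamma)$ are distinct as ordered pairs in $\iota(A)\times\iota(B)$. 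Since $z\in AB$ was arbitrary, \emph{no} element of $\iota(A)\iota(B)$ has a unique expression as a product of an element of $\iota(A)$ and an element of $\iota(B)$; in particular the pair $\iota(A),\iota(B)$ witnesses that $G(\Gamma)$ fails the unique product property, which is the assertion.

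The only real care needed — and the step I would flag as the main obstacle — is the bookkeeping that the vertex $v'$ at the end of $p_{x_1}p_{y_1}$ really does carry a \emph{second} $AB$-factorization through a path emanating from $*$, for \emph{every} choice of $z$ and of the first factorization, including the boundary cases: the ``corner'' products $v_{i0}, v_{iC_i}, w_{i0}, w_{iC_i}$ that were unique in $F$ (handled by Steps 1--2, which glue each such vertex to an interior vertex representing a different product), and the products of the form $v_{iP}$ which coincide with a $w$-vertex under the identifications. One must check that after the folding in Step 4 the tree $T$ embeds (so that $p_x$ is well-defined and its label is genuinely $x$) — but this is recorded in the excerpt (``$T$ is a maximal spanning tree of $\Gamma$'') and follows from reducedness of the labeling. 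With that structural input in hand, the argument above is a short diagram chase and requires no small cancellation beyond what is needed to know $\Gamma$ (equivalently $T$) embeds, which is why the $Gr_*'(1/6)$ hypothesis of Proposition~\ref{P: small cancellation on RSgraphs} is invoked only for the \emph{other} lemma (the embedding of $A$ and $B$), not for this one.
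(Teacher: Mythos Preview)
Your argument is correct and follows essentially the same route as the paper's proof: pick $z\in AB$, locate the vertex $v'$ at the end of $p_{x_1}p_{y_1}$, use Steps~1--2 of the construction to find a second factorization $x_2y_2$ reaching the same vertex, observe that the resulting closed path forces $x_1y_1=x_2y_2$ in $G(\Gamma)$, and invoke injectivity of $\iota|_A$ and $\iota|_B$ to keep the two factorizations distinct. The only cosmetic differences are that the paper phrases the case split as ``if $z$ is unique in $F$'' (the non-unique-in-$F$ case being immediate) and asserts that \emph{both} $x\neq x'$ and $y\neq y'$ hold---which, as you implicitly note, is equivalent to your ``or'' once $xy=x'y'$ in a group.
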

\begin{proof}
By construction $\Gamma$ contains the image of the tree $T$. Hence, for every element $x\in A$ there is a vertex $v\in T$ such that the simple path $p_x$ from $*$ to $v$ is labeled by $x$. For every product $z=xy$ in $AB$ there is a simple path $p_y$ from $v$ to a vertex $v'$ labeled by $y$. The concatenation $p_xp_y$ is a path from $*$ to $v'$ that is labeled by $z=xy$. If $z$ is unique in $F$, then $v'$ was identified in Step 1 or 2 with a different vertex $v''$ of $T$. Let  $p'$ be the simple path from $*$ to $v''$ in $T$. By construction, the label $p'$ equals a product $z'=x'y'$ in $AB$ where $x\not=x'$ and $y\not=y'$ in $F$. The concatenation of $p$ and $p'^{-1}$ is a cycle in $\Gamma$, and the label of this cycle equals $zz'^{-1}=xy(x'y')^{-1}$.  As $\iota$ is injective when restricted to $A$ and $B$, $x\not= x'$ and $y\not= y'$ in $G(\Gamma)$. This implies that $z$ is not a unique product of $A$ and $B$ in $G(\Gamma)$.
\end{proof}

The following lemma completes the proof of Proposition \ref{P: small cancellation on RSgraphs}.

\begin{lemma}
Let $\Gamma$ be a generalized Rips-Segev graph for $A$ and $B$ whose labeling satisfies the $Gr_*'(1/6)$--condition. Then $A$ and $B$ embed into the group $G(\Gamma)$.
\end{lemma}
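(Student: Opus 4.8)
The plan is to show that the restriction of the quotient map $\iota\colon F\to G(\Gamma)$ to the finite sets $A$ and $B$ is injective, since by Lemma~\ref{L: graphs encoding non-unique product property} this already yields the desired conclusion. The key observation is that, by construction, every element $x\in A=\bigsqcup_i\{v_{i0},\ldots,v_{i(C_i-1)}\}$ and every element $y\in B=\{1,a,b,ab\}$ is the label of a simple path in the spanning tree $T$ of $\Gamma$ (for $A$, the path $p_x$ from $*$ to the vertex representing $x$; for $B$, a subpath of some $p_xp_y$). Consequently, for any two distinct $x,x'\in A$ (resp. $y,y'\in B$) the product $x(x')^{-1}$ (resp. $y(y')^{-1}$) is the label of a \emph{simple path} in $\Gamma$ running from one vertex of $T$ to another. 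Since the labeling of $\Gamma$ is reduced over $F$, such a label represents a non-trivial element of $F$; I must upgrade this to non-triviality in $G(\Gamma)$.

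First I would make precise that $x(x')^{-1}$ is read on a simple path $q$ in $\Gamma$: concatenate the tree-geodesic from $*$ to the vertex $v$ representing $x$ with the reverse of the tree-geodesic from $*$ to the vertex $v'$ representing $x'$, and cancel the common initial segment; since $T$ is a tree this produces a genuine simple (reduced) path, and the analogous statement holds for $B$ (here the relevant paths are the short $a$-, $b$-, and $ab$-labeled edges/edge-pairs emanating from vertices of the tree, which are distinct simple paths for distinct elements of $B$ because the $v_{i0}$ representatives are pairwise distinct vertices). Then I would apply Theorem~\ref{T: gi}: under the $Gr_*'(1/6)$--condition, $\Gamma$ injects into the Cayley graph of $G(\Gamma)$ with respect to $G_1\cup G_2$ (indeed with respect to $X\sqcup X^{-1}$ in Case~2). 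Injectivity of $\Gamma\hookrightarrow\mathrm{Cay}(G(\Gamma))$ means precisely that two distinct vertices of $\Gamma$ map to distinct group elements; in particular the two endpoints of the simple path $q$ above map to distinct elements of $G(\Gamma)$, i.e. $\iota(x)\ne\iota(x')$. The same argument applied to the $B$-paths gives $\iota(y)\ne\iota(y')$ for distinct $y,y'\in B$.

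Thus the main work reduces to checking the hypotheses of Theorem~\ref{T: gi} are in force — which they are, by assumption, since $\Gamma$ has a reduced labeling by $G_1\cup G_2$ satisfying $Gr_*'(1/6)$ — and to the bookkeeping that the elements of $A$ and the elements of $B$ are realized as labels of \emph{simple} paths between vertices of $\Gamma$ so that vertex-injectivity translates into set-injectivity of $\iota$. I expect the minor subtlety (the main obstacle, such as it is) to be the case of $B$: one must be careful that the paths realizing $1,a,b,ab$ are honestly distinct simple paths and that, for a fixed $x$, the four vertices $x,xa,xb,xab$ are distinct vertices of $\Gamma$ — this follows from the construction of $\Theta$ and the fact that $c_i$ has terminal letter $\ne a^{-1},b,b^{-1}$ (as in Example~\ref{E: RSset}), so that no unwanted folding collapses these vertices, and from the reducedness of the labeling of $\Gamma$ which guarantees the corresponding edges are not identified. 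Once this is in place, injectivity of $\iota|_A$ and $\iota|_B$ follows, and with Lemma~\ref{L: graphs encoding non-unique product property} the proof of Proposition~\ref{P: small cancellation on RSgraphs} is complete.
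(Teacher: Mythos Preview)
Your proposal is correct and takes essentially the same approach as the paper: both deduce injectivity of $\iota|_A$ and $\iota|_B$ from the vertex-injectivity of $\Gamma\hookrightarrow\mathrm{Cay}(G(\Gamma))$ supplied by Theorem~\ref{T: gi}, the paper treating $A$ exactly as you do and dispatching $B$ with ``a similar argument''. Your reduction to \emph{simple} paths is harmless but unnecessary (only distinctness of the endpoints is used), and your caution about the $B$ case is well placed---after Step~3 not every $w_{il}$ survives, so a single $x$ with all four of $x,xa,xb,xab$ in $\Gamma$ need not exist---but this is easily repaired pair-by-pair using the $b$-edges at $v_{i0}$ and $v_{iC_i}$, which are always retained.
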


\begin{proof}
By construction, $\Gamma$ is connected. 
 Let us show that $\iota$ is injective when restricted to $A$: Suppose that $x_1\not= x_2\in A$ and $\iota(x_1)=\iota(x_2)$ in~$G(\Gamma)$. By construction, there are  vertices $v_1$ and $v_2$ in $\Gamma$, $v_1\not= v_2$, and paths $p_{x_1}$ and $p_{x_2}$ from $*$ to $v_1$ and $v_2$ respectively whose labels   represent $x_1$, respectively $x_2$. Let $p$ be the path obtained by concatenating the inverse of the path $p_{x_1}$ with $p_{x_2}$: The path $p$ starts at $v_1$ goes back to $*$ along the edges of $p_{x_1}$ in reverse order, and then from $*$ to $v_2$ along $p_{x_2}$. 
 
 Let $\mathscr{C}$ be the Cayley graph of $G$ with respect to $X\sqcup X^{-1}$. By Theorem \ref{T: gi}, the graph $\Gamma$ injects into $\mathscr{C}$. Hence $v_1\not= v_2$ in $\mathscr{C}$, and by consequence the image of $p$ in $\mathscr{C}$ is not a cycle. Hence,  $\iota(x_1)\not=\iota(x_2)$ 
 in $G$.  
 
A similar argument can be applied to  conclude that $\iota$ is injective when restricted to $B$.
\end{proof}

Under the $Gr_*(1/8)$--condition, Theorem \ref{T: tf} implies that $G(\Gamma)$ is torsion-free. Theorem \ref{lii} implies that $G(\Gamma)$ is Gromov hyperbolic if $G_1$ and $G_2$ are Gromov hyperbolic.

\subsection{Graphical small cancellation}\label{S: RipsSegev condition}

The labellings of the graphs shown in Figure~\ref{F: exnup4} do not satisfy the $Gr_*'(1/8)$--condition. We now investigate conditions on generalized Rips-Segev graphs~$\Gamma$ that ensure this condition.

\smallskip

Let us start with a remark. Consider a  generalized Rips-Segev graph $\Theta'$ for coefficients $(I,O,C)$. We specify $c_i$ to be the label of one of the shortest paths connecting $v_{10}$ and $v_{i0}$ in $\Theta'$. This specifies a set $A$. Observe that the corresponding generalized Rips-Segev graph $\Gamma$ for this set $A$ and the set $B$ equals the graph $\Theta'$, see Figure~\ref{F: exnup4} for illustration. From now we choose $c_i$ as above and assume that $\Gamma$ is a generalized Rips-Segev graph with coefficients $(I,O,C)$. 
\smallskip

We distinguish two types of directed edges in $\Gamma$ : \begin{itemize}
                                                                                                \item $a$-edges, that is positively oriented edges labeled $a$
                                                                                                \item $b$-edges, that is positively oriented edges labeled $b$.
                                                                                               \end{itemize}
Every $b$-edge is by construction an edge $(v_{iP},v_{jQ})$ where $i\not=j$, $1\leqslant i,j \leqslant K$, $P\in \{0,O_{i1},\ldots,O_{it_i},C_i\}$, and $Q\in \{I_{j1},\ldots, I_{js_j}\}$.
                                                                                               
\begin{definition*}
 The \emph{underlying graph of $\Gamma$} is the following graph denoted by $\Phi_{\Gamma}$. 
 
 The vertex set of $\Phi_{\Gamma}$ is the set of a-lines in $\Gamma$. We denote these vertices by $\{1,2,3,\ldots, K\}$, so that vertex $i$ represents the $a$-line $i$. 
 
 The edges of $\Phi_{\Gamma}$   are in one-one correspondence with the $b$-edges of $\Gamma$: For every $b$-edge $(v_{iP},v_{jQ})$, there is a single directed edge $(i,j)$ in $\Phi_{\Gamma}$ . Conversely, for every edge $e=(i,j)$ in $\Phi_{\Gamma}$  there is a unique number  $P_e\in \{0,O_{i1},\ldots,O_{it_i},C_i\}$ and a unique number $Q_e\in \{I_{j1},\ldots, I_{js_j}\}$ such that $(v_{iP_e},v_{jQ_e})$ is a $b$-edge in $\Gamma$.

 If $(v_{iP},v_{jQ})$ is a $b$-edge in $\Gamma$, then we label the corresponding edge $(i,j)$ in $\Phi_{\Gamma}$  by $a^{P}ba^{-Q}$.
  \end{definition*}
  
  We now describe a reverse construction.
  \begin{definition*}
 We refer to the following reduction procedure as of \emph{$\{a,b\}$--reduction} of a graph $\Phi$ whose edges are labeled by words of the type $a^{P}ba^{-Q}$, $P,Q\in \mathbb{N}$. Suppose that the edge $(i,j)$ is labeled by $a^{P_i}ba^{-Q_j}$. We replace the edge $(i,j)$ by a path $p$ of $P_i+Q_j+1$ edges in same orientation as $(i,j)$. Then we label the first $P_i$ edges by $a$, the $(P_i+1)$-th edge by $b$ and the remaining $Q_j$ edges by $a^{-1}$ so that the label on  $p$ is the word $a^{P_i}ba^{-Q_j}$.  We denote by $\Phi'$ the graph obtained by applying this procedure to all edges of $\Phi$. Finally, we  reduce (or fold) the graph $\Phi'$ and obtain a reduced graph $\Phi''$ labeled by $\{a^{\pm1},b^{\pm1}\}$. The reduced graph $\Phi''$ is the $\{a,b\}$--reduction of $\Phi$.  
 \end{definition*}
 
 The following proposition is immediate.
 \begin{prop}The $\{a,b\}$--reduction of $\Phi_{\Gamma}$ coincides with the generalized Rips-Segev graph $\Gamma$.  
 \end{prop}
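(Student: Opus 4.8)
The plan is to verify that the $\{a,b\}$--reduction undoes, step by step, the passage from $\Gamma$ to $\Phi_{\Gamma}$. First I would unwind the definitions. By construction $\Phi_{\Gamma}$ has one vertex $i$ for each $a$-line $i$ of $\Gamma$ and, for each $b$-edge $(v_{iP},v_{jQ})$ of $\Gamma$, one edge directed from $i$ to $j$ and labeled $a^{P}ba^{-Q}$. The $\{a,b\}$--subdivision replaces such an edge by a path
\[
i=\xi_{0}\xrightarrow{a}\xi_{1}\xrightarrow{a}\cdots\xrightarrow{a}\xi_{P}\xrightarrow{b}\eta_{Q}\xrightarrow{a^{-1}}\eta_{Q-1}\xrightarrow{a^{-1}}\cdots\xrightarrow{a^{-1}}\eta_{0}=j
\]
with new interior vertices $\xi_{1},\dots,\xi_{P},\eta_{1},\dots,\eta_{Q}$. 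Thus the subdivided graph $\Phi'$ is the union of the $K$ vertices $1,\dots,K$ together with one such labeled path for every $b$-edge of $\Gamma$, glued only at those $K$ vertices.

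Next I would describe the folding of $\Phi'$ explicitly. Away from the vertices $1,\dots,K$ the graph $\Phi'$ is locally a path, so every foldable configuration occurs at one of those $K$ vertices, and there only excess $a$-labeled edges can occur. At vertex $i$ the $a$-labeled out-edges are exactly the initial edges of the $a$-runs coming from the $b$-edges incident to the $a$-line $i$: an outgoing $b$-edge attached at position $P$ contributes an $a$-run of length $P$ ending in a $b$-edge, and an incoming $b$-edge attached at position $Q$ contributes an $a$-run of length $Q$ ending in a $b^{-1}$-edge. Folding merges all these $a$-runs along common prefixes into a single $a$-path, whose length is the largest position occurring; this largest position is $C_i$, since Step~1 of the construction always produces an outgoing $b$-edge at position $C_i$ of line $i$ (the one issued from $w_{iC_i}$). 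Performing this merging at every vertex $i$ — the operations at distinct vertices act on disjoint sets of interior vertices, hence commute — identifies each $\xi_{k}$ (resp.\ $\eta_{k}$) with the $k$-th vertex of the merged $a$-line $i$ (resp.\ $j$), i.e.\ with $v_{ik}$ (resp.\ $v_{jk}$), and turns the $b$-edge $\xi_{P}\xrightarrow{b}\eta_{Q}$ into $v_{iP}\xrightarrow{b}v_{jQ}$.

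It then remains to see that the graph so obtained is already folded, and is $\Gamma$ itself. The one place where the construction of $\Gamma$ is genuinely used is that every vertex $v_{iP}$ carries at most one outgoing and at most one incoming $b$-edge: in Step~1 the sources $v_{i0},v_{iC_i}$ and the chosen targets form pairwise distinct index pairs, and in Step~2 the sources $v_{lO}$ with $0<O<C_l$ and the treated vertices $v_{j0},v_{jC_j}$ are again pairwise distinct; hence at $v_{iP}$ the incident half-edges carry pairwise distinct labels in $\{a,a^{-1},b,b^{-1}\}$ and no further folding is possible (this is also the already-known fact that $\Gamma=\Theta'$ is reduced). Matching the resulting folded graph with $\Gamma$ vertex-for-vertex and edge-for-edge — using that distinct $b$-edges of $\Gamma$ attach at distinct configurations and so remain distinct after folding — completes the proof. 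I expect the bookkeeping in the second paragraph to be the only delicate point: one must check that the $a$-runs emanating from a vertex $i$ really do merge into a line of length exactly $C_i$, and that the mergings performed at the different vertices $1,\dots,K$ do not interact so as to force additional identifications; everything else is a direct unwinding of the definitions.
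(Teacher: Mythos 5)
Your argument is correct: it is exactly the routine unwinding of the two constructions (subdivide each edge of $\Phi_\Gamma$, fold the resulting $a$-runs at the $K$ line-vertices into $a$-lines of length $C_i$, and observe that the distinctness of the attachment positions from Steps 1--2 makes the result already reduced and equal to $\Theta'=\Gamma$) that the paper declares ``immediate'' and does not write out. In particular your two key checkpoints — that the merged $a$-run at vertex $i$ has length exactly $C_i$ because Step 1 always glues the $b$-edge at $v_{iC_i}$, and that each $v_{iP}$ carries at most one incoming and one outgoing $b$-edge so no further folds occur — are precisely the points the paper's ``immediate'' implicitly relies on.
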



  Let us analyze $(I,O,C)$ and $\Phi_{\Gamma}$ to imply the $Gr_*'(1/8)$--condition on the labeling of~$\Gamma$.
 
{

\smallskip
 
 We employ the following two assumptions.  Firstly, we assume for simplicity that $t_i=s_i=2$ for all a-lines $i$,  and write \[(I_i,O_i,C_i)=(I_{i1},I_{i2},O_{i1},O_{i2},C_i). \] 
Secondly, we assume that $(I,O,C)$ satisfies the following Rips-Segev condition. 
\begin{definition*} A set of coefficients $(I,O,C)$ satisfies the \emph{Rips-Segev condition} if the numbers in the list 
 \begin{align*} 
I_{11},\ldots, I_{i1},&I_{i2},O_{i1},O_{i2},C_i,|C_i-O_{i2}|,|C_i-O_{i1}|,|C_i-I_{i2}|,|C_i-I_{i1}|,\\ &|O_{i2}-O_{i1}|,|O_{i2}-I_{i2}|,|O_{i2}-I_{i1}|,|O_{i1}-I_{i2}|,|O_{i1}-I_{i2}|,|I_{i2}-I_{i1}|,\ldots
\end{align*} are all non-zero and pairwise distinct.
\end{definition*}
For an (infinite) set of tuples $(I_i,O_i,C_i)$ that satisfy such conditions see Example \ref{E:coefM=0} below.}

\smallskip

The first condition above implies that the degree of the vertices of the underlying graph equals 8. 
The numbers in the Rips-Segev condition are all distances (in the edge-distance) between pairs of the vertices $v_{i0},$ $v_{iI_{ij}},$ $v_{iO_{ik}}$ and $v_{iC_i}$ on an $a$-line. The Rips-Segev condition then states, in other words, that all distances on the $a$-lines between pairs of those distinguished vertices are distinct among each other. These vertices are the starting or terminal vertices of the $b$-edges. { The following remarks follow immediately from the construction.

\begin{remark}\label{R: RScondition 1}
Let $b^{\varepsilon_1}a^{P}b^{\varepsilon_2}$, $\varepsilon_l=\pm1$, $P\in \mathbb{Z}$, be represented by the label of a simple path in  $\Gamma$. If $P$ is non-zero, then $P$ is one of the numbers in the Rips-Segev condition. The Rips-Segev condition then implies that $P$ is distinct among all non-zero exponents of $a$ in such labels of simple paths. 
\end{remark}
\begin{remark} \label{R: RScondition 2}
If $b^{\varepsilon_1}a^{P}b^{2\varepsilon_2}$, respectively $b^{2\varepsilon_1}a^{P}b^{\varepsilon_2}$, is represented by the label of  a simple  path in~$\Gamma$, the Rips-Segev condition implies that $P$ is non-zero.
\end{remark}

As a consequence of Remark \ref{R: RScondition 1} }, for all $P\not=0$, $\varepsilon_l=\pm1$, no graphical piece in  $\Gamma$ contains subpath whose label is equal to~$b^{\varepsilon_1}a^{P}b^{\varepsilon_2}$.

\begin{lemma}\label{L: maxplength}
Under the Rips-Segev condition, for all graphical pieces $p$ in $\Gamma$, we have that $\lvert \omega(p) \rvert_* \leqslant 3$.
\end{lemma}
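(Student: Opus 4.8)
The plan is to analyze the structure of a graphical piece $p$ in $\Gamma$ directly from the combinatorics of $\Theta'$ and its underlying graph $\Phi_\Gamma$. Recall that $\Gamma = \Theta'$ is built out of $a$-lines (paths of $a$-edges) joined by $b$-edges, and that every $b$-edge starts and ends at one of the distinguished vertices $v_{i0}, v_{iI_{ij}}, v_{iO_{ik}}, v_{iC_i}$. A reduced labeled path in $\Gamma$ alternates between maximal $a$-subpaths (with label $a^P$ for $P\in\mathbb{Z}$) and single $b$-edges (with label $b^{\pm1}$). I would measure the free product length $\lvert\omega(p)\rvert_*$ by counting the number of syllables, i.e.\ the number of these alternating blocks, and aim to show that a path with $\lvert\omega(p)\rvert_*\geqslant 4$ cannot have two distinct immersions into $\Gamma$.

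First I would argue that a graphical piece $p$ with at least two distinct immersions cannot contain a subword of the form $b^{\varepsilon_1}a^P b^{\varepsilon_2}$ with $P\neq 0$: by Remark~\ref{R: RScondition 1}, the exponent $P$ occurring between two consecutive $b$-edges on a simple path in $\Gamma$ is uniquely determined among all non-zero such exponents, so the two $b$-edges at the ends of this subword, together with the intermediate $a$-line, are forced — which pins down the location of that subpath in $\Gamma$ and forces the two immersions of $p$ restricted to a neighborhood of this subword to coincide; propagating this rigidity along $p$ would then force the two immersions of all of $p$ to agree, contradicting distinctness. So a piece $p$ has the property that every maximal $a$-subpath strictly between two $b$-edges of $p$ has trivial label, i.e.\ the two $b$-edges meet at a vertex. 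But a reduced labeling forbids $b^{\varepsilon_1}a^0 b^{\varepsilon_2} = b^{\varepsilon_1}b^{\varepsilon_2}$ from being non-cancelling unless $\varepsilon_1 = \varepsilon_2$; and by the construction of $\Theta'$ (each distinguished vertex receives at most the prescribed number of glued $b$-edges, with distinct ones not identified) one checks that no reduced simple path in $\Gamma$ traverses two $b$-edges with no intervening $a$-edge and with a unique continuation. The upshot is that a piece contains at most one $b$-edge (possibly with a non-trivial $a^P$ on each side, plus the use of Remark~\ref{R: RScondition 2} to rule out the longer $b^{2\varepsilon}$-patterns), giving at most the syllable pattern $a^{P}b^{\varepsilon}a^{Q}$, whence $\lvert\omega(p)\rvert_*\leqslant 3$.

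Concretely, the key steps in order are: (1) normalize $p$ as an alternating word in blocks $a^{P_\ell}$ and $b^{\varepsilon_\ell}$ coming from a reduced immersion into $\Gamma$; (2) use Remark~\ref{R: RScondition 1} to show that any internal block $a^{P}$ with $P\neq 0$ flanked by $b$-edges makes the immersion locally rigid, and propagate this to conclude $p$ would have a unique immersion — hence such a block cannot occur in a piece; (3) use the reducedness of the labeling together with the gluing rules of Steps~1--2 of the construction to rule out two $b$-edges of $p$ with trivial $a$-block between them; (4) combine (2) and (3) to see that $p$ contains at most one $b$-edge and hence has at most three syllables.

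I expect the main obstacle to be step~(2), the rigidity/propagation argument: turning "the exponent $P$ determines the pair of $b$-edges" into a clean statement that forces the two immersions of the piece $p$ to coincide requires care about the endpoints of $p$ (where $p$ may terminate in the middle of an $a$-line) and about the case $s_i$ or $t_i$ differing from $2$ if one does not invoke the standing simplifying assumption $s_i=t_i=2$. A secondary subtlety is correctly bookkeeping the syllable length: a label like $a^P b a^Q$ has free product length $3$ only when $P,Q\neq 0$, and one must make sure the boundary blocks are allowed to be non-trivial while the interior ones are not, so the bound $\lvert\omega(p)\rvert_*\leqslant 3$ is attained exactly in the expected configuration.
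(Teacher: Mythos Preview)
Your core idea---that a piece cannot contain a subpath with label $b^{\varepsilon_1}a^Pb^{\varepsilon_2}$, $P\neq 0$---is exactly the paper's argument: this is the one-sentence observation stated immediately before the lemma, and from it the bound $\lvert\omega(p)\rvert_*\leqslant 3$ follows by pure syllable-counting. Indeed, once that observation is in hand, a piece can have at most one $b$-\emph{syllable} in its normal form (two $b$-syllables would be separated by a non-trivial $a$-syllable, producing the forbidden subword), so the label has shape $a^{P}b^{k}a^{Q}$ and hence free product length at most~$3$.

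Your step~(3) is where you go astray. You try to show that a piece contains at most one $b$-\emph{edge}, but this is false: under the Rips-Segev condition each endpoint $v_{i0}$ and $v_{iC_i}$ carries one incoming and one outgoing $b$-edge, and since there are many such vertices, paths with label $a^{P}b^{2}a^{Q}$ admit several distinct immersions and are genuine pieces of free product length~$3$. Remark~\ref{R: RScondition 2} does not exclude these; it only forbids three consecutive $b$-edges. Fortunately step~(3) is also unnecessary: the single-$b$-syllable conclusion from step~(2) already gives $\lvert\omega(p)\rvert_*\leqslant 3$, regardless of whether that syllable is $b^{\pm 1}$ or $b^{\pm 2}$. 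As for your concern about ``propagation'' in step~(2): since the labeling of $\Gamma$ is reduced, the graph is folded, so an immersion of a labeled path is determined by its image on any single edge; once the two immersions agree on the $b^{\varepsilon_1}a^Pb^{\varepsilon_2}$-subpath they agree everywhere, and the argument is immediate.
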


We estimate the  \emph{minimal cycle length} $\gamma(\Gamma):=\min\{|\omega(c)|_*\mid c \text{ is a non-trivial cycle in $\Gamma$} \}$. 
We estimate $\gamma(\Gamma)$ in terms of the minimal number of $b$-edges on non-trivial cycles in $\Gamma$. Therefore, we estimate $\gamma(\Gamma)$  using the underlying graph of $\Gamma$.

\begin{lemma}\label{L: mincycle}
Under the Rips-Segev condition, we have that $ \gamma(\Gamma) \geqslant \girth (\Phi_{\Gamma}).$ 

 The girth denotes the minimal number of edges in the cycles of a graph.
\end{lemma}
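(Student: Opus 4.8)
The plan is to project a shortest non-trivial cycle of $\Gamma$ onto the underlying graph $\Phi_{\Gamma}$. Since deleting a backtrack $ee^{-1}$ from a cycle does not change the element of $F$ read on it, we may fix a reduced non-trivial cycle $c$ in $\Gamma$ with $\lvert\omega(c)\rvert_*=\gamma(\Gamma)$, and from $c$ build a closed walk $\bar c$ in $\Phi_{\Gamma}$ whose edge-length $\ell$ satisfies $1\le\ell\le\lvert\omega(c)\rvert_*$. As every $b$-edge of $\Gamma$ joins two \emph{distinct} $a$-lines, $\Phi_{\Gamma}$ has no loops, and a closed walk of positive length in a loopless graph contains a cycle of length at most that of the walk (run along the walk until a vertex first repeats). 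Hence $\girth(\Phi_{\Gamma})\le\ell\le\lvert\omega(c)\rvert_*=\gamma(\Gamma)$, which is the assertion. The construction of $\bar c$ and the estimate on $\ell$ rest on two structural facts: (A) the subgraph $\Gamma_b\subseteq\Gamma$ spanned by the $b$-edges is a forest; and (B) every maximal run of consecutive $b$-edges inside a reduced path of $\Gamma$ has at most two edges.

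Granting (A) and (B), the projection goes as follows. Under the standing conventions of Section~\ref{S: RipsSegev condition} one has $\Gamma=\Theta'$, and in the construction of $\Theta'$ no two vertices lying on a common $a$-line are ever identified, so each $a$-line of $\Gamma$ is an embedded path; thus the $a$-edges of $\Gamma$ span a forest and $c$ must use at least one $b$-edge, while by (A) it cannot consist of $b$-edges alone, so it uses at least one $a$-edge as well. Reading $c$ cyclically and grouping its edges into maximal $a$-runs and maximal $b$-runs, the two kinds of runs therefore alternate: $c=B_1A_1\cdots B_rA_r$ with $r\ge1$. Since the labelling is reduced there is no sign change within a run, so $\omega(B_i)=b^{\pm k_i}\in G_2\setminus\{1\}$ and $\omega(A_i)=a^{\pm l_i}\in G_1\setminus\{1\}$ with $k_i,l_i\ge1$; hence $\omega(c)$ is already a cyclically reduced normal form over $F=G_1*G_2$ and $\lvert\omega(c)\rvert_*=2r$. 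Collapsing each $A_i$ to its $a$-line (one vertex of $\Phi_{\Gamma}$) and replacing each $B_i=f_{i,1}\cdots f_{i,k_i}$ by the corresponding edges of $\Phi_{\Gamma}$ gives a closed walk $\bar c$ of length $\ell=\sum_i k_i\ge r\ge1$; and by (B), $k_i\le2$, so $\ell\le2r=\lvert\omega(c)\rvert_*$, which is the inequality needed above.

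It remains to prove (A) and (B), and here I expect the real work. For (A): a shortest cycle of $\Gamma_b$ is simple and made of $b$-edges; one of length $\ge3$ contains a simple subpath with label $b^{2\varepsilon}a^{0}b^{\varepsilon}$, contradicting Remark~\ref{R: RScondition 2}; one of length $2$ would force some vertex $v_{jQ}$ of $\Gamma$ to be simultaneously the head of one $b$-edge and the tail of another whose remaining endpoints agree, and unwinding Steps~1 and~2 of the construction this forces an index coincidence of the form $I_{ik}=O_{ik'}$ or $I_{ik}\in\{0,C_i\}$ on some $a$-line, which the Rips-Segev condition forbids; a loop is impossible since $b$-edges join distinct $a$-lines. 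Hence $\Gamma_b$ is a forest. For (B): a maximal $b$-run in a reduced path is a non-backtracking walk in $\Gamma_b$, hence (as $\Gamma_b$ is a forest) a simple path, and if it had $\ge3$ edges it would again contain a simple subpath labelled $b^{2\varepsilon}a^{0}b^{\varepsilon}$, contradicting Remark~\ref{R: RScondition 2}. The main obstacle is the bookkeeping underlying (A): one must pin down precisely which vertices of $\Gamma$ carry an incoming $b$-edge (the Step-1 targets $v_{iI_{i1}},v_{iI_{i2}}$ and the Step-2 targets $v_{i0},v_{iC_i}$) versus an outgoing one (the sources $v_{i0},v_{iO_{i1}},v_{iO_{i2}},v_{iC_i}$), and check that the Rips-Segev condition keeps these two index sets disjoint on each $a$-line apart from the unavoidable overlap at the endpoints $0$ and $C_i$, which is exactly what makes $b$-runs terminate.
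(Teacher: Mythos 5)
Your strategy is in substance the same as the paper's: both arguments rest on Remark~\ref{R: RScondition 2} (i.e.\ on the Rips-Segev condition) to cap every maximal $b$-run at two edges and to force a non-trivial $a$-power between consecutive $b$-runs, and then transfer the girth of $\Phi_{\Gamma}$ into a lower bound on the syllable length; the paper does this by writing out the label of a cycle as $a^{P_0}b^{\varepsilon_0}a^{P_1}\cdots$ and invoking the correspondence between non-trivial cycles of $\Gamma$ and cycles of $\Phi_{\Gamma}$, while you project the cycle to a closed walk in $\Phi_{\Gamma}$. Your structural claims (A) and (B) are correct and provable along the lines you sketch (indeed, under the Rips-Segev condition the $I$-vertices carry only incoming $b$-edges, the $O$-vertices only outgoing ones, and no $b$-edge joins two endpoint vertices $v_{i0},v_{iC_i}$, so the $b$-edges even span a union of paths with at most two edges), and your decomposition $c=B_1A_1\cdots B_rA_r$ with $\lvert\omega(c)\rvert_*\geqslant 2r$ and $\ell=\sum_i k_i\leqslant 2r$ is sound.

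There is, however, one step that is not justified and is false as you state it: ``a closed walk of positive length in a loopless graph contains a cycle of length at most that of the walk.'' A backtracking walk $ee^{-1}$ in a tree is a closed walk of length $2$ containing no cycle, and ``run along the walk until a vertex first repeats'' can return exactly such a backtrack. So from the mere existence of the closed walk $\bar c$ of length $\ell$ you cannot yet conclude $\girth(\Phi_{\Gamma})\leqslant\ell$; what you need, and do not prove, is that $\bar c$ is non-backtracking. This is true and follows quickly from facts you already have: the edges of $\Phi_{\Gamma}$ are in bijection with the $b$-edges of $\Gamma$, so a backtrack in $\bar c$ would mean that some $b$-edge $e$ of $c$ is followed, after the intervening $a$-run, by $e^{-1}$; that $a$-run is then a non-backtracking closed walk, based at the head of $e$, in the forest spanned by the $a$-edges, hence empty, so $c$ itself backtracks, contradicting your choice of $c$. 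Once $\bar c$ is known to be non-backtracking, the first-repetition argument does yield a genuine cycle of $\Phi_{\Gamma}$ of length at most $\ell$ (a repetition at distance $2$ forces two distinct parallel edges, which is still a cycle of the multigraph $\Phi_{\Gamma}$), and with this short patch your proof is complete and matches the paper's.
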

\begin{proof} Let $g:=\girth (\Phi_{\Gamma}).$ { Let  
\[
 w=a^{P_0}b^{\varepsilon_0}a^{P_1}b^{\varepsilon_1}a^{P_2} \ldots a^{P_{l-1}}b^{\varepsilon_{l-1}},\hbox{ $P_i\in \mathbb{Z} \cup \{0\}$, $\varepsilon_i=\pm1$, $0\leqslant i < l$}
\]
 be the label of a  simple cycle in $\Gamma$. Remark \ref{R: RScondition 2} implies: For all $1\leqslant i < l$, if $P_{i}=0$, then $P_{i-1}\not=0$ and $P_{i+1}\not= 0$ . If $P_0=0$, then $P_1\not= 0$ and $P_{l-1}\not= 0$. Hence, the label of a non-trivial cycle in $\Gamma$ is of minimal free product length if it is representing 
\begin{align*}
 w&=a^{P_0}b^{2\varepsilon_0}a^{P_1}b^{2\varepsilon_1}a^{P_2} \ldots a^{P_{k}}b^{2\varepsilon_{k}}\hbox{ if $l$ is even and $k:=l/2$, and }          \\
  w&=a^{P_0}b^{2\varepsilon_0}a^{P_1}b^{2\varepsilon_1}a^{P_2} \ldots a^{P_{k-1}}b^{2\varepsilon_{k-1}}a^{P_{k}}b^{\varepsilon_{k}}\hbox{ if $l$ is odd and $k:=\lfloor l/2 \rfloor +1$.}    
 \end{align*}
By construction, $\varepsilon_i=\pm 1$, and by the Rips-Segev condition all the exponents $P_j$, $0\leqslant j \leqslant k$, of $w$ are non-zero. In both cases, we have that $|w|_*=2k+2>l+2$. Note that the non-trivial cycles in $\Gamma$ are in one-one correspondence with the cycles in $\Phi_{\Gamma}$. In particular, a non-trivial cycle in $\Gamma$ contains at least $g$ distinct  $b$-edges. Hence, set $l=g-2$. We now conclude that a label of a non-trivial cycle in $\Gamma$ has length at least $g$. This proves the claim.}
\end{proof}

Using the criterion described in Proposition \ref{P: criterion}, we obtain the following.

\begin{cor}\label{C: final graphical small cancellation for Rips-Segev}
A generalized Rips-Segev graph with coefficients $(I,O,C)$ satisfying the Rips-Segev condition and with underlying graph of girth $41$ satisfies the $Gr_*(1/8)$--condition.
\end{cor}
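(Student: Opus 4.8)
The plan is to deduce the corollary directly from Lemmas~\ref{L: maxplength} and~\ref{L: mincycle} by feeding their conclusions into the criterion of Proposition~\ref{P: criterion}; the substantive work will then already have been done, and only a short arithmetic check remains.

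First I would note that, being a generalized Rips-Segev graph, $\Gamma$ carries a reduced labeling by $F=G_1*G_2$ (this is built into Step~4 of the construction), so the standing hypothesis of Proposition~\ref{P: criterion} is satisfied. Since $(I,O,C)$ satisfies the Rips-Segev condition, Lemma~\ref{L: maxplength} gives $\lvert\omega(p)\rvert_*\leqslant 3$ for every graphical piece $p$ in $\Gamma$, while Lemma~\ref{L: mincycle} gives $\gamma(\Gamma)\geqslant\girth(\Phi_\Gamma)=41$, so that $\lvert\omega(c)\rvert_*\geqslant 41$ for every non-trivial labeled cycle $c$ in $\Gamma$. It is worth stressing that these bounds are phrased for pieces and cycles in $\Gamma$ itself; Proposition~\ref{P: criterion} is precisely the device that upgrades such control to the $Gr_*'(\lambda)$--condition, which quantifies over all reduced labeled graphs equivalent to $\Gamma$ (the summand $+2$ there absorbing the at most two extra syllables a piece may acquire when passing to an equivalent graph).

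Combining the two bounds, for every graphical piece $p$ and every non-trivial labeled cycle $c$ in $\Gamma$ one has
\[
\frac{\lvert\omega(p)\rvert_*+2}{\lvert\omega(c)\rvert_*}\leqslant\frac{3+2}{41}=\frac{5}{41}<\frac18 ,
\]
where the last inequality holds because $5\cdot 8=40<41$. Applying Proposition~\ref{P: criterion} with $\lambda=1/8$ then yields that the labeling of $\Gamma$ satisfies the $Gr_*'(1/8)$--condition, which is the assertion.

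The only point that genuinely requires attention is the numerical value $41$: with underlying girth $g$ the displayed estimate reads $\tfrac{5}{g}$, and one needs $\tfrac{5}{g}<\tfrac18$, that is $g\geqslant 41$; girth $40$ would only give $\tfrac{5}{40}=\tfrac18$, which is not strict, so $41$ is the smallest admissible value and any larger girth works equally well. Beyond this bookkeeping there is no obstacle, since the real content of the corollary is carried by Lemmas~\ref{L: maxplength} and~\ref{L: mincycle}, which we may take as given.
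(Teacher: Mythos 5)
Your proposal is correct and matches the paper's own argument: the paper likewise obtains the corollary by feeding the piece bound of Lemma~\ref{L: maxplength} and the cycle-length bound of Lemma~\ref{L: mincycle} into the criterion of Proposition~\ref{P: criterion}, with the same arithmetic $\tfrac{3+2}{41}<\tfrac18$. Your remark that $41$ is the smallest girth making the inequality strict is a correct (if optional) observation consistent with the paper's choice of constant.
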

 The groups defined by such graphs are torsion-free and without the unique product property. We comment on the existence of such generalized Rips-Segev graphs in the next section.

\subsection{Explicit constructions}\label{S: explicit construction}

In this section, we label a specific graph with large girth by \emph{words} in $a^{\pm1}$ and $b^{\pm1}$ such that the $\{a,b\}$--reduction of this graph is a generalized Rips-Segev graph $\Gamma$. Then we combine such graphs to obtain new families of graphs that define torsion-free groups without the unique product property.  We work with the following explicit coefficients.

\begin{example}\label{E:coefM=0} The following coefficients satisfy the Rips-Segev condition.
 \[
		\begin{array}{c|c|c|c|c|c|}
			   j & I_{j1}  & I_{j2} & O_{j1}  & O_{j2} & C_j \\  \hline 
                           1 &  10 &  100 & 1000 & 10000 & 10^5  \\
			   2 & 10^6 & 10^7 & 10^8 & 10^9 & 10^{10}  \\
			    \hdots &  \hdots  & \hdots& \hdots & \hdots & \hdots \\
			   & & & & & \\
			   i & 10^{5i-4} & 10^{5i-3} & 10^{5i-2} & 10^{5i-1} & 10^{5i}\\  
			    \hdots & \hdots &  \hdots  & \hdots& \hdots & \hdots  \\
			   & & & & & \\
                  \end{array}
\]
\end{example}

Other more technical such examples are explained in \cite{rips_torsion-free_1987}*{p. 125f}.  Choosing an injective map $\varphi:\{1,\ldots,n\} \to \mathbb{N}$, we obtain a set of coefficients $(I,O,C)$ by setting $(I_{i1},I_{i2},O_{i1},O_{i2},C_i)$ the $\varphi(i)$-th line in Example \ref{E:coefM=0}. Using such coefficients it is clear that we can construct $\Gamma$ to satisfy Lemma \ref{L: maxplength} above. The purpose of the remaining section is to ensure the large girth assumption of Corollary \ref{C: final graphical small cancellation for Rips-Segev}. 

\begin{lemma}
 For all natural numbers $n>1$, there are finite connected graphs with vertex degree $2n$ which have arbitrarily large girth.
\end{lemma}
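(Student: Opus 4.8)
For every integer $n > 1$, there exist finite connected graphs of constant vertex degree $2n$ with arbitrarily large girth.

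The plan is to give an explicit construction rather than invoke probabilistic existence, since we want genuine graphs to feed into the generalized Rips-Segev machinery. First I would recall the classical fact, going back to Erd\H{o}s--Sachs, that for every pair $(k,g)$ with $k \geqslant 3$ there exists a finite $k$-regular graph of girth at least $g$; applying this with $k = 2n$ already gives the statement, so in principle a one-line citation suffices. However, to keep the paper self-contained and because we only need \emph{even} degree, I would instead give the following elementary argument. Fix $g$; I want a connected $2n$-regular graph of girth $\geqslant g$. Start from any connected graph that is $2n$-regular and has at least one cycle (for instance, take $2n$ disjoint copies of a long cycle $C_N$ and superimpose them on a common vertex set, or more cleanly: realize $\mathbb{Z}/N\mathbb{Z}$ as vertex set and join $i$ to $i \pm 1, i \pm 2, \dots, i \pm n$, a circulant graph, which is $2n$-regular and connected for $N > 2n$). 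This has small girth, so the second step is girth amplification.

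The main step is a covering-space argument. Given a finite connected $2n$-regular graph $\Gamma_0$ with a chosen spanning tree, its fundamental group is free of some rank $r$; for any $N$ I can take the cover $\Gamma_1 \to \Gamma_0$ corresponding to the kernel of a surjection $\pi_1(\Gamma_0) \twoheadrightarrow (\mathbb{Z}/N\mathbb{Z})^r$ (or simply onto $\mathbb{Z}/N\mathbb{Z}$). A finite cover of a $2n$-regular graph is again $2n$-regular and finite, and it is connected when the cover is connected, which one arranges by choosing the surjection so that the image of each generator is a generator — or, even more simply, one uses the standard lemma that a connected graph with $\beta_1 \geqslant 1$ admits finite connected covers of arbitrarily large girth: any closed non-backtracking path that is nullhomotopic in the cover must project to a nontrivial element of the deck group, and by choosing $N$ large one forces the minimal length of such a path to exceed $g$. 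Concretely, label each edge of $\Gamma_0$ outside the spanning tree by a generator of $\mathbb{Z}/N\mathbb{Z}$ and each tree edge by $0$; take the derived cover; a short cycle in the cover projects to a short \emph{closed walk} in $\Gamma_0$ whose edge-labels sum to $0$ mod $N$, and a closed walk of length $< g$ in the fixed finite graph $\Gamma_0$ has bounded ``winding,'' so for $N$ large enough no such walk lifts to a cycle — hence the cover has girth $\geqslant g$. Degree, finiteness, and connectedness are all preserved or arranged by construction.

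The hard part — really the only subtlety — is ensuring \emph{connectedness} of the cover simultaneously with the girth bound, since a naive choice of deck group or labeling can disconnect $\Gamma_1$. I would handle this by taking the deck group to be cyclic of prime order $p > g$ and the labeling to assign a nonzero value to at least one non-tree edge in a way that generates $\mathbb{Z}/p\mathbb{Z}$ (automatic once one label is nonzero and $p$ is prime): then the cover is connected, $2n$-regular, finite, and any closed walk of length $\ell < g < p$ in $\Gamma_0$ carrying a nonzero total label cannot lift to a cycle, while the walks with zero total label are exactly the nullhomotopic ones in $\Gamma_1$ but these, being of length $< g$, must already bound in $\Gamma_0$ hence — after enough iterations of the construction, or by starting from $\Gamma_0$ with no short \emph{separating} cycles — contribute no short cycle either; iterating the covering construction a bounded number of times removes the remaining short cycles. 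This completes the proof.
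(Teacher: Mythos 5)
Your overall strategy (girth amplification by finite covers) is the right one, and your side remark that one could simply cite the Erd\H{o}s--Sachs existence theorem is fine; but the elementary argument you actually give has a genuine gap at its central step. In a cover with \emph{cyclic} (or any abelian) deck group, the only cycles that get unwound are those whose class survives in the abelianized quotient. A closed non-backtracking walk in $\Gamma_0$ whose total label is $0$ lifts to a closed non-backtracking walk of the \emph{same} length in $\Gamma_1$, and hence forces a short cycle upstairs. Your patch --- that such walks ``must already bound in $\Gamma_0$'' --- is false: zero total label only means the walk lies in the kernel of the chosen homomorphism $\pi_1(\Gamma_0)\to\mathbb{Z}/p$, not that it is nullhomotopic (in a graph nothing bounds), and commutator-type walks such as $a b a^{-1} b^{-1}$ read on two non-tree loops are closed, non-backtracking, of bounded length, and have zero label under \emph{every} homomorphism to an abelian group. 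Consequently a single $\mathbb{Z}/p$-cover of your circulant base has girth bounded independently of $p$, and your fallback ``iterate a bounded number of times'' cannot be right as stated: the number of iterations needed must grow with the target girth $g$ (this is essentially residual nilpotence of free groups, which you neither invoke nor prove). The ``no short separating cycles'' condition you propose is not the relevant notion and does not repair this.

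The clean fix is to use the full strength of residual finiteness of the free group rather than only its abelian (cyclic) quotients, and this is exactly what the paper does: let $B$ be the ball of radius $r$ in the free group $F_n$; residual finiteness gives a finite-index normal subgroup $N\trianglelefteq F_n$ with $B\cap N=\{1\}$; the cover of the bouquet of $n$ circles corresponding to $N$ is a finite, connected, $2n$-regular graph, and any cycle in it spells a nontrivial cyclically reduced word lying in $N$, hence of length greater than $r$. This one-step argument avoids both the base circulant graph and the iteration entirely; if you prefer your covering-space language, it amounts to choosing the deck group to be the finite quotient $F_n/N$ (nonabelian in general), which is precisely what is needed to kill the commutator-type short walks that defeat a single cyclic cover.
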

\begin{proof}
 Let $B$ the set of elements in the free group on n generators that are at distance at most $r$ from the identity in the Cayley graph of free group.  As the free group is residually finite, there is a normal finite index subgroup $N$ in the free group, so that $B\cap N$ is the identity. Moreover, the free group on $n$ generators is the fundamental group of a bouquet of $n$ circles. The covering space of a bouquet of $n$ circles corresponding to $N$ is $2n$-regular and has girth at least $2r$.
 \end{proof}
 The lemma gives a graph $\Phi$ with vertex degree $8$ and $\girth(\Phi)>41$. We enumerate the vertices of $\Phi$ by $1,2,\ldots,n$. Our $\Phi$ is a covering space of the bouquet of $4$ circles with positive orientation labeled by $x_1,x_2,x_3,x_4$.  Hence, for every vertex $i$ of $\Phi$ there are four vertices denoted by $l_{ij}$, $1\leq j\leq 4$, and four directed edges $x_{ij}:=({l_{ij}},i)$ labeled by $x_j$. For every vertex $i$ there are four vertices denoted by $k_{ij}$, $1\leq j \leq 4$, and four directed edges $y_{ij}:=(i,{k_{ij}})$ labeled by $x_j$. We have that $y_{ij}=x_{k_{ij}j}$, cf. Figure~\ref{F: exug}. 
 
 We relabel $\Phi$ by words in $a^{\pm1}$ and $b^{\pm1}$ as follows. For all $i$ we set $L(x_{i1})=ba^{-I_{i1}}$, $L(x_{i2})=a^{C_{l_{i2}}}ba^{-I_{i2}}$, $L(y_{i3})=a^{O_{i1}}ba^{-C_{k_{i3}}}$ and $L(y_{i4})=a^{O_{i2}}b$. As $y_{ij}=x_{k_{ij}j}$ and $x_{ij}=y_{l_{ij}i}$, this yields a new labeling of~$\Phi$. Figure \ref{F: exug} shows the local picture of a graph $\Phi$ labeled with respect to suitable coefficients $(I,O,C)$. The original graphs used by Rips-Segev are such graphs $\Phi$.

 \begin{figure}
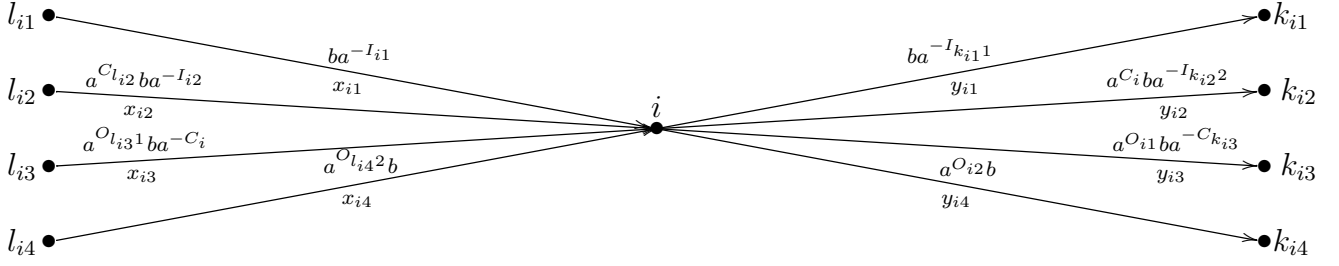
 
\[\xy
{\ar^{ba^{-I_{i1}}}_{x_{i1}} (-80,15)*{\bullet}; (0,0)*{\bullet}}; {\ar^{ba^{-I_{k_{i1}1}}}_{y_{i1}}  (0,0); (80,15)*{\bullet}}; (0,3)*{i}; (-83.5,15)*{{l_{i1}}}; (83.5,15)*{{k_{i1}}};
{\ar^<<<<<<<<<<{a^{C_{l_{i2}}}ba^{-I_{i2}}}_<<<<<<<<<<{x_{i2}} (-80,5)*{\bullet}; (0,0)}; {\ar^>>>>>>>>>>{a^{C_i}ba^{-I_{k_{i2}2}}}_>>>>>>>>>>{y_{i2}}  (0,0); (80,5)*{\bullet}}; (-83.5,5)*{{l_{i2}}}; (84.5,5)*{{k_{i2}}};
{\ar^<<<<<<<<<<<{a^{O_{l_{i3}1}}ba^{-C_i}}_<<<<<<<<<<{x_{i3}} (-80,-5)*{\bullet}; (0,0)}; {\ar^>>>>>>>>>>{a^{O_{i1}}ba^{-C_{k_{i3}}}}_>>>>>>>>>>{y_{i3}} (0,0); (80,-5)*{\bullet}}; (-83.5,-5)*{{l_{i3}}}; (84.5,-5)*{{k_{i3}}};
{\ar^<<<<<<<<<<<<<<<<<<<<<<<<<<<<<<<<<<<{a^{O_{l_{i4}2}}b}_{x_{i4}} (-80,-15)*{\bullet}; (0,0)}; {\ar^{a^{O_{i2}}b}_{y_{i4}} (0,0); (80,-15)*{\bullet}}; (-83.5,-15)*{{l_{i4}}}; (83.5,-15)*{{k_{i4}}};
 \endxy
\] \caption{Local picture of an underlying graph at the vertex $1$, cf. \cite{rips_torsion-free_1987}*{Figure p.119}} \label{F: exug} 
\end{figure}

 By construction we now have the following observation.
 \begin{prop}The $\{a,b\}$--reduction of $\Phi$ labeled $L$ is a generalized Rips-Segev graph with coefficients $(I,O,C)$ and underlying graph $\Phi$.\end{prop}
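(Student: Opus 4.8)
The plan is to avoid carrying out the folding in the $\{a,b\}$--reduction by hand; instead I will exhibit a generalized Rips-Segev graph $\Gamma$ with coefficients $(I,O,C)$ whose underlying graph $\Phi_\Gamma$ is $(\Phi,L)$, and then invoke the preceding proposition, which says that the $\{a,b\}$--reduction of $\Phi_\Gamma$ equals $\Gamma$. First note that $L$ assigns to every edge of $\Phi$ a word of the form $a^{P}ba^{-Q}$ with $P,Q\geqslant 0$ (one has $P=0$ on the edges $y_{i1}$ and $Q=0$ on the edges $y_{i4}$), so the $\{a,b\}$--reduction of $(\Phi,L)$ is defined.

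Next I would read off from the covering structure of $\Phi$ the identification data of Steps 1 and 2. Since $\Phi$ is an $8$--valent covering of the bouquet of the loops $x_1,\dots,x_4$, each assignment $i\mapsto k_{ij}$ is a bijection of the vertex set, and similarly $i\mapsto l_{ij}$. Using the formulas defining $L$ together with $y_{ij}=x_{k_{ij}j}$ (so that $L(y_{i1})=ba^{-I_{k_{i1}1}}$ and $L(y_{i2})=a^{C_i}ba^{-I_{k_{i2}2}}$), one checks, position by position along the $a$-lines, that the edge $y_{i1}$ prescribes the Step-1 identification $w_{i0}=v_{k_{i1}I_{k_{i1}1}}$, the edge $y_{i2}$ the Step-1 identification $w_{iC_i}=v_{k_{i2}I_{k_{i2}2}}$, the edge $y_{i3}$ the Step-2 identification $v_{k_{i3}C_{k_{i3}}}=w_{iO_{i1}}$, and the edge $y_{i4}$ the Step-2 identification $v_{k_{i4}0}=w_{iO_{i2}}$. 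Because the maps $i\mapsto k_{ij}$ are bijections and the numbers $0,I_{i1},I_{i2},O_{i1},O_{i2},C_i$ are nonzero and pairwise distinct on each $a$-line (Rips-Segev condition), this is a legitimate instance of the construction: each $w_{i0}$ and $w_{iC_i}$ is used exactly once in Step 1, each $v_{j0}$ and $v_{jC_j}$ is untouched by Step 1 (as $I_{j1},I_{j2}\notin\{0,C_j\}$) and used exactly once in Step 2, and all index pairs that occur are pairwise distinct; connectedness of the resulting graph is inherited from connectedness of $\Phi$. Carrying out Steps 1--3 with this data — and, as in the remark opening Section \ref{S: RipsSegev condition}, choosing the $c_i$ to be geodesics from $v_{10}$ to $v_{i0}$, so that Step 4 leaves the graph unchanged — produces a generalized Rips-Segev graph $\Gamma$ with coefficients $(I,O,C)$.

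It then remains to identify $(\Phi,L)$ with $\Phi_\Gamma$. By construction the $a$-lines of $\Gamma$ are indexed by the vertices of $\Phi$, so the vertex sets of $\Phi_\Gamma$ and $\Phi$ coincide; the $b$-edges of $\Gamma$ are exactly the $4n$ identifications listed above, hence are in bijection with the $4n$ edges of $\Phi$; and for each $b$-edge $(v_{iP},v_{jQ})$ of $\Gamma$ the label $a^{P}ba^{-Q}$ it receives in $\Phi_\Gamma$ agrees, with matching source and target, with the value of $L$ on the corresponding edge of $\Phi$. Thus the identity on vertices extends to an isomorphism of labeled oriented graphs $\Phi_\Gamma\cong\Phi$, and the preceding proposition yields that the $\{a,b\}$--reduction of $(\Phi,L)=\Phi_\Gamma$ equals $\Gamma$, a generalized Rips-Segev graph with coefficients $(I,O,C)$ and underlying graph $\Phi$, as claimed.

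The substantive part is the bookkeeping in the middle paragraph: matching the four families of $L$-values against the four types of $b$-edges produced in Steps 1--2, and observing that the covering-space bijections $i\mapsto k_{ij}$ make the genericity requirements (all index pairs distinct, connectedness) automatic. If one prefers the direct route — actually folding $\Phi'$ — the same combinatorics reappears: at each vertex $i$ the $a$-prefixes of the four outgoing edges and the reversed $a$-suffixes of the four incoming edges, all of length $\leqslant C_i$, fold into one $a$-line of length $C_i$; the $a^{-1}$-tails of the $b$-edges fold onto the $a$-lines of their opposite endpoints; and no further identifications occur, the absence of further identifications within a single $a$-line being exactly the Rips-Segev condition and across distinct $a$-lines following from the large girth of $\Phi$.
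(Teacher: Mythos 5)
Your proposal is correct and follows what the paper itself intends: the paper states this proposition with no proof ("By construction we now have the following observation"), and your argument is exactly the natural unpacking — reading off from $L$ the Step 1/Step 2 identification data (with the covering bijections $i\mapsto k_{ij}$ and the Rips-Segev condition guaranteeing the distinctness requirements), checking that the resulting $\Gamma$ has $\Phi_\Gamma=(\Phi,L)$, and invoking the preceding proposition that the $\{a,b\}$--reduction of $\Phi_\Gamma$ is $\Gamma$. The bookkeeping in your middle paragraph matches the paper's labeling conventions, so no gap to report.
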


  Hence, we obtain first explicit examples of generalized Rips-Segev graphs $\Gamma$ that satisfy, by Corollary \ref{C: final graphical small cancellation for Rips-Segev} the $Gr_*'(1/8)$--condition. We have infinitely many such $\Gamma$ with different coefficients $(I,O,C)$.
 



\smallskip

The construction of underlying graphs $\Phi$ we have explained in this Section can further be extended, and gives more examples of generalized Rips-Segev graphs:
\begin{itemize}
\item  First, we can allow other and non-uniform values for $s_i$ and $t_i$. This corresponds to graphs $\Phi$ with non-uniform vertex degree. The Rips-Segev condition then states that all distances between all pairs of the distinguished vertices $v_{i0},$ $v_{iI_{ij}},$ $v_{iO_{ik}}$ and $v_{iC_i}$ on the $a$-lines are distinct among each other. Examples of such coefficients clearly exist: one can reorder the numbers used in Example~\ref{E:coefM=0}. Graphs  $\Gamma$ with such coefficients have  untypical $a$-lines, see for example Figures~\ref{F: extremal case 1} and~\ref{F: extremal case 2}. We still have that $\Lambda(\Gamma)\leqslant 3$.

Clearly, it follows from our proof that graphs $\Phi$ with non-uniform vertex degree and arbitrary large girth are available. Similar to our above explanation, such graphs can be labeled by words in $a^{\pm 1},b^{\pm 1}$ so that the $\{a,b\}$--reduction yields graphs $\Gamma$ with possibly untypical $a$-lines. As before, if $\Phi_{\Gamma}$ has girth larger than $41$, the labeling of $\Gamma$ satisfies the $Gr_*'(1/8)$--condition.

\item 
Let $M$ be a non-zero natural number. Let us  allow $M$ of the numbers in the Rips-Segev condition to be equal. Then the maximal piece length in a corresponding $\Gamma$ is bounded by $2M+3$. Given a more technical large girth assumption, stating that the girth of $\Phi$ is `large with respect to $M$', we can conclude that the labeling of  $\Gamma$ satisfies the $Gr_*'(1/8)$--condition.

\item If $F=G_1*\cdots *G_d$, $d\geqslant 2$ and all $G_i$ are torsion-free, choose $1\not = a\in G_1$ and $1\not= b_{i-1}\in G_i$. Let $A$ be given as above and let $B:=\{1,a,b_1,\ldots,b_{d-1},ab_1,\ldots,a{b_{d-1}}\}$. 
 We can then extend our constructions to such sets $A$ and $B$ and produce more generalized Rips-Segev graphs. 

\end{itemize}

\subsection{Finite and infinite families of generalized Rips-Segev graphs} \label{S: families of Rips-Segev graphs}

Let $(\Gamma_l)_l$ be a (possibly infinite) family of generalized Rips-Segev graphs for sets $A_l$, $B_l$ and let $\overrightarrow{\Gamma}:=\bigsqcup_l \Gamma_l$. If the labeling of $\overrightarrow{\Gamma}$ satisfies the $Gr_*'(1/8)$--condition, a corresponding group presentation is called \emph{generalized Rips-Segev presentation}. The proof of Proposition \ref{P: small cancellation on RSgraphs} applies to such an infinite family of graphs so that each pair $A_l,B_l$ does not have a unique product in $G(\bigsqcup_l \Gamma_l)$.  In analogy with Section \ref{S: RipsSegev condition} we now describe conditions on the family $(\Gamma_l)_l$ that are sufficient to conclude. Section \ref{S: explicit construction} can then be used to give explicit examples of such families.  

\smallskip

Each generalized Rips-Segev graph $\Gamma_l$ has $K_l$-many $a$-lines and coefficients denoted by  $(I,O,C)_l=\bigcup_{1\leqslant i_l\leqslant K_l} (i_l,s_{i_l},t_{i_l},(I_{i_l},O_{i_l},C_{i_l}))$. Set $j_l:=i_l+K_1+\ldots+K_{l-1}$ and let us enumerate all the $a$-lines in~$\overrightarrow{\Gamma}$ so that the image of the $i_l$-th $a$-line of $\Gamma_l$ is the $j_l$-th $a$-line of $\overrightarrow{\Gamma}$. In addition, we replace each $i_l$ in $(i_l,s_{i_l},t_{i_l},(I_{i_l},O_{i_l},C_{i_l}))$ by $j_l$. We get 
\[
\overrightarrow{(I,O,C)}= \overrightarrow{\bigcup_l} (I,O,C)_l:= \bigcup_l \bigcup_{1\leqslant i_l \leqslant K_l} (j_l,s_{j_l},t_{j_l},(I_{j_l},O_{j_l},C_{j_l})), 
 \]
 the set of coefficients associated to $(\Gamma_l)_l$.
 %
 %
 
 \smallskip
 
  The Rips-Segev condition of Section \ref{S: RipsSegev condition} applies to such finite or infinite $\overrightarrow{(I,O,C)}$. Under the Rips-Segev condition on $\overrightarrow{(I,O,C)}$, Lemma  \ref{L: maxplength} and Lemma \ref{L: mincycle} generalize as follows. 
\begin{lemma}
Under the Rips-Segev condition, for all graphical pieces $p$ in $\overrightarrow{\Gamma}$, we have that $\lvert \omega(p) \rvert_* \leqslant 3$.
\end{lemma}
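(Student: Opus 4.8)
The plan is to run the proof of Lemma~\ref{L: maxplength} almost verbatim, the only genuinely new situation being a graphical piece whose two immersions lie in different components of $\overrightarrow{\Gamma}=\bigsqcup_l\Gamma_l$. First I would record the structural facts that make this possible. Since $\overrightarrow{\Gamma}$ is a disjoint union, every path immersed in $\overrightarrow{\Gamma}$ is contained in a single component $\Gamma_l$, and each $\Gamma_l$ is a generalized Rips-Segev graph whose coefficients $(I,O,C)_l$ form a sublist of $\overrightarrow{(I,O,C)}$. Because $\overrightarrow{(I,O,C)}$ satisfies the Rips-Segev condition -- all listed numbers nonzero and pairwise distinct -- so does each $(I,O,C)_l$, and, more importantly, Remarks~\ref{R: RScondition 1} and~\ref{R: RScondition 2}, being assertions about the labels of simple paths, hold on all of $\overrightarrow{\Gamma}$ relative to $\overrightarrow{(I,O,C)}$: a simple path in $\overrightarrow{\Gamma}$ lies in one $\Gamma_l$, so a nonzero exponent $P$ occurring in a subword $b^{\varepsilon_1}a^{P}b^{\varepsilon_2}$ is one of the \emph{globally} pairwise distinct numbers of the Rips-Segev condition and determines the $a$-line carrying it -- hence the component -- together with the ordered pair of distinguished vertices it joins. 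I would also note that, by the Rips-Segev condition, no vertex of $\overrightarrow{\Gamma}$ has more than one incoming and one outgoing $b$-edge, so each vertex has at most one edge leaving it with a given label in $\{a^{\pm1},b^{\pm1}\}$ and the $b$-syllables of labels of immersed paths have exponent in $\{\pm1,\pm2\}$.

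With this in place, let $p$ be a graphical piece in $\overrightarrow{\Gamma}$, so there are two distinct immersed paths with label $\omega(p)$. If they both lie in the same component $\Gamma_l$, then $p$ is a graphical piece of $\Gamma_l$ and Lemma~\ref{L: maxplength} gives $\lvert\omega(p)\rvert_*\leqslant 3$ immediately. Otherwise the two immersions lie in $\Gamma_l$ and $\Gamma_{l'}$ with $l\neq l'$; assume $\lvert\omega(p)\rvert_*\geqslant 4$ for contradiction. Since the labeling is reduced, $\omega(p)$ is a reduced word in $\langle a\rangle*\langle b\rangle$ of free product length at least $4$, so reading three consecutive syllables around a middle $a$-syllable exhibits a subword $b^{\varepsilon_1}a^{P}b^{\varepsilon_2}$ -- with $b^{2\varepsilon_i}$ possibly replacing $b^{\varepsilon_i}$ at an end, as $b^2$ can occur at the endpoints of $a$-lines -- and $P$ is nonzero (it is a genuine $a$-syllable, and Remark~\ref{R: RScondition 2} reconfirms this in the $b^{2\varepsilon}$ cases). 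By Remark~\ref{R: RScondition 1} (and the endpoint analysis for the $b^{2\varepsilon}$ cases) the value $P$ pins down a unique subpath $p_0$ of $\overrightarrow{\Gamma}$ carrying this label, lying in one component. Both immersions of $p$ must therefore send the corresponding subpath of $p$ onto $p_0$, so they agree at the initial vertex of $p_0$; since they carry the same label on $p$ and, at every vertex, each letter of $\{a^{\pm1},b^{\pm1}\}$ labels at most one leaving edge, the two immersions propagate identically forwards and backwards from $p_0$ and hence coincide on all of $p$ -- contradicting that $p$ is a piece. Thus $\lvert\omega(p)\rvert_*\leqslant 3$ in all cases.

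The step I expect to need the most care is the transfer of this ``rigidity'' from a single $\Gamma$ to $\overrightarrow{\Gamma}$: one must check that the Rips-Segev condition imposed on the \emph{union} $\overrightarrow{(I,O,C)}$ of all the coefficient sets is precisely the hypothesis that makes a nonzero $a$-exponent between two $b$-edges identify an $a$-line across \emph{all} components, not merely within one. This is the only place where the disjoint-union structure of $\overrightarrow{\Gamma}$ genuinely enters, and it is exactly what rules out a cross-component piece of free product length $\geqslant 4$. The remaining verifications -- that a reduced word of free product length $\geqslant 4$ really does contain a subword of the required $b^{\varepsilon_1}a^{P}b^{\varepsilon_2}$ shape, that the relevant $P$ is a distance between distinguished vertices on a single $a$-line, and that the resulting subpath of $\overrightarrow{\Gamma}$ is unique -- are routine given the Rips-Segev condition and run exactly as in the proof of Lemma~\ref{L: maxplength}.
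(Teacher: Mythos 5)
Your proof is correct and takes essentially the same route as the paper: the paper states this lemma without a separate proof, as the direct generalization of Lemma~\ref{L: maxplength}, whose justification is exactly the observation you elaborate — under the Rips-Segev condition on $\overrightarrow{(I,O,C)}$ the nonzero exponent $P$ in any subword $b^{\varepsilon_1}a^{P}b^{\varepsilon_2}$ of a label of a simple path locates a unique such path in the whole disjoint union, so no graphical piece (in particular no cross-component one) can have free product length at least $4$. Your additional details (the same-component versus cross-component case split and the folded-graph propagation step) simply fill in what the paper leaves implicit.
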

\begin{lemma}
Under the Rips-Segev condition, we have that  
 $
 \gamma(\overrightarrow{\Gamma})\geqslant \min_l\{\girth (\Phi_{\Gamma_l}) +2\}.
 $
\end{lemma}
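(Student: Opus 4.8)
The plan is to reduce the statement to a componentwise application of Lemma~\ref{L: mincycle}. Since $\overrightarrow{\Gamma}=\bigsqcup_l\Gamma_l$ is a disjoint union, every non-trivial cycle of $\overrightarrow{\Gamma}$ lies entirely in one $\Gamma_l$, so $\gamma(\overrightarrow{\Gamma})=\min_l\gamma(\Gamma_l)$. (If one insists on reading $\gamma$ through equivalent reduced graphs, the same identity holds: the three graph transformations --- the AO-move, the Reduction, and the Deletion --- each alter only a connected subgraph, namely an arc, a simple path, or a degree-one edge, hence never merge two components, so every reduced labeled graph equivalent to $\overrightarrow{\Gamma}$ is a disjoint union $\bigsqcup_l\Gamma_l'$ with $\Gamma_l'$ reduced and equivalent to $\Gamma_l$, and conversely.) It therefore suffices to show $\gamma(\Gamma_l)\geqslant\girth(\Phi_{\Gamma_l})+2$ for each~$l$.

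Next I would note that the Rips-Segev condition imposed on $\overrightarrow{(I,O,C)}$ restricts to the Rips-Segev condition on each block $(I,O,C)_l$: the list of numbers it requires to be non-zero and pairwise distinct --- the values $I_{ik},O_{ik},C_i$ together with their pairwise absolute differences, over all a-lines of $\overrightarrow{\Gamma}$ --- contains as a sublist the corresponding list for the a-lines of $\Gamma_l$. Hence each $\Gamma_l$ is a generalized Rips-Segev graph whose coefficients satisfy the Rips-Segev condition, so that Remark~\ref{R: RScondition 2}, and the bijection between the non-trivial cycles of $\Gamma_l$ and the cycles of $\Phi_{\Gamma_l}$ used in the proof of Lemma~\ref{L: mincycle}, are both available inside $\Gamma_l$.

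Finally I would rerun the syllable count from the proof of Lemma~\ref{L: mincycle} inside $\Gamma_l$: a non-trivial cycle of $\Gamma_l$ corresponds to a cycle of $\Phi_{\Gamma_l}$ and hence traverses at least $\girth(\Phi_{\Gamma_l})$ distinct $b$-edges; by Remark~\ref{R: RScondition 2} those $b$-edges consolidate at most in pairs, and any two consecutive consolidated blocks are separated by a non-trivial power of~$a$, so the free product length of the cycle's label is at least its number of $b$-edges increased by two, that is, at least $\girth(\Phi_{\Gamma_l})+2$. Combining $\gamma(\Gamma_l)\geqslant\girth(\Phi_{\Gamma_l})+2$ with $\gamma(\overrightarrow{\Gamma})=\min_l\gamma(\Gamma_l)$ proves the lemma. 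The only ingredient here that is not a verbatim repetition of earlier arguments is the reduction ``$\gamma$ of a disjoint union equals the minimum of the $\gamma$'s'', and the point to be careful about is that a reduced representative of $\overrightarrow{\Gamma}$ can be produced one connected component at a time, which is precisely the locality of the three graph transformations; the sublist observation and the syllable count I expect to be routine.
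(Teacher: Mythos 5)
Your route is, in substance, the paper's route: the paper offers no separate proof of this lemma, merely asserting that Lemma~\ref{L: mincycle} generalizes to the family, and your three steps --- every non-trivial cycle of the disjoint union $\overrightarrow{\Gamma}$ lies in a single $\Gamma_l$ (note that in this section $\gamma$ is defined directly via the cycles of the graph, so your parenthetical about the locality of the graph transformations is not even needed), the Rips-Segev condition on $\overrightarrow{(I,O,C)}$ restricts to the coefficients of each $\Gamma_l$, and the syllable count of Lemma~\ref{L: mincycle} is rerun componentwise --- are exactly the intended argument.

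One caveat, which you inherit from the paper rather than introduce: the last arithmetic step does not actually deliver the stated ``$+2$''. From ``the $b$-edges consolidate at most in pairs (Remark~\ref{R: RScondition 2}) and consecutive blocks are separated by non-trivial powers of $a$'' one gets, for a cycle crossing $l$ $b$-edges, at least $\lceil l/2\rceil$ $b$-syllables and equally many $a$-syllables, hence $\lvert \omega(c)\rvert_*\geqslant 2\lceil l/2\rceil\geqslant l\geqslant \girth(\Phi_{\Gamma_l})$ --- not $l+2$. The extra $2$ would require excluding that \emph{every} $b$-edge of the cycle pairs up, which neither you nor the paper's proof of Lemma~\ref{L: mincycle} does (the ``$\lvert w\rvert_*=2k+2$'' there rests on an indexing slip: the displayed minimal-length word contains $l+2$, not $l$, letters $b$, and pairing is possible at vertices that are simultaneously head and tail of $b$-edges, e.g.\ a $v_{j0}$ glued in Step~2 whose edge to $w_{j0}$ was glued in Step~1). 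This is immaterial for the application: the weaker bound $\gamma(\overrightarrow{\Gamma})\geqslant\min_l\girth(\Phi_{\Gamma_l})$, together with $\Lambda\leqslant 3$ and Proposition~\ref{P: criterion}, already yields the $Gr_*'(1/8)$--condition as soon as each girth exceeds $40$, which is how Corollary~\ref{C: final graphical small cancellation for Rips-Segev} is used. But as a verification of the lemma exactly as stated, the ``increased by two'' in your final count is asserted, not proved.
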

 
 We conclude the analogue for families of Corollary \ref{C: final graphical small cancellation for Rips-Segev}.   Example \ref{E:coefM=0} gives uncountably many infinite  sets of tuples with the Rips-Segev condition (there are uncountably many maps $\varphi: \mathbb{N} \to \mathbb{N}$). 
Using these sets of tuples, our constructions give uncountably many families $(\Gamma_l)_l$ such that $\overrightarrow{(I,O,C)}$ satisfies the Rips-Segev condition. 
  Section \ref{S: explicit construction} can be used to construct such $(\Gamma_l)_l$ with $\girth(\Phi_{\Gamma_l})>41$ for each $l$, so that $\overrightarrow{\Gamma}$ satisfies the $Gr_*'(1/8)$--condition.   
 Let us summarize the content of this section:
 \begin{theorem}\label{T: hyperbolic without up}
  There are uncountably many generalized Rips-Segev  presentations. The corresponding groups are torsion-free  and without the unique product property. If $G_1, \ldots, G_d$ are Gromov hyperbolic and the presentation is finite, then the corresponding groups are Gromov hyperbolic.
 \end{theorem}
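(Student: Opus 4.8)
The plan is to assemble the statement from the explicit construction of Section~\ref{S: explicit construction} together with the graphical small cancellation theorems proved above (Theorems~\ref{lii}, \ref{T: tf}, \ref{T: gi}); essentially all the work is already done and what remains is bookkeeping. For the uncountability, I would fix an injective map $\varphi\colon\mathbb{N}\to\mathbb{N}$ and use the rows of the table in Example~\ref{E:coefM=0} indexed by $\varphi$ as coefficients; since those rows consist of pairwise distinct powers of $10$, the resulting global coefficient set $\overrightarrow{(I,O,C)}$ satisfies the Rips-Segev condition. By the construction of Section~\ref{S: explicit construction} --- taking for each $l$ a degree-$8$ graph $\Phi_{\Gamma_l}$ of girth $>41$ (which exists as a finite cover of a bouquet of four circles, by residual finiteness of free groups), relabelling its edges by the words in $a^{\pm1},b^{\pm1}$ dictated by the coefficients as in Figure~\ref{F: exug}, and taking the $\{a,b\}$--reduction --- one obtains a family $(\Gamma_l)_l$ of generalized Rips-Segev graphs whose underlying graphs $\Phi_{\Gamma_l}$ have girth $>41$. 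Put $\overrightarrow{\Gamma}:=\bigsqcup_l\Gamma_l$. The family versions of Lemma~\ref{L: maxplength} and Lemma~\ref{L: mincycle} (Section~\ref{S: families of Rips-Segev graphs}) give $\Lambda(\overrightarrow{\Gamma})\leqslant 3$ and $\gamma(\overrightarrow{\Gamma})\geqslant\min_l\{\girth(\Phi_{\Gamma_l})+2\}\geqslant 44$, so $(\lvert\omega(p)\rvert_*+2)/\lvert\omega(c)\rvert_*\leqslant 5/44<1/8$ for every graphical piece $p$ and every non-trivial cycle $c$ in $\overrightarrow{\Gamma}$; Proposition~\ref{P: criterion} then gives that the labeling of $\overrightarrow{\Gamma}$ satisfies the $Gr_*'(1/8)$--condition, so a corresponding group presentation is a generalized Rips-Segev presentation. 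Distinct maps $\varphi$ record distinct multisets of $a$-line lengths in $\overrightarrow{\Gamma}$ and hence give distinct presentations, and there are uncountably many such $\varphi$.

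Next I would read off the algebraic properties of $G:=G(\overrightarrow{\Gamma})$. The factors $G_1,\dots,G_d$ are torsion-free, so the $Gr_*'(1/8)$--condition and Theorem~\ref{T: tf} give that $G$ is torsion-free. Since $Gr_*'(1/8)$ implies $Gr_*'(1/6)$, the argument of Proposition~\ref{P: small cancellation on RSgraphs} applies; as noted in Section~\ref{S: families of Rips-Segev graphs} it carries over verbatim to disjoint unions of generalized Rips-Segev graphs, using only Lemma~\ref{L: graphs encoding non-unique product property}, the embedding lemma stated after Proposition~\ref{P: small cancellation on RSgraphs}, and the injectivity of each $\Gamma_l$ in the Cayley graph of $G$ provided by Theorem~\ref{T: gi}. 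Hence each pair $A_l,B_l$ embeds into $G$ and does not have a unique product in $G$, so $G$ is not a unique product group. Finally, if all $G_i$ are Gromov hyperbolic and the presentation is finite --- i.e.\ the family $(\Gamma_l)_l$ is finite, so $\overrightarrow{\Gamma}$ is a finite graph --- then Theorem~\ref{lii} gives that $G$ is Gromov hyperbolic. The case $d\geqslant 2$, with $B=\{1,a,b_1,\dots,b_{d-1},ab_1,\dots,ab_{d-1}\}$, is subsumed by the last bullet of Section~\ref{S: explicit construction}.

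The step I expect to be the main obstacle is the passage from a single generalized Rips-Segev graph to the infinite disjoint union $\overrightarrow{\Gamma}$: one must check that the bounds $\Lambda(\overrightarrow{\Gamma})\leqslant 3$ and $\gamma(\overrightarrow{\Gamma})\geqslant\min_l\{\girth(\Phi_{\Gamma_l})+2\}$ hold \emph{uniformly} across the whole family, which is exactly what the family versions of Lemmas~\ref{L: maxplength} and~\ref{L: mincycle} supply. The key observations are that the Rips-Segev condition is imposed on the \emph{global} coefficient set $\overrightarrow{(I,O,C)}$, that any graphical piece is a finite immersed path and so meets only finitely many components of $\overrightarrow{\Gamma}$, and that the girth requirement $>41$ can be met simultaneously for every component since the large-girth graphs of Section~\ref{S: explicit construction} are constructed independently of the labeling.
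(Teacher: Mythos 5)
Your proposal is correct and follows essentially the same route as the paper: the paper's proof of this theorem is precisely the summary of Section~\ref{S: families of Rips-Segev graphs}, i.e.\ uncountably many coefficient families from Example~\ref{E:coefM=0} via injective maps $\varphi$, the explicit girth~$>41$ underlying graphs of Section~\ref{S: explicit construction}, the family versions of Lemmas~\ref{L: maxplength} and~\ref{L: mincycle} combined with Proposition~\ref{P: criterion} to get the $Gr_*'(1/8)$--condition on $\overrightarrow{\Gamma}$, and then Theorems~\ref{T: tf}, \ref{T: gi}, \ref{lii} together with Proposition~\ref{P: small cancellation on RSgraphs} extended to disjoint unions. (One cosmetic point: distinct injective $\varphi$ with the same image yield the same coefficient multiset, so the uncountability should be phrased in terms of the uncountably many infinite images, but this does not affect the conclusion and the paper is equally brief on it.)
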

 
 We have obtained the first examples of Gromov hyperbolic groups without the unique product property. We further study the, possibly infinite, generalized Rips-Segev presentations in Section \ref{S: number generalized Rips-Segev groups} below. 
Let us conclude this section with some conclusion on the original Rips-Segev construction.

\subsection{Rips-Segev groups revisited}
\label{S: Rips-Segev revisited 2}

 As mentioned above, Rips-Segev defined their graphical group presentations using graphs $\Phi$. We have explained a variant of their construction of such graphs. 
The coefficients used by Rips-Segev in their original construction satisfy our Rips-Segev condition, and the $\{a,b\}$--reduction of original Rips-Segev's $\Phi$ are generalized Rips-Segev graphs, denoted by  $\Gamma_{\Phi}$.  
 Our arguments show that $\Gamma_\Phi$ satisfies the $Gr_*'(1/8)$--condition. Rips-Segev's original groups coincide with the groups $G(\Gamma_{\Phi})$. We conclude that Rips-Segev's original groups \cite{rips_torsion-free_1987} are indeed torsion-free, and we provided a full proof of this fact. In addition, we showed that $A$ and $B$ inject in $G(\Gamma_{\phi}$. This completes the proof of the claim in \cite{rips_torsion-free_1987}*{p. 117}.  Moreover, we have the following new result. 
 
 \begin{theorem} \label{T: RS are hyperbolic}
 Rips-Segev's original groups \cite{rips_torsion-free_1987} are non-elementary hyperbolic.
 \end{theorem}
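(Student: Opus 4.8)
The plan is to deduce everything from the results already in place. By the discussion of Section~\ref{S: Rips-Segev revisited 2}, Rips--Segev's original group is $G(\Gamma_\Phi)$, where $\Gamma_\Phi$ is the $\{a,b\}$--reduction of Rips--Segev's underlying graph $\Phi$; this $\Gamma_\Phi$ is a \emph{finite} generalized Rips--Segev graph whose reduced labeling over $F=\langle a\rangle*\langle b\rangle$ satisfies the $Gr_*'(1/8)$--condition (hence, a fortiori, the $Gr_*'(1/6)$--condition), and by Lemma~\ref{L: mincycle} together with the large-girth choice in Rips--Segev's construction one has $\gamma(\Gamma_\Phi)\geqslant\girth(\Phi_{\Gamma_\Phi})>41$. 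The factors $\langle a\rangle\cong\langle b\rangle\cong\mathbb{Z}$ are finitely generated, torsion-free and Gromov hyperbolic, so Theorem~\ref{lii}, applied with $\Omega=\Gamma_\Phi$ finite, gives that $G(\Gamma_\Phi)$ is Gromov hyperbolic, and Theorem~\ref{T: tf} gives that it is torsion-free.

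It then remains to show that $G(\Gamma_\Phi)$ is non-elementary. Since $G(\Gamma_\Phi)$ is torsion-free, an elementary (i.e.\ virtually cyclic) hyperbolic group is either trivial or infinite cyclic: indeed a finitely generated torsion-free virtually cyclic group is infinite cyclic. In either case it is abelian, so it suffices to prove that $a$ and $b$ do not commute in $G(\Gamma_\Phi)$. I would argue by contradiction: assume $[a,b]=aba^{-1}b^{-1}=1$ in $G(\Gamma_\Phi)$, so that $[a,b]\in\langle\langle R\rangle\rangle$. By the van Kampen Lemma there is a minimal van Kampen diagram $D$ over $R$ whose boundary label represents $[a,b]$. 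Since $[a,b]\neq1$ in the free group $F$, $D$ has at least one face, hence the collapsed diagram $\widetilde{D}$ obtained by deleting originating edges (Corollary~\ref{C: Ollivier3}) has at least one face $M$. By Lemma~\ref{Hilfssatz van Kampen}(1) the label of a boundary cycle of $M$ equals in $F$ the label of a non-trivial reduced cycle of $\Gamma_\Phi$, so $\lvert\omega(\partial M)\rvert_*\geqslant\gamma(\Gamma_\Phi)>41$. On the other hand, the third assertion of Lemma~\ref{L: graphical small cancellation lemma} gives $\lvert\omega(\partial D)\rvert_*\geqslant\lvert\omega(\partial M)\rvert_*$. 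But $\lvert\omega(\partial D)\rvert_*=\lvert[a,b]\rvert_*=4$ (the normal form $a\cdot b\cdot a^{-1}\cdot b^{-1}$ has four syllables), a contradiction. Hence $[a,b]\neq1$ and $G(\Gamma_\Phi)$ is non-abelian.

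Combining the two steps, $G(\Gamma_\Phi)$ is a torsion-free, non-abelian, Gromov hyperbolic group, hence non-elementary hyperbolic, which proves the theorem. I expect the only delicate point to be the transition from $D$ to $\widetilde{D}$ and the lower bound $\lvert\omega(\partial M)\rvert_*\geqslant\gamma(\Gamma_\Phi)$ for its faces; but this is precisely what Lemma~\ref{Hilfssatz van Kampen} and Lemma~\ref{L: graphical small cancellation lemma} were designed to provide, so once they are invoked the argument is routine. An alternative route to non-elementarity would be to use Theorem~\ref{T: gi} to embed $\Gamma_\Phi$ into the Cayley graph of $G(\Gamma_\Phi)$ and extract a non-abelian free subgroup from its large-girth cycle structure, but the commutator computation above is shorter and self-contained.
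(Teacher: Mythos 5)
Your proposal is correct, and its first half coincides with the paper's route: Rips--Segev's original groups are identified with $G(\Gamma_\Phi)$ for a finite generalized Rips--Segev graph $\Gamma_\Phi$ satisfying the $Gr_*'(1/8)$--condition over $F=\langle a\rangle *\langle b\rangle$, so hyperbolicity follows from Theorem~\ref{lii} (the factors being $\mathbb{Z}$) and torsion-freeness from Theorem~\ref{T: tf}. Where you differ is the non-elementary part, which the paper does not spell out: implicitly it follows there from the facts already established, namely that a torsion-free elementary hyperbolic group is trivial or infinite cyclic, and both are unique product groups, whereas $A$ and $B$ embed and have no unique product in $G(\Gamma_\Phi)$ by Proposition~\ref{P: small cancellation on RSgraphs}. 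You instead give a direct Greendlinger-type argument: if $[a,b]=1$ in $G(\Gamma_\Phi)$, a minimal van Kampen diagram $D$ for $[a,b]$ has at least one face (as $[a,b]\neq 1$ in $F$), the collapsed diagram $\widetilde D$ has a face $M$ whose boundary label is that of a non-trivial cycle of $\Gamma_\Phi$, hence of free product length at least $\gamma(\Gamma_\Phi)\geqslant\girth(\Phi_{\Gamma_\Phi})>4$, contradicting the third assertion of Lemma~\ref{L: graphical small cancellation lemma} since $\lvert [a,b]\rvert_*=4$. This is sound (it is the same mechanism as Lemma~\ref{L: number} and Proposition~\ref{P: marked groups}) and has the merit of being self-contained, not relying on the standard fact that ordered groups such as $\mathbb{Z}$ have the unique product property; the paper's implicit route is shorter but leans on that external fact. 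Either argument completes the proof.
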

In contrast,  all the other known examples of the non-unique product groups are not hyperbolic, see \cite{promislow_simple_1988,carter_new_2013}. 
Finally, let us emphasize, again, that our generalized Rips-Segev graphs allow to study more presentations of groups without the unique product property.  In fact, we have the following. 
 \begin{remark} The class of our generalized Rips-Segev graphs is the most general in the following sense. Suppose our sets $A$ and $B$ embed in a group $G$ such that $A$ and $B$ do not have a unique product in $G$, then there is a generalized Rips-Segev graph $\Gamma$ for $A$ and $B$, and a presentation of $G$ including relators represented by the labels on the cycles of $\Gamma$.
 \end{remark}

This remark is of importance as such presentations  are not generic. We show this in the next section.


\section{The size of the class of generalized Rips-Segev groups}

We are interested in the size of the class of generalized Rips-Segev groups. In most generality, we ask whether or not two non-equivalent and non-isomorphic Rips-Segev graphs define isomorphic groups. The answer to this problem is unknown. We give a partial answer. In particular, we give a family of groups that are non-isomorphic as marked groups. We use this to construct an uncountable family of torsion-free groups without the unique product property. Then we show that finite generalized Rips-Segev presentations are not generic in the fundamental models of random finitely presented groups. 

\subsection{The number of generalized Rips-Segev groups} \label{S: number generalized Rips-Segev groups} 
We apply Lemma \ref{L: graphical small cancellation lemma} to groups given by generalized Rips-Segev presentations defined over a suitable family of generalized Rips-Segev graphs.

Let $\Gamma$ and $\Gamma'$ denote generalized Rips-Segev graphs whose labeling satisfies the $Gr_*'(1/8)$--condition. Let $D$ be a minimal van Kampen diagram over $R$ given by  $\Gamma$. We will assume that $D$ has non originating edges. Otherwise, we replace $D$ with our diagram $\widetilde{D}$.

\begin{lemma} \label{L: number} In $D$, there is at least one exterior face $\Pi$ with the following property. There is a non-zero number $P$ such that  $b^{\varepsilon_1}a^{P}b^{\varepsilon_2}$ is represented by the label of a simple path in $\partial_{ext}\Pi$.
 \end{lemma}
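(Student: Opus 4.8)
The plan is to combine the graphical small cancellation lemma, Lemma~\ref{L: graphical small cancellation lemma} (with $\lambda=1/8$), with the elementary alternating structure of the words read along paths of $\Gamma$. Since $D$ has no originating edges, $D=\widetilde D$ and its faces are its maximal regions, each simply-connected by Corollary~\ref{C: Ollivier3}; by Lemma~\ref{Hilfssatz van Kampen}(2) I may assume each face $\Pi$ has a reduced simple boundary cycle whose edges carry the single-letter labels $a^{\pm1},b^{\pm1}$ (the normalization there uses only foldings and deletions, so single-letter labels survive). Then $|\omega(\partial\Pi)|_*\ge\gamma(\Gamma)>24$, because $\Lambda(\Gamma)\le3$ by Lemma~\ref{L: maxplength} and the labeling satisfies $Gr_*'(1/8)$.

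First I would record the structural fact. Any reduced word in $a^{\pm1},b^{\pm1}$ read along $\partial\Pi$ (or along a subpath) is an alternating product of maximal $a$-runs, each of constant orientation since it stays on one $a$-line, hence reading an $a$-syllable $a^{P}$ with $P\ne0$, and maximal $b$-runs, each of constant orientation since the labeling is reduced. Consequently, if $q$ is a path on $\partial_{ext}\Pi$ whose label has free product length at least $4$, then its label, written in normal form, has at least four syllables, so some $a$-syllable $a^{P}$ ($P\ne0$) occurs with a $b$-syllable immediately to each side inside $q$; taking that $a$-run together with one $b$-edge from each of the two neighbouring $b$-runs gives a simple subpath of $\partial_{ext}\Pi$ whose label is literally $b^{\varepsilon_1}a^{P}b^{\varepsilon_2}$ with $\varepsilon_1,\varepsilon_2\in\{\pm1\}$ and $P\ne0$. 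Thus it suffices to exhibit an exterior face $\Pi$ with $\partial_{ext}\Pi$ connected and $|\omega(\partial_{ext}\Pi)|_*\ge4$.

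For this I split on $|D|$. If $|D|\ge3$, Lemma~\ref{L: graphical small cancellation lemma} directly yields an exterior face $\Pi$ with $\partial_{ext}\Pi$ connected and $|\omega(\partial_{ext}\Pi)|_*>\bigl(1-\tfrac38\bigr)|\omega(\partial\Pi)|_*>\tfrac58\cdot24>4$. If $|D|=1$, the unique face $\Pi$ is exterior with $\partial_{ext}\Pi=\partial\Pi$ of length $>24$. If $|D|=2$, the two disk faces are glued along their common boundary, which must be connected (otherwise $D$ would not be simply-connected) and, since the faces are simply-connected, is a single inner segment $s$ (possibly empty); as $D$ has no originating edges $s$ is a piece, so $|\omega(s)|_*\le\Lambda(\Gamma)\le3$, and hence each face $\Pi$ has $\partial_{ext}\Pi$ connected with $|\omega(\partial_{ext}\Pi)|_*=|\omega(\partial\Pi)|_*-|\omega(s)|_*>21$. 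In every case the structural fact applied to $\partial_{ext}\Pi$ finishes the argument.

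I expect the only real subtlety to be the reduction to a face whose boundary is a reduced cycle with single-letter labels, so that one can genuinely extract a subpath reading $b^{\varepsilon_1}a^{P}b^{\varepsilon_2}$ rather than, say, $b^{2}a^{P}b$; this is handled by Lemma~\ref{Hilfssatz van Kampen}(2) as indicated above, while the fact that $b$-runs in $\Gamma$ stay short under the Rips--Segev condition guarantees that no equivalence can have fused several $b$-edges into a single higher-power edge. The remainder is routine bookkeeping with $|\,\cdot\,|_*$ and the alternating normal form of the relators.
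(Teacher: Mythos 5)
Your argument follows essentially the same route as the paper: use the graphical small cancellation lemma (Lemma~\ref{L: graphical small cancellation lemma}) to produce an exterior face whose exterior boundary is connected and has free product length at least $4$, and then exploit the alternating $a$-power/$b$-letter structure of words read on $\Gamma$ to extract a simple subpath labelled $b^{\varepsilon_1}a^{P}b^{\varepsilon_2}$ with $P\neq 0$. You are in fact more explicit than the paper about the one- and two-face cases and about the extraction step (the paper splits instead on $\lvert\partial_{int}\Pi\rvert_*$ and leaves the extraction implicit), and that extra care is welcome. The one incorrect step is your justification of the bound on $\gamma$: from $\Lambda(\Gamma)\leqslant 3$ and the $Gr_*'(1/8)$--condition you can only conclude $\gamma>8\Lambda$, which gives nothing like $\gamma>24$ unless you also know that pieces of length $3$ actually occur -- an upper bound on $\Lambda$ cannot bound $\gamma$ from below. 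This is exactly the point the paper supplies (``by construction, there are pieces of length $3$,'' e.g.\ paths reading $a^{P_0}b^{\varepsilon}a^{P_1}$ at two different $b$-edges, whence $\gamma>24$, or the paper's weaker $\gamma\geqslant 18$); alternatively, the trivial observation that pieces of length $1$ exist already gives $\gamma>8$, which is all your numerics ($\gamma\geqslant 7$) require. With that justification repaired, your proof is correct and coincides in substance with the paper's.
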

 
\begin{proof}
 By construction, there are pieces of length $3$: the labels of these pieces represent the element given as $a^{P_0}b^{\varepsilon}a^{P_1}$ in normal form. Hence, the  minimal cycle length $\gamma\geqslant 18$. If $D$ has more than one face, one exterior face $\Pi$ satisfies $|\partial_{ext}\Pi|_*>\frac{|\partial\Pi|_*}{2}$ by Lemma \ref{L: graphical small cancellation lemma}.
     If $|\partial_{int}\Pi|_* \geqslant 6$, then, by the above inequality, we have $|\partial_{ext}\Pi|_* \geqslant 4$. This implies our claim in this case.
    If $|\partial_{int}\Pi|_* <6$, as $\gamma\geqslant 18$ we have that $|\partial_{ext}\Pi |_*\geqslant 12$. Hence, the claim holds.  
\end{proof}


\begin{prop}\label{P: marked groups}
Let $\Gamma$ and $\Gamma'$ be two generalized Rips-Segev graphs with coefficients $(I,O,C)$ and $(I',O',C')$ such that  $(I,O,C) \overrightarrow{\bigcup} (I',O',C')$ (as defined in Section \ref{S: families of Rips-Segev graphs}) satisfies the Rips-Segev condition. Then the identity map $a\mapsto a$, $b\mapsto b$, does not induce an isomorphism between $G(\Gamma)$ and $G(\Gamma')$.
\end{prop}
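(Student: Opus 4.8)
The plan is to prove the formally stronger statement that the identity map $F=\langle a\rangle*\langle b\rangle\to F$ does not even descend to a homomorphism $G(\Gamma)\to G(\Gamma')$; since an isomorphism is in particular a homomorphism, this suffices. Suppose it did; equivalently $\langle\langle R\rangle\rangle\subseteq\langle\langle R'\rangle\rangle$ in $F$, where $R$ and $R'$ are the sets of labels read on the cycles of $\Gamma$ and $\Gamma'$. Fix a non-trivial simple cycle $c$ of $\Gamma$ and put $r:=\omega(c)$. Every cycle of $\Gamma$ bounds in $G(\Gamma)$, so $r=1$ in $G(\Gamma)$ and hence $r=1$ in $G(\Gamma')$, i.e. $r\in\langle\langle R'\rangle\rangle$. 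On the other hand $\Gamma$ has a reduced labeling and $c$ is non-trivial, so $r\neq 1$ in $F$; therefore a minimal van Kampen diagram $D$ for $r$ over $R'$ has at least one face, and passing to $\widetilde D$ (Corollary~\ref{C: Ollivier3}, Lemma~\ref{L: graphical small cancellation lemma}) I may assume $D$ has no inner originating edges.

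Next I would feed $D$ into Lemma~\ref{L: number}: it yields an exterior face $\Pi$ of $D$ and a \emph{non-zero} integer $P$ such that $b^{\varepsilon_1}a^{P}b^{\varepsilon_2}$, $\varepsilon_i=\pm 1$, is represented by the label of a simple path $s\subseteq\partial_{ext}\Pi$. (If $D$ has only one or two faces the hypothesis of Lemma~\ref{L: number} is borderline, but then $r$ is read on at most two cycles of $\Gamma'$ glued along a piece, and the same conclusion follows directly from the fact that $\lvert\omega(\partial_{ext}\Pi)\rvert_*\geqslant 4$ together with the alternating $a$-line/$b$-edge structure of generalized Rips-Segev graphs and Remark~\ref{R: RScondition 2}.) The key point is now that $s$ must be visible simultaneously in $\Gamma$ and in $\Gamma'$. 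First, $s\subseteq\partial_{ext}\Pi\subseteq\partial D$ and the boundary label of $D$ is $\omega(c)$; since $\lvert\omega(s)\rvert_*=3$ is far below $\gamma(\Gamma)$, the path $s$ is an honest simple sub-path of a copy of the simple cycle $c$, so $b^{\varepsilon_1}a^{P}b^{\varepsilon_2}$ is the label of a simple path in $\Gamma$, and Remark~\ref{R: RScondition 1} forces $P$ to be one of the distance numbers appearing in the Rips-Segev list of $(I,O,C)$. Second, $\Pi$ is a maximal region of $D$ all of whose inner edges originate, so $\Pi$ lifts to $\widetilde{\Gamma'}$ (as in the construction of $\widetilde D$), its boundary cycle being a copy of a reduced cycle of $\Gamma'$; hence $\partial_{ext}\Pi$, and in particular $s$, lifts to a sub-path of that cycle, which is simple because the $a$-lines of $\Gamma'$ are simple paths. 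Applying Remark~\ref{R: RScondition 1} to $\Gamma'$ forces $P$ to be one of the distance numbers of $(I',O',C')$.

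To conclude, $P$ occurs as an entry contributed by $(I,O,C)$ and as a distinct entry contributed by $(I',O',C')$ in the concatenated list $(I,O,C)\overrightarrow{\bigcup}(I',O',C')$, so this list contains a repeated value, contradicting the hypothesis that it satisfies the Rips-Segev condition (all listed numbers non-zero and pairwise distinct). Thus the identity does not descend to a homomorphism $G(\Gamma)\to G(\Gamma')$, let alone to an isomorphism.

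I expect the genuine difficulty to lie entirely in the middle step: one must pin down a \emph{single} simple path $s$ produced by Lemma~\ref{L: number} and verify that it lifts, on the one hand, to an honest simple path in $\Gamma$ (through $\partial_{ext}\Pi\subseteq\partial D$ and the smallness of $\lvert\omega(s)\rvert_*$ relative to the minimal cycle length) and, on the other hand, to an honest simple path in $\Gamma'$ (through the fact that $\Pi$, being a region with only originating inner edges, lifts to $\widetilde{\Gamma'}$, exactly as arcs of maximal regions were shown to be pieces in Corollary~\ref{C: Ollivier3}). Once both liftings are in place the rest is formal: two invocations of Remark~\ref{R: RScondition 1} and the pairwise-distinctness clause of the Rips-Segev condition on the union. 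A secondary, purely bookkeeping nuisance is the handling of van Kampen diagrams with very few faces, dealt with by hand as indicated above.
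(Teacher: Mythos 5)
Your proposal is correct and follows essentially the same route as the paper: take the label $r$ of a simple cycle of $\Gamma$, build a minimal van Kampen diagram for $r$ over the relators of $\Gamma'$, invoke Lemma~\ref{L: number} to extract a non-zero exponent, and apply Remark~\ref{R: RScondition 1} to both $\Gamma$ and $\Gamma'$ to contradict the pairwise distinctness in the Rips-Segev condition on the union. Your extra care with the lift of the face to $\widetilde{\Gamma'}$ and with diagrams having few faces, and the observation that the argument even rules out the identity descending to a homomorphism, are refinements of the same argument rather than a different approach.
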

In other words, the groups $G(\Gamma)$ and $G(\Gamma')$ are not isomorphic as marked groups.
\begin{proof}
 Let us assume that the identity induces an isomorphism $G(\Gamma)\to G(\Gamma')$. Let 
 \[
r=a^{P_0}b^{\varepsilon_1}a^{P_1}b^{\varepsilon_2}\cdots a^{P_{l-1}}b^{\varepsilon_l}  
 \]
be represented by the label of a simple cycle of $\Gamma$. Let $D$ be a minimal van-Kampen diagram for $r$ over the relators of $G(\Gamma')$. Then $D$ has more than one face, otherwise $r$ can be represented by the label of a cycle of $\Gamma'$, a contradiction. The non-zero number $Q$ given by the above lemma equals to an exponent $P_i$ in the representation of $r$. Remark \ref{R: RScondition 1} implies that both $Q$ and $P_i$ are among the numbers in the Rips-Segev condition. This is a contradiction to the choice of $\Gamma$ and $\Gamma'$.
\end{proof}

We can easily extend the above argument to prove the following observation.

\begin{remark}
 Let $\Gamma$ and $\Gamma'$ be as in Proposition \ref{P: marked groups} above. In addition,  suppose the numbers in the Rips-Segev condition do not differ by $\pm1$, and do not equal to $1$ or $2$.
 \begin{itemize}
  \item  Single elementary Nielsen equivalences do not induce isomorphisms of $G(\Gamma)$ and $G(\Gamma')$.  
  \item  If no number in the Rips-Segev condition on $(I,O,C)$ is a multiple of a number in the Rips-Segev condition on $(I',O',C')$, then the map $a\mapsto ba^Pb^{-1}$, $b\mapsto b$ is no isomorphism  of $G(\Gamma)$ and $G(\Gamma')$ for any $P\in \mathbb{Z}$.
  \end{itemize}
\end{remark}

Let $G=F/\langle\langle R\rangle\rangle$ and $W\subseteq R$.  \emph{The non-unique product property of $A$ and $B$ in $G$ is a consequence of $W$} if all words $u:=xy(x'y')^{-1}$, $x,x'\in A$, $y,y'\in B$ are  in the normal closure of $W$ in~$F$. 
\begin{cor}
Let $(\Gamma_l)_l$ be a family of generalized Rips-Segev graphs for $A_l$ and $B_l$. Under the Rips-Segev condition on $\overrightarrow{(I,O,C)}$, for all $j$, the non-unique product property of $A_j$ and $B_j$ is \emph{not} a consequence of the relators representing the labels of the cycles of $\bigsqcup_{l, l\not=j}\Gamma_l$.
\end{cor}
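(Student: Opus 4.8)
The plan is to deduce this corollary directly from Proposition~\ref{P: marked groups} (or rather its proof) applied to an appropriate pair of generalized Rips-Segev graphs. Fix $j$ and let $\Gamma:=\Gamma_j$, and let $\Gamma'$ be the generalized Rips-Segev graph obtained as the disjoint union $\bigsqcup_{l,l\neq j}\Gamma_l$; strictly speaking this is a \emph{family} of generalized Rips-Segev graphs, but the arguments of Section~\ref{S: families of Rips-Segev graphs} and Section~\ref{S: number generalized Rips-Segev groups} apply verbatim to families, so I will treat $W:=\bigsqcup_{l,l\neq j}\Gamma_l$ as the relator set it determines and write $G':=F/\langle\langle W\rangle\rangle$. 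The hypothesis that $\overrightarrow{(I,O,C)}$ satisfies the Rips-Segev condition guarantees that the coefficients of $\Gamma_j$ together with the coefficients of all the remaining $\Gamma_l$ still satisfy the Rips-Segev condition; this is exactly the hypothesis needed to invoke Proposition~\ref{P: marked groups} with this pair.

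First I would argue by contradiction: suppose the non-unique product property of $A_j$ and $B_j$ is a consequence of $W$, i.e. every word $u=xy(x'y')^{-1}$ with $x,x'\in A_j$, $y,y'\in B_j$ lies in $\langle\langle W\rangle\rangle$. By construction of a generalized Rips-Segev graph (cf. the proof of Lemma~\ref{L: graphs encoding non-unique product property}), every relator $r$ representing the label of a simple cycle of $\Gamma_j$ is, up to weakly cyclically reduced conjugacy, a word of the form $xy(x'y')^{-1}$ for suitable $x,x'\in A_j$, $y,y'\in B_j$ (the cycle is the concatenation $p_xp_y(p_{x'}p_{y'})^{-1}$ through the tree $T$). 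Hence each such $r$ would lie in $\langle\langle W\rangle\rangle$, so $r=1$ in $G'$. Now I run the argument of Proposition~\ref{P: marked groups}: take a minimal van Kampen diagram $D$ for $r$ over the relators $W$. Since $r$ is \emph{not} the label of a cycle of $\bigsqcup_{l\neq j}\Gamma_l$ (its syllable pattern involves the coefficients of $\Gamma_j$, which by the Rips-Segev condition are distinct from all coefficients occurring in the other graphs), $D$ must have more than one face. Then Lemma~\ref{L: number} supplies an exterior face $\Pi$ and a non-zero $P$ such that $b^{\varepsilon_1}a^Pb^{\varepsilon_2}$ is read along a simple path in $\partial_{ext}\Pi$, hence also along a simple path in the defining graph $\bigsqcup_{l\neq j}\Gamma_l$. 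On the other hand, since that same subpath of $\partial_{ext}\Pi$ is a subpath of $\partial D$, it is a subpath of a simple path in $\Gamma_j$ reading $r$, so $P$ equals some exponent $P_i$ appearing in the normal form of $r$. By Remark~\ref{R: RScondition 1}, $P$ is among the numbers in the Rips-Segev condition associated to $\bigsqcup_{l\neq j}\Gamma_l$ and $P_i$ is among those associated to $\Gamma_j$; equality of a nonzero number from one list with a nonzero number from the other contradicts the Rips-Segev condition on $\overrightarrow{(I,O,C)}$.

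The main obstacle I anticipate is the bookkeeping in identifying relators of $\Gamma_j$ with words $xy(x'y')^{-1}$ and, conversely, in matching the subpath of $\partial_{ext}\Pi$ produced by Lemma~\ref{L: number} with a genuine simple path in the relevant graph so that Remark~\ref{R: RScondition 1} becomes applicable on both sides; this is the same subtlety already handled in the proof of Proposition~\ref{P: marked groups}, so it should reduce to citing that argument rather than redoing it. A minor point to be careful about is that Lemma~\ref{L: number} and Remark~\ref{R: RScondition 1} are stated for a single generalized Rips-Segev graph, so I would first note explicitly (as the paper does in Section~\ref{S: families of Rips-Segev graphs}) that both extend to the graph $\bigsqcup_{l\neq j}\Gamma_l$, whose coefficient set satisfies the Rips-Segev condition by hypothesis. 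With these observations in place the corollary follows, since the contradiction shows that not all $r$ (in fact not even one such $r$) can lie in $\langle\langle W\rangle\rangle$, i.e. the non-unique product property of $A_j$ and $B_j$ is not a consequence of $W$.
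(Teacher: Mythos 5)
Your proposal is correct and follows exactly the route the paper takes: its entire proof of this corollary is the one-line remark ``This follows from Lemma~\ref{L: number} as in the proof of Proposition~\ref{P: marked groups},'' and your argument (a relator of $\Gamma_j$ of the form $xy(x'y')^{-1}$ cannot bound a one-face diagram over the other graphs' relators, while a multi-face minimal diagram yields via Lemma~\ref{L: number} and Remark~\ref{R: RScondition 1} an exponent common to both coefficient lists, contradicting the Rips-Segev condition on $\overrightarrow{(I,O,C)}$) is precisely that argument spelled out, including the correct observation that the lemma and remark extend to the disjoint union $\bigsqcup_{l\neq j}\Gamma_l$.
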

This follows from Lemma \ref{L: number} as in the proof of Proposition \ref{P: marked groups}.

\begin{theorem}\label{infpresgrwup}
There are uncountably many non-isomorphic torsion-free groups without the unique product property.  
\end{theorem}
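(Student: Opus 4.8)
The plan is to realise, for every infinite subset $S\subseteq\mathbb N$, a group $G_S$ from the family just constructed, and to show that $S\mapsto G_S$ has countable fibres up to isomorphism; since $\mathbb N$ has uncountably many infinite subsets, uncountably many isomorphism classes must then occur, each of a torsion-free non-unique product group.

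First I would fix, once and for all and for each $n\in\mathbb N$, a generalized Rips-Segev graph $\Gamma_n$ over $F=\langle a\rangle*\langle b\rangle$ whose coefficients are the $n$-th line of the table in Example~\ref{E:coefM=0} and whose underlying graph $\Phi_{\Gamma_n}$ has girth $>41$; this is available by Section~\ref{S: explicit construction}. The only properties I shall use are that, by Remark~\ref{R: RScondition 1}, every non-zero exponent $P$ occurring in a label $b^{\varepsilon_1}a^{P}b^{\varepsilon_2}$ of a simple path of $\Gamma_n$ is a Rips-Segev number of line $n$, and that by the design of Example~\ref{E:coefM=0} the Rips-Segev numbers belonging to distinct lines are pairwise distinct, so that such a number ``coming from line $n$'' never also comes from a line $m\neq n$. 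For infinite $S\subseteq\mathbb N$ set $\overrightarrow{\Gamma}_S:=\bigsqcup_{n\in S}\Gamma_n$; by the family version of Corollary~\ref{C: final graphical small cancellation for Rips-Segev} in Section~\ref{S: families of Rips-Segev graphs} its labelling satisfies the $Gr_*'(1/8)$--condition, and all pieces have length at most $3$ while $\gamma(\overrightarrow{\Gamma}_S)\geqslant 43$. Let $G_S:=G(\overrightarrow{\Gamma}_S)=F/\langle\langle R_S\rangle\rangle$ with $R_S$ the elements of $F$ represented by labels of simple cycles of $\overrightarrow{\Gamma}_S$. By Theorem~\ref{T: tf} each $G_S$ is torsion-free, and by Proposition~\ref{P: small cancellation on RSgraphs} together with its extension to families, $G_S$ has a pair of finite subsets without a unique product, i.e.\ it is not a unique product group.

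The core step is the following separation claim: if $S\setminus S'\neq\varnothing$, then $R_S\not\subseteq\langle\langle R_{S'}\rangle\rangle$, hence $\langle\langle R_S\rangle\rangle\neq\langle\langle R_{S'}\rangle\rangle$ and $G_S,G_{S'}$ are distinct quotients of $F$. Pick $n\in S\setminus S'$ and let $r\in R_S$ be the label of a non-trivial simple cycle of $\Gamma_n$ of minimal free product length; by the proof of Lemma~\ref{L: mincycle} one may write $r=a^{P_0}b^{2\varepsilon_0}a^{P_1}b^{2\varepsilon_1}\cdots$ with all exponents $P_j$ non-zero, and each $P_j$ is a Rips-Segev number coming from line $n$, hence from no line of $S'$. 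Suppose for contradiction that $r=1$ in $G_{S'}$, and let $D$ be a minimal van Kampen diagram for $r$ over $R_{S'}$, with originating edges removed as in Section~\ref{S: van Kampen diagrams}. If $D$ had at most one face, then $r$ would be a cyclic conjugate of a relator in $R_{S'}$, hence represented by a cycle label of $\overrightarrow{\Gamma}_{S'}$, so by Remark~\ref{R: RScondition 1} every $P_j$ would be a Rips-Segev number of a line of $S'$, which is impossible. Thus $D$ has more than one face, and Lemma~\ref{L: number} (applied to $D$) yields an exterior face $\Pi$ and a non-zero integer $Q$ with $b^{\varepsilon}a^{Q}b^{\varepsilon'}$ a subword of the label of $\partial_{ext}\Pi$. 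Since $\partial_{ext}\Pi$ lies on $\partial D$, whose label is a cyclic conjugate of $r$, the block $a^{Q}$ is a full maximal $a$-block of $r$, so $Q=P_j$ for some $j$, a number coming from line $n$. But $b^{\varepsilon}a^{Q}b^{\varepsilon'}$ is also a subword of the label of $\partial\Pi$, which is a relator of $G_{S'}$, i.e.\ the label of a simple cycle of $\overrightarrow{\Gamma}_{S'}$, so by Remark~\ref{R: RScondition 1} $Q$ is a Rips-Segev number of some line of $S'$. As $n\notin S'$, these two descriptions of $Q$ contradict each other, so $r\neq 1$ in $G_{S'}$, proving the claim.

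To conclude I would run a cardinality argument. Assume the family $\{G_S\}$ realises only countably many isomorphism classes; then some class $[H]$ is realised by an uncountable set $\mathcal T$ of infinite subsets $S$. For each $S\in\mathcal T$ choose an isomorphism $\psi_S\colon G_S\to H$ and set $\Psi_S:=\psi_S\circ\pi_S\colon F\to H$, where $\pi_S\colon F\to G_S$ is the quotient map; then $\Psi_S$ is surjective and $\ker\Psi_S=\ker\pi_S=\langle\langle R_S\rangle\rangle$. Since $\Psi_S$ is a homomorphism from the free group $F=\langle a\rangle*\langle b\rangle$, it is determined by the pair $(\Psi_S(a),\Psi_S(b))\in H\times H$; as $H$ is countable, $H\times H$ is countable, so there are $S\neq S'$ in $\mathcal T$ with $\Psi_S=\Psi_{S'}$, whence $\langle\langle R_S\rangle\rangle=\langle\langle R_{S'}\rangle\rangle$. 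Since $S\neq S'$ we have $S\setminus S'\neq\varnothing$ or $S'\setminus S\neq\varnothing$, and in either case the separation claim is violated. Hence $\{G_S\}$ realises uncountably many isomorphism classes, which with the second paragraph proves the theorem. The part I expect to carry the weight is the separation claim: it is essentially a re-run of the arguments behind Proposition~\ref{P: marked groups} and Lemma~\ref{L: number}, but one must now track \emph{which line each exponent originates from}, using that the numbers of Example~\ref{E:coefM=0} are line-specific, rather than invoking a single global Rips-Segev condition (which cannot hold simultaneously for uncountably many coefficient families). The upgrade from this ``marked'' statement to genuine non-isomorphism of abstract groups is then the short cardinality argument, exploiting that each $G_S$ is a countable $2$-generated group and so has only countably many generating pairs.
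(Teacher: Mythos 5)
Your proposal is correct and follows essentially the same route as the paper: take disjoint unions $\overrightarrow{\Gamma}_S$ indexed by subsets $S$, separate the resulting groups as \emph{marked} groups via Lemma~\ref{L: number} and Remark~\ref{R: RScondition 1} (your ``separation claim'' is exactly the content of Proposition~\ref{P: marked groups} and its proof, applied to line-specific coefficients from Example~\ref{E:coefM=0}), and then pass from marked to abstract non-isomorphism by the countability of generating pairs of a finitely generated group. The only difference is presentational: you spell out the separation argument and the pigeonhole step in detail where the paper cites Proposition~\ref{P: marked groups} and states the counting argument in one line.
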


Such a countable family has recently been constructed (in a quite technical way) in \cite{carter_new_2013} via the Passman-Promislow example of a torsion-free group without the unique product property \cites{promislow_simple_1988}. 

\begin{proof}
 We adapt a standard argument. Let $(\Gamma_l)_{l\in \mathbb{Z}}$ be an infinite family of generalized Rips-Segev graphs such that the labeling of $\overrightarrow{\Gamma}:=\bigsqcup_l \Gamma_l$ satisfies the $Gr_*'(1/8)$--condition. The corresponding group $G\left(\Omega \right)$ is torsion-free and without the unique product property. For every subset $I\subseteq \mathbb{Z}$, we let  $\overrightarrow{\Gamma}_I:=\bigsqcup_{i\in {I}}\Gamma_i$. The groups $G_I:=G(\overrightarrow{\Gamma}_I)$ are torsion-free and without the unique product property. Taking into account Proposition \ref{P: marked groups} and its proof  
 , we obtain uncountably many groups $G_I$ that are pairwise non-isomorphic as marked groups. As a finitely generated group has only countably many pairs of generators, there are uncountably many non-isomorphic $G_I$.
\end{proof}

An interesting open question is whether or not there are infinitely many non-isomorphic finitely presented Rips-Segev groups. In the next section, we show that the class of finitely presented Rips-Segev groups is small when considered within certain models of random finitely presented groups.

\subsection{Genericity and generalized Rips-Segev presentations}

The following lemma is useful to prove that the class of finitely presented Rips-Segev groups is not generic in the Gromov graphical model as well as in the Arzhantseva-Ol'shanskii few relator model of random finitely presented groups.

\begin{lemma}\label{reduced word 2} A generalized Rips-Segev graph with underlying graph of girth $g$ has a reduced path with label  $$a^{P_0}b^{\varepsilon}a^{P_1}b^{\varepsilon}a^{P_2} \ldots a^{P_{g-1}}b^{\varepsilon}a^{P_g},$$ where $\varepsilon=\pm 1$ and $P_i\not=0$ for all $0<i<g$. 
\end{lemma}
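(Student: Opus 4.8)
The plan is to find a cycle in the underlying graph $\Phi_\Gamma$ realizing the girth $g$, and to read off the corresponding cycle in $\Gamma$ via the $\{a,b\}$--reduction, keeping careful track of the exponents of $a$. First I would pick a simple cycle $e_1 e_2 \cdots e_g$ in $\Phi_\Gamma$ of length exactly $g$, passing through vertices (i.e.\ $a$-lines) $i_0, i_1, \ldots, i_{g-1}, i_g = i_0$. By the definition of $\Phi_\Gamma$, each edge $e_m = (i_{m-1}, i_m)$ corresponds to a $b$-edge $(v_{i_{m-1}P_{e_m}}, v_{i_m Q_{e_m}})$ of $\Gamma$ with $P_{e_m} \in \{0, O_{i_{m-1}1}, O_{i_{m-1}2}, C_{i_{m-1}}\}$ and $Q_{e_m} \in \{I_{i_m1}, I_{i_m2}\}$, and the label of $e_m$ in $\Phi_\Gamma$ is $a^{P_{e_m}} b a^{-Q_{e_m}}$. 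Concatenating along the cycle and performing the $\{a,b\}$--reduction, the label of the resulting (non-trivial, by Lemma \ref{L: mincycle} and the girth--length correspondence used there) simple cycle $c$ in $\Gamma$ is, up to orientation of each edge, of the form
\[
a^{P_0} b^{\varepsilon} a^{P_1} b^{\varepsilon} a^{P_2} \cdots a^{P_{g-1}} b^{\varepsilon} a^{P_g},
\]
where the exponent $P_m$ between the $m$-th and $(m{+}1)$-th $b$ is the signed sum $ -Q_{e_m} + (\text{a vertex index on } a\text{-line } i_m) + P_{e_{m+1}}$ coming from the portion of the $a$-line $i_m$ traversed between the two incident $b$-edges; here all the $b$'s carry the same sign $\varepsilon = \pm 1$ because I will choose the orientation of $c$ to traverse every $b$-edge positively (this is possible since, by construction, the $b$-edges all point ``the same way'' relative to the $a$-lines — from an $O$- or endpoint vertex of one $a$-line to an $I$-vertex of another). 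Actually, to be safe I would simply take $\varepsilon$ to be whatever common sign arises, and if a given cycle of $\Phi_\Gamma$ forces different signs I instead invoke the even/odd analysis already carried out in the proof of Lemma \ref{L: mincycle}, which shows the minimal-length non-trivial cycle in $\Gamma$ can be taken with all $b$-syllables doubled; for the statement as written it suffices to exhibit one reduced path, so I would extract a reduced subpath of such a minimal cycle.

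The substantive point is the claim $P_m \neq 0$ for $0 < m < g$. Here I would use the Rips-Segev condition via Remark \ref{R: RScondition 2}. For $0 < m < g$, the subpath of $c$ around the $m$-th internal $b$-edge reads $b^{\varepsilon} a^{P_m} b^{\varepsilon}$ (a simple path in $\Gamma$, being a subpath of the simple cycle $c$). If $P_m = 0$, this would be $b^{\varepsilon} \cdot 1 \cdot b^{\varepsilon} = b^{2\varepsilon}$, meaning two consecutive $b$-edges of $\Gamma$ share an $a$-line endpoint with the incoming $b$-edge terminating at exactly the vertex from which the outgoing $b$-edge departs; i.e.\ on $a$-line $i_m$ the relevant $Q$-vertex equals the relevant $P$-vertex. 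But then $b^{\varepsilon_1} a^{0} b^{2\varepsilon_2}$ is represented by the label of a simple path in $\Gamma$, contradicting Remark \ref{R: RScondition 2} (which asserts that in $b^{\varepsilon_1} a^{P} b^{2\varepsilon_2}$ one must have $P \neq 0$). Hence $P_m \neq 0$ for every interior index, which is exactly the assertion. The endpoints $P_0$ and $P_g$ are allowed to vanish, so no claim is needed there — and indeed if they do vanish I would just trim the leading/trailing $b$ to keep the path reduced, or equivalently note the statement only constrains $0 < i < g$.

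The main obstacle I anticipate is bookkeeping: matching the combinatorial data of the $\{a,b\}$--reduction (which vertex of $a$-line $i_m$ is hit by $e_m$ versus $e_{m+1}$, and with which orientation) precisely enough to see that the $a$-exponents appearing are genuinely the differences like $|O_{i_m1} - I_{i_m2}|$ etc.\ that occur in the Rips-Segev list, so that Remark \ref{R: RScondition 2} applies verbatim. This is not conceptually hard but requires stating the correspondence between edges of $\Phi_\Gamma$, $b$-edges of $\Gamma$, and the distinguished vertices on $a$-lines carefully; I would set this up once at the start, citing the description of $b$-edges $(v_{iP},v_{jQ})$ with $P \in \{0,O_{i1},O_{i2},C_i\}$, $Q \in \{I_{j1},I_{j2}\}$, and then the girth-realizing cycle argument above goes through directly. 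A secondary subtlety is ensuring the path obtained is genuinely \emph{reduced} in $\Gamma$ (labels of non-trivial simple paths represent non-trivial elements of $F$): this is immediate since $\Gamma$ carries a reduced labeling by hypothesis and we exhibit the word as the label of an honest path in $\Gamma$, so reducedness of the path is equivalent to the word $a^{P_0}b^\varepsilon\cdots b^\varepsilon a^{P_g}$ being weakly reduced, which holds precisely because the interior $P_i$ are non-zero.
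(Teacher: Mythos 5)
Your route---lifting a girth-realizing cycle of $\Phi_\Gamma$ through the $\{a,b\}$--reduction---has two concrete gaps. First, the edges of $\Phi_\Gamma$ are \emph{directed}, and a cycle realizing the girth is only an undirected cycle; it need not be coherently orientable, so the lifted cycle in $\Gamma$ will in general traverse some $b$-edges backwards and the $b$-syllables will not all carry the same sign $\varepsilon$. Your justification (``the $b$-edges all point the same way relative to the $a$-lines'') is true but does not help: it does not turn an undirected cycle of $\Phi_\Gamma$ into a directed one. Second, interior exponents $P_m=0$ do occur on such lifted cycles: whenever the cycle passes through a vertex of $\Gamma$ at which an incoming and an outgoing $b$-edge meet (e.g.\ a Step~2 identification $v_{j0}=w_{lO}$, or an $I$-vertex equal to an endpoint as in Figure~\ref{F: extremal case 2}), the label contains a syllable $b^{2\varepsilon}$, i.e.\ a zero $a$-exponent between two $b$'s. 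Your appeal to Remark~\ref{R: RScondition 2} misreads it: that remark asserts $P\neq0$ only in the configuration $b^{\varepsilon_1}a^{P}b^{2\varepsilon_2}$, so it forbids two \emph{adjacent} zero exponents, not a single one---this is precisely why the proof of Lemma~\ref{L: mincycle} must work with words containing $b^{2\varepsilon_i}$ syllables (with possibly varying signs). Your fallback of extracting a subpath of a minimal cycle does not rescue this, since after discarding the offending syllables you no longer have $g$ occurrences of $b$ with a common sign. Note also that the lemma assumes nothing about the Rips-Segev condition, whereas your argument needs it.

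The paper instead constructs the path directly, not as a cycle: start at an endpoint $v_{i0}$ or $v_{iC_i}$ of an $a$-line and repeat the two moves (i) traverse the outgoing $b$-edge at the current endpoint (every endpoint has one by construction), arriving on another $a$-line, and (ii) walk along that $a$-line to an endpoint, taking the other endpoint if you arrived at one. Every $b$-edge is then traversed positively, so all $b$-syllables have the same sign, and each intermediate $a$-segment joins two distinct vertices of an $a$-line, so every interior exponent is non-zero---without invoking the Rips-Segev condition. The girth of $\Phi_\Gamma$ enters only to guarantee that this non-backtracking walk can be continued through at least $g$ $b$-edges before its projection to $\Phi_\Gamma$ closes up, yielding the required path with $g$ $b$-syllables. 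To repair your write-up, replace the girth-cycle lift by this greedy endpoint-to-endpoint walk.
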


\begin{proof} We indicate how to find such a path. 
Start at $v_{i0}$ or $v_{iC_i}$. Follow the $b$-edge pointing from the $a$-line $i$ to the vertex $v_{j,O}$ (or possibly $v_{j0}$, $v_{jC_j}$). Then go along the $a$-line $j$ towards an (the other) end of $j$. We reach the $a$-line $l$ at $v_{lO}$ (or possibly $v_{l0}$, $v_{lC_l}$). Then go to an (the other) end $v_{l0}$, $v_{lC_l}$. As the girth of the underlying graph is $g$, we can go along at least $g$ $b$-edges before coming back to the $a$-line~$i$. 
\end{proof}

\begin{figure}
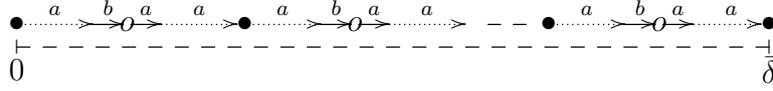

 \[
\xy
{\ar@{.>}^a (0,0)*{\bullet};(10,0)};{\ar^{b} (10,0);(14,0)}; (14.5,0)*{o}; 
 {\ar^a (15,0);(19,0)};{\ar@{.>}^a (19,0);(29,0)}; 
{\ar@{.>}^a (30,0)*{\bullet};(40,0)};{\ar^b (40,0);(44,0)}; (44.5,0)*{o}; 
 {\ar^a (45,0);(49,0)};{\ar@{.>}^a (49,0);(59,0)};
{(62,0)*{}; (67,0)**\dir{--}}; 
{\ar@{.>}^a (70,0)*{\bullet};(80,0)};{\ar^b (80,0);(84,0)}; (84.5,0)*{o}; 
 {\ar^a (85,0);(89,0)};{\ar@{.>}^a (89,0);(99,0)*{\bullet}}; {\ar@{|--|} (0,-3);(99,-3)}; (99,-6)*{\bar{\delta}}; (0,-6)*{0};
\endxy
\] \caption{$w=a^{P_0}b^{\varepsilon}a^{P_1}b^{\varepsilon}a^{P_2} \ldots a^{P_{k-1}}b^{\varepsilon}a^{P_k}$}
\label{specific word}
\end{figure}

If $\Delta$ is a graph, then let $\Delta^j$ denote the \emph{$j$-subdivision of $\Delta$}. This is the graph obtained by replacing each edge of $\Delta$ with $j$ edges. 

\begin{theorem}\label{g1} Generalized Rips-Segev presentations are not generic in Gromov's graphical model \cites{gromov_random_2003,ollivier_kazhdan_2007} of finitely presented random groups:

 For any $v\in \mathbb{N}$ and for any $C\geqslant 1$, there exists an integer $j_0$ such that for any $j\geqslant j_0$, for any family of graphs $\Delta=(\Delta_i)$ of girth $\delta=(\delta_i)$ satisfying the conditions 
\begin{enumerate} \item Each vertex of $\Delta$ is of valency at most $v$; \item $\diam(\Delta_i)\leqslant C\delta_i \text{ for all } i$,                        \end{enumerate}
  with probability tending to $1$ with exponential asymptotics as $\delta\to \infty$, the folded graph $\overline{\Delta^j}$ obtained by a random labeling of $\Delta^j$ contains no generalized Rips-Segev graph as a subgraph.
\end{theorem}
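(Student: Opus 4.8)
The strategy is to show that a generalized Rips-Segev subgraph would force the existence, somewhere in the folded graph $\overline{\Delta^j}$, of a reduced path carrying a very specific word — one with a long run of letters $b^{\varepsilon}$ all with the \emph{same} sign $\varepsilon$ interspersed with nontrivial powers of $a$, as exhibited in Lemma~\ref{reduced word 2}. Such a word is highly non-random: a uniformly random labeling of the $j$-subdivided graph is extremely unlikely to produce a path on which all the $b$-edges receive the same orientation-sign and none of the intervening $a$-exponents vanishes. First I would record the combinatorial consequence of Lemma~\ref{reduced word 2}: if $\overline{\Delta^j}$ contained a generalized Rips-Segev graph $\Gamma$ as a subgraph, then (using the girth lower bound $g>41$ built into the definition via Corollary~\ref{C: final graphical small cancellation for Rips-Segev}, or at least $g\geqslant 2$) $\overline{\Delta^j}$ would contain a reduced path labeled $a^{P_0}b^{\varepsilon}a^{P_1}b^{\varepsilon}\cdots b^{\varepsilon}a^{P_g}$ with $\varepsilon$ constant and $P_i\neq 0$ for $0<i<g$; in particular it contains at least two consecutive $b$-edges with equal sign separated by a nontrivial $a$-block, and one can iterate to get arbitrarily many.

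Next I would set up the probabilistic estimate in the Gromov graphical model. Fix $v$ and $C$; for $j$ large, each edge of $\Delta_i$ becomes a path of $j$ edges in $\Delta^j$, each receiving an independent uniform label from $\{a^{\pm1},b^{\pm1}\}$ (here one uses the two-generator alphabet $\{a,b\}$ as in the Rips-Segev setting, or more generally a fixed finite alphabet). After folding, the label of any given reduced path of bounded combinatorial length is still governed by the original independent random choices along the corresponding path in $\Delta^j$. I would bound the number of candidate embedded copies of a generalized Rips-Segev graph: since $\Gamma$ has bounded valency and the relevant features (the $b$-edges and the distinguished $a$-line vertices) sit inside balls of radius $O(\delta_i)$ with $\diam(\Delta_i)\leqslant C\delta_i$, the number of potential locations is polynomial in the size of $\Delta_i$, hence at most exponential in $\delta_i$ with a controllably small exponent once $j$ is large. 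Against this I would place the probability that a \emph{fixed} potential location actually carries the required word: because the word of Lemma~\ref{reduced word 2} pins down the signs of $g$ distinct $b$-edges (probability $\leqslant 2^{-(g-1)}$ just from the sign constraints, even ignoring the non-vanishing of the $P_i$ and the letter choices forcing $b$'s where $b$'s must be), and because after $j$-subdivision each such constraint actually constrains a length-$j$ sub-path, the probability decays like $c^{-\delta_i}$ for a constant $c>1$ depending on $j$ (and $c\to\infty$ as $j\to\infty$). A union bound over all candidate locations then gives a total failure probability bounded by $(\text{exp growth in }\delta_i)\cdot c^{-\delta_i}$, which tends to $0$ with exponential asymptotics once $j_0$ is chosen so that $c$ beats the growth rate; this forces the threshold $j_0$ to depend only on $v$ and $C$, as claimed.

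The main obstacle, and the step requiring the most care, is making the union bound legitimate after \emph{folding}: folding can identify edges of $\Delta^j$, so a putative Rips-Segev subgraph of $\overline{\Delta^j}$ need not lift to an embedded subgraph of $\Delta^j$, and the labels along a folded path are not literally independent of one another. I would handle this exactly as in Ollivier's analysis underlying Theorem~\ref{T: gi} and in \cite{ollivier_small_2006}: a reduced path in $\overline{\Delta^j}$ lifts to an immersed (locally injective) path in $\Delta^j$, and by the large-girth hypothesis $(2)$ together with the choice of $j\geqslant j_0$, two edges of $\Delta^j$ that get identified under folding are forced to carry equal random labels only along short initial segments, so one can still extract, from any hypothetical Rips-Segev copy, a genuinely embedded sub-configuration whose label is determined by $\Omega(\delta_i)$ \emph{independent} random letters and which must satisfy the sign-constraint of Lemma~\ref{reduced word 2}. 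Once that independent core is isolated, the counting estimate of the previous paragraph applies verbatim. I would finish by noting that the exponential-in-$\delta$ asymptotics is preserved because both the number of candidate cores and the per-core failure probability are exponential in $\delta_i$, with the latter exponent dominating for $j\geqslant j_0$; letting $\delta\to\infty$ along the family then yields the conclusion with probability tending to $1$.
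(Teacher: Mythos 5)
Your overall shape --- extract the special word of Lemma~\ref{reduced word 2} from a hypothetical Rips-Segev subgraph and run a union bound against the randomness of the labeling --- is the same as the paper's, but the step that is supposed to produce decay exponential in $\delta_i$ does not work as you state it. First, a girth bound $g>41$ on the underlying graph is \emph{not} built into the definition of a generalized Rips-Segev graph (it is only imposed later, in Corollary~\ref{C: final graphical small cancellation for Rips-Segev}, to obtain the $Gr_*'(1/8)$--condition), so all you may assume about a subgraph excluded by the theorem is that $g$ is some constant. Consequently your per-location probability, coming from the sign constraints on $g$ many $b$-edges, is a constant like $2^{-(g-1)}$, and even after noting that each constraint sits on a length-$j$ subdivided segment you gain only a factor exponential in $g\cdot j$, which does not grow with $\delta_i$; this can never beat a count of candidate locations of size $v^{2C\delta}$. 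In the paper the exponential decay has a different source: by the Ollivier--Wise gross cogrowth estimate \cite{ollivier_kazhdan_2007}*{Prop. 7.8}, for $j\geqslant j_0$ the folded graph $\overline{\Delta^j}$ has girth at least $\bar\delta=(\eta-\beta)\delta j$ (this is exactly where $j_0$ comes from), so the pattern of Lemma~\ref{reduced word 2} must persist along a path of length $\bar\delta$, i.e.\ of length growing linearly in $\delta j$; and the decisive estimate is an entropy comparison your proposal never makes: there are at most $2^{\bar\delta+2}$ words of the form $a^{P_0}b^{\varepsilon}a^{P_1}\cdots b^{\varepsilon}a^{P_k}$ of length $\bar\delta$, against at least $3^{\bar\delta}$ reduced words of that length, so a fixed path carries such a word with probability about $(2/3)^{(\eta-\beta)\delta j}$, and choosing $j$ with $v^{2C}(2/3)^{j}<1$ wins the union bound with exponential asymptotics as $\delta\to\infty$.

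A smaller but related problem is the union bound itself: the number of ``candidate embedded copies of a generalized Rips-Segev graph'' is not polynomial in the size of $\Delta_i$ (such graphs can be large, and subgraph counts are exponential in the number of edges), and your repair of the folding issue via an ``independent core'' is left entirely to an analogy with Ollivier's argument. The paper avoids both difficulties by union-bounding only over paths of length at least $\bar\delta$ in $\Delta^j$ --- at most $C\delta j^2v^{2C\delta}$ of them, using valency $\leqslant v$ and $\diam(\Delta_i)\leqslant C\delta_i$ --- and by noting that the special path in $\overline{\Delta^j}$ is the folding of such a path, so that no decoupling analysis of the folded labels is needed beyond the word count above. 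The missing ideas are thus precisely the girth lower bound for $\overline{\Delta^j}$ via the cogrowth constant $\eta$ (which determines $j_0$), and the $2^{\bar\delta}$-versus-$3^{\bar\delta}$ count of admissible labels along a path whose length is proportional to $\delta j$.
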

\begin{proof} For any small $\beta >0$, there is a number $j_0$ such that, for all $j\geqslant j_0$, the girth of the folded graph $\overline{\Delta^j}$ is at least $\bar{\delta}=(\eta-\beta)\delta(\Delta)$. Here $0<\eta<1$ is the gross cogrowth of a finitely generated free group  \cite{ollivier_kazhdan_2007}*{Proposition 7.8}.\footnote{Note however that we consider a ``two generator model'' for quotients of the free product $F=G_1*G_2$, in the sense that the words in $R$ are generated by concatenating the words $a^{\pm1}\in G_1$ and $b^{\pm 1}\in G_2$. In particular,  we consider random labellings of $\Delta^j$ by the words $a\in G_1$ and $b\in G_2$, and therefore random quotients of $F$.}

We show that the folded graph $\overline{\Delta^j}$ obtained by a random labeling of $\Delta^j$ contains no generalized Rips-Segev graph with exponential asymptotics as $\delta\to \infty$. As usual, we denote by $\Delta $ also a member of the family $(\Delta_i)$. If $\overline{\Delta^j}$ contains a generalized Rips-Segev graph, by Lemma \ref{reduced word 2} there is a path $p$ of length $\bar{\delta}$ in $\overline{\Delta^j}$, bearing a word of type $$w=a^{P_0}b^{\varepsilon}a^{P_1}b^{\varepsilon}a^{P_2} \ldots a^{P_{k-1}}b^{\varepsilon}a^{P_k},$$ where $\varepsilon=\pm 1$ and $P_i\not=0$ for all $0<i<k$, see Figure~\ref{specific word}.

We have to estimate the number of such paths in $\overline{\Delta^j}$.

First, we show that the number of the words $w$ read on a path of length $\bar{\delta}$ is at most $2^{\bar{\delta}+2}$. In fact, there are $2$ possibilities for $\varepsilon$.  There are at most $\sum_{k=0}^{\bar{\delta}/2}\binom{\bar{\delta}}{k}$ possibilities for the vertices $o$ in between $b$ and $a$. Such a choice determines the lengths of the dotted $a$-paths. For every vertex $o$ we have to choose an orientation for $a$. This gives at most $$2 \sum_{k=0}^{\bar{\delta}/2} \binom{\bar{\delta}}{k} 2^k\leqslant 2\cdot 2^{\bar{\delta}/2}  \cdot \sum_{k=0}^{\bar{\delta}/2} \binom{\bar{\delta}}{k}\leqslant 2 \cdot 2^{\bar{\delta}+1}$$ possibilities for $w$ read on a path of length $\bar{\delta}$ in $\overline{\Delta^j}$. 
 
Our path is the folding of a path of length at least $\bar{\delta}=(\eta-\beta)\delta j$ in $\Delta^j$.  The number of paths of length $\geqslant (\eta-\beta)\delta j$ in $\Delta^j$ is at most $ C\delta j^2v^{C\delta+C\delta}$. Thus there are at most $C\delta j^2v^{C\delta+C\delta}2^{\bar{\delta}+2}$ possibilities for the occurrence of an above path $p$ in $\overline{\Delta^j}$. 

The number of reduced words of length $\bar{\delta}=(\eta-\beta)\delta j$ is at least $3^{(\eta-\beta)\delta j}$. Thus the probability that a randomly labeled graph $\overline{\Delta^j}$ is a generalized Rips-Segev graph is bounded by $$C\delta j^2v^{2C\delta}\frac{2^{(\eta-\beta)\delta j}}{3^{(\eta-\beta)\delta j}}.$$ We choose $j$ so large that $v^{2C}\left(\frac{2}{3}\right) ^j<1$. As $\delta\to \infty$, the probability that $\overline{\Delta}$ is a generalized Rips-Segev graph tends to zero with exponential asymptotics.
\end{proof}

\begin{theorem}\label{g2} $\quad$ Generalized Rips-Segev presentations are not generic in Arzhantseva-Ol'shanskii's few-relator model \cite{arzhantseva_class_1996} of finitely presented random groups:

For all $n\in \mathbb{N}$, the probability that a randomly chosen group presentation among all group presentations $$\langle a,b \mid r_1,\ldots ,r_n\rangle, \text{ where } r_i \text{ is a cyclically reduced word in } a,b \text{ and } |r_i|\leqslant t,$$ is a generalized Rips-Segev presentation tends to zero with exponential asymptotics as $t\to \infty$. \end{theorem}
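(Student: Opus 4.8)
The plan is to mirror the argument of Theorem~\ref{g1}, replacing the notion of a randomly labeled subdivided graph by that of a random choice of $n$ cyclically reduced relators of length at most $t$. First I would recall from Lemma~\ref{reduced word 2} that a generalized Rips-Segev presentation must contain, among the labels of cycles of some $\Gamma_l$, a reduced word of a very restricted shape, namely
\[
w=a^{P_0}b^{\varepsilon}a^{P_1}b^{\varepsilon}a^{P_2}\cdots a^{P_{g-1}}b^{\varepsilon}a^{P_g},
\]
with a fixed sign $\varepsilon$ throughout and all intermediate exponents nonzero, where $g=\girth(\Phi_{\Gamma_l})$; moreover the $Gr_*'(1/8)$--condition forces $g$ to be large (at least $41$, hence in particular the free product length of $w$ is bounded below linearly in $t$). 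The key point is that such ``patterned'' words are exponentially rare among all cyclically reduced words of the same length.

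The main steps, in order: (1) observe that if $\langle a,b\mid r_1,\dots,r_n\rangle$ is a generalized Rips-Segev presentation, then one of the $r_i$ (or a cyclic conjugate/subword of it, coming from a cycle of the defining graph) is a word $w$ of the above special form whose syllable length, and hence ordinary length, is comparable to $t$; in particular such a word has length at least $c\,t$ for a constant $c>0$ coming from the girth bound. (2) Count the words of length $m$ having the special shape: there are $2$ choices for $\varepsilon$, at most $\sum_{k\le m/2}\binom{m}{k}$ choices for the positions of the $b$'s (which determine the lengths of the $a$-blocks), and at most $2^{k}$ choices for the signs of the $a$-blocks, giving at most $2\cdot 2^{m/2}\sum_{k\le m/2}\binom{m}{k}\le 4\cdot 2^{m}$ such words, exactly as in the proof of Theorem~\ref{g1}. (3) Compare with the number of all cyclically reduced words of length $m$ in $a,b$, which grows like $3^{m}$ (more precisely is $\ge 3^{m-O(1)}$). (4) Hence the probability that a single randomly chosen cyclically reduced relator of length $\le t$ has the special form, or contains a special subword of length $\ge c\,t$, is at most $\text{poly}(t)\cdot (2/3)^{c\,t}$, which tends to $0$ exponentially. (5) Finally, union-bound over the $n$ relators: since $n$ is fixed, the probability that \emph{some} $r_i$ is of the required form is at most $n\cdot\text{poly}(t)\cdot(2/3)^{ct}\to 0$ with exponential asymptotics as $t\to\infty$, so a generalized Rips-Segev presentation occurs with probability tending to $0$.

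I expect the main obstacle to be a bookkeeping point rather than a conceptual one: carefully justifying step~(1), i.e.\ that possessing a generalized Rips-Segev structure really does force one of the chosen relators itself (not merely a long word in the normal closure) to contain a patterned subword of length linear in $t$. This uses that the relators of $G(\Gamma)$ are exactly the labels of the cycles generating $\pi_1(\Gamma)$, together with Lemma~\ref{reduced word 2} and the girth bound from Corollary~\ref{C: final graphical small cancellation for Rips-Segev}; one must be a little careful that a relator of length $\le t$ can only come from a graph $\Gamma_l$ whose underlying girth is $O(t)$, so that the patterned word extracted from it also has length $\Theta(t)$ and the exponential gain in step~(4) is not lost. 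Once that is pinned down, the remaining counting is identical in spirit to the proof of Theorem~\ref{g1} and the conclusion follows by the same Borel--Cantelli-style estimate.
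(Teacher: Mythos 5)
Your overall strategy is the paper's: use Lemma~\ref{reduced word 2} to reduce to the presence of a ``patterned'' relator $w=a^{P_0}b^{\varepsilon}a^{P_1}\cdots a^{P_{k-1}}b^{\varepsilon}a^{P_k}$, count such words (your step (2) is verbatim the paper's count, giving at most $4t\cdot 2^{t}$ patterned words of length $\leqslant t$), compare with the roughly $3^{t}$ cyclically reduced words of length $\leqslant t$, and conclude; the paper writes the final estimate directly as $4t\,2^{t}\cdot 2^{n-1}3^{(n-1)t}/3^{n(t-1)}$ rather than via a union bound, which is equivalent. The genuine problem is the point you yourself flag as the main obstacle, namely your insistence in steps (1) and (4) that the patterned word have length at least $c\,t$, justified by the claim that ``a relator of length $\leqslant t$ can only come from a graph $\Gamma_l$ whose underlying girth is $O(t)$, so the patterned word has length $\Theta(t)$.'' This is a non sequitur: the girth hypothesis coming from Corollary~\ref{C: final graphical small cancellation for Rips-Segev} is the fixed constant $41$, independent of $t$, and the defining graph does not grow with $t$; as $t\to\infty$ the patterned path of Lemma~\ref{reduced word 2} keeps a length bounded in terms of the fixed graph, so no uniform $c>0$ exists. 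Consequently the clause ``contains a special subword of length $\geqslant c\,t$'' cannot be forced, and the weaker event ``contains a special subword of bounded length'' is \emph{not} exponentially rare (a fixed subword of constant length occurs in a positive proportion, up to polynomial factors, of all words of length $t$), so that branch of your step (4) collapses.

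The repair --- and this is exactly what the paper does --- is to observe that no lower bound on the length of the patterned word is needed in this model. One argues that some relator $r_i$ is \emph{itself} a word of the special type (a label of a cycle of the defining graph, via Lemma~\ref{reduced word 2}), of whatever length $\leqslant t$. Since the relators are drawn uniformly from all cyclically reduced words of length $\leqslant t$, whose number grows like $3^{t}$ (dominated by the words of length close to $t$), the probability that a given $r_i$ equals \emph{any} special word, long or short, is at most about $4t\cdot 2^{t}/3^{t-1}$, already exponentially small; multiplying by the choices of the remaining $n-1$ relators (or union-bounding over $i$, as you do) gives the paper's bound. So your counting in step (2) together with the ``has the special form'' alternative of step (4) suffices once you drop the unjustified $\Theta(t)$ detour; as written, however, the argument you sketch for the key step would fail.
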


\begin{proof} We show that the number of words of type $$w=a^{P_0}b^{\varepsilon}a^{P_1}b^{\varepsilon}a^{P_2} \ldots a^{P_{k-1}}b^{\varepsilon}a^{P_k},$$ where $\varepsilon=\pm 1$, $P_i\not=0$ for all $0<i<k$ and length $\leqslant t$, is at most $4t\cdot 2^{t}$. In fact, there are $2$ possibilities for the orientation of $b$.  There are at most $$\sum_{l=0}^{t}\sum_{k=0}^{l/2}\binom{l}{k}\leqslant t\sum_{k=0}^{t/2} \binom{t}{k}$$ possibilities for the vertices $o$ in between $b$ and $a$. These choices include all such words with length between $0$ and $t$. Such a choice determines the lengths of the dotted $a$-paths. For every vertex $o$ we have to choose an orientation for $a$. This gives at most $$2 t\sum_{k=0}^{t/2} \binom{t}{k} 2^k\leqslant 2t\cdot 2^{t/2}  \cdot \sum_{k=0}^{t/2} \binom{t}{k}\leqslant 2 t\cdot 2^{t+1}$$ possibilities for such a word of length $\leqslant t$.

The probability that a random presentation in the above sense contains a relator $w$ is at most $$\frac{4t 2^{t}\cdot 2^{n-1}3^{(n-1)t}}{3^{n(t-1)}},$$ and tends to zero with exponential asymptotics as $t\to \infty$.
 \end{proof}
 
Our results suggest that unique product groups are generic among finitely presented groups. This would then imply that the Kaplansky zero divisor conjecture is generic among finitely presented groups.

\begin{bibdiv}
\begin{biblist}

\bib{appel_conjugacy_1972}{article}{
  author={Appel, K. I.},
   author={Schupp, P. E.},
   title={The conjugacy problem for the group of any tame alternating knot
   is solvable},
   journal={Proc. Amer. Math. Soc.},
   volume={33},
   date={1972},
   pages={329--336},
   issn={0002-9939},
}

\bib{arzhantseva_class_1996}{article}{
  author={Arzhantseva, G. N.},
   author={Ol{\cprime}shanski{\u\i}, A. Yu.},
   title={Generality of the class of groups in which subgroups with a lesser
   number of generators are free},
   journal={Mat. Zametki},
   volume={59},
   date={1996},
   number={4},
   pages={489--496, 638},
   issn={0025-567X},
   translation={
      journal={Math. Notes},
      volume={59},
      date={1996},
      number={3-4},
      pages={350--355},
      issn={0001-4346},
   },
}

\bib{carter_new_2013}{article}{
   author={Carter, W.},
   title={New examples of torsion-free non-unique product groups},
   journal={J. Group Theory},
   volume={17},
   date={2014},
   number={3},
   pages={445--464},
}

\bib{cohen_zero_1974}{article}{
 author={Cohen, J. M.},
   title={Zero divisors in group rings},
   journal={Comm. Algebra},
   volume={2},
   date={1974},
   pages={1--14},
   issn={0092-7872},
}

\bib{delzant_sur_1997}{article}{
 author={Delzant, T.},
   title={Sur l'anneau d'un groupe hyperbolique},
   journal={C. R. Acad. Sci. Paris S\'er. I Math.},
   volume={324},
   date={1997},
   number={4},
   pages={381--384},
}

\bib{gromov_random_2003}{article}{
 author={Gromov, M.},
   title={Random walk in random groups},
   journal={Geom. Funct. Anal.},
   volume={13},
   date={2003},
   number={1},
   pages={73--146},
}

\bib{gruber_graphical_2012}{article}{
  author={Gruber, D.},
   title={Groups with graphical $C(6)$ and $C(7)$ small cancellation
   presentations},
   journal={Trans. Amer. Math. Soc.},
   volume={367},
   date={2015},
   number={3},
   pages={2051--2078},
}

 \bib{kaplansky_problems_1957}{inproceedings}{
  author={Kaplansky, I.},
   title={Problems in the theory of rings. Report of a conference on linear
   algebras, June, 1956, pp. 1-3},
   publisher={National Academy of Sciences-National Research Council},
   place={Washington, Publ. 502},
   date={1957},
   pages={v+60},
}

\bib{kaplansky_problems_1970}{article}{
 author={Kaplansky, I.},
   title={``Problems in the theory of rings'' revisited},
   journal={Amer. Math. Monthly},
   volume={77},
   date={1970},
   pages={445--454},
   issn={0002-9890},
}

\bib{kapovich_2005_genericity}{article}{
   author={Kapovich, I.},
   author={Schupp, P.},
   title={Genericity, the Arzhantseva-Ol\cprime shanskii method and the
   isomorphism problem for one-relator groups},
   journal={Math. Ann.},
   volume={331},
   date={2005},
   number={1},
   pages={1--19},
}

\bib{kropholler_applications_1988}{article}{
   author={Kropholler, P. H.},
   author={Linnell, P. A.},
   author={Moody, J. A.},
   title={Applications of a new $K$-theoretic theorem to soluble group
   rings},
   journal={Proc. Amer. Math. Soc.},
   volume={104},
   date={1988},
   number={3},
   pages={675--684},
}

\bib{linnell_strong_2012}{article}{
   author={Linnell, P.},
   author={Okun, B.},
   author={Schick, T.},
   title={The strong Atiyah conjecture for right-angled Artin and Coxeter
   groups},
   journal={Geom. Dedicata},
   volume={158},
   date={2012},
   pages={261--266},
}

 \bib{lafforgue_baum_1998}{article}{
 author={Lafforgue, V.},
   title={Une d\'emonstration de la conjecture de Baum-Connes pour les
   groupes r\'eductifs sur un corps $p$-adique et pour certains groupes
   discrets poss\'edant la propri\'et\'e (T)},
   journal={C. R. Acad. Sci. Paris S\'er. I Math.},
   volume={327},
   date={1998},
   number={5},
   pages={439--444},
}

\bib{lyndon_combinatorial_1977}{book}{
  author={Lyndon, R. C.},
   author={Schupp, P. E.},
   title={Combinatorial group theory},
   note={Ergebnisse der Mathematik und ihrer Grenzgebiete, Band 89},
   publisher={Springer-Verlag},
   place={Berlin},
   date={1977},
   pages={xiv+339},
}

\bib{lyndon_dehn_1966}{article}{
 author={Lyndon, R. C.},
   title={On Dehn's algorithm},
   journal={Math. Ann.},
   volume={166},
   date={1966},
   pages={208--228},
}

\bib{ollivier_kazhdan_2007}{article}{
 author={Ollivier, Y.},
   author={Wise, D. T.},
   title={Kazhdan groups with infinite outer automorphism group},
   journal={Trans. Amer. Math. Soc.},
   volume={359},
   date={2007},
   number={5},
   pages={1959--1976},
}

 \bib{ollivier_small_2006}{article}{
 author={Ollivier, Y.},
   title={On a small cancellation theorem of Gromov},
   journal={Bull. Belg. Math. Soc. Simon Stevin},
   volume={13},
   date={2006},
   number={1},
   pages={75--89},
}

 \bib{pankratev_hyperbolic_1999}{article}{
 author={Pankrat{\cprime}ev, A. E.},
   title={Hyperbolic products of groups},
   journal={Vestnik Moskov. Univ. Ser. I Mat. Mekh.},
   date={1999},
   number={2},
   pages={9--13, 72},
   issn={0201-7385},
   translation={
      journal={Moscow Univ. Math. Bull.},
      volume={54},
      date={1999},
      number={2},
      pages={9--12},
   },
}
 
\bib{passman_algebraic_1977}{book}{
 author={Passman, D. S.},
   title={The algebraic structure of group rings},
   series={Pure and Applied Mathematics},
   publisher={Wiley-Interscience [John Wiley \& Sons]},
   place={New York},
   date={1977},
   pages={xiv+720},
}

\bib{promislow_simple_1988}{article}{
 author={Promislow, S. D.},
   title={A simple example of a torsion-free, nonunique product group},
   journal={Bull. London Math. Soc.},
   volume={20},
   date={1988},
   number={4},
   pages={302--304},
   issn={0024-6093},
}

\bib{rips_torsion-free_1987}{article}{
 author={Rips, E.},
   author={Segev, Y.},
   title={Torsion-free group without the unique product property property},
   journal={J. Algebra},
   volume={108},
   date={1987},
   number={1},
   pages={116--126},
   issn={0021-8693},
}

\bib{schupp_dehn_1968}{article}{
   author={Schupp, P. E.},
   title={On Dehn's algorithm and the conjugacy problem},
   journal={Math. Ann.},
   volume={178},
   date={1968},
   pages={119--130},
}

\end{biblist}
\end{bibdiv}

\end{document}